\newcommand\newa[1]{#1}
\begin{document}
 
\title[Busemann process and gRSK]{Intertwining the Busemann process of the directed polymer model} 



\author[E.~Bates]{Erik Bates}
\address{Erik Bates\\ North Carolina State University\\  Department of Mathematics\\ SAS Hall \\ 2311 Stinson Drive \\   Raleigh, NC 27695-8205\\ USA.}
\email{ebates@ncsu.edu}
\urladdr{https://www.ewbates.com/}
\thanks{E.\ Bates was partially supported by National Science Foundation grants DMS-1902734 and DMS-2412473.} 
 
\author[W.-T.~Fan]{Wai-Tong (Louis) Fan}
\address{Wai-Tong (Louis) Fan\\ University of North Carolina at Chapel Hill\\ School of Data Science and Society\\ 211 Manning Dr, 
	\\  Chapel Hill, NC 27514\\ USA.}
\email{louisfan@unc.edu}
\urladdr{https://sites.google.com/site/louisfanmath/}
\thanks{W.-T.\ Fan was partially supported by National Science Foundation grants DMS-2152103 and DMS-2348164, and Office of Naval Research grant N00014-20-1-2411.}

\author[T.~Sepp\"al\"ainen]{Timo Sepp\"al\"ainen}
\address{Timo Sepp\"al\"ainen\\ University of Wisconsin-Madison\\  Mathematics Department\\ Van Vleck Hall\\ 480 Lincoln Dr.\\   Madison, WI 53706-1388\\ USA.}
\email{seppalai@math.wisc.edu}
\urladdr{http://www.math.wisc.edu/~seppalai}
\thanks{T.\ Sepp\"al\"ainen was partially supported by  National Science Foundation grants DMS-1854619 and DMS-2152362, by Simons Foundation grant 1019133, and by  the Wisconsin Alumni Research Foundation.} 

\keywords{Busemann function, cocycle, directed polymer, geometric RSK, Gibbs measure, intertwining, inverse-gamma, log-gamma, one force--one solution, stochastic heat equation} 
\subjclass{60K35, 60K37} 
\renewcommand{\subjclassname}{\textup{2020} Mathematics Subject Classification}
\date{\today} 

\begin{abstract}  We study the Busemann process and competition interfaces of the planar directed polymer model with i.i.d.\ weights on the vertices of the planar square lattice, in both the general case and the solvable inverse-gamma case.  We prove new regularity properties of the Busemann process without reliance on unproved assumptions on the shape function.  For example, each nearest-neighbor Busemann function is strictly monotone and has the same random set of discontinuities in the direction variable. 
When all Busemann functions on a horizontal line are viewed together, the Busemann process intertwines with an evolution that obeys a version of the geometric Robinson--Schensted--Knuth correspondence. When specialized to the inverse-gamma case, this relationship enables an explicit distributional description: the Busemann function on a nearest-neighbor edge has independent increments in the direction variable, and its distribution comes from an inhomogeneous planar Poisson process.  The distribution of the asymptotic competition interface direction of the inverse-gamma polymer is discrete and supported on the Busemann discontinuities which---unlike in zero-temperature last-passage percolation---are dense. Further implications follow for the  eternal solutions and the failure of the one force--one solution principle of the discrete stochastic heat equation solved by the polymer partition function. 
\end{abstract}
\maketitle

\maketitle
\tableofcontents

\section{Introduction}


\subsection{Motivation of this paper}  
The investigation of Busemann functions and semi-infinite geodesics in first- and last-passage percolation
has been in progress for three decades, since the seminal work of Newman \cite{newm-icm-95} and Hoffman \cite{hoff-05,hoff-08}.  More recent is the study of the analogous Busemann functions and semi-infinite Gibbs measures in positive-temperature polymer models. 
This work began in \cite{geor-rass-sepp-yilm-15} on the inverse-gamma polymer model introduced in \cite{sepp-12-aop-corr}.   
In  \cite{geor-rass-sepp-16}   Busemann functions were studied as extrema of variational formulas for shape functions and limiting free energy densities.  On the dynamical systems side, \cite{bakh-li-19} utilized Busemann functions and polymer measures to define attractive eternal solutions to a randomly forced Burgers equation in semi-discrete space-time. 

\newa{The present paper considers nearest-neighbor directed polymers on the planar square lattice, for which the Busemann process and semi-infinite Gibbs measures were constructed in \cite{janj-rass-20-aop, janj-rass-20-jsp}.  We establish results in both the general version of this model (i.i.d.~weights with a mild moment assumption) and in the special case of inverse-gamma weights.
Moreover, our methodology and exposition seek to highlight the interplay between these two distinct directions.}


Next we introduce informally  the notions of Busemann function and Busemann process, give  a brief account of the present state of the subject, and then turn to the main novel aspects of this paper.
Rigorous definitions and statements begin in Section~\ref{sec:poly}.  
To keep this introduction to a reasonable length  we refer the reader to the  papers cited above    
  for additional history.     
  Section~\ref{sec:org}  below  summarizes  the organization of the paper.

\subsection{Busemann functions and Busemann process}  \label{sec:i_BB}

 Given a random field $(L_{u,v})_{u,v\tsp\in\tsp\Z^2}$  
 with a metric-like interpretation    
 and  a planar direction vector $\xi$, 
 an individual \textit{Busemann function} $\Bus^\xi\colon \Z^2\times\Z^2\to\R$    
 is a limit of the type 
 \be\label{buse10}
 \Bus^\xi_{x,y} = \lim_{n\to\infty} [ L_{x, v_n} - L_{y, v_n}] \,, \quad x,y \in\Z^2, 
 \ee
 where $(v_n)$ is a sequence of vertices with asymptotic direction $\xi$.  In a first- or last-passage growth model, $L_{u,v}$ is  the passage time between $u$ and  $v$.  In a polymer model, $L_{u,v}$ is the free energy (logarithm of the partition function) of paths between $u$ and $v$. 
 

 The (global, or full) \textit{Busemann process} is a stochastic process $(\Bus^\xi)_{\xi}$ 
 that  combines the individual Busemann functions into a single random object. Since there are uncountably many directions $\xi$, the limits \eqref{buse10} alone do not define this object.  
 But once a global process is constructed, it turns out that the distributional and  regularity properties of the function $\xi\mapsto \Bus^\xi$ capture   useful information about the field $(L_{u,v})_{u,v\tsp\in\tsp\Z^2}$.

   \subsection{Busemann process state of the art}
 The Busemann process can be constructed in broad  generality  in planar growth and polymer models,  with an argument that combines  weak convergence and monotonicity.   In this approach limits \eqref{buse10} are not  the starting point, but instead proved after $\Bus^\rbbullet$ has been constructed.  In the planar corner growth model (CGM), equivalently, in planar directed nearest-neighbor  last-passage percolation  (LPP) this was done  in \cite{geor-rass-sepp-17-buse}, by appeal to weak convergence results from queueing theory.  A general construction for both LPP and the directed nearest-neighbor polymer model was undertaken in \cite{janj-rass-20-aop}, based on the weak convergence argument of \cite{damr-hans-14}.  Recent extensions   to higher dimensions and ergodic weights appear in \cite{groa-janj-rass-23-arxiv, janj-nurb-rass-22}. 
 
 The general construction gives little  insight into the distribution or  the regularity  of the Busemann  process. 
   Explicit  properties of the   joint distribution of the Busemann process    have been established in solvable  
   LPP models:  exponential  
   CGM \cite{fan-sepp-20},  Brownian LPP \cite{seppalainen_sorensen23a}, and the directed landscape \cite{busani_seppalainen_sorensen24}. 
In positive temperature work is in progress on the Kardar--Parisi--Zhang (KPZ)  equation: the construction of the Busemann process and applications to ergodicity and synchronization  in \cite{janj-rass-sepp-22-arxiv}, and distributional properties in  \cite{groathouse_rassoulagha_seppalainen_sorensen25}.   The first lattice polymer case of the Busemann distribution  is developed in the present paper.

In LPP models the Busemann process serves   as an analytic device for studying  infinite geodesics.
A common suite of results has emerged across several models:
\begin{enumerate}[label=\textup{(\alph*)}]

\item \label{busres1} On an event of probability one, there is a Busemann process defined simultaneously across all directions.

\item \label{busres2} The Busemann function in a particular  direction encodes a family of coalescing semi-infinite geodesics. Discontinuities of the Busemann process $\xi\mapsto \Bus_{x,y}^\xi$ correspond to multiple coalescing families in the same asymptotic direction.

\item \label{busres3} When the joint distribution of the Busemann process can be described,  it has revealed that the set of discontinuities is a countable dense subset of directions.

\end{enumerate}

Besides geometric properties, an explicit Busemann process is useful for estimates, such as  bounds on coalescence \cite{sepp-shen-20}  and nonexistence of bi-infinite geodesics \cite{bala-busa-sepp-20}.   Before the   Busemann process,  explicit stationary 
processes were discovered and utilized to establish fluctuation exponents.  The seminal work  \cite{cato-groe-06}  came in Poissonian LPP, followed by the  exponential CGM  \cite{balazs_cator_seppalainen06} and  the inverse-gamma polymer \cite{sepp-12-aop-corr}. 
In positive-temperature polymer models, analogues of objectives~\ref{busres1} and \ref{busres2} above were accomplished in   \cite{janj-rass-20-aop}   for general i.i.d.\ weights. 

\subsection{General goals of this paper}

\newa{In broad strokes, our paper has three primary objectives for directed polymer models:}
\begin{itemize} 
\item \newa{Sharpen the regularity of $(\xi,x,y)\mapsto \Bus^\xi_{x,y}$ in both the direction variable $\xi$ and the lattice pair $(x,y)$.  Our results reveal  distinctions with the zero-temperature case that are not apparent from the construction in \cite{janj-rass-20-aop}. These are described further in subsequent sections.}
\item \newa{Fulfill objective~\ref{busres3} by establishing a characterization of the joint distribution of the Busemann process, valid for the general polymer model.    In the inverse-gamma case this description yields  corollaries whose universality raises open problems for the future.   Moreover, our process-level description goes well beyond the one-point marginals of the inverse-gamma model that were found in \cite{geor-rass-sepp-yilm-15}.}    
\item \newa{Cast these general developments in the framework of the Robinson--Schensted--Knuth (RSK) correspondence that is central to integrable probability.}
\end{itemize}

\newa{An important qualitative feature  of general polymer theory (and of general first- and last-passage percolation) is the degree of reliance on unproved regularity assumptions on the shape function. One commonly used assumption is that the shape function be differentiable at the endpoints of its linear segments. We  explain the significance of  this condition in Remark \ref{diffrmk} of Section \ref{buse_int}, and in various subsequent locations to point out where this assumption makes a difference.}

\newa{A strength of our paper is that we achieve all our main results \textit{without any unproved assumptions}.  
Not only do our regularity results go well beyond those of \cite{janj-rass-20-aop}, but also we 
avoid the unproved differentiability assumption mentioned above.   We invoke this assumption only for the finer points of the competition interface (in part of Theorem~\ref{thm:eta20}) 
and in Appendix~\ref{a:Blim}.  The latter   supplements the construction of the Busemann process from \cite{janj-rass-20-aop} but is not used elsewhere in the paper.}

The next sections 
\ref{sec:i_Bus}--\ref{sec:i_HJ}
provide a more detailed  overview of the contents of this paper.


\subsection{Characterization of the Busemann process of the directed polymer model} \label{sec:i_Bus}  

Our main results for the Busemann process  are the following. 
\begin{enumerate}[label=\textup{(\roman*)}, ref=\textup{(\roman*)}]
\item\label{B.i}  On each lattice edge $(x-\evec_r,x)$, the Busemann process $\xi\mapsto \Bus^\xi_{x-\evec_r,x}$  is \textit{strictly} monotone away from the linear segments of the shape function (Theorem~\ref{thm:78-63}).  The random set of discontinuities is the same on each edge (Theorem~\ref{thm:51-32}).
If the i.i.d.~weights have a continuous distribution, then the random set of discontinuities of $\xi\mapsto\Bus^\xi_{x,y}$ is the same for every pair $x\neq y$ (Theorem~\ref{thm:allxy}).
\newa{Of special note is that the latter two results cannot be predicted from an exactly solvable case such as the inverse-gamma model.}

\item \label{B.ii} The joint distribution of the Busemann process on a lattice level is identified as the invariant distribution of a certain Markov process.
This distribution is shift-ergodic and unique subject to a condition on asymptotic slopes (Theorem~\ref{cpthm}).
The Markovian evolution intertwines with another Markov process that obeys a version of geometric RSK (Section~\ref{sec:grsk}, discussed below in Section~\ref{sec:i_gRSK}). 

\item\label{B.iii} Under inverse-gamma weights, the Busemann process on a lattice edge is realized as a functional of a two-dimensional inhomogeneous Poisson point process (Theorem~\ref{B-th5}).
The discontinuities are countably infinite and dense (Corollary~\ref{cor:ig_aUset}).     In the zero-temperature limit the inverse-gamma Busemann process on a lattice edge converges in distribution to the Busemann process of the exponential CGM (Theorem  \ref{thm:igexp2}).

\end{enumerate}

We point out some distinctions between the positive and zero-temperature cases. 
In contrast with 
items~\ref{B.i} and \ref{B.iii},  in LPP  a  Busemann function is constant on random  open intervals whose union is dense  \cite[Lem.~3.3]{janj-rass-sepp-23}.  The full set of discontinuities does not  appear on a single edge, but any given discontinuity direction is observed at some edge along any bi-infinite down-right path \cite[Lem.~3.6]{janj-rass-sepp-23}. In the case of exponential weights, discontinuities of   $\xi\mapsto \Bus^\xi_{x,y}$  can accumulate only at the extremes $\evec_2$ and $\evec_1$, while  across all $x,y$ discontinuities  are dense \cite{fan-sepp-20}. 
 Item~\ref{B.ii} generalizes invariance, ergodicity, and uniqueness properties of a single  Busemann function from \cite{janj-rass-20-jsp}.
 
 The special case of the joint distribution of two inverse-gamma Busemann functions  from this work has   been in circulation prior to this publication.   
 In earlier collaborative work of the third author, this bivariate case  was applied in \cite{busa-sepp-22-ejp} to prove nonexistence of bi-infinite polymer Gibbs measures and in \cite{rassoulagha_seppalainen_shen24} to derive coalescence estimates for polymers.  

\subsection{Competition interface}  
In LPP, geodesics  from a common point   spread   in a tree-like fashion and divide the lattice into disjoint clusters, depending on the initial steps of the paths.
The boundaries of these clusters, called \textit{competition interfaces}, were introduced in \cite{ferr-pime-05}   and further studied by \cite{cato-pime-13, ferr-mart-pime-09}.
These interfaces convey essential geometric information and are intimately linked to the Busemann process \cite{georgiou_rassoulagha_seppalainen17b, fan-sepp-20, janj-rass-sepp-23, seppalainen_sorensen23b}.

At positive temperature, geodesics are replaced by polymer measures, so the random environment does not by itself generate a tree-forming family of paths.
Instead, one samples from a natural coupling of the quenched polymer measures, thereby adding an additional layer of randomness.
The resulting  competition interface in \cite{geor-rass-sepp-yilm-15} was shown in \cite{janj-rass-20-aop} to have a random asymptotic direction whose distribution is determined by a  Busemann function.

In Section~\ref{sec:cif}, we extend this theme by realizing---in a single coupling---an interface direction from every point on the lattice (Theorem~\ref{thm:eta10}).
\newa{This coupling is new and does not require any unproved assumptions about the shape function.}
Whereas the coupling from \cite{geor-rass-sepp-yilm-15,janj-rass-20-aop} is of finite-volume polymer measures, ours is of semi-infinite polymer measures associated to the Busemann process.
Consequently, the results in item~\ref{B.i} of Section~\ref{sec:i_Bus} allow us to relate the interface directions to discontinuities of the Busemann process (Theorem~\ref{thm:eta20}).
This is similar in spirit to the LPP result \cite[Thm.~3.7]{janj-rass-sepp-23}, but in the polymer case the additional randomness poses a new challenge to establishing the desired relation.


Our results raise questions about the relationship between the geometry of polymer paths and the regularity of the Busemann process (Remark~\ref{rm:open4}).
 We answer some of these questions in the inverse-gamma case in Section~\ref{sec:ig_xi}.  Others remain open.

%

\subsection{Polymers, geometric RSK, and intertwining}  \label{sec:i_gRSK} 

The Robinson--Schensted--Knuth (RSK) correspondence from combinatorics 
is central to the integrable work on LPP models in the KPZ (Kardar--Parisi--Zhang)  class.  The geometric version of the RSK mapping (gRSK), introduced by Kirillov\footnote{Kirillov called his construction tropical RSK. To be consistent with the modern notion  of tropical mathematics,  \cite{corw-ocon-sepp-zygo} renamed the algorithm geometric RSK.}  \cite{kiri-01}  
and  elucidated by Noumi and Yamada \cite{noum-yama-04},  plays the analogous role in  directed polymer models.   The polymer connection of gRSK was initially developed  in \cite{corw-ocon-sepp-zygo, ocon-sepp-zygo-14}. For recent work and references on this theme, see \cite{corwin21}.  
 
Intertwinings of mappings and Markov kernels are typical features of this work.  In \cite{corw-ocon-sepp-zygo},  the application of gRSK to the inverse-gamma polymer and an intertwining argument led to a closed-form expression for the distribution of the polymer partition function.  Subsequently \cite{boro-corw-reme} used this formula to establish the Tracy--Widom limit of the free energy. 

In our paper two Markovian dynamics are intertwined by an explicit mapping (Proposition~\ref{g:itlm5}, Theorem~\ref{twm_thm}). The \textit{sequential process}  is defined by  a gRSK algorithm that produces polymer partition functions on a bi-infinite strip with a boundary condition (Section~\ref{sec:Sgrsk}).   The  \textit{parallel process}  is the dynamics of the Busemann process.  


Under inverse-gamma weights the sequential process has accessible product-form invariant probability  measures (Theorem~\ref{thm-I}). The intertwining map pushes these measures forward into invariant measures of the parallel process.  A uniqueness theorem for the latter identifies these measures as joint distributions of Busemann functions (Theorem~\ref{B_thm9}).  

The analogous zero-temperature intertwining argument appeared in \cite{fan-sepp-20} to describe the Busemann process of the CGM. This development was recast as ``stationary melonization'' by \cite{busani24} 
to derive the universal limit called the \textit{stationary horizon}.
\newa{Relative to \cite{fan-sepp-20}, this paper has several key differences:}
\begin{enumerate}[label=\textup{(\roman*)}]

\item \newa{Adapting the Markovian dynamics from the $(\max, +)$ algebra to the $(+,\aabullet)$ algebra.  This ``de-tropicalization" ultimately leads to the novel strict monotonicity mentioned in item~\ref{B.i} of Section~\ref{sec:i_Bus}, as well as additional regularity that is not present at zero temperature (e.g.~Lemma~\ref{inj_lem}).}

\item \newa{Establishing distributional uniqueness without knowing the explicit shape function.
Because we work with general i.i.d.~weights, we cannot profit from the differentiability and strict convexity that is known in exactly solvable cases.  Ensuing complications are discussed in Remark~\ref{diffrmk}.}

\item \newa{When we do specialize to the solvable inverse-gamma case, the explicit description of the Busemann process is more subtle than in the exponential LPP case.  While its evolution is again governed by a Poisson point process, here the process is two-dimensional (and dense in one coordinate) rather than a marked one-dimensional process; the associated calculations are found in Section~\ref{ss_inv_ga}.
We unify the two Busemann descriptions through the coupling of Theorem~\ref{thm:igexp2}, which offers a way to understand the relationship between the positive-temperature and zero-temperature processes.}

\end{enumerate}



\subsection{Failure of one force--one solution}   \label{sec:i_HJ} 
In  stochastically forced conservation laws such as the stochastic Burgers equation (SBE), the \textit{one force--one solution principle} (1F1S) is the statement that for a given realization of the driving noise and a given value of the conserved quantity, there is a unique eternal solution that is measurable with respect to the history of the noise.  
A connection with polymer models comes from viewing   the polymer free energy    as a solution of   
a stochastically forced viscous Hamilton--Jacobi equation.   In the physics literature this connection goes back to \cite{huse_henley_fisher85,imbr-spen}, while in mathematics an early paper was \cite{kife-97}. 
 
%
%
%
%
%

  In Appendix~\ref{bmcon} we observe  that the exponential of the Busemann process gives eternal solutions to a discrete difference  equation, simultaneously for all values of the conserved quantity on a single event of full probability (Theorem~\ref{thm:V1}).   This equation is a discrete analogue of the stochastic heat equation, which, as is well known, is linked to the KPZ equation and SBE through the Hopf--Cole transform. 
   In the inverse-gamma case our results imply that with probability one,  there is a countable dense set of values of the conserved quantity at which there are at least two eternal solutions (Theorem~\ref{thm:she_ig}). This is the first example of failure of 1F1S in a positive-temperature lattice model.     This failure of 1F1S at the discontinuities of the Busemann process was anticipated in the unpublished manuscript \cite{janj-rass-19-1f1s}.   After the posting of this paper,  the analogous result for the KPZ equation appeared in \cite{groathouse_rassoulagha_seppalainen_sorensen25}.


We refer to the introduction of \cite{janj-rass-sepp-22-arxiv} for further references on this theme and to \cite{bakh-khan-18} for conjectures on the universal behavior of  Hamilton--Jacobi type equations with random forcing.

\subsection{Organization of the paper} \label{sec:org}  
The directed polymer model   is introduced  in Section~\ref{sec:poly}.  Our main results for the general polymer appear in Section~\ref{sec:main1} and for the inverse-gamma polymer in Section~\ref{sec:main2}. 
Some proofs are given straight away, but most appear in Section~\ref{mar_sec}.

Sections~\ref{map_sec}--\ref{2_proc_sec} develop the dynamics of the Busemann process, the intertwining argument, and the Markovian characterization of the joint law of Busemann functions.
The application of these tools to prove four main results comes at the end of Section~\ref{2_proc_sec} and in Section~\ref{sec:pf_disc}.


Section~\ref{sec:grsk} is an interlude that puts the technical development of Section~\ref{mar_sec} in the context of the geometric RSK mapping.   

Section~\ref{sec:Vig} resumes the proofs, focusing on the inverse-gamma model.
Section~\ref{sec:twin_ig} records several consequences of intertwining in this solvable case.
Section~\ref{sec:array} constructs the intertwining through triangular arrays of infinite sequences; Remark~\ref{rmk:a_grsk} makes another contact with gRSK.  
While the result of  Section~\ref{sec:array} holds for general weights, our application in Section~\ref{ss_inv_ga} is to obtain the independent-increments property of the nearest-neighbor Busemann function under inverse-gamma weights.  
  
  Appendix~\ref{app_bp} proves two complements to the general properties of the Busemann process, one of which is needed in the main text (and stated as Theorem~\ref{all_lln}).
  Finally, in Appendix~\ref{bmcon}, we reinterpret some of our results in the language of eternal solutions to a discrete stochastic heat equation, including the failure of 1F1S in the inverse-gamma case.  
 
 \subsection{Notation and conventions}    \label{sec:nota}   We collect here items for quick reference.  Some are reintroduced in appropriate places in the body of the text.    
 

Intervals of integers are written as $\lzb a,b\rzb = \{a,a+1,\dots,b\}$.
Subsets of reals and integers are indicated by subscripts, as in $\Z_{>0}=\{1,2,3,\dotsc\}$ and $\R_{\ge0}=[0,\infty)$.    Spaces of bi-infinite sequences of restricted values are denoted by $\R_{>0}^\Z=(\R_{>0})^\Z$.   On $\R^2$ and $\Z^2$,    
$\zevec=(0,0)$, $\evec_1=(1,0)$ and $\evec_2=(0,1)$. 
In different contexts   
time evolution proceeds either in the vertical $\evec_2$ direction or along anti-diagonal  levels $\level_t=\{x=(a,b)\in\Z^2:\,  a+b=t\}$.

   Inequalities between tuples $I=(I_i)$ and $I'=(I'_i)$ are coordinatewise:  $I\le I'$ means $I_i\le I'_i$ for all $i$, and the strict version $I< I'$ means that  $I_i< I'_i$ for \textit{all} $i$.  
   For points $x=(a,b)$ and $y=(c,d)$  on the plane $\R^2$ or the lattice $\Z^2$,   
 the strict southeast ordering  $x\prec y$   means that $a<c$ and $b>d$. 
 Its  weak version $x\preccurlyeq y$ means that $x\prec y$ or $x=y$. 

 The set of $\ell^1$-unit vectors in the first quadrant will be denoted by $[\evec_2, \evec_1]$, with $\evec_2$ regarded as the minimal element according to southeast ordering.
 Infinite paths  proceed south and west but direction vectors $\xi$ are members of $[\evec_2, \evec_1]$ and so point north and east. In particular, $\Bus^\xi$ will denote a Busemann function associated to the direction   $v_n/n\to-\xi$.  
 
    A range of indices is marked with a colon, for example $x_{\parng{m}{n}} = (x_m,x_{m+1},\dots,x_n)$ or $X^{i, \tspa\parng{m}{n}}=(X^{i,m}, X^{i, m+1},\dotsc, X^{i,n})$. 
   The left tail logarithmic Ces\`aro average 
   is $\ces(I) =\lim_{n\to\infty}  {n}^{-1}\sum_{k=-n+1}^{0}\log I_k$.



The end of a numbered remark or definition is marked with  $\triangle$.

 \subsection{Acknowledgements} The authors thank C.~Janjigian for useful feedback.

 \section{Directed polymer model: definitions and prior results} \label{sec:poly} 
 Polymer models take as input a random environment and produce a family of measures on paths.
 In the standard (1+1)-dimensional discrete model  the random environment consists of  i.i.d.~random variables indexed by the vertices of $\Z^2$ and the paths are up-right nearest-neighbor trajectories on $\Z^2$.

\subsection{Random environment and recovering cocycles}
 Let $(\Omega, \kS, \P)$ be a Polish probability space equipped with a group of 
 continuous\footnote{The authors of \cite{janj-rass-20-aop} communicated to us that this assumption of continuity is needed for their construction, which we cite below as Theorem~\ref{buse_full}.} 
 bijections $\{\shift_x\}_{x\in\Z^2}$ (called \textit{translations}) that map $\Omega\to\Omega$, are measure-preserving ($\P=\P\circ\shift_x$ for all $x\in\Z^2$), and satisfy $\shift_x\circ\shift_y=\shift_{x+y}$.
We then assume
\eeq{ \label{ranenv}
&\text{$(\Yw_x)_{x\tsp\in\tsp\Z^2}$ are strictly  positive, i.i.d.~random variables on $\OSP$ such that} \\
&\text{$\Yw_x(\w) = \Yw_\zevec(\shift_x\w)$, }
\E(\abs{\log \Yw_\zevec}^p)<\infty \text{ for some $p>2$, and }\Vvv(\Yw_\zevec)>0. 
}
It is common to write $\Yw_x = e^{\beta w_x}$ with an inverse temperature parameter  $\beta$.   Our positivity condition comes from having already applied the exponential.

 A \textit{cocycle} on $\Z^2$ is a function $\Bus\colon\Z^2\times\Z^2\to\R$ such that
\begin{subequations} \label{rec_coc}
\eeq{ \label{coc_def}
\Bus(x,y) + \Bus(y,z) = \Bus(x,z) \quad \text{for all $x,y,z\in\Z^2$}.
}
The cocycles of interest to us are those satisfying a second property:
given a realization of the weights $(W_x)_{x\in\Z^2}$, a cocycle $\Bus$ is said to \textit{recover} these weights if
\eeq{ \label{rec_def}
e^{-\Bus(x-\evec_1,x)}+ e^{-\Bus(x-\evec_2,x)} =  \Yw_x^{-1}    \quad \text{for every $x\in\Z^2$}.
}
\end{subequations}
It is generally unclear  if recovering cocycles exist.
The next sections describe how one can furnish a one-parameter family of recovering cocycles known as the Busemann process.
 

\subsection{Path spaces, finite polymer measures, and the limit shape} \label{finm_sec}
A (\textit{directed}) \textit{path} on $\Z^2$  is a sequence of vertices $x_\brbullet = x_{\parng{m}{n}} = (x_i)_{i=m}^n$ such that $x_i - x_{i-1}\in\{\evec_1,\evec_2\}$ for each $i\in\lzb m+1,n\rzb$.
The lattice divides into anti-diagonal \textit{levels},
\eeq{ \label{leveldef} 
\level_n \coloneqq \{x\in\Z^2:\, x\cdot(\evec_1+\evec_2) = n\}, \quad n\in\Z.  
}
We index paths so that $x_i\in\level_i$.
For $u\in\level_m$ and $v\in\level_n$,   the set of paths between $u$ and $v$ is 
\eq{ 
\pathsp_{u,v} = \bigl\{x_{\parng{m}{n}} =(x_i)_{i=m}^n :\,  x_m=u, \, x_n=v, \, x_i-x_{i-1}\in\{\evec_1, \evec_2\} \, \forall i\in\lzb m+1,n\rzb \bigr\}. 
}
This set is nonempty if and only if $u\le v$, by which we mean both $u\cdot\evec_1\leq v\cdot\evec_1$ and $u\cdot\evec_2\leq v\cdot\evec_2$.
The projection random variables on any path space are denoted by $X_m(x_\brbullet) =x_m$ or $X_{\parng{\ell}{m}}(x_\brbullet) = x_{\parng{\ell}{m}}$ whenever the indices make sense (we will always use $\ell \le m \le n$).

Given a collection of weights $(\Yw_x)_{x\in\Z^2}$, we consider the following probability measure on $\pathsp_{u,v}$ (whenever $u\leq v$):
	\be\label{hcq}
	Q_{u,v}(x_{\parng{m}{n}}) =\frac1{Z_{u,v}}   \prod_{i=m+1}^{n} \wgtd_{x_i} 
	\quad\text{for } \  x_{\parng{m}{n}}\in\pathsp_{u,v}. 
	\ee
The normalizing constant  $Z_{u,v}$ is called  the \textit{partition function}:
\be \label{part56}
Z_{u,v}\coloneqq \sum_{x_\brbullet\tspa\in\tspa\pathsp_{u,v}} \prod_{i=m+1}^{n} \wgtd_{x_i}, 
\quad u\in\level_m, v\in\level_n.
\ee
Since all paths terminating at $v$ must pass through either $v-\evec_1$ or $v-\evec_2$, \eqref{part56} can also be thought of as a recursion:
\eeq{ \label{part57}
Z_{u,v} = (Z_{u,v-\evec_1}+Z_{u,v-\evec_2})\Yw_v \quad u\in\level_m, v\in\level_n, m < n, \quad \text{and} \quad Z_{v,v}=1.
}
The marginals of $X_{\parng{m}{n}}$ under $Q_{u,v}$ can be obtained by multiplying partition functions: for any sequence $m < i_1 < \cdots < i_k < n$, we have
\eeq{ \label{qmpf}
Q_{u,v}(X_{i_1} = x_{i_1}, X_{i_2}=x_{i_2}, \dots, X_{i_k} = x_{i_k})
= \frac{Z_{u,x_{i_1}}Z_{x_{i_1},x_{i_2}}\cdots Z_{x_{i_k},v}}{Z_{u,v}}.
}
One usually fixes the starting point  $u$ and studies the polymer  as the terminal point $v$ escapes to infinity   in the northeast quadrant.
We take the opposite (but analogous) perspective of fixing the terminal location $v$ and pulling the starting point $u$ to negative infinity   in the southwest 
quadrant.\footnote{When time proceeds in the up-right diagonal direction, under this convention the Busemann process is related to the environment from the past rather than the future; see \eqref{B-ind}.
This is consistent with the language of SHE and 1F1S in Appendix~\ref{bmcon}.
}
A basic result is a  law of large numbers known as a \textit{shape theorem}.

\begin{theirthm} \textup{\cite[Sec.~2.3]{janj-rass-20-aop}} \label{shape_thm}
Assume \eqref{ranenv}.
Then there exists a nonrandom function $\gpp\colon \R_{\ge0}^2\to\R$ whose restriction to $\Z_{\ge0}^2$ satisfies
\eq{ 
\lim_{n\to\infty}  \; \sup_{x\geq\zevec\tspa: \tspb \abs{x}_1\ge n} 
\frac{\log Z_{-x,\zevec}  -  \gpp(x)}{\abs{x}_1} =0
\qquad \P\text{-almost surely.} 
}
This function $\gpp$ is concave, continuous, and  positively homogeneous in the sense that
\eeq{ \label{homog}
\gpp(c\xi) = c\gpp(\xi) \quad \text{for any scalar $c\geq0$ and  $\xi\in\R_{\ge0}^2$}. 
}
\end{theirthm}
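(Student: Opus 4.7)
The plan is to apply Kingman's subadditive ergodic theorem along each rational direction to define $\gpp$ pointwise, then establish concavity and continuity of $\gpp$, and finally upgrade the pointwise directional limits to uniform convergence via a concavity-based discretization-and-sandwich argument.

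The starting observation is super-multiplicativity of the partition function: restricting the sum in \eqref{part56} to paths through a fixed intermediate vertex $v$ with $u \leq v \leq w$ yields $Z_{u,w} \geq Z_{u,v} Z_{v,w}$. Fix a rational direction $\xi \in \Q_{\geq 0}^2$ and consider integers $n$ with $n\xi \in \Z^2$. The process $A_n = -\log Z_{-n\xi,\zevec}$ is then stationary (under the translations $\shift_{-n\xi}$) and subadditive, and satisfies $\E|A_n| = O(n)$ by the hypothesis $\E|\log\Yw_\zevec|^p < \infty$ for $p>2$ (a crude bound using a single path gives a lower bound, while $Z_{-n\xi,\zevec} \leq 2^n \prod_{x}\Yw_x$ over the enclosed rectangle gives the upper bound). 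Kingman's theorem then provides, $\P$-a.s.\ and in $L^1$,
\begin{equation*}
\gpp(\xi) = \lim_{n\to\infty} \frac{1}{n}\log Z_{-n\xi,\zevec} = \sup_n \frac{1}{n}\E[\log Z_{-n\xi,\zevec}].
\end{equation*}
Extend $\gpp$ to all of $\R_{\geq 0}^2$ by positive homogeneity, which then holds by construction.

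Passing to limits in $Z_{-n\xi_1 - n\xi_2,\zevec} \geq Z_{-n\xi_1-n\xi_2,\tspa-n\xi_2}\cdot Z_{-n\xi_2,\zevec}$ and using translation invariance of $\P$ yields the superadditivity $\gpp(\xi_1+\xi_2) \geq \gpp(\xi_1)+\gpp(\xi_2)$, which combined with homogeneity \eqref{homog} is equivalent to concavity. A finite concave function is automatically continuous on the relative interior of its domain; for the axes, $Z_{-n\evec_i,\zevec}$ reduces to a product of $n$ i.i.d.\ weights, so the classical LLN identifies $\gpp(\evec_i) = \E[\log\Yw_\zevec]$, and continuity up to the boundary follows from a standard upper-semicontinuity argument for concave functions on a cone.

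The main obstacle is upgrading these pointwise directional limits to the uniform bound stated. Fix $\e>0$ and, using uniform continuity of $\gpp$ on the compact arc $\{\xi \geq \zevec : |\xi|_1 = 1\}$, choose finitely many rational directions $\xi_1,\dotsc,\xi_N$ whose consecutive gaps are $\e$-small. For each $\xi_j$, the directional LLN gives a random threshold beyond which $|\log Z_{-\lfloor n\xi_j\rfloor,\zevec} - n\gpp(\xi_j)| \leq \e n$; taking the maximum over the $N$ thresholds yields a single random $N_\e$. For arbitrary $x \geq \zevec$ with $|x|_1 \geq N_\e$, locate $-x$ between nearby grid points $-\lfloor n\xi_j\rfloor$ and $-\lfloor n\xi_{j+1}\rfloor$ with $n \asymp |x|_1$, and sandwich $\log Z_{-x,\zevec}$ between the corresponding grid quantities using super-multiplicativity. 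The sandwich error comes from short detour paths of length $O(\e n)$; a Borel–Cantelli / maximal-inequality argument, which is precisely where the $p>2$ moment hypothesis is used, bounds these detour contributions uniformly in $x$ by $o(|x|_1)$ almost surely. Sending $\e\to 0$ along a sequence then delivers the stated uniform limit.
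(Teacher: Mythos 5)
This theorem is imported verbatim from \cite{janj-rass-20-aop} (it is a \textit{theirthm}), so the paper you were given contains no proof to compare against; the relevant comparison is with the standard argument in the literature, and your outline is exactly that route: Kingman's subadditive ergodic theorem in rational directions, extension by homogeneity, concavity from superadditivity, continuity, and a Martin-style discretization/sandwich with moment-controlled errors to upgrade to the uniform statement. Structurally this is the right proof. Two of your steps, however, are justified by arguments that do not work as written.

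First, the integrability input to Kingman. Your bound ``$Z_{-n\xi,\zevec}\le 2^n\prod_x \Yw_x$ over the enclosed rectangle'' gives $\E$ of the log of order $n^2$, not $O(n)$ (and one must also truncate at $\max(\Yw_x,1)$). What Kingman actually needs is $\E[(\log Z_{-\xi,\zevec})^-]<\infty$, which is easy from a single path, together with $\E[\log Z_{-n\xi,\zevec}]\le Cn$ so that the limit $\gpp(\xi)$ is finite. The latter is not trivial under \eqref{ranenv}: only logarithmic moments are assumed, so the annealed bound via $\E Z$ is unavailable (e.g.\ inverse-gamma weights with small shape parameter have infinite mean). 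One needs the bound $\log Z\le Cn+\max_{\text{paths}}\sum_{\text{path}}(\log \Yw)^+$ together with a directed lattice-animal/maximal estimate showing the expected maximum is $O(n)$; this is where the moment hypothesis already enters, before the uniformity step, and your proposal should say so rather than attribute finiteness to the rectangle bound.

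Second, continuity up to the axes is not a soft consequence of concavity. A finite concave function on the closed quadrant can be discontinuous at the boundary (take $f\equiv 0$ on the open quadrant and $f\equiv -1$ on the axes); superadditivity only yields $\liminf_{\zeta\to\evec_r}\gpp(\zeta)\ge\gpp(\evec_r)=\E[\log\Yw_\zevec]$, i.e.\ the easy one-sided bound. The matching $\limsup\le\gpp(\evec_r)$ is a genuine probabilistic estimate: one must show that paths confined to a slab of transversal width $o(|x|_1)$ cannot collect more than $o(|x|_1)$ of free energy beyond the axial law of large numbers, which again uses the $p>2$ moments (Martin-type estimate). This matters doubly for you because the uniform statement takes a supremum over \emph{all} $x\ge\zevec$, including nearly axial ones; a finite net of interior rational directions plus ``uniform continuity of $\gpp$ on the arc'' presupposes the boundary continuity you have not proved, and the sandwich errors in the thin region near $\evec_1,\evec_2$ are controlled by exactly this slab estimate, not by the generic ``detour path'' bound. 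With these two estimates supplied, your argument is complete and coincides with the proof of the cited result.
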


In general, further regularity of $\gpp$ beyond Theorem~\ref{shape_thm} is unknown. 
Here, as in FPP and LPP, curvature and differentiability of the limit shape is a long-standing open problem \cite{auffinger_damron_hanson17}.

\subsection{Infinite polymer measures} \label{infm_sec}
Following Theorem~\ref{shape_thm}, it is natural to ask if the polymer measures  \eqref{hcq}  themselves have  limits. 
Fixing  a root vertex $v\in\level_n$, any such limit is a measure on the space of semi-infinite backward paths: 
	\eq{ 
	\pathsp_v=\{x_{\parng{-\infty}{n}}:\, x_n=v\text{ and }x_i-x_{i-1}\in\{\evec_1, \evec_2\}\text{ for all } i\in\,\rzb-\infty, n\rzb\}. 
	}
This space is equipped with the usual cylindrical $\sigma$-$(+,\abullet)$-algebra.
  In a Gibbsian spirit we desire that the  finite-dimensional conditional distributions of any limiting measure agree with  the pre-limiting measures from \eqref{hcq}. 
So call a probability measure $Q_v$ on $\pathsp_v$ a \textit{semi-infinite polymer measure rooted at} $v\in\level_n$ if, whenever $x_m\leq v$,   
we have  
\begin{subequations} \label{gibbs12}
\eeq{ \label{gibbs_one}
Q_v\givenp{\dd x_{\parng{m}{n}}}{X_m=x_m} = Q_{x_m,v}(\dd x_{\parng{m}{n}}).
}
In words, conditioning the measure $Q_v$ to pass through $x_m\in\level_m$ induces a marginal distribution (on the portion of the path between $x_m$ and $v$) that is exactly the measure from \eqref{hcq}.
To ensure the left-hand side of \eqref{gibbs_one} makes sense, we require the non-degeneracy condition
\eeq{ \label{gibbs_thr}
Q_v(X_m=x_m) > 0\quad \text{whenever $x_m\leq v$}.
}
Another natural requirement is that limiting measures rooted at different vertices are consistent with one another.
Let $(Q_v)_{v\tsp\in\tsp\Z^2}$ be a family of semi-infinite polymer measures, each    $Q_v$   rooted at $v$.  
This family is \textit{consistent} if, whenever $x_m\leq v$,  we have
	\be\label{gibbs_two}   Q_v\givenp{\dd x_{\parng{-\infty}{m}}}{X_m=x_m} = Q_{x_m}(\dd x_{\parng{-\infty}{m}}). \ee
	\end{subequations}
That is, conditioning the measure $Q_{v}$ to pass through $x_m$ induces a marginal distribution (on the portion of the path between $-\infty$ and $x_m$) that is exactly $Q_{x_m}$.

We then have the following (deterministic) relation between consistent families of semi-infinite polymer measures and recovering cocycles.

\begin{theirthm} \textup{\cite[Thm.~5.2]{janj-rass-20-aop}} \label{pococthm} 
Fix any positive weights $(\Yw_x)_{x\tsp\in\tsp\Z^2}$.
There is a bijective correspondence between functions $\Bus$ satisfying \eqref{rec_coc} and families $(Q_v)_{v\in\Z^2}$ satisfying \eqref{gibbs12}, which is realized as follows.  
Each $Q_v$ is the law of the  Markov chain $(X_m)_{m\leq n}$ evolving backward in time with initial state $X_n = v\in\level_n$ and backward transition probabilities
\eeq{\label{back_prob}
Q_v\givenp{X_{m-1}=x-\evec_r}{X_m= x}  
= e^{-\Bus(x-\evec_r,x)}\cdot  \Yw_{x}, \quad r\in\{1,2\}.
}		
\end{theirthm}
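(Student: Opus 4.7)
Given a cocycle $\Bus$ satisfying \eqref{rec_coc}, multiplying the recovery relation \eqref{rec_def} by $\Yw_x$ shows that the quantities $e^{-\Bus(x-\evec_r, x)} \Yw_x$ for $r \in \{1,2\}$ are nonnegative and sum to $1$, hence define valid backward transition probabilities. Let $Q_v$ be the law of the corresponding backward Markov chain started at $X_n = v \in \level_n$. For any path $x_{\parng{m}{n}} \in \pathsp_{x_m, v}$, the cocycle identity \eqref{coc_def} telescopes to $\sum_{i=m+1}^n \Bus(x_{i-1}, x_i) = \Bus(x_m, v)$, giving
\[
Q_v(X_{\parng{m}{n}} = x_{\parng{m}{n}}) = e^{-\Bus(x_m, v)} \prod_{i=m+1}^n \Yw_{x_i}.
\]
Summing over $\pathsp_{x_m, v}$ produces $Q_v(X_m = x_m) = e^{-\Bus(x_m, v)} Z_{x_m, v} > 0$. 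Dividing the two identities yields \eqref{gibbs_one}, positivity gives \eqref{gibbs_thr}, and consistency \eqref{gibbs_two} follows from the Markov property together with the fact that the backward transitions depend only on the current state, not on the root.

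\textbf{Backward direction.} Starting from a family $(Q_v)_{v \in \Z^2}$ satisfying \eqref{gibbs12}, the plan is first to show that each $Q_v$ is Markov in backward time. Applying \eqref{gibbs_one} at both level $m$ and level $m-1$, the conditional probability $Q_v(X_{m-1} = x_{m-1} \mid X_m = x_m, X_{\parng{m+1}{n}} = x_{\parng{m+1}{n}})$ collapses to a ratio depending only on $x_{m-1}$, $x_m$, and $v$, namely $Q_v(X_{m-1} = x_{m-1}) \Yw_{x_m} Z_{x_m, v}/(Q_v(X_m = x_m) Z_{x_{m-1}, v})$. The consistency hypothesis \eqref{gibbs_two} then forces these transitions to be independent of the root, so they equal $p_r(x) = Q_x(X_{m-1} = x - \evec_r)$, which is strictly positive by \eqref{gibbs_thr}. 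One then defines $\Bus(x-\evec_r, x) = -\log(p_r(x)/\Yw_x)$ for every $x \in \Z^2$ and $r \in \{1,2\}$. Since $p_1(x) + p_2(x) = 1$, the recovery relation \eqref{rec_def} is immediate.

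\textbf{Extension, cocycle identity, and bijection.} To extend $\Bus$ beyond nearest-neighbor edges, set $\Bus(x, v) = \log Z_{x, v} - \log Q_v(X_m = x)$ for all $x \leq v$ with $x \in \level_m$, $v \in \level_n$; this reduces to the edge definition when $v = x + \evec_r$. For any ordered triple $x \leq y' \leq z$ with $x \in \level_k$, $y' \in \level_m$, $z \in \level_n$, formula \eqref{qmpf} combined with \eqref{gibbs_one} gives
\[
Q_z(X_k = x, X_m = y') = Q_z(X_k = x) \cdot \frac{Z_{x,y'} Z_{y',z}}{Z_{x,z}},
\]
while the backward Markov property and \eqref{gibbs_two} give $Q_z(X_k = x, X_m = y') = Q_z(X_m = y') \cdot Q_{y'}(X_k = x)$. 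Equating these two factorizations and taking logarithms yields the cocycle identity $\Bus(x, z) = \Bus(x, y') + \Bus(y', z)$. A common upper bound together with antisymmetry then extends $\Bus$ unambiguously to a cocycle on $\Z^2 \times \Z^2$. The two constructions are mutually inverse: starting from $\Bus$ and constructing $(Q_v)$ as in \eqref{back_prob}, the Markov transitions $p_r$ recover the original $\Bus$ on edges and hence everywhere by cocycle uniqueness; conversely, starting from $(Q_v)$ and forming $\Bus$, the Markov chain built from \eqref{back_prob} has the same transitions and the same initial state $v$ as $Q_v$, so the laws agree.

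\textbf{Main obstacle.} The crux is the first step of the backward direction. The Gibbs property \eqref{gibbs_one} is phrased as forward conditioning, so the backward Markov structure of $Q_v$ and the root-independence of its transitions are not available a priori. Extracting them requires combining \eqref{gibbs_one} at two consecutive levels with the consistency hypothesis \eqref{gibbs_two}; only then can one safely define $\Bus$ on edges from the transitions. Once this is accomplished, both the recovery and cocycle properties reduce to direct computations with ratios of partition functions.
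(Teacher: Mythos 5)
This statement is quoted from \cite[Thm.~5.2]{janj-rass-20-aop} and the paper supplies no proof of it, so there is no in-paper argument to compare against; judged on its own, your proof is correct and follows the standard route one would expect (and which the cited reference takes): telescoping the cocycle to get the explicit finite-dimensional formula $Q_v(X_{\parng{m}{n}}=x_{\parng{m}{n}})=e^{-\Bus(x_m,v)}\prod_{i=m+1}^n \Yw_{x_i}$ in one direction, and in the other direction extracting the backward Markov property from \eqref{gibbs_one} applied at two consecutive levels, using \eqref{gibbs_two} to remove the root dependence, and then defining $\Bus(x,v)=\log Z_{x,v}-\log Q_v(X_m=x)$ and verifying additivity on ordered triples via the two factorizations of $Q_z(X_k=x,X_m=y')$. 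The only places where you assert rather than argue are routine: the existence of the semi-infinite law in the forward direction is Kolmogorov extension from the consistent finite-dimensional Markov formulas, and the ``common upper bound'' extension needs the one-line check that $\Bus(x,z)-\Bus(y,z)$ does not depend on the choice of $z\ge x\vee y$ (compare two such $z,z'$ through a third point $w\ge z\vee z'$ using the ordered-triple identity), after which agreement of the two constructions on nearest-neighbor edges determines the cocycle everywhere. With those two sentences added, the argument is complete.
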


This result suggests an entry point to  recovering cocyles.
Observe that for any $x_\ell< x\leq v$ and either $r\in\{1,2\}$, we have
\[
Q_v\givenp{X_{m-1}=x-\evec_r}{X_\ell=x_\ell,X_m = x} 
\stackrefpp{qmpf}{gibbs_one}{=} \frac{Z_{x_\ell,x-\evec_r}Z_{x-\evec_r,\tspb x}}{Z_{x_\ell,\tspb x}} 
= \frac{Z_{x_\ell,x-\evec_r}}{Z_{x_\ell,\tspb x}}\cdot \Yw_x.\]
The ratio $Z_{x_\ell,x-\evec_r}/Z_{x_\ell,x}$ occupies the same role above as 
$e^{-\Bus(x-\evec_r,x)}$  
in  \eqref{back_prob}.   Perhaps  when $x_\ell$   is suitably sent to $-\infty$,  this ratio converges    to 
$e^{-\Bus(x-\evec_r,x)}$ 
for some recovering cocycle $B$. 
The cocycles realized this way are called \textit{Busemann functions}.    Through Theorem~\ref{pococthm} they encode limits of the measures $Q_{u,v}$ from \eqref{hcq} as $u$ is pulled to $-\infty$.

%

\subsection{Busemann process} \label{buse_int}  
We describe features of the shape function $\gpp$.  By homogeneity \eqref{homog},   $\gpp$ is   determined by its restriction to the  closed line segment $[\evec_2,\evec_1]$.     The minimal element of this segment is $\evec_2$, consistently with  \textit{southeast ordering}:  for $\zeta,\xi\in[\evec_2,\evec_1]$ we write  $\zeta\preccurlyeq\xi$ when   $\zeta\cdot\evec_1\leq\xi\cdot\evec_1$, and  $\zeta\prec\xi$ when $\zeta\cdot\evec_1<\xi\cdot\evec_1$.  The relative interior of $[\evec_2,\evec_1]$ is denoted by $]\evec_2,\evec_1[$.  


By concavity  one-sided derivatives exist: 
for $\xi\in\,]\evec_2,\evec_1[\,$, let $\nabla\gpp(\xi+)$ and $\nabla\gpp(\xi-)$ be the vectors in $\R^2$ defined by
\[ \nabla\gpp(\xi\pm)\cdot\evec_1 = \lim_{\eps\searrow0}\frac{\gpp(\xi\pm\eps\evec_1)-\gpp(\xi)}{\pm\eps}
\quad\text{and}\quad 
\nabla\gpp(\xi\pm)\cdot\evec_2 = \lim_{\eps\searrow0}\frac{\gpp(\xi\mp\eps\evec_2)-\gpp(\xi)}{\mp\eps}.  \]
The set of directions of differentiability is 
\eq{
\Udiff \coloneqq \{\xi\in\,]\evec_2,\evec_1[\, :\, \nabla\gpp(\xi+)=\nabla\gpp(\xi-)\}.
}
There may be linear segments of $\gpp$ on either side of a given $\xi\in\,]\evec_2,\evec_1[\,$, which are recorded by the following two closed (nonempty, but possibly degenerate) subintervals:
\eq{ 
\cL_{\xi\pm} \coloneqq \big\{\zeta\in\,]\evec_2,\evec_1[\,:\, \gpp(\zeta)-\gpp(\xi) = \nabla\gpp(\xi\pm)\cdot(\zeta-\xi)\big\}.
}
The endpoints of these intervals will be denoted by
\eeq{ \label{seg_end}
\underline\xi \coloneqq \inf\cL_{\xi-} \quad \text{and} \quad\ \overline\xi \coloneqq \sup\cL_{\xi+} \qquad
\text{for $\xi\in\,]\evec_2,\evec_1[\,$},
}
where the infimum and supremum are taken with respect to the southeast order $\preccurlyeq$ on $[\evec_2,\evec_1]$.
Since $\gpp$ is known to have no linear segment containing $\evec_2$ or $\evec_1$ (see \cite[Lem.~B.1]{janj-rass-20-aop}), we always have $\underline\xi,\overline\xi\in\,]\evec_2,\evec_1[\,$.
Finally, for convenience we will write
\eq{
\cL_\xi \coloneqq \cL_{\xi+}\cup\cL_{\xi-} = [\tsp\underline\xi,\overline\xi\tsp] \quad \text{for $\xi\in\,]\evec_2,\evec_1[$}.
}
We say that $\gpp$ is \textit{strictly concave} at $\xi$ if this interval is degenerate, i.e.~$\underline\xi=\overline\xi=\xi$.

Given $A\subset[\evec_2,\evec_1]$, let us say that a sequence of $x_\ell\in\level_\ell$ is \textit{$A$-directed} as $\ell\to-\infty$ if the set of limit points of $\{x_\ell/\ell\}$ is contained in $A$.

\begin{theirthm} \textup{\cite[Thm.~3.8]{janj-rass-20-aop}} \label{buse_dir}
Assume \eqref{ranenv}, and suppose $\xi\in\Udiff$ is such that $\underline\xi,\overline\xi\in\Udiff$.
Then there is a full-probability 
event $\Omega_\xi\subset\Omega$ on which the following holds.
For each $x,y\in\Z^2$, the following limit exists and is the same for every $\cL_\xi$-directed sequence $(x_\ell)$:
\eeq{ \label{orig_buse}
\Bus^\xi_{x,y} = \Bus^\xi_{x,y}(\w) \coloneqq \lim_{\ell\to-\infty}\big(\log Z_{x_\ell,y} - \log Z_{x_\ell,x}\big), \quad \w\in\Omega_\xi.
}
Furthermore, if $\zeta\in\Udiff$ also satisfies $\underline\zeta,\overline\zeta\in\Udiff$, and has $\zeta\cdot\evec_1<\xi\cdot\evec_1$, then on $\Omega_\xi\cap\Omega_\zeta$ we have the following inequalities for all $x\in\Z^2$:
\eeq{ \label{mono_1}
\Bus^\zeta_{x-\evec_1,x} \ge \Bus^\xi_{x-\evec_1,x} \quad \text{and} \quad
\Bus^\zeta_{x-\evec_2,x} \le \Bus^\xi_{x-\evec_2,x}.
}
\end{theirthm}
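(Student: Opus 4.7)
The plan is to build the Busemann process in three phases: a tight weak-convergence construction along a countable dense set of directions, transfer of direction-monotonicity from the pre-limit to the limit, and identification of $\Bus^\xi$ along arbitrary $\cL_\xi$-directed sequences via a southeast-ordering sandwich.

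For the construction, I would fix a countable dense set $\Udiff_0 \subseteq \Udiff$ and, for each $\eta \in \Udiff_0$, choose $u_n^\eta \in \level_n$ with $u_n^\eta/n \to \eta$. Consider the joint laws of the random fields
\[
\Bus^{(n),\eta}_{x,y} := \log Z_{x, u_n^\eta} - \log Z_{y, u_n^\eta}, \qquad x,y \in \Z^2,\ \eta \in \Udiff_0.
\]
Theorem~\ref{shape_thm} combined with the moment hypothesis in \eqref{ranenv} makes this collection tight as $n \to \infty$, so I would extract a joint subsequential weak limit $(\Bus^\eta_{x,y})$. The cocycle identity \eqref{coc_def} and the recovery relation \eqref{rec_def} pass to weak limits, and, after possibly enlarging the probability space and transporting via the translations $\shift_x$, the limit is realized on $\OSP$. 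Monotonicity is propagated to the limit as follows: a starting-point comparison for ratios $Z_{u, v-\evec_r}/Z_{u,v}$ in the southeast order on $u$ (derivable by induction on $v$ from the recursion \eqref{part57}) yields the pre-limit inequality $\Bus^{(n),\zeta}_{x-\evec_1, x} \geq \Bus^{(n),\eta}_{x-\evec_1, x}$, with the reverse at the $\evec_2$-edge, whenever $u_n^\zeta$ and $u_n^\eta$ are coupled in southeast order. These monotonicities pass to the weak limit on $\Udiff_0$.

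For the identification, fix an $\cL_\xi$-directed sequence $x_\ell \in \level_\ell$. Pick directions $\zeta, \eta \in \Udiff_0$ with $\zeta \prec \underline\xi$ and $\eta \succ \overline\xi$, together with bracketing lattice sequences $u_\ell^\zeta \preccurlyeq x_\ell \preccurlyeq u_\ell^\eta$ satisfying $u_\ell^\zeta/\ell \to \zeta$ and $u_\ell^\eta/\ell \to \eta$. The same ratio-monotonicity sandwiches $\log Z_{x_\ell, y} - \log Z_{x_\ell, x}$ between two pre-limit quantities which, by the construction phase, converge to $\Bus^\zeta_{x,y}$ and $\Bus^\eta_{x,y}$. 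Letting $\zeta \nearrow \underline\xi$ and $\eta \searrow \overline\xi$ through $\Udiff_0$, the two bracketing limits are forced to a common value, which I then define as $\Bus^\xi_{x,y}$, and the inequalities \eqref{mono_1} are inherited from the monotonicity already established on $\Udiff_0$. The main obstacle is precisely this closing squeeze — verifying that $\Bus^\zeta - \Bus^\eta \to 0$ as the bracketing directions close in. This is where the hypothesis $\underline\xi, \overline\xi \in \Udiff$ is essential: differentiability at the endpoints of the linear segment forces the mean increments $\E[\Bus^\zeta_{x-\evec_r, x}]$ and $\E[\Bus^\eta_{x-\evec_r, x}]$, which equal $\nabla\gpp(\zeta) \cdot \evec_r$ and $\nabla\gpp(\eta) \cdot \evec_r$ (as follows from telescoping the cocycle against the shape theorem), to coincide in the limit, after which the monotone coupling upgrades the agreement of means to almost-sure agreement.
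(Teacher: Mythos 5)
This statement is quoted from \cite{janj-rass-20-aop} (the present paper contains no proof of it), so the only fair comparison is with the strategy of that reference, which the paper itself describes: a Damron--Hanson-style weak-limit construction of recovering cocycles along countably many directions, followed by monotonicity and a squeeze at directions of differentiability. Your architecture mirrors that strategy, and your closing argument (equal means via $\nabla\gpp$ constant on $[\tsp\underline\xi,\overline\xi\tsp]$ and differentiability at the endpoints, upgraded to a.s.\ equality by monotonicity) is the right way to close the squeeze. The starting-point monotonicity of ratios $Z_{u,x}/Z_{u,x-\evec_r}$ in the southeast order that you invoke is also a genuine and standard tool.

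However, there is a genuine gap in your identification phase, and it is the heart of the theorem. Your construction phase produces the cocycles $\Bus^\zeta$, $\Bus^\eta$ only as \emph{subsequential weak limits}, possibly on an enlarged probability space, of the centered quantities $\log Z_{x,u_n}-\log Z_{y,u_n}$. From this you cannot conclude, as you assert, that the bracketing pre-limit quantities $\log Z_{u^\zeta_\ell,y}-\log Z_{u^\zeta_\ell,x}$ converge \emph{almost surely} to $\Bus^\zeta_{x,y}$: that statement is precisely an instance of \eqref{orig_buse} for the direction $\zeta$, i.e.\ the theorem you are proving, so the sandwich as written is circular. The missing ingredient in the actual proof is an a.s.\ comparison between point-to-point free-energy increments and the \emph{limit cocycle itself}: one couples the cocycle with the weights, uses its recovery property \eqref{rec_def} to build a stationary polymer with cocycle boundary values, compares it to the point-to-point partition functions via the same ratio monotonicity, and then invokes a shape theorem/ergodic theorem for the cocycle to control where the stationary paths go; this yields the a.s.\ bounds of the type \eqref{B_monob}, sandwiching $\varliminf$ and $\varlimsup$ of $Z_{x_\ell,x}/Z_{x_\ell,x-\evec_r}$ between cocycle values for neighboring directions. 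Only after that does your mean/monotonicity squeeze at $\underline\xi,\overline\xi\in\Udiff$ close the argument. (Two smaller points: your construction pulls the terminal point to $+\infty$ while the statement pulls the initial point to $-\infty$, which needs a reflection to reconcile; and the mean identification $\E[\Bus^\zeta_{x-\evec_r,x}]=\nabla\gpp(\zeta)\cdot\evec_r$ for the weak limit requires a uniform integrability estimate beyond weak convergence, not just ``telescoping against the shape theorem.'')
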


Because of the telescoping identity
\eq{
(\log Z_{x_\ell,y} - \log Z_{x_\ell,x}) + (\log Z_{x_\ell,z}-\log Z_{x_\ell,y}) = \log Z_{x_\ell,z}-\log Z_{x_\ell,x},
}
the function $\Bus^{\xi}$ in \eqref{orig_buse} satisfies the cocycle condition \eqref{coc_def}.
It also satisfies the recovery condition \eqref{rec_def}, since \eqref{part57} leads to
\eq{
\frac{Z_{x_\ell,\tspb x-\evec_1}}{Z_{x_\ell,\tspb x}} + \frac{Z_{x_\ell,x-\evec_2}}{Z_{x_\ell,\tspb x}} = \frac{1}{\Yw_x}.
}
Thus Theorem~\ref{buse_dir} produces a recovering cocycle $\Bus^\xi$ for an individual  direction $\xi$.
Crucially, though, the full-probability event in Theorem~\ref{buse_dir} depends on $\xi$.
So in order to realize a cocycle simultaneously for all uncountably many values of $\xi$, \eqref{orig_buse} is not sufficient.
Hence the importance of \eqref{mono_1}, which allows this strategy: 
\begin{enumerate}[label=\textup{\arabic*.}]

\item First realize the nearest-neighbor Busemann functions $\Bus_{x-\evec_r,x}^{\xi}$ for a countable dense collection of direction parameters $\xi$.

\item Next extend to all $\xi\in\, ]\evec_2,\evec_1[$ by taking monotone limits.

\item Finally, extend additively to all of $\Z^2\times\Z^2$ according to \eqref{coc_def}.

\end{enumerate}
Since left and right limits may disagree, this construction results in two Busemann processes: a left-continuous version $(\Bus^{\xi-})_{\xi\in\,]\evec_2,\evec_1[}$ and a right-continuous version $(\Bus^{\xi+})_{\xi\in\,]\evec_2,\evec_1[}$.

\begin{theirthm} \label{buse_full}
\textup{\cite[Thm.~4.7, Lem.~4.13, Thm.~4.14]{janj-rass-20-aop}} 
Assume \eqref{ranenv}. 
Then there exists a family of random variables
\eq{
\Bus^{\xi\sig}_{x,y}\colon\Omega\to\R, \quad \xi\in\,]\evec_2,\evec_1[\,,\ \sigg\in\{-,+\},\ x,y\in\Z^2, 
}
and a full-probability event $\Omega_0\subset\Omega$ with the following properties:

\begin{itemize}
\item Each $\Bus^{\xi\sig}$ is a covariant cocycle on $\Z^2$, the \textit{cocycle} part meaning that 
 \be\label{coc1} 
 \Bus^{\xi\sig}_{x,y}+\Bus^{\xi\sig}_{y, z}= \Bus^{\xi\sig}_{x,z}   
 \quad \text{for all $x,y,z\in\Z^2$},
 \ee
and the \textit{covariant} part meaning that
  \be\label{coc2} 
 \Bus^{\xi\sig}_{x,y}(\shift_u\w) = \Bus^{\xi\sig}_{x+u,y+u}(\w)  
 \quad \text{for all $u,x,y\in\Z^2$, $\w\in\Omega$}.
 \ee
 
\item  Almost surely each $\Bus^{\xi\sig}$ recovers the vertex weights: on the event $\Omega_0$,  
		\be\label{B_reco}  \exp\{-\Bus^{\xi\sig}_{x-\evec_1,x}\}+ \exp\{-\Bus^{\xi\sig}_{x-\evec_2,x}\} =  \Yw_x^{-1}    \quad \text{for all $x\in\Z^2$}. 
		\ee
		
\item When restricted to nearest-neighbor pairs, the Busemann functions exhibit the following monotonicity: if $\zeta\prec\xi\prec\eta$, then for every $x\in\Z^2$ we have
\begin{subequations} \label{B_mono} 
\begin{align}
\label{B_mono1}
\Bus^{\zeta+}_{x-\evec_1,x}&\ge \Bus^{\xi-}_{x-\evec_1,x} \ge \Bus^{\xi+}_{x-\evec_1,x} 
\quad \text{and} \\
\label{B_mono2}
\Bus^{\xi-}_{x-\evec_2,x}&\le \Bus^{\xi+}_{x-\evec_2,x} \le \Bus^{\eta-}_{x-\evec_2,x}.
\end{align}
\end{subequations}

 \item For fixed $\w\in\Omega$ and $x,y\in\Z^2$, the maps $\xi\mapsto \Bus^{\xi-}_{x,y}(\w)$ and $\xi\mapsto \Bus^{\xi+}_{x,y}(\w)$ are the left- and right-continuous versions of each other.
 That is, under the southeast ordering of $\,]\evec_2,\evec_1[\,$, we have these monotone  limits:
 \be\label{B_lim1} 
\lim_{\zeta\nearrow\xi}   \Bus^{\zeta\sig}_{x,y}=\Bus^{\xi-}_{x,y}
\quad\text{and}\quad 
\lim_{\eta\searrow\xi}   \Bus^{\eta\sig}_{x,y}=\Bus^{\xi+}_{x,y} \quad\text{for either $\sigg\in\{-,+\}$}.
\ee
Towards  the endpoints of $[\evec_2, \evec_1]$,  for $r\in\{1,2\}$ and both signs $\sigg\in\{-,+\}$, we have these monotone  limits on the event $\Omega_0$:
\be\label{B_lim} 
\lim_{\xi\to\evec_r} \Bus^{\xi\sig}_{x-\evec_r,x} =\log\Yw_x 
\quad\text{while}\quad 
\lim_{\xi\to\evec_r} \Bus^{\xi\sig}_{x-\evec_{3-r},x} =\infty. 
\ee

\item The Busemann process is constant on linear segments of the shape function: 
\be\label{B==} 
\text{if }\ \zeta\neq\xi\ \text{ and }  \ \nabla\gpp(\zeta\sigg)=\nabla\gpp(\xi\sigg'),   \ \text{ then } \ 
\Bus^{\zeta\sig}_{x,y} = \Bus^{\xi\sig'}_{x,y}  \quad \text{for all $x,y\in\Z^2$.}
\ee


\item  Extended Busemann limits:  on the event $\Omega_0$, for any $\cL_\xi$-directed sequence $(x_\ell)$,  
\begin{subequations} \label{B_monob} 
\begin{align}
\exp\Bus_{x-\evec_1,x}^{\xi-}
\geq \varlimsup_{\ell\to-\infty} \frac{Z_{x_\ell,\tspb x}}{Z_{x_\ell,\tspb x-\evec_1}}
&\geq \varliminf_{\ell\to-\infty} \frac{Z_{x_\ell,\tspb x}}{Z_{x_\ell,\tspb x-\evec_1}}
\geq \exp\Bus_{x-\evec_1,x}^{\xi+}  \quad \text{and} \\
\exp\Bus_{x-\evec_2,x}^{\xi-}
\leq \varliminf_{\ell\to-\infty} \frac{Z_{x_\ell,\tspb x}}{Z_{x_\ell,x-\evec_2}}
&\leq \varlimsup_{\ell\to-\infty} \frac{Z_{x_\ell,\tspb x}}{Z_{x_\ell,x-\evec_2}}
\leq \exp\Bus_{x-\evec_2,x}^{\xi+}.
\end{align}
\end{subequations}

\item For every $\xi\in\,]\evec_2,\evec_1[\,$, $\sigg\in\{-,+\}$, and $x,y\in\Z^2$,   $\Bus^{\xi\sig}_{x,y}\in L^1(\P)$ and  
\be\label{EB}
\E(\Bus^{\xi\sig}_{x,y})  = \nabla\gpp(\xi\sigg)\cdot(y-x). 
\ee

\item 
For any set $A\subset\Z^2$, let $A^{\not\le}=\{u\in\Z^2:\, u\not\le y\text{ for every }y\in A \}$. 
Then we have independence of the following  two collections of random variables:
\be\label{B-ind}   \{ \Yw_u:\,  u\in A^{\not\le}\}  
\quad\indep\quad    
\{  \Yw_y, \Bus^{\xi\sig}_{x,y} :\,  \xi\in\,]\evec_2,\evec_1[\,, \sigg\in\{-,+\},  y\in A, x\le y\}.   
\ee
	

\end{itemize}
\end{theirthm}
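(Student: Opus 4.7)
The plan is to build the process following the three-step blueprint stated after Theorem~\ref{buse_dir}: realize the Busemann cocycle on a countable skeleton of directions, extend by monotone limits, and then verify all listed properties. Fix once and for all a countable dense set $D_0\subset\Udiff$ such that each $\xi\in D_0$ satisfies $\underline\xi=\xi=\overline\xi\in\Udiff$; such a set exists because $\gpp$ is concave and continuous, so strict-concavity points are dense in $\Udiff$. By Theorem~\ref{buse_dir}, the intersection $\Omega_0'=\bigcap_{\xi\in D_0}\Omega_\xi$ has full probability, and on this event the limits $\Bus^\xi_{x,y}$ exist for every $\xi\in D_0$ and $x,y\in\Z^2$, satisfy the cocycle condition \eqref{coc_def} and recovery \eqref{rec_def} (each of which is preserved in passage to the limit from the finite-volume identities), and are monotone in $\xi$ in the sense of \eqref{mono_1} on the nearest-neighbor edges.

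On a nearest-neighbor edge, the monotonicity \eqref{mono_1} combined with recovery \eqref{rec_def} pins $\Bus^\xi_{x-\evec_r,x}$ inside the compact interval $[-\log\Yw_x,\infty)$ for $r\in\{1,2\}$ on each side of the recovery identity. Hence I can define the right- and left-continuous versions by monotone limits along $D_0$:
\eeqc{
\Bus^{\xi+}_{x-\evec_r,x}(\w) \coloneqq \lim_{\eta\searrow\xi,\,\eta\in D_0}\Bus^\eta_{x-\evec_r,x}(\w),
\qquad
\Bus^{\xi-}_{x-\evec_r,x}(\w) \coloneqq \lim_{\zeta\nearrow\xi,\,\zeta\in D_0}\Bus^\zeta_{x-\evec_r,x}(\w),
}
with the direction $r=1$ taking $\searrow$ and $\nearrow$ reversed from $r=2$ because of \eqref{mono_1}. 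This directly produces \eqref{B_mono} and the monotone limit identities in \eqref{B_lim1}. Then I extend additively via \eqref{coc_def}, first along paths made of nearest-neighbor steps (consistency being a telescoping identity), and then to arbitrary $(x,y)$: the cocycle property \eqref{coc1} is automatic, and covariance \eqref{coc2} follows from the measure-preserving property of $\{\shift_u\}$ applied to the limits, using that $\Yw_x\circ\shift_u=\Yw_{x+u}$. Recovery \eqref{B_reco} survives the limit because the two terms on the left of \eqref{rec_def} are monotone in opposite directions in $\xi$ and bounded in $[0,\Yw_x^{-1}]$, so their sum equals $\Yw_x^{-1}$ for every $\xi$ on the countable set $D_0$ and hence also in the limit.

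The remaining items require more care but reduce to either monotonicity or shape-theorem inputs. For the endpoint limits \eqref{B_lim}, recovery forces one term to $0$ as $\xi\to\evec_r$; the nontrivial point is to exclude that the other term stays bounded, which is done by combining monotonicity along $D_0$ with a comparison to $\log\Yw_x$ via recovery. For the linear-segment rigidity \eqref{B==}, use that a $\cL_\xi$-directed sequence is also $\cL_\zeta$-directed whenever $\zeta\in\cL_\xi$ and $\gpp$ is linear between $\zeta$ and $\xi$, so the Busemann limits in Theorem~\ref{buse_dir} agree on the same sequence and pass through to the monotone extensions. The extended limit bounds \eqref{B_monob} are precisely the content of Theorem~\ref{buse_dir} at directions of $D_0$, sandwiched between the one-sided extensions by monotonicity of ratios $Z_{x_\ell,x}/Z_{x_\ell,x-\evec_r}$ along $D_0$. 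For the expectation formula \eqref{EB}, integrability can be obtained from the $L^p$ bound in \eqref{ranenv} and the lower bound $\Bus^{\xi\pm}_{x-\evec_r,x}\geq -\log\Yw_x$ provided by recovery, and then the shape theorem identifies $\E[\Bus^{\xi\pm}_{x-\evec_r,x}]$ as a one-sided partial derivative of $\gpp$, giving \eqref{EB} along coordinate directions and for general $y-x$ by the cocycle property.

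The main technical obstacle is the independence statement \eqref{B-ind}. The issue is that $\Bus^{\xi\sig}_{x,y}$ is built as a limit of ratios $Z_{x_\ell,y}/Z_{x_\ell,x}$ along $x_\ell\to-\infty$, and all these partition functions depend on weights in arbitrarily large regions. The key observation is that for $x,y\in A$ and $x_\ell\le x,y$, every path contributing to $Z_{x_\ell,x}$ or $Z_{x_\ell,y}$ uses only weights at sites $u\le x$ or $u\le y$, and hence at sites $u$ that do lie below some element of $A$. Therefore each prelimit ratio is measurable with respect to $\sigma(\Yw_u:u\le y\text{ for some }y\in A)$, which is independent of $\sigma(\Yw_u:u\in A^{\not\le})$ by the i.i.d.\ assumption. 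Taking monotone limits along $D_0$ preserves measurability, establishing \eqref{B-ind}. Once this is in place, every assertion of the theorem has been verified on a common full-probability event $\Omega_0\subset\Omega_0'$.
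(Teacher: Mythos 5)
You are attempting to prove a statement that this paper only quotes: Theorem~\ref{buse_full} is cited from \cite{janj-rass-20-aop}, and, as Remark~\ref{diffrmk} explains, the construction there is \emph{not} the monotone-limit scheme you propose but a weak-convergence argument adapted from \cite{damr-hans-14}, carried out on an enlarged probability space. Your route breaks at its very first step. You fix a countable dense set $D_0\subset\Udiff$ of directions with $\underline\xi=\xi=\overline\xi$, asserting that strict-concavity points are dense because $\gpp$ is concave and continuous. Concavity only makes the nondifferentiability set countable; it does not prevent $\gpp$ from having linear segments, and on a linear segment there are no strict-concavity points at all. Worse, if a linear segment has an endpoint at which $\gpp$ is not differentiable, then \emph{no} direction in that segment satisfies the hypothesis $\underline\xi,\overline\xi\in\Udiff$ of Theorem~\ref{buse_dir}, so the limits \eqref{orig_buse} that your skeleton relies on are simply unavailable there, and the properties \eqref{B==}, \eqref{EB}, and \eqref{B_monob} at such directions cannot be reached by monotone limits from outside the segment. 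Ruling out this scenario is exactly the content of the regularity assumption \eqref{shape_ass}, which Theorem~\ref{buse_full} is stated without (and regularity of $\gpp$ beyond Theorem~\ref{shape_thm} is an open problem). So your argument proves the theorem only under \eqref{ranenv} \emph{plus} \eqref{shape_ass}, not under \eqref{ranenv} alone.

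This is not a cosmetic issue, because the two routes produce different objects: your construction would make the whole process $\Bus^{\rbbullet}$ a measurable function of the weights $(\Yw_x)$, which is precisely what is not known without \eqref{shape_ass}; the cited proof obtains the process as a weak limit and must expand $(\Omega,\kS,\P)$, which is also why statements such as \eqref{B-ind} and \eqref{EB} are established there by arguments on the enlarged space rather than by the pathwise measurability observation you give for \eqref{B-ind}. Two smaller soft spots in your sketch, even granting \eqref{shape_ass}: the divergence in \eqref{B_lim} does not follow from monotonicity and recovery alone (monotone convergence of means only shows the limit has infinite expectation, not that it is almost surely infinite, so an additional ergodicity or zero--one type input is needed), and the identification \eqref{EB} of the mean with the one-sided gradients requires the shape theorem applied at the specific direction $\xi$, which again presupposes access to Busemann limits at, or arbitrarily near, $\xi$ on both sides. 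Under \eqref{shape_ass} your blueprint is the standard one sketched before Theorem~\ref{buse_full}; without it, the missing idea is the tightness/weak-limit construction of \cite{janj-rass-20-aop}.
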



\begin{remark}[Busemann process and a regularity assumption] \label{diffrmk} 
 The discussion  
 before Theorem~\ref{buse_full} overlooked the assumptions of Theorem~\ref{buse_dir}.  The condition $\xi\in\Udiff$ is 
harmless because $\Udiff$ is dense by concavity.  
But the requirement  $\underline\xi, \overline\xi\in\Udiff$ is a serious limitation if $\gpp$ has linear segments.
Thus it is common in the literature to assume that if $\gpp$ has linear segments, then it is differentiable at the endpoints of those segments.
Equivalently,
\eeq{ \label{shape_ass}
\text{at every $\xi\in\,]\evec_2,\evec_1[\,$, $\gpp$ is either differentiable or strictly concave}.
}
Under this assumption Theorem~\ref{buse_dir} applies to every $\xi\in\Udiff$.  
This in turn implies that  the entire Busemann process $\Bus^\rbbullet$   is a measurable function of the weights $(\Yw_x)$.

Nevertheless, Theorem~\ref{buse_full} was proved in \cite{janj-rass-20-aop} without \eqref{shape_ass} by an adaptation of  the strategy from \cite{damr-hans-14}.
The shortcoming is that the Busemann process is constructed as a weak limit and is not a function of the original weights $(\Yw_x)$.   
One has to expand the original probability space to accommodate this weak limit.  Hence Theorem~\ref{buse_full} is  properly stated as ``There exists a probability space $\OSP$ such that \eqref{ranenv} holds and..."
We regard the expansion of the probability space as given and will not make any further distinctions.

\newa{As advertised in the introduction}, our main results avoid the assumption \eqref{shape_ass}.  One challenge from 
  this is that we do not know if the Busemann process is ergodic under translations.
We do show (and need to use) that horizontal Busemann increments are ergodic under the $\evec_1$ translation (Theorem~\ref{cpthm}).
 This extends \cite[Thm.~3.5]{janj-rass-20-jsp} to joint distributions with multiple directions.
 \newa{While ergodicity under the $\evec_2$ translation would also be desirable, our arguments (especially in Lemma~\ref{g85n}) show how one can get around this.
 These challenges do not arise in the exactly solvable cases, since then $\Lambda$ is explicit and \eqref{shape_ass} is immediate.}
%
%
%
%
\qedex\end{remark} 

\begin{remark}[Discontinuities and null events]\label{pm_rmk}   Monotonicity \eqref{B_mono} and  \eqref{EB} imply that,  for each  $\xi\in\Diff$,   $ \Bus^{\xi-}= \Bus^{\xi+}$ on a full-probability event $\Omega_\xi$ that \textit{depends} on $\xi$.  In particular, when desirable, any full-probability event $\Omega_0$ can be assumed to satisfy $ \Bus^{\xi-}= \Bus^{\xi+}$ for all $\xi$ in a  fixed   countable subset of $\Diff$.
The construction of $B^\rbbullet$ described above Theorem~\ref{buse_full} relies on this property.  
Another consequence   is that any statement about the distribution of countably many $\Bus^{\xi}$ functions with $\xi\in\Diff$ can drop the signs $\sigg\in\{-,+\}$.  

Random directions $\xi$ of discontinuity $ \Bus^{\xi-}\ne\Bus^{\xi+}$ can arise among the uncountably many differentiability directions.  
 One of the main points of our paper is to describe properties of these directions. 
In Corollary~\ref{cor:ig_aUset} we determine that this set of discontinuities is dense in the inverse-gamma case, thereby providing the first existence result for discontinuities in a positive-temperature lattice model.  We cannot prove this existence in general, but we do present some new properties of the discontinuity set in Section~\ref{pathB_sec}. 

The bounds in \eqref{B_monob} leave open the possibility that when $\xi$ is a jump direction, the Busemann functions $\Bus^{\xi\pm}$ cannot be realized as limits.    
In Appendix~\ref{a:Blim}, we prove this possibility does not occur under the assumption \eqref{shape_ass}.
Under the assumption \eqref{shape_ass},   we prove  in Appendix~\ref{a:Blim}
 That is, the extreme inequalities in  \eqref{B_monob} are in fact equalities for suitably chosen $\cL_\xi$-directed sequences,  simultaneously for all directions $\xi$ (Proposition~\ref{ebiwlp}).
This result helps to enrich the theory from \cite{janj-rass-20-aop}, although it is not needed in the main text.
\qedex\end{remark} 

\begin{remark}[Monotonicity] 
\label{dmnr}
As stated, \eqref{B_mono} is a sure event.
On the almost sure event $\Omega_0$ from Theorem~\ref{buse_full}, the recovery property \eqref{B_reco} allows an upgrade:
\begin{subequations} \label{B_Mono} 
\begin{align}
\label{B_Mono1}
\Bus^{\zeta+}_{x-\evec_1,x}&\ge \Bus^{\xi-}_{x-\evec_1,x} \ge \Bus^{\xi+}_{x-\evec_1,x} > \log\Yw_x
\quad \text{and} \\
\label{B_Mono2}
\log\Yw_x &< 
\Bus^{\xi-}_{x-\evec_2,x}\le \Bus^{\xi+}_{x-\evec_2,x} \le \Bus^{\eta-}_{x-\evec_2,x}.
\end{align}
\end{subequations}
Furthermore,   \eqref{EB} gives  $\E(\Bus^{\xi\sig}_{x-\evec_1,x}) = \gpp(\xi\sigg)\cdot\evec_1$ and $\E(\Bus^{\xi\sig}_{x-\evec_2,x})=\gpp(\xi\sigg)\cdot\evec_2$.   Combined with \eqref{B_Mono} we have this  monotonicity: for $\zeta\prec\xi\prec\eta$,\begin{subequations} \label{inmono} 
\begin{align}
\label{inmono1}
\nabla\gpp(\zeta+)\cdot\evec_1
&\ge \nabla\gpp(\xi-)\cdot\evec_1
\ge \nabla\gpp(\xi+)\cdot\evec_1
> \E[\log\Yw_x] 
\quad \text{and} \\
\label{inmono2}
\E[\log\Yw_x]
&<\nabla\gpp(\xi-)\cdot\evec_2
\le\nabla\gpp(\xi+)\cdot\evec_2
\le\nabla\gpp(\eta-)\cdot\evec_2. \rlap{\hspace*{0.68in}$\triangle$}
\end{align}
\end{subequations}
\end{remark} 


Finally, the Busemann process satisfies a shape theorem simultaneously in all directions.
We prove the following result in Appendix~\ref{a:Bshape} as a small extension of the single-direction result in \cite{janj-rass-20-aop}, recalled as Theorem~\ref{lln_input}.
We stress that this  result does not require the  regularity assumption  \eqref{shape_ass}.
     
 \begin{theorem} \label{all_lln}  Assume \eqref{ranenv}. 
There exists a full-probability 
event on which the following limit  holds simultaneously 
 for each $\xi\in\,]\evec_2,\evec_1[\,$ and $\sigg\in\{-,+\}$:
\eeq{ \label{bfanfx}
\lim_{n\to\infty}\;\max_{|x|_1\le\tsp n}   n^{-1} {|\Bus^{\xi\sig}_{\zevec,x}-\nabla\gpp(\xi\sigg)\cdot x|} = 0 .  
}
\end{theorem}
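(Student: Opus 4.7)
The plan is to reduce the simultaneous shape theorem to its single-direction version (Theorem~\ref{lln_input}) via a sandwich argument built on the monotonicity \eqref{B_Mono}. Since $\gpp$ restricted to $[\evec_2,\evec_1]$ is concave, its set $D_0$ of non-differentiability points is at most countable; fix any countable dense subset $D_1$ of $\Udiff$ and put $D=D_0\cup D_1$. A countable intersection of the full-probability events produced by Theorem~\ref{lln_input}, one per pair $(\xi,\sigg)\in D\times\{-,+\}$, together with the events from Remark~\ref{pm_rmk} on which $\Bus^{\xi+}=\Bus^{\xi-}$ for every $\xi\in D_1$, yields a full-probability event $\Omega_*$ on which \eqref{bfanfx} already holds for every $\xi\in D$ and $\sigg\in\{-,+\}$.

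It remains to extend \eqref{bfanfx} on $\Omega_*$ to every $\xi\in\Udiff\setminus D_1$. Fix such a $\xi$ and pick $\zeta_k\nearrow\xi$ and $\eta_k\searrow\xi$ from $D_1$. For $x=(a,b)\in\Z_{\ge 0}^2$ with $|x|_1\le n$, decompose via the axis-first path using the cocycle property \eqref{coc1}:
\eq{
\Bus^{\xi\sig}_{\zevec,x}=\sum_{i=1}^a \Bus^{\xi\sig}_{(i-1,0),(i,0)}+\sum_{j=1}^b \Bus^{\xi\sig}_{(a,j-1),(a,j)}.
}
The monotonicity \eqref{B_Mono} applied term-by-term ($\evec_1$-increments are nonincreasing in $\xi$ while $\evec_2$-increments are nondecreasing) sandwiches $\Bus^{\xi\sig}_{\zevec,x}$ between $\sum_i \Bus^{\eta_k}_{(i-1,0),(i,0)}+\sum_j \Bus^{\zeta_k}_{(a,j-1),(a,j)}$ and the mirror-image expression with $\zeta_k,\eta_k$ swapped. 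The telescoping identity rewrites each bounding expression as $\Bus^{\eta_k}_{\zevec,(a,0)}+[\Bus^{\zeta_k}_{\zevec,(a,b)}-\Bus^{\zeta_k}_{\zevec,(a,0)}]$, and mirror. Invoking Theorem~\ref{lln_input} at $\zeta_k$ and $\eta_k$ (both valid on $\Omega_*$) gives uniformly in $|x|_1\le n$
\eq{
\tfrac{1}{n}\bigl(\text{lower bound}\bigr)=\tfrac{a}{n}\nabla\gpp(\eta_k)\cdot\evec_1+\tfrac{b}{n}\nabla\gpp(\zeta_k)\cdot\evec_2+o(1),
}
and likewise for the upper bound with $\zeta_k,\eta_k$ swapped. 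Because $\nabla\gpp$ is continuous at any differentiable $\xi$, letting $k\to\infty$ drives $\nabla\gpp(\zeta_k),\nabla\gpp(\eta_k)\to\nabla\gpp(\xi)$ and both sandwich bounds collapse to $\nabla\gpp(\xi)\cdot x/n$.

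Cases with $x$ outside the nonnegative quadrant reduce to the above via the cocycle identity $\Bus^{\xi\sig}_{\zevec,x}=-\Bus^{\xi\sig}_{x,\zevec}$ and analogous axis-first decompositions through the appropriate corner; the sign of monotonicity in each coordinate direction reverses in tandem, so the sandwich closes the same way. The principal obstacle is preserving uniformity in $x$ after combining the shape theorems for $\zeta_k$ and $\eta_k$ with the cocycle bookkeeping---the telescoping rewrite into whole-axis Busemann values within the $\ell^1$-ball of radius $n$ is what makes this uniform. The appeal to continuity of $\nabla\gpp$ at differentiable directions is what allows the sandwich to close even though monotonicity in $\xi$ acts with opposite signs on horizontal and vertical increments, and the countability of $D_0$ is what absorbs the non-differentiable directions into the exceptional null set.
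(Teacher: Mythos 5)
Your proposal is correct and follows essentially the same route as the paper's proof in Appendix~\ref{a:Bshape}: take the countable set consisting of a dense subset of $\Udiff$ together with the countably many non-differentiability directions, apply the single-direction shape theorem (Theorem~\ref{lln_input}) there, and extend to all of $\Udiff$ by an axis-first cocycle decomposition sandwiched via the monotonicity \eqref{B_Mono}, closing the sandwich through the convergence of $\nabla\gpp(\zeta_k),\nabla\gpp(\eta_k)$ to $\nabla\gpp(\xi)$ at differentiable $\xi$ (the paper phrases this with a fixed $\eps$ and gradients $\eps$-close, which is the same mechanism). Your treatment of the quadrants with negative coordinates by swapping the roles of $\zeta_k$ and $\eta_k$ also matches the paper's handling.
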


\section{Main results under general i.i.d.\ weights}  \label{sec:main1}

\subsection{Busemann process indexed by directions} \label{pathB_sec}
Our first result is on the monotonicity of Busemann functions,  proved at the end of Section~\ref{2_proc_sec}.
Combined with \eqref{B==}, it reveals that $\xi\mapsto \Bus^{\xi\pm}_{x-\evec_r,x}$ is constant on linear  segments of $\gpp$ and strictly monotone elsewhere.

\begin{theorem} \label{thm:78-63}
Assume \eqref{ranenv}.
Then there exists a full-probability event on which the following holds. 
For each pair of directions $\zeta\prec\eta$ in $\,]\evec_2, \evec_1[$  that do not lie on the same closed linear segment of $\gpp$, we have the strict inequalities  
 \eeq{ \label{eq:78-63}
 \Bus^{\zeta+}_{x-\evec_1,x}> \Bus^{\eta-}_{x-\evec_1,x} >  \log\Yw_x \quad \text{and} \quad 
\log\Yw_x < \Bus^{\zeta+}_{x-\evec_2,x}< \Bus^{\eta-}_{x-\evec_2,x} \quad\forall\tsp x\in\Z^2.  
 }
\end{theorem}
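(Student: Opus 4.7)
The plan is to upgrade the almost sure weak monotonicity recorded in \eqref{B_Mono} to the strict version by combining a mean-separation argument with the Markov characterization of the Busemann process supplied by Theorem~\ref{cpthm}. The guiding principle is that the recovery relation \eqref{B_reco} couples the $\evec_1$- and $\evec_2$-halves of \eqref{eq:78-63} together and that any positive probability of equality between $\Bus^{\zeta+}_{x-\evec_1, x}$ and $\Bus^{\eta-}_{x-\evec_1, x}$ is incompatible with their distinct asymptotic slopes along a lattice level.

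First I reduce to a countable problem. Choose a countable dense $\Xi \subset\,]\evec_2, \evec_1[\,$ containing at least one representative from each maximal closed linear segment of $\gpp$. Because of \eqref{B==} (constancy on segments) and \eqref{B_lim1} (monotone left/right limits), it suffices to establish the strict inequality in \eqref{eq:78-63} almost surely for each fixed $x\in\Z^2$ and each fixed pair $\zeta\prec\eta$ in $\Xi$ not lying on a common closed segment. For such a pair, the hypothesis and concavity of $\gpp$ force the strict gradient inequality $\nabla\gpp(\zeta+)\cdot\evec_1 > \nabla\gpp(\eta-)\cdot\evec_1$, and then \eqref{EB} gives
\[
\E\bigl[\Bus^{\zeta+}_{x-\evec_1, x} - \Bus^{\eta-}_{x-\evec_1, x}\bigr] > 0,
\]
while \eqref{B_Mono1} shows that this difference is almost surely nonnegative. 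Consequently strict inequality holds with positive probability for each $x$.

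The principal obstacle is upgrading this positive-probability statement to an almost sure one. Let $A_x = \{\Bus^{\zeta+}_{x-\evec_1, x} = \Bus^{\eta-}_{x-\evec_1, x}\}$. Covariance \eqref{coc2} makes $\P(A_x)$ independent of $x$, and on $A_x$ the recovery relation \eqref{B_reco} forces $\Bus^{\zeta+}_{x-\evec_2, x} = \Bus^{\eta-}_{x-\evec_2, x}$ as well, so the two halves of \eqref{eq:78-63} stand or fall together. To rule out $\P(A_x) > 0$, I appeal to Theorem~\ref{cpthm}: the joint law of $(\Bus^{\zeta+}, \Bus^{\eta-})$ restricted to a lattice level is characterized as the unique shift-ergodic invariant distribution of the parallel Markov process subject to a condition on asymptotic slopes, here supplied by $\nabla\gpp(\zeta+)$ and $\nabla\gpp(\eta-)$. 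The strict gap between these slopes, combined with ergodicity and the rigidity imposed by \eqref{B_reco}, forces the invariant law to assign zero mass to $A_\zevec$; otherwise the $\evec_1$-translation-invariant event $\bigcap_{y\in\Z\evec_1} A_y$ would carry the mass by ergodicity, contradicting the positive mean gap established above.

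Once $\P(A_x) = 0$ is established, a countable intersection over $x\in\Z^2$ and pairs in $\Xi^2$ yields a single full-probability event on which $\Bus^{\zeta+}_{x-\evec_1, x} > \Bus^{\eta-}_{x-\evec_1, x}$ for all such $x$ and pairs. The $\evec_2$-half of \eqref{eq:78-63} follows from the $\evec_1$-half via \eqref{B_reco}, the strict inequalities against $\log\Yw_x$ are already part of \eqref{B_Mono}, and extending from $\Xi$ to arbitrary admissible pairs uses \eqref{B_lim1}, \eqref{B_Mono}, and \eqref{B==}. The crux of the whole argument is the Markovian rigidity step invoking Theorem~\ref{cpthm}: without it, the mean-separation argument alone yields only strict inequality with positive probability, not almost surely.
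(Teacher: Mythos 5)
Your reduction to a countable dense family of pairs and your mean-separation step (via \eqref{EB} and the strict gradient gap $\nabla\gpp(\zeta+)\cdot\evec_1>\nabla\gpp(\eta-)\cdot\evec_1$) are fine, but the step you yourself call the crux does not work as stated. Setting $A_x=\{\Bus^{\zeta+}_{x-\evec_1,x}=\Bus^{\eta-}_{x-\evec_1,x}\}$, shift-ergodicity of the level-$0$ joint law (Theorem~\ref{cpthm}) gives a zero--one law only for $\evec_1$-shift-invariant events. The event $\bigcap_{k\in\Z}A_{(k,0)}$ is invariant and is indeed ruled out by the mean gap, but $A_\zevec$ itself is not invariant, and nothing in your argument shows that $\P(A_\zevec)>0$ would force the all-edges event to carry positive mass. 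An ergodic, coordinatewise-ordered pair of sequences with distinct means can perfectly well have $\P(I^{\zeta+}_0=I^{\eta-}_0)\in(0,1)$; neither the uniqueness statement in Theorem~\ref{uniq_thm} (which concerns equal means) nor the recovery identity \eqref{B_reco} supplies the missing ``spreading'' of a single-edge coincidence to a translation-invariant event. So the proposal proves only that equality cannot hold simultaneously at every horizontal edge of a level, which is strictly weaker than \eqref{eq:78-63}.

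The paper closes exactly this hole with a deterministic propagation mechanism rather than a distributional one. On the single full-probability event where the simultaneous Busemann shape theorem \eqref{bfanfx} and the level-to-level recursion \eqref{Bmc} hold, the Ces\`aro means of $\log I^{\zeta+}_\bbullet(t)$ and $\log I^{\eta-}_\bbullet(t)$ differ at every level $t$, so for every $n$ there is some $k_0\le -n$ with $I^{\zeta+}_{k_0}(t)>I^{\eta-}_{k_0}(t)$; the strict monotonicity of the update map, Lemma~\ref{mon_lem}\ref{mon_lemb}, then converts this single strict inequality into $I^{\zeta+}_k(t+1)>I^{\eta-}_k(t+1)$ for all $k\ge k_0$, and letting $n\to\infty$ gives strict inequality at every site of level $t+1$, for every $t$, simultaneously for all admissible pairs $(\zeta,\eta)$. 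To repair your argument you need some version of this one-step strict-propagation input (it is also the engine behind Proposition~\ref{nepg}); ergodicity and uniqueness of the invariant law alone cannot see the difference between ``equality at one edge'' and ``equality at all edges.''
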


Next we consider discontinuities of the Busemann process.
Define the $\w$-dependent set of exceptional directions where the Busemann process experiences a jump: 
\eeq{ \label{aUsetdef}
\aUset=\bigl\{ \xi\in\,]\evec_2,\evec_1[\,:\,   \exists\, x,y\in\Z^2,\,  \Bus^{\xi-}_{x,y}(\w)\ne  \Bus^{\xi+}_{x,y}(\w)\bigr\}. 
}
For any sequence of vertices $x=x_0,x_1,\dots,x_k=y$ such that $|x_i - x_{i-1}|_1 = 1$, the cocycle property \eqref{coc1} gives $\Bus^{\xi\pm}_{x,y} \coloneqq \sum_{i = 1}^k  \Bus^{\xi\pm}_{x_{i-1},x_i}$.
Each nearest-neighbor increment  $\Bus^{\xi\pm}_{x_{i-1},x_i}$ is a monotone function of $\xi$ by \eqref{B_mono} and thus has at most countably many discontinuities.
Hence $\aUset$ is at most countable.
Under a differentiability assumption on   $\gpp$,  \cite[Thm.~3.10(c)]{janj-rass-20-aop} implies that $\aUset$ is either empty or infinite.  Membership $\xi\in\aUset$ has implications for the existence and uniqueness of $\xi$-directed polymer Gibbs measures. Such results   under the regularity assumption \eqref{shape_ass} appear in \cite[Thm.~3.10]{janj-rass-20-aop}.   In Remark~\ref{rmk:Q_ig} we state these consequences in the inverse-gamma case.

The following theorem is proved in Section~\ref{sec:pf_disc}.
Part~\ref{thm:51-32a} is the main novelty, as part~\ref{thm:51-32b} is morally contained in \cite[Thm.~3.2]{janj-rass-20-aop}.
 
 \begin{theorem}\label{thm:51-32}   Assume \eqref{ranenv}. Then there exists a full-probability event $\Omega_0$ on which the following statements hold. 
 \begin{enumerate}[label={\rm(\alph*)}, ref={\rm(\alph*)}]  \itemsep=3pt  
 \item \label{thm:51-32a}
 The set of discontinuities of the function $\xi\mapsto \Bus^{\xi\pm}_{x-\evec_r,x}$ is the same for all nearest-neighbor edges. 
 That is,    for each  $\w\in\Omega_0$,  
 \[
\aUset=
\bigl\{\xi\in\,]\evec_2,\evec_1[\,:\,    \Bus^{\xi-}_{x-\evec_r,x}(\w)\ne  \Bus^{\xi+}_{x-\evec_r,x}(\w)\bigr\} 
\quad \forall \tsp 
x\in\Z^2,\  r\in\{1,2\}.  
\] 
\item \label{thm:51-32b}
For each $\w\in\Omega_0$, $\aUset$ contains the set $\,]\evec_2,\evec_1[\,\setminus\Udiff$ of directions $\xi$ at which   the shape function $\gpp(\xi)$ is not differentiable.
\end{enumerate}
\end{theorem}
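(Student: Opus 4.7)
The plan is to combine the recovery relation \eqref{B_reco}, the cocycle and covariance properties \eqref{coc1}--\eqref{coc2}, the monotonicity \eqref{B_mono}, the mean identity \eqref{EB}, the $\evec_1$-ergodicity of horizontal Busemann increments (Theorem~\ref{cpthm}), and the strict monotonicity of Theorem~\ref{thm:78-63}.

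For part (a), the first step is local: at any fixed $x \in \Z^2$, applying \eqref{B_reco} to both $\Bus^{\xi-}$ and $\Bus^{\xi+}$ and subtracting gives
\[e^{-\Bus^{\xi-}_{x-\evec_1,x}} - e^{-\Bus^{\xi+}_{x-\evec_1,x}} = e^{-\Bus^{\xi+}_{x-\evec_2,x}} - e^{-\Bus^{\xi-}_{x-\evec_2,x}},\]
and \eqref{B_mono} makes each side sign-definite, so both sides vanish together. Thus the $(-,+)$-gaps in the two nearest-neighbor Busemann increments at a common vertex are simultaneously zero or simultaneously positive. To extend this to an equivalence across vertices, set $A^r_{\xi,x} \coloneqq \{\Bus^{\xi-}_{x-\evec_r,x} \neq \Bus^{\xi+}_{x-\evec_r,x}\}$; by covariance \eqref{coc2}, $A^r_{\xi,x} = \shift_x^{-1} A^r_{\xi,\zevec}$, so the union $\bigcup_{n \in \Z} \shift_{n\evec_1}^{-1} A^1_{\xi,\zevec}$ is $\shift_{\evec_1}$-invariant and therefore has probability $0$ or $1$ by Theorem~\ref{cpthm}. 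It remains to rule out the possibility that $A^1_{\xi,x}$ holds at some $x$ but fails at a lattice-neighbor of $x$. For this, differencing the square-loop identity
\[\Bus^{\xi\pm}_{x-\evec_1, x} + \Bus^{\xi\pm}_{x, x+\evec_2} = \Bus^{\xi\pm}_{x-\evec_1, x-\evec_1+\evec_2} + \Bus^{\xi\pm}_{x-\evec_1+\evec_2, x+\evec_2}\]
produced by \eqref{coc1} yields a balance among the four $(-,+)$-gaps around a unit square, which together with the strict monotonicity of Theorem~\ref{thm:78-63} forces the gaps to be simultaneously zero or simultaneously positive at all four edges. Iterating this in both coordinate directions, and invoking the horizontal-vertical equivalence from the first step, propagates the equivalence across all of $\Z^2$.

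For part (b), fix $\xi \in\,]\evec_2,\evec_1[\,\setminus\Udiff$. Then $\nabla\gpp(\xi-)\neq\nabla\gpp(\xi+)$, so there exists $r\in\{1,2\}$ with
\[\E[\Bus^{\xi-}_{x-\evec_r,x}] - \E[\Bus^{\xi+}_{x-\evec_r,x}] = \bigl(\nabla\gpp(\xi-) - \nabla\gpp(\xi+)\bigr)\cdot\evec_r \neq 0\]
by \eqref{EB}. Since the integrand is sign-definite by \eqref{B_mono}, $A^r_{\xi,\zevec}$ has positive probability, which part (a) and the ergodicity in Theorem~\ref{cpthm} upgrade to probability one. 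As $\,]\evec_2,\evec_1[\,\setminus\Udiff$ is countable (by concavity of $\gpp$), intersecting the resulting full-probability events yields a single event $\Omega_0$ on which $\,]\evec_2,\evec_1[\,\setminus\Udiff \subseteq \aUset$.

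The main obstacle is the cross-vertex propagation in part (a). The local recovery identity only synchronizes the two edges meeting at a common vertex, whereas the theorem requires synchronization across every edge of the lattice. I expect the cleanest route is to appeal to the coupled Markovian evolution of the $\xi-$ and $\xi+$ processes on a level line developed in Section~\ref{2_proc_sec}: the intertwining structure there carries built-in coherence of the $\xi$-parametrized family along a row, and combined with recovery and the vertical cocycle this coherence should extend across rows.
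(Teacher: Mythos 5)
There is a genuine gap, and it sits exactly where you flag it: the cross-vertex propagation in part (a). Two problems. First, your zero--one law via Theorem~\ref{cpthm} is a per-$\xi$ argument: it produces a full-probability event depending on the fixed direction $\xi$, whereas the theorem must hold on a single event simultaneously for \emph{all} $\xi$, in particular for the random discontinuity directions $\xi\in\aUset$. For any fixed $\xi\in\Udiff$ the gap $\Bus^{\xi-}_{x-\evec_1,x}-\Bus^{\xi+}_{x-\evec_1,x}$ is nonnegative with zero mean by \eqref{EB}, so it vanishes almost surely; fixing $\xi$ and invoking ergodicity therefore says nothing about the directions where jumps actually occur. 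The paper instead proves two \emph{deterministic, all-$\xi$-at-once} propagation statements on one event: northeast propagation (if $x\in\bD^\xi$ and $y>x$ then $y\in\bD^\xi$), which follows from the parallel-process recursion \eqref{Bmc} and the strict monotonicity of the update map $\Dop$ (Lemma~\ref{mon_lem}), and southwest propagation for $\xi\in\Udiff$ (Proposition~\ref{lm:51-32}). Second, your claim that the square-loop identity plus Theorem~\ref{thm:78-63} "forces the gaps to be simultaneously zero or simultaneously positive at all four edges" is not correct. The loop identity gives only $S^{\xi,\evec_2}_{x-\evec_1}+S^{\xi,\evec_1}_{x-\evec_2}=S^{\xi}_x$ (the paper's Lemma~\ref{lm:51-31}), i.e.\ if $x$ is a discrepancy point then \emph{at least one} of $x-\evec_1$, $x-\evec_2$ is; it does not force both, and Theorem~\ref{thm:78-63} compares Busemann functions in \emph{different} directions $\zeta\prec\eta$, not the $(-,+)$ gap at a single $\xi$, so it cannot close this. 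The actual southwest propagation is the technical heart of the paper's proof: a quantitative comparison of horizontal and vertical jump sizes via recovery (Lemma~\ref{rjco}), a positive-density lemma built from ergodicity and the strict gap $\nabla\gpp(\xi\sigg)\cdot\evec_1>\E[\log\Yw]$ (Lemma~\ref{g85n}), and a contradiction with differentiability of $\gpp$ at $\xi$ through the uniform Busemann shape theorem \eqref{bfanfx}. This is also precisely where the restriction to $\Udiff$ enters; directions $\xi\notin\Udiff$ are handled by a separate argument showing $\bD^\xi=\Z^2$ outright.

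On part (b), your positive-probability step from \eqref{EB} and sign-definiteness is fine, but the upgrade to "discrepancy at every edge" leans on part (a), which you have not established. The paper avoids this circularity: for $\xi\notin\Udiff$ it uses $\zeta\cdot(\nabla\gpp(\xi-)-\nabla\gpp(\xi+))<0$ for $\zeta\prec\xi$ and the shape theorem \eqref{bfanfx} along a $\zeta$-directed down-left path to produce infinitely many discrepancy points, then fills the lattice by northeast propagation. Your concluding suggestion to lean on the intertwining structure of Section~\ref{2_proc_sec} is in the right neighborhood for the northeast direction, but it does not by itself supply the southwest propagation, which is where the real work (and the differentiability hypothesis) lies.
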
 

It is natural to ask whether Theorem~\hyperref[thm:51-32a]{\ref*{thm:51-32}\ref*{thm:51-32a}} extends from nearest-neighbor edges to all pairs of distinct vertices.
For any $x\neq y$ in $\Z^2$, the cocycle property \eqref{coc1} allows us to write $\xi\mapsto\Bus^{\xi\pm}_{x,y}$ as a sum of nearest-neighbor Busemann functions, all of which share the discontinuity set $\aUset$.
If $x$ and $y$ are connected by a down-right path, then all these discontinuities have the same sign thanks to monotonicity \eqref{B_mono}.  
Hence these discontinuities persist in the sum, and every $\xi\in\aUset$ remains a point of discontinuity for $\xi\mapsto\Bus^{\xi\pm}_{x,y}$.
But if $x$ and $y$ are connected by an up-right path, then it is possible that some discontinuities cancel out.
The next theorem rules out this possibility under the assumption of continuous weights.
The proof is given in Section~\ref{sec:pf_disc}.

\begin{theorem} \label{thm:allxy}
 Assume \eqref{ranenv} and $\P(\Yw_x=s)=0$ for all $s>0$.
Then there exists a full-probability event $\Omega_0$ on which the set of discontinuities of $\xi\mapsto \Bus^{\xi\pm}_{x,y}$ is the same for all pairs of distinct vertices $x,y\in\Z^2$. 
 That is,    for each  $\w\in\Omega_0$,  
 \[
\aUset=
\bigl\{\xi\in\,]\evec_2,\evec_1[\,:\,    \Bus^{\xi-}_{x,y}(\w)\ne  \Bus^{\xi+}_{x,y}(\w)\bigr\} 
\quad \forall \tsp 
x\neq y \text{ in $\Z^2$}.
\] 
\end{theorem}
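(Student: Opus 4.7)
My plan is to isolate the single weight $Y_{y-\evec_1}$ and exploit that its continuous distribution, together with the conditional independence structure supplied by \eqref{B-ind}, rules out the algebraic coincidences required for a jump of $\xi\mapsto B^{\xi\pm}_{x,y}$ to cancel at a direction $\xi_0 \in \aUset$.

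The inclusion $\aUset_{x,y} \subset \aUset$ is immediate from the cocycle identity \eqref{coc1} combined with Theorem~\ref{thm:51-32}\ref*{thm:51-32a}. For the reverse inclusion, the case where $x$ and $y$ are joined by a monotone down-right path (as well as the axial subcase $y-x = k\evec_r$) follows from the common sign of all nearest-neighbor jumps at a fixed $\xi_0 \in \aUset$, a consequence of \eqref{B_mono}, without using continuity of weights. The substantive case is therefore $y = x + m\evec_1 + n\evec_2$ with $m, n \geq 1$; I show $\P(\exists\,\xi_0\in\aUset : B^{\xi_0-}_{x,y} = B^{\xi_0+}_{x,y}) = 0$ and conclude by a countable union over $(x,y)$. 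For $\xi_0 \in \aUset$ set $\Delta_{h,u} := B^{\xi_0-}_{u-\evec_1,u} - B^{\xi_0+}_{u-\evec_1,u}$ and $\Delta_{v,u} := B^{\xi_0+}_{u-\evec_2,u} - B^{\xi_0-}_{u-\evec_2,u}$; both are strictly positive at every $u$ on the event of Theorem~\ref{thm:51-32}\ref*{thm:51-32a}. Decomposing $B^\xi_{x,y}$ along the right-then-up lattice path and collecting jumps, cancellation at $\xi_0$ is equivalent to
\[
\Delta_{v,y} \;=\; \sum_{i=1}^{m}\Delta_{h,\tsp x+i\evec_1} \;-\; \sum_{j=1}^{n-1}\Delta_{v,\tsp x+m\evec_1+j\evec_2}.
\]

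Take the maximal set $A := \{v \in \Z^2 : y-\evec_1 \not\leq v\}$, so that $A^{\not\leq} = \{u : u \geq y-\evec_1\} \ni y-\evec_1$. By \eqref{B-ind}, $Y_{y-\evec_1}$ is independent of the $\sigma$-algebra $\mathcal F$ generated by $\{Y_v, B^{\xi\sig}_{v',v} : v \in A,\, v' \leq v,\, \sig \in \{-,+\}\}$. Every vertex appearing in the right-hand side of the displayed equation lies in $A$, so that side is $\mathcal F$-measurable; likewise $\aUset$ is $\mathcal F$-measurable on the event of Theorem~\ref{thm:51-32}\ref*{thm:51-32a} via the identification at vertex $x \in A$. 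For the left-hand side, recovery \eqref{B_reco} combined with a cocycle expansion of $B^\xi_{y-\evec_1,y-\evec_2}$ through $y-\evec_1-\evec_2$ yields $p_y^\xi := Y_y e^{-B^\xi_{y-\evec_1,y}} = 1/(1 + (Y_{y-\evec_2}/Y_{y-\evec_1})\rho^\xi)$, where $\rho^\xi = (1-p_{y-\evec_1}^\xi)/p_{y-\evec_2}^\xi$. Both factors of $\rho^\xi$ are $\mathcal F$-measurable: directly for $p_{y-\evec_2}^\xi$ since $y-\evec_2 \in A$; for $p_{y-\evec_1}^\xi$, using that it depends only on $B^\xi_{y-2\evec_1,\,y-\evec_1-\evec_2}$, which by cocycle through $y-2\evec_1-\evec_2 \in A$ is a difference of nearest-neighbor Busemann values ending at $y-2\evec_1,\; y-\evec_1-\evec_2 \in A$.

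The strict jumps at both $y-\evec_1$ and $y-\evec_2$, guaranteed by Theorem~\ref{thm:51-32}\ref*{thm:51-32a}, imply $\rho^{\xi_0-} > \rho^{\xi_0+}$, and a direct differentiation shows that $\Delta_{v,y} = \log[(1-p_y^{\xi_0-})/(1-p_y^{\xi_0+})]$ is strictly monotone in $Y_{y-\evec_1}$ with all $\mathcal F$-measurable quantities held fixed. Consequently, conditional on $\mathcal F$, the cancellation equation places $Y_{y-\evec_1}$ in an at-most-countable $\mathcal F$-measurable subset of $\R_{>0}$ (one value per $\xi_0 \in \aUset$). Since $Y_{y-\evec_1}$ has a continuous distribution and is independent of $\mathcal F$, this conditional probability vanishes; unconditioning and taking a countable union over $(x,y)$ completes the proof. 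The main obstacle I anticipate is the measurability verification for $p_{y-\evec_1}^\xi$: even though $y-\evec_1 \notin A$, the quantity must be extracted from Busemann values at vertices of $A$ via the down-right cocycle through $y-2\evec_1-\evec_2$. The monotonicity computation for $\Delta_{v,y}$ in $Y_{y-\evec_1}$ is then a short explicit calculation.
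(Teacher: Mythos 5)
Your proposal is correct and follows essentially the same route as the paper's proof: you isolate the single weight $\Yw_{y-\evec_1}$ via recovery and the cocycle structure (your $\rho^\xi$ plays the role of the paper's $\ug^{\xi\sig}$, built from the same increments into $y-\evec_2$, $y-\evec_1-\evec_2$, $y-2\evec_1$), show the cancellation equation pins $\Yw_{y-\evec_1}$ to at most one value per jump direction, and conclude via \eqref{B-ind} and the continuity of the weight distribution. The only point you gloss, which the paper handles explicitly with the sequences $\tau^{\delta,\zeta}$, is the measurable enumeration of the random set $\aUset$ needed to make your ``at most countably many bad values, one per $\xi_0\in\aUset$'' conditioning rigorous; this is routine and does not affect correctness.
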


An interpretation of Theorems~\ref{thm:51-32} and \ref{thm:allxy} in terms of semi-infinite polymer measures is given in Remark~\ref{rmk:pmid}.
Another interpretation in terms of a discrete stochastic heat equation is given in Theorem~\ref{thm:she_ig}.

 \subsection{Joint distribution of  the Busemann process}  \label{distB_sec}

This section gives a preliminary characterization of the joint distribution of the Busemann process, without full details. The complete description requires additional developments and appears in Section~\ref{mar_sec}. 
 
Once the weights $(\Yw_x)_{x\in\Z^2}$ are given, a Busemann function  $\Bus^{\xi\sig}$ is completely determined by its values  $(\Bus^{\xi\sig}_{x-\evec_1,x})_{x\in\Z^2}$  on  horizontal nearest-neighbor edges, by additivity \eqref{coc1}  and    recovery   \eqref{B_reco}. 
For this reason and stationarity, it is sufficient to describe  the joint distribution on any horizontal level.  

On each lattice level $t\in\Z$, define the sequence  ${\brvI}^{\chdir\sig}(t)=({\brvI}^{\chdir\sig}_k(t))_{k\in\Z}$   of exponentiated horizontal nearest-neighbor Busemann increments 
\begin{subequations}\label{IB5}
\be\label{IB5.a}    {\brvI}^{\chdir\sig}_k(t) = 
e^{\Bus^{\chdir\sig}_{(k-1,t),\tsp (k,t)}}, \qquad k\in\Z . \ee 
Fix $N$ directions 
$\xi_1,\dotsc, \xi_N$ in $]\evec_2,\evec_1[\,$  and signs $\sigg_1,\dotsc,\sigg_N\in\{-,+\}$.   Condense the notation of the $N$-tuple of sequences as 
\be\label{IB5.b}    {\brvI}^{(\chdir\sig)_{\parng{1}{N}}}(t) = \bigl( {\brvI}^{\chdir_1\sig_1}(t), {\brvI}^{\chdir_2\sig_2}(t),  \dotsc, {\brvI}^{\chdir_N\sig_N}(t) \bigr) \; \in \; (\R_{>0}^\Z)^N. \ee 
\end{subequations} 
The values  ${\brvI}^{(\chdir\sig)_{\parng{1}{N}}}(t+1)$ at level $t+1$ can be calculated from 
the level-$t$ values   ${\brvI}^{(\chdir\sig)_{\parng{1}{N}}}(t)$ and the level-$(t+1)$ weights $\Yw(t+1)=(\Yw_{(k,t+1)})_{k\in\Z}$  by a  mapping  encoded as  
\be\label{Bmc}
{\brvI}^{(\chdir\sig)_{\parng{1}{N}}}(t+1)=\Taop_{\Yw(t+1)}\bigl({\brvI}^{(\chdir\sig)_{\parng{1}{N}}}(t)\bigr). 
\ee
This mapping $\Taop_\Yw$, called the \textit{parallel transformation}, depends on a given sequence $\Yw$ of weights and acts on $N$-tuples of sequences.  It is defined in equation \eqref{Taop_def} in Section~\ref{twin_sec}.   
Since $\Yw(t+1)$ is independent of  ${\brvI}^{(\chdir\sig)_{\parng{1}{N}}}(t)$,  it follows that the process 
$({\brvI}^{(\chdir\sig)_{\parng{1}{N}}}(t):\,  t\in\Z)$ is an $(\R_{>0}^\Z)^N$-valued stationary Markov chain. 
Here $\R_{>0}^\Z$ and $(\R_{>0}^\Z)^N$ are equipped with the product topologies.

Translation on the sequence space $(\R_{>0}^\Z)^N$ is the operation $\tau$ that acts 
 on  $I=(I^i_k)^{i\in\lzb1,N\rzb}_{k\in\Z}\in(\R_{>0}^\Z)^N$  by shifting the $k$-index: $(\tau I)^i_k=I^i_{k-1}$.    Recall the mean \eqref{EB}.  


\begin{theorem}\label{cpthm}
Assume \eqref{ranenv}. Let $N\in\Z_{>0}$.  
The property   
\eq{    
\E[ \log {\brvI}^{\chdir_i\sig_i}_k(t)  ] =  \E[\Bus^{\xi_i\sig_i}_{(k-1,t),\tsp (k,t)}]  = \nabla\gpp(\xi_i\sigg_i)\cdot\evec_1 \qquad\text{for }  i\in\lzb1,N\rzb \text{ and } k\in\Z
}
 determines uniquely a probability  distribution $\mu$ on the   space $(\R_{>0}^\Z)^N$  that is invariant  for the Markov chain \eqref{Bmc} and stationary and ergodic under the translation $\tau$.  
 In particular, for each $t\in\Z$, the $N$-tuple of sequences  ${\brvI}^{(\chdir\sig)_{\parng{1}{N}}}(t)$ defined in \eqref{IB5}   has distribution $\mu$. 
\end{theorem}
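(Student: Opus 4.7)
Take $\mu$ to be the law of ${\brvI}^{(\chdir\sig)_{\parng{1}{N}}}(0)$ under $\P$. That $\mu$ satisfies the three asserted properties can be read off directly from Theorem~\ref{buse_full}: the parallel transformation $\Taop_W$ is defined precisely so that the level-$(t+1)$ horizontal Busemann increments are produced from the level-$t$ increments and the level-$(t+1)$ weights by successively invoking the cocycle property \eqref{coc1} and recovery \eqref{B_reco}, and by \eqref{B-ind}, $W(t+1)$ is independent of ${\brvI}^{(\chdir\sig)_{\parng{1}{N}}}(t)$; together these give invariance of $\mu$ for the chain \eqref{Bmc}. The $\tau$-stationarity is the covariance \eqref{coc2} at $u=\evec_1$ combined with $\P$-invariance of $\shift_{\evec_1}$, and the mean condition is \eqref{EB}.

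For uniqueness, let $\nu$ be any probability distribution on $(\R_{>0}^\Z)^N$ that is invariant for \eqref{Bmc}, $\tau$-stationary and ergodic, and satisfies the specified mean condition. Drawing $J(0)\sim\nu$ and extending in $t\in\Z$ via the Markov dynamics driven by an independent copy of the vertex weights, each coordinate $J^i = (J^i_k(t))_{k,t\in\Z}$ defines a recovering cocycle on $\Z^2$ in the sense of \eqref{rec_coc}. The $\tau$-ergodic theorem applied at a fixed level, combined with the mean condition, identifies the asymptotic horizontal slope of this cocycle as $\nabla\gpp(\xi_i\sigg_i)\cdot\evec_1$. A shape-theorem argument of the type underlying Theorem~\ref{all_lln}, combined with the sandwich bounds \eqref{B_monob}, then forces $J^i$ to coincide with $\Bus^{\xi_i\sig_i}$ coordinatewise, yielding $\nu=\mu$.

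The $\tau$-ergodicity of $\mu$ is proved in parallel by showing that any $\tau$-invariant event for ${\brvI}^{(\chdir\sig)_{\parng{1}{N}}}(0)$ lifts to a $\shift_{\evec_1}$-invariant event on the underlying probability space, which is trivial by the i.i.d.\ assumption \eqref{ranenv} (and by the $\shift_{\evec_1}$-covariance of the weak-limit construction in Theorem~\ref{buse_full} when \eqref{shape_ass} fails). This extends the single-direction result of \cite[Thm.~3.5]{janj-rass-20-jsp} to the joint setting.

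The main obstacle is the uniqueness half. When $\gpp$ is non-differentiable at some $\xi_i$ or contains $\xi_i$ in a linear segment, distinct direction parameters can share both the slope $\nabla\gpp(\xi_i\sigg_i)$ and the Busemann function by \eqref{B==} and \eqref{inmono}, so identifying $J^i$ purely from its asymptotic slope is delicate. Formulating the coupling so that it simultaneously pins down all $N$ coordinates, while respecting the monotonicity \eqref{B_mono} inherent in the parallel transformation, is the technical core.
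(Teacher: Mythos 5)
Your existence/invariance half is fine and matches the paper: with $\mu$ the law of ${\brvI}^{(\chdir\sig)_{\parng{1}{N}}}(0)$, invariance for \eqref{Bmc} follows from \eqref{coc1}, \eqref{B_reco} and the independence \eqref{B-ind}, $\tau$-stationarity from \eqref{coc2}, and the means from \eqref{EB}. The gaps are in the uniqueness and ergodicity halves. For uniqueness, your plan is to take an arbitrary ergodic invariant $\nu$, build from it a recovering cocycle $J^i$ with the correct horizontal slope, and then force $J^i=\Bus^{\xi_i\sig_i}$ coordinatewise via a shape-theorem argument and the sandwich \eqref{B_monob}. This step does not go through: the initial data $J(0)\sim\nu$ is an arbitrary random element independent of the driving weights, not a measurable function of them, so the cocycle it generates has no a priori relation to the bulk partition-function ratios $Z_{x_\ell,x}/Z_{x_\ell,x-\evec_1}$ appearing in \eqref{B_monob}; knowing only its slope cannot pin it down almost surely (indeed, almost-sure identification of non-adapted recovering cocycles with Busemann functions is exactly the kind of "strong 1F1S" statement the paper shows can fail at discontinuity directions, and even the weaker distributional identification is the whole content of the theorem). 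The paper instead proves purely distributional uniqueness by a contraction argument: Proposition~\ref{rho_prop} shows the update map is a weak contraction in the $\bar\rho$ distance on shift-ergodic laws, strictly contracting when two distinct ergodic measures share all $N$ means, and Theorem~\ref{uniq_thm} then gives at most one ergodic invariant law with prescribed means. This is the multi-component extension of the Chang-type argument from \cite{janj-rass-20-jsp}, and it sidesteps any coupling of $\nu$ to the Busemann process.

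The ergodicity claim has a second gap. You propose lifting a $\tau$-invariant event for ${\brvI}^{(\chdir\sig)_{\parng{1}{N}}}(0)$ to a $\shift_{\evec_1}$-invariant event that is trivial by \eqref{ranenv}. That works only if the Busemann process is a measurable function of the weights, i.e.\ under \eqref{shape_ass}; without it the process is constructed as a weak limit on an enlarged space (Remark~\ref{diffrmk}), and $\shift_{\evec_1}$-covariance of that construction does not give triviality of invariant events on the enlargement --- this is precisely why the paper flags ergodicity as something it must prove. The paper's route (proof of Theorem~\ref{B_thm9}) is an ergodic decomposition of $\mu$: using the strict contraction of Proposition~\ref{rho_prop} one shows $P$-almost every ergodic component is itself invariant for \eqref{Bmc} with the same means, and then the uniqueness theorem collapses the decomposition to a single component, yielding ergodicity of $\mu$ as a byproduct rather than an input.
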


A precise version of this theorem is stated and proved as Theorem~\ref{B_thm9} in Section~\ref{2_proc_sec}.  
Since this theorem concerns a fixed finite set of directions, the sign $\sigg_i$ makes a difference only if $\nabla\gpp(\xi_i-)\ne\nabla\gpp(\xi_i+)$, as explained in Remark~\ref{pm_rmk}. 

\begin{remark}[Vertical increments] 
\label{rmk:vB}    By the  reflection symmetry of i.i.d.~weights,  
Theorem~\ref{cpthm} applies also to  vertical Busemann increments.  
 In particular,   the processes $(I^{\xi+}_k(t):\, k,t\in\Z,\, \xi\in\,]\evec_2,\evec_1])$  
 and  $(J^{\wh\xi-}_t(k):\, k,t\in\Z,\, \xi\in\,]\evec_2,\evec_1])$ are equal in  distribution,  where $J^{\xi\sig}_k(t)$ is defined in \eqref{IB5.8}, and $\wh\xi$ is the reflection of $\xi$ across the $\evec_1+\evec_2$ direction.
This fact, though  intuitive, is not immediately apparent from Theorem~\ref{buse_full}.
\qedex\end{remark}

%
%

\subsection{Competition interface directions}  \label{sec:cif} 
We define the competition interface from \cite{geor-rass-sepp-yilm-15,janj-rass-20-aop}. 
By \eqref{qmpf} the point-to-point polymer measure $Q_{u,v}$ from \eqref{hcq}   is an up-right Markov chain starting at $u$ and ending at $v$, with transition probabilities
\eq{
\pi_v(x,x+\evec_r) = \Yw_{x+\evec_r}\frac{Z_{x+\evec_r,v}}{Z_{x,v}}, \quad x< v,\ r\in\{1,2\}.
}
Given a realization of the weights $(\Yw_x)$, these walks can be coupled  as follows.

For $\w\in\Omega$, let $\Qwalks^\w$ be a probability measure under which the  weights 
have been fixed:
\eeq{ \label{fnv8}
\Qwalks^\w\{\text{$\Yw_x = \Yw_x(\w)$ for all $x\in\Z^2$}\} = 1. 
}
Assume there is a family of random variables $(U_x)_{x\in\Z^2}$ that are i.i.d.~uniform on $(0,1)$ under $\Qwalks^\w$.
Recall the levels $\level_n=\{v\in\Z^2: n=v\cdot(\evec_1+\evec_2)\}$ from \eqref{leveldef}.
For each pair $u<v$ with $u\in\level_\ell$ and $v\in\level_n$, define the path $X^{u,v}_\bbullet = X^{u,v}_{\parng{\ell}{n}}$ starting at $X^{u,v}_\ell = u$ and proceeding up or right according to the following rule.
If $\ell\leq m<n$ and $X^{u,v}_m=x\in\level_m$, then set
\eeq{ \label{buqp1}
  X^{u,v}_{m+1} =  
\begin{cases}  x  +  \evec_1  &\text{if }U_x \leq \pi_v(x, x+\evec_1), \\[4pt]  
 x  +  \evec_2   &\text{if }U_x > \pi_v(x, x+\evec_1). 
\end{cases}  
}
Under $\Qwalks^\w$, $X^{u,v}_{\bbullet}$ has  law $Q_{u,v}$.
Furthermore, if $X^{u_1,v}_m=X^{u_2,v}_m$, then $X^{u_1,v}_{m+1}=X^{u_2,v}_{m+1}$ since the right-hand side of \eqref{buqp1} does not depend on $u$.
For any given $v\in\level_n$, the sets $\{u:\, X^{u,v}_{n-1}=v-\evec_1\}$ and $\{u:\, X^{u,v}_{n-1}=v-\evec_2\}$ are disjoint; so by planarity, these two clusters are separated by a down-left path $\varphi^v = \varphi^v_{\parng{-\infty}{n}}$. (Figure~\ref{Fig:cif} gives an example where $v=\zevec$.) 
This path is   the \textit{competition interface}. Under assumption \eqref{shape_ass}, it was shown in \cite[Thm.~3.12]{janj-rass-20-aop} to have a random asymptotic direction $\xi^*(v)$:  
  for $\P$-almost every $\w$, there is a quenched law of large numbers  
\[  \Qwalks^\w\Big\{ \ddd\lim_{n\to-\infty} n^{-1}\varphi^v_n = \xi^*(v)\Big\}=1\]
with the limit distribution 
\be\label{xilaw5}  
\Qwalks^\w\{ \xi^*(v)  \preccurlyeq \xi \} = \Yw_v \tspb e^{-\Bus^{\xi+}_{v-\evec_1, v}}  , \qquad \xi\in\,]\evec_2, \evec_1[\,.  
\ee
The appearance of the Busemann function in \eqref{xilaw5} suggests a connection to semi-infinite polymer measures, and that is what our paper addresses.

\begin{figure}
\begin{tikzpicture}[>=latex,scale=0.5]
\draw[->] (0,0)--(-10,0);
\draw[->] (0,0)--(0,-10);
\draw(0.38,0.36)node{$\zevec$};
\draw(0.93,-1)node{$-\evec_2$};
\draw(-1,0.5)node{$-\evec_1$};
\draw[line width=1pt](-9,0)--(0,0)--(0,-9); 
\draw[line width=1pt](-2,0)--(-2,-4)--(-5,-4)--(-5,-7)--(-7,-7)--(-7,-9)--(-9,-9);
\draw[line width=1pt](-1,0)--(-1,-3);
\draw[line width=1pt](0,-4)--(-1,-4)--(-1,-5)--(-2,-5)--(-2,-7);
\draw[line width=1pt](-2,-5)--(-4,-5)--(-4,-6);
\draw[line width=1pt](-2,-6)--(-3,-6)--(-3,-8)--(-5,-8)--(-5,-9)--(-6,-9);
\draw[line width=1pt](-3,-7)--(-4,-7);
\draw[line width=1pt](-6,-7)--(-6,-8);
\draw[line width=1pt](0,-6)--(-1,-6)--(-1,-8)--(-2,-8)--(-2,-9)--(-3,-9);
\draw[line width=1pt](-4,-8)--(-4,-9);
\draw[line width=1pt](0,-9)--(-1,-9);
\draw[line width=1pt](-5,-4)--(-7,-4)--(-7,-5)--(-9,-5)--(-9,-8);
\draw[line width=1pt](-6,-4)--(-6,-6)--(-7,-6);
\draw[line width=1pt](-8,-5)--(-8,-8);
\draw[line width=1pt](-2,-1)--(-8,-1)--(-8,-2)--(-9,-2);
\draw[line width=1pt](-2,-2)--(-4,-2)--(-4,-3)--(-6,-3);
\draw[line width=1pt](-9,0)--(-9,-1);

\draw[line width=1pt](-5,-1)--(-5,-2)--(-7,-2)--(-7,-3);
\draw[line width=1pt](-8,-2)--(-8,-4)--(-9,-4);
\draw[line width=1pt](-3,-2)--(-3,-3);
\draw[line width=1pt](-8,-3)--(-9,-3);

\draw[line width=3.0pt](-.5,-.5)--(-0.5,-3.5)--(-1.5,-3.5)--(-1.5,
-4.5)--(-4.5,-4.5)--(-4.5,-7.5)--(-5.5,-7.5)--(-5.5,-8.5)--(-6.5,-8.5)--(-6.5,
-9.5); 
\foreach\x in {0,...,-9}{
 \foreach\y in {0,...,-9}{
\shade[ball color=light-gray](\x,\y)circle(1.5mm); 
}}
\end{tikzpicture}
\caption{\small   A sample of all finite polymer paths terminating at $\zevec$, coupled via \eqref{buqp1}.
The competition interface $\varphi^\zevec$ is the solid line on the dual lattice $\Z^2+(-\tfrac12,-\tfrac12)$. Paths from the west and north of $\varphi^\zevec$ reach $\zevec$ through $-\evec_1$, while paths from the east and south of $\varphi^\zevec$ reach $\zevec$ through $-\evec_2$.}
\label{Fig:cif}
\end{figure}

Consider now the family of Gibbs measures $(Q^{\xi\sig}_v)_{v\in\Z^2}$ associated to the Busemann function $\Bus^{\xi\sig}$ as in Theorem~\ref{pococthm}.
In other words,   $Q^{\xi\sig}_v$  is the quenched distribution of semi-infinite southwest paths rooted at $v\in\level_n$. 
Each $Q^{\xi\sig}_v$  is a down-left Markov chain 
with  transition probabilities
\be\label{pi36}   \pi^{\xi\sig}(x, x-\evec_r) = \Yw_x \tspc e^{-\Bus^{\xi\sig}_{x-\evec_r, x}}  \,, \quad x\in\Z^2, \  r\in\{1,2\}. 
\ee
Note that these transition probabilities inherit the monotonicity of the Busemann process: if either $\zeta\prec\eta$ or $(\zeta\sigg, \eta\sigg')=(\xi-,\xi+)$,  then \eqref{B_mono1} implies
\eeq{ \label{pi_mono}
\pi^{\zeta\sig}(x,x-\evec_1)\leq \pi^{\eta\sig'}(x,x-\evec_1).
}
\begin{remark}[Polymer-measure interpretation of results on discontinuity set] \label{rmk:pmid}
Theorem~\hyperref[thm:51-32a]{\ref*{thm:51-32}\ref*{thm:51-32a}} says that if $\pi^{\xi-}(x,x-\evec_1)<\pi^{\xi+}(x,x-\evec_1)$ for some $x\in\Z^2$, then the same strict inequality holds for \textit{all} $x$.
If in addition the weights $(\Yw_x)_{x\in\Z^2}$ have a continuous distribution, then Theorem~\ref{thm:allxy} implies that $Q_v^{\xi-}$ and $Q_v^{\xi+}$ do not agree on any marginal: assuming $v\in\level_n$ and $m<n$, we have
\[
Q_v^{\xi-}(X_{m}=u) \neq Q_v^{\xi+}(X_{m}=u) \quad \text{for every $u\in\level_m$, $u\le v$, $\xi\in\aUset$} \qedhere.
\]
This is because the probability of reaching $u$ is determined by the value of $\Bus_{u,v}^{\xi\sig}$:
\eq{
Q_v^{\xi\sig}(X_{m}=u) 
= \sum_{x_\brbullet\tspa\in\tspa\pathsp_{u,v}} \prod_{i=m+1}^{n}\pi^{\xi\sig}(x_i,x_{i-1}) 
\stackrel{\eqref{pi36},\eqref{coc1}}{=}e^{-\Bus^{\xi\sig}_{u,v}}\sum_{x_\brbullet\tspa\in\tspa\pathsp_{u,v}} \prod_{i=m+1}^{n}\wgtd_{x_i}
\stackref{part56}{=} e^{-\Bus^{\xi\sig}_{u,v}}Z_{u,v}. 
\rlap{\hspace*{9pt}$\triangle$}
}
\end{remark}



We now proceed to couple all the distributions $(Q^{\xi\sig}_v:\, \xi\in\,]\evec_2, \evec_1[\,,\, \sigg\in\{-,+\}, v\in\Z^2)$.
For each $\w\in\Omega$, let $\Qwalks^\w$ be as in \eqref{fnv8} with the additional guarantee of fixing the values of the Busemann process:\footnote{When \eqref{shape_ass} is assumed, \eqref{fnv9} is implied by \eqref{fnv8} because then the Busemann process is a function of the weights (see Remark~\ref{diffrmk}).}
\eeq{ \label{fnv9}
\Qwalks^\w\{\text{$\Bus^\rbbullet=\Bus^\rbbullet(\w)$}\} = 1.
}
This means the transition probability $\pi^{\xi\sig}(x,x-\evec_r)$ in \eqref{pi36} is deterministic under $\Qwalks^\w$. 
  For each direction $\xi\in\,]\evec_2, \evec_1[$,  sign $\sigg\in\{-,+\}$,  root  vertex $v\in\level_n$,  and tiebreaker  $\tiebr\in\{\evec_1, \evec_2\}$,  define the random path  $X^{v, \xi\sig, \tiebr}_\bbullet = X^{v, \xi\sig, \tiebr}_{\parng{-\infty}{n}}$ inductively as follows.  
Fix the root location $X^{v, \xi\sig, \tiebr}_n=v$.
For $m\le n$, if $X^{v, \xi\sig, \tiebr}_m$ is equal to $x\in\level_m$, then set 
\be\label{X129} 
  X^{v, \xi\sig, \tiebr}_{m-1} =  
\begin{cases}  x  -  \evec_1  &\text{if }U_x < \pi^{\xi\sig}(x, x-\evec_1), \\[4pt]  
 x  -  \evec_2   &\text{if }U_x > \pi^{\xi\sig}(x, x-\evec_1), \\[4pt]  
 x  -  \tiebr  &\text{if }U_x = \pi^{\xi\sig}(x, x-\evec_1). 
\end{cases}  
\ee
%
Under $\Qwalks^\w$, the path  $X^{v, \xi\sig, \tiebr}_\bbullet$ has law $Q^{\xi\sig}_v$ because its transition probability from $x$ to $x-\evec_1$ is clearly $\pi^{\xi\sig}(x,x-\evec_1)$.
The tiebreaker $\tiebr$ is included because $\xi$ takes uncountably many values.
Indeed, for any fixed $\xi\sigg$, we have $\Qwalks^\w\{U_x = \pi^{\xi\sig}(x, x-\evec_1)\}=0$ and so the walks $X^{v, \xi\sig, \evec_1}_\bbullet$ and $X^{v, \xi\sig, \evec_2}_\bbullet$ agree $\Qwalks^\w$-almost surely.
But considering all values of $\xi\sigg$ simultaneously leaves open the possibility that $X^{v, \xi\sig, \evec_1}_\bbullet$ and $X^{v, \xi\sig, \evec_2}_\bbullet$ separate at some lattice vertex.  

Notice that the protocol \eqref{X129} does not depend on $v$.
That is, for given $\xi\sigg$ and $\tiebr$, any two walks $X^{v_1, \xi\sig, \tiebr}_\bbullet$ and $X^{v_2, \xi\sig, \tiebr}_\bbullet$ that meet at some $x\leq v_1\wedge v_2$ will thereupon remain together forever.
Therefore, it suffices to understand the behavior of $X^{x,\xi\sig,\tiebr}_\bbullet$ at $x$, which is the content of the following theorem.


\begin{theorem}\label{thm:eta10}   
Assume \eqref{ranenv}.
For $\P$-almost every $\w$, 
the following holds. Under $\Qwalks^\w$ there exist independent $\,]\evec_2, \evec_1[\,$-valued random directions $(\eta^*(x))_{x\in\Z^2}$ with the following properties. 
\begin{enumerate} [label={\rm(\alph*)}, ref={\rm(\alph*)}] \itemsep=3pt 
\item\label{eta10a}  The marginal distribution is, for $\eta \in\,]\evec_2, \evec_1[\,$, 
\be\label{Q-eta14} 
\Qwalks^\w\{  \eta^*(x)\preccurlyeq \eta\}  =   \pi^{\eta+}(x,x-\evec_1)
\ee

\item\label{eta10b}   Let $x\in\level_m$.    Then $\Qwalks^\w$-almost surely the walks \eqref{X129} behave as follows at $x$.  \\[-9pt]   
\begin{enumerate} [label={\rm(b.\roman*)}, ref={(b.\roman*)}] \itemsep=3pt 

\item\label{eta10bi}  Suppose  $\zeta \prec \eta^*(x)  \prec \eta$. Then  for both signs $\sigg\in\{-,+\}$ and tiebreakers  $\tvec\in\{\evec_1, \evec_2\}$, 
 $X^{x,\zeta\sig, \tiebr}_{m-1} =x-\evec_2$   and   $X^{x,\eta\sig, \tiebr}_{m-1} =x-\evec_1$.  

\item\label{eta10bii}  Suppose $\xi = \eta^*(x)\notin\aUset$.  Then the tiebreaker separates the walks but the $\pm$ distinction has no effect:    for both $\sigg\in\{-,+\}$, 
$  X^{x, \xi\sig, \evec_2}_{m-1}=x-\evec_2$   and   $X^{x, \xi\sig, \evec_1}_{m-1} =x-\evec_1$. 

\item\label{eta10biii}  Suppose $\xi=\eta^*(x)\in\aUset$.    Then the $\pm$ distinction separates the walks but the tiebreaker has no effect: for both $\tvec\in\{\evec_1, \evec_2\}$, 
$X^{x, \xi-, \tvec}_{m-1}=x-\evec_2$   and   $X^{x, \xi+, \tvec}_{m-1} =x-\evec_1$. 

\end{enumerate} 

\end{enumerate} 

\end{theorem}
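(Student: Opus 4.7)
The plan is to define $\eta^*(x)$ as the generalized inverse of the non-decreasing function $F_x(\eta) \coloneqq \pi^{\eta+}(x, x-\evec_1)$ evaluated at $U_x$, and to verify the three sub-cases of part (b) by tracking where $U_x$ falls relative to the discontinuities and constancy intervals of $F_x$. The marginal law in (a) is then a standard distributional identity, and independence follows because $\eta^*(x)$ is a deterministic function of $U_x$ alone.

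First I would collect the structural properties of $F_x$. It is non-decreasing by \eqref{pi_mono}, right-continuous by \eqref{B_lim1}, and has endpoint limits $0$ at $\evec_2$ and $1$ at $\evec_1$ by \eqref{B_lim}. Its jumps lie exactly in $\aUset$, which is countable by Theorem~\ref{thm:51-32}\ref{thm:51-32a}. Moreover, by Theorem~\ref{thm:78-63} combined with \eqref{B==}, the maximal constancy intervals of $F_x$ correspond to the (at most countably many) maximal closed linear segments of $\gpp$. Setting
\[
\eta^*(x) \coloneqq \inf\bigl\{\eta \in \,]\evec_2, \evec_1[\,:\, F_x(\eta) \geq U_x\bigr\}
\]
thus yields an element of $\,]\evec_2, \evec_1[\,$, and the identity $\{\eta^*(x) \preceq \eta\} = \{U_x \leq F_x(\eta)\}$ (which uses right-continuity of $F_x$) combined with the uniform law of $U_x$ gives \eqref{Q-eta14}. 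Independence of $(\eta^*(x))_{x \in \Z^2}$ is then immediate: under $\Qwalks^\w$ the Busemann process is fixed by \eqref{fnv9}, making each $\eta^*(x)$ a measurable function of $U_x$ alone, and the $(U_x)$ are i.i.d.\ under $\Qwalks^\w$.

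Part (b) will be proved on the complement of the $\Qwalks^\w$-null event that $U_x$ equals one of the countably many exceptional values: a plateau constant of $F_x$, or a one-sided limit $F_x(\xi-)$ or $F_x(\xi)$ with $\xi \in \aUset$. On this complement, case (b.i) follows from two inequalities combined with monotonicity \eqref{pi_mono}: for $\zeta \prec \eta^*(x)$ the definition of the infimum gives $\pi^{\zeta+}(x, x-\evec_1) = F_x(\zeta) < U_x$, while for $\eta \succ \eta^*(x)$ the exclusion of plateau constants and jump left-limits forces $\pi^{\eta-}(x, x-\evec_1) = F_x(\eta-) > U_x$; these push both signs to the stated corners irrespective of tiebreaker. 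In case (b.ii), $\xi = \eta^*(x) \notin \aUset$ makes $F_x$ continuous at $\xi$, and the plateau exclusion pins $\pi^{\xi-}(x, x-\evec_1) = \pi^{\xi+}(x, x-\evec_1) = U_x$, so both signs invoke the tiebreaker. In case (b.iii), $\xi \in \aUset$ is a jump of $F_x$, and the boundary-value exclusion yields the strict two-sided inequality $\pi^{\xi-}(x, x-\evec_1) < U_x < \pi^{\xi+}(x, x-\evec_1)$, which splits the two signs independently of the tiebreaker.

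The main obstacle is the uncountability of the directions $\zeta, \eta$ appearing in part (b.i): the walks must be controlled simultaneously for all $\zeta \prec \eta^*(x) \prec \eta$, which requires ruling out the equality $U_x = F_x(\eta-)$ at any such $\eta$. The resolution is the observation that these potential bad values of $U_x$ collapse to a countable set (the plateau constants of $F_x$ together with the one-sided limits at its jumps), a consequence of the countability of $\aUset$ from Theorem~\ref{thm:51-32}\ref{thm:51-32a} and of the maximal linear segments of the concave shape function $\gpp$; both of these inputs are invoked essentially.
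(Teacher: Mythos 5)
Your proposal is correct and follows essentially the same route as the paper: the paper's Proposition~\ref{pr:eta6} is precisely the generalized-inverse construction of $\eta^*(x)$ from $U_x$ and the monotone map $\eta\mapsto\pi^{\eta\sig}(x,x-\evec_1)$, with the degenerate cases identified (via Theorem~\ref{thm:78-63} and \eqref{B==}) with maximal linear segments of $\gpp$ and removed by excluding countably many values of $U_x$, and part~(b) read off from the dichotomy $U_x=\pi^{\eta^*(x)\pm}$ versus $\pi^{\eta^*(x)-}<U_x<\pi^{\eta^*(x)+}$. The only cosmetic difference is that you use the right-continuous version $F_x=\pi^{\rcbullet+}$ and the standard quantile identity to get \eqref{Q-eta14} in one step, where the paper introduces the pair $\eta^{*1}_x\preccurlyeq\eta^{*2}_x$ and sandwiches the distribution function between $\pi^{\eta-}$ and $\pi^{\eta+}$ before passing to the limit.
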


\begin{remark}[Relation to competition interface] \label{bhg3}
There is an obvious duality between the constructions of $\xi^*(x)$ and $\eta^*(x)$.
The former separates finite up-right paths ending at $x$, while the latter separates semi-infinite down-left paths starting at $x$.
Comparison of \eqref{xilaw5} and \eqref{Q-eta14} shows that the two directions have the same quenched distribution.
One compelling aspect of our construction is that $(\eta^*(x))_{x\in\Z^2}$ is an independent family under $\Qwalks^\w$, whereas $(\xi^*(x))_{x\in\Z^2}$ is not.
This allows us in Theorem~\ref{thm:eta20} below to relate the interface directions to discontinuities of the Busemann process.
Another advantage is that Theorem~\ref{thm:eta10} does not require the regularity assumption \eqref{shape_ass}.
A disadvantage is that there is no canonical way to identify an interface with asymptotic direction $\eta^*(x)$, since two paths $X^{v_1, \zeta\sig, \tiebr}_\bbullet$ and $X^{v_2, \eta\sig', \tiebr'}_\bbullet$ can separate and rejoin several times.

While our presentation has coupled $\xi^*(x)$ and $\eta^*(x)$ through the same auxiliary randomness in \eqref{buqp1} and \eqref{X129}, this is purely for simplicity, and there may be a more natural coupling offering additional insights.
The connections between $\xi^*$, $\eta^*$, the geometry of polymer paths,  and the regularity of the Busemann process are largely left open, elucidated in Remark~\ref{rm:open4}.  
In Section~\ref{sec:ig_xi}  we resolve some of these questions in the inverse-gamma case.  
\qedex\end{remark}

 \begin{remark}[Comparison with zero temperature, part 1] \label{bhg4}
In LPP there is no need for the auxiliary randomness supplied by $(U_x)$, since in that setting the fundamental objects are geodesic paths rather than path measures. 
The finite paths in \eqref{buqp1} are analogous to finite geodesics, while the semi-infinite paths in \eqref{X129} are analogous to semi-infinite geodesics defined by Busemann functions (see \cite[eq.~(2.12)]{janj-rass-sepp-23}).
Those two families of geodesics share the same interface, so there is no distinction between $\xi^*(x)$ and $\eta^*(x)$ at zero temperature.
That interface is defined so as to separate geodesics passing through $x-\evec_1$ from those passing through $x-\evec_2$, just as in Figure~\ref{Fig:cif}.
See \cite[Thm.~A.8]{janj-rass-sepp-23} for a precise accounting of properties that follow.
\qedex\end{remark}


We record further properties of our interface directions in the next theorem.
One of the statements makes the (highly non-trivial) assumption that the Busemann process is pure-jump:
\eeq{ \label{pure_jump}
\Bus^{\zeta-}_{x-\evec_1,x}-\Bus^{\eta+}_{x-\evec_1,x}
= \sum_{\xi\in\aUset\cap[\zeta,\eta]}[\Bus^{\xi-}_{x-\evec_1,x}-\Bus^{\xi+}_{x-\evec_1,x}] \quad
\text{for all $\zeta\prec\eta$ in $]\evec_2,\evec_1[\,$, $x\in\Z^2$}.
}

\begin{theorem}\label{thm:eta20} 
Assume \eqref{ranenv}.
The following holds $\Qwalks^\w$-almost surely, for $\P$-almost every $\w$. 
\begin{enumerate} [label={\rm(\alph*)}, ref={\rm(\alph*)}]
\item \label{thm:eta20b} Each direction $\xi\notin\aUset$ appears at most once among  $\{\eta^*(x):\,  x\in\Z^2\}$.
\item \label{thm:eta20bb}
Under additional assumption \eqref{pure_jump},   $\{\eta^*(x):\,  x\in\Z^2\}\subset\aUset$.
\end{enumerate} 
For the next three statements  assume  \eqref{ranenv} and regularity assumption \eqref{shape_ass}. 
\begin{enumerate} [resume,label={\rm(\alph*)}, resume,ref={\rm(\alph*)}]   \itemsep=2pt
\item  \label{thm:eta20d}  Suppose the pair $(\zeta\sigg, \eta\sigg')$ satisfies one of these two conditions:  
\begin{itemize} \itemsep=1pt
\item $(\zeta\sigg, \eta\sigg')=(\xi-,\xi+)$ for some  $\xi\in\aUset$; or
\item   $\zeta\prec\eta$ do not lie on the same closed linear segment of $\gpp$. 
\end{itemize} 
Then for each $v\in\Z^2$ and any tiebreakers $\tvec, \tvec'\in\{\evec_1, \evec_2\}$,  the walks  $X^{v, \zeta\sig,\tvec}_\bbullet$ and  $X^{v, \eta\sigg',\tvec'}_\bbullet$ eventually separate permanently.
That is, there exists $m>-\infty$ such that 
$X^{v, \eta\sig',\tvec'}_\ell \prec X^{v, \zeta\sig,\tvec}_\ell$ for all $\ell\le m$.  

\item  \label{thm:eta20a} Each discontinuity direction $\xi\in\aUset$ appears infinitely often among $\{\eta^*(x):\,  x\in\Z^2\}$.  
\item \label{thm:eta20c} The set $\{\eta^*(x):\,  x\in\Z^2\}$ is dense in $\,]\evec_2,\evec_1[\,$ in the complement of linear segments of $\gpp$.
\end{enumerate}  
 \end{theorem}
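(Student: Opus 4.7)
My plan is to combine the marginal distribution \eqref{Q-eta14}, the strict monotonicity of Theorem~\ref{thm:78-63}, the horizontal ergodicity of Busemann increments from Theorem~\ref{cpthm}, and the $\Qwalks^\w$-independence of $(\eta^*(x))_{x\in\Z^2}$ furnished by Theorem~\ref{thm:eta10} in a quenched second Borel--Cantelli argument.

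Fix $\zeta \prec \eta$ in $\,]\evec_2, \evec_1[\,$ that do not lie on a common closed linear segment of $\gpp$. Then \eqref{Q-eta14} and \eqref{pi36} give
\[
\Qwalks^\w\{\eta^*(x) \in (\zeta, \eta]\} = \Yw_x \bigl(e^{-\Bus^{\eta+}_{x-\evec_1,x}} - e^{-\Bus^{\zeta+}_{x-\evec_1,x}}\bigr).
\]
By Theorem~\ref{thm:78-63} combined with \eqref{B_mono1}, $\Bus^{\zeta+}_{x-\evec_1,x} > \Bus^{\eta+}_{x-\evec_1,x}$ $\P$-almost surely, so this $[0,1]$-valued quantity is $\P$-a.s.\ positive. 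Let $p(\zeta, \eta) > 0$ be its $\P$-expectation, which is independent of $x$ by translation-covariance \eqref{coc2} and the i.i.d.\ property \eqref{ranenv}. Since the horizontal nearest-neighbor Busemann increments are $\evec_1$-stationary and ergodic (Theorem~\ref{cpthm}), Birkhoff's theorem yields
\[
\lim_{n\to\infty}\frac{1}{n}\sum_{k=1}^n \Qwalks^\w\bigl\{\eta^*((k,0)) \in (\zeta, \eta]\bigr\} = p(\zeta, \eta) \quad \P\text{-a.s.},
\]
whence $\sum_{k\in\Z}\Qwalks^\w\{\eta^*((k,0)) \in (\zeta, \eta]\} = \infty$ $\P$-almost surely. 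Because the $\eta^*(x)$ are independent under $\Qwalks^\w$, the second Borel--Cantelli lemma then produces, with full $\Qwalks^\w$-probability, infinitely many $k \in \Z$ satisfying $\eta^*((k,0)) \in (\zeta, \eta]$.

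To conclude, enumerate a countable base $\{(\zeta_i, \eta_i)\}_{i\in\N}$ of open intervals in $\,]\evec_2, \evec_1[\,$ with rational endpoints, and intersect over those $i$ for which $\zeta_i$ and $\eta_i$ do not share a linear segment to obtain a single full-probability event on which every such interval is hit by $\{\eta^*(x): x\in\Z^2\}$. For any $\xi$ in the complement of linear segments of $\gpp$, strict concavity at $\xi$ guarantees that arbitrarily small intervals of the chosen family contain $\xi$, yielding the desired density.

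The main difficulty is establishing the quenched divergence of the series $\sum_k\Qwalks^\w\{\eta^*((k,0))\in(\zeta,\eta]\}$, which requires ergodicity to translate the $\P$-a.s.\ positivity from Theorem~\ref{thm:78-63} into an $\evec_1$-averaged positive lower bound. The remaining step of choosing intervals whose endpoints are not confined to a common linear segment is automatic once strict concavity is available at each target direction.
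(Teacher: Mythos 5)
Your proposal addresses only part~\ref{thm:eta20c} of the theorem. Parts~\ref{thm:eta20b}, \ref{thm:eta20bb}, \ref{thm:eta20d}, and \ref{thm:eta20a} are not touched at all, and they are not byproducts of your argument. In particular, part~\ref{thm:eta20d} (permanent separation of the walks $X^{v,\zeta\sig,\tvec}_\bbullet$ and $X^{v,\eta\sig',\tvec'}_\bbullet$) is the technical core of the paper's proof: it uses extremality of the Gibbs measures $Q^{\xi\sig}_v$ under assumption \eqref{shape_ass} to get the quenched Busemann limits $Z_{X_m,x}/Z_{X_m,x-\evec_1}\to e^{\Bus^{\xi\sig}_{x-\evec_1,x}}$ simultaneously for all $\xi$ along the sampled walks, so that infinitely many meetings would force $\Bus^{\zeta\sig}_{x-\evec_1,x}=\Bus^{\eta\sig'}_{x-\evec_1,x}$, contradicting Theorems~\ref{thm:78-63}/\ref{thm:51-32}. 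The paper then deduces \ref{thm:eta20a} and \ref{thm:eta20c} from \ref{thm:eta20d} via Theorem~\ref{thm:eta10}\ref{eta10bi}/\ref{eta10biii}. Your Borel--Cantelli scheme cannot replace this for part~\ref{thm:eta20a}: for a fixed $\w$-dependent direction $\xi\in\aUset$, the quenched atom probabilities $\pi^{\xi+}((k,0),\cdot)-\pi^{\xi-}((k,0),\cdot)$ are strictly positive at every site (Theorem~\ref{thm:51-32}), but since $\xi=\xi(\w)$ is selected from the environment you cannot invoke stationarity in $k$ to force their sum to diverge, so "infinitely often" for a given discontinuity direction does not follow. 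Parts~\ref{thm:eta20b} and \ref{thm:eta20bb} need separate (short) arguments based on independence of $(\eta^*(x))$ with a common atom set, and on \eqref{pure_jump} summing the jumps to $1$ via \eqref{pi-deg}, respectively.

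For the density statement itself, your route is genuinely different from the paper's and essentially sound: quenched positivity of $\Qwalks^\w\{\eta^*(x)\in\,]\zeta,\eta]\}$ from Theorem~\ref{thm:78-63}, independence of $(\eta^*(x))$ under $\Qwalks^\w$, second Borel--Cantelli, and a countable family of intervals. Two remarks: (i) your appeal to Birkhoff with Theorem~\ref{cpthm} is not literally justified, because the summand involves $\Yw_{(k,0)}$ (equivalently a vertical increment through recovery), which is not one of the components whose joint $\evec_1$-ergodicity Theorem~\ref{cpthm} asserts; but ergodicity is not needed -- $\P$-stationarity in $k$ of the strictly positive summands already forces $\sum_k\Qwalks^\w\{\eta^*((k,0))\in\,]\zeta,\eta]\}=\infty$ almost surely; (ii) with this fix your argument for the density part does not even require \eqref{shape_ass}, which is a modest strengthening over the paper's derivation of \ref{thm:eta20c} from \ref{thm:eta20d}. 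Still, as a proof of the theorem as stated, the proposal is incomplete.
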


  
  As we will see in the proof, the effect of assumption \eqref{pure_jump} in part~\ref{thm:eta20bb} is to eliminate the third possibility in \eqref{X129}. 
  This renders tiebreakers unnecessary and rules out case~\ref{eta10bii} in Theorem~\ref{thm:eta10}.
  Meanwhile, parts~\ref{thm:eta20d}--\ref{thm:eta20c} utilize the extremality of the polymer Gibbs measures $Q^{\xi\sig}_u$, which presently has been proved only under assumption \eqref{shape_ass} \cite{janj-rass-20-aop}. 

 \begin{remark}[Comparison with zero temperature, part 2] \label{bhg5}
In LPP with continuous weights, the almost-sure uniqueness of finite geodesics implies that once semi-infinite geodesics separate, they cannot meet again.
Theorem~\hyperref[thm:eta20d]{\ref*{thm:eta20}\ref*{thm:eta20d}} is the analogous result here.
It is not possible to eliminate all reunions since the uniform variables $(U_x)$ guiding the polymer walks are chosen independently, which allows any two walks $X^{v, \zeta\sig,\tvec}_\bbullet$ and  $X^{v, \eta\sigg',\tvec'}_\bbullet$ to meet with positive $\Qwalks^\w$-probability even after separating.

Parts~\ref{thm:eta20b} and \ref{thm:eta20a} are similar to the statement in LPP that 
the set $\{\xi^*(x):\,  x\in\Z^2\}$ lies in the union of the supports of the Lebesgue--Stieltjes measures of the Busemann functions $\xi\mapsto \Bus^{\xi+}_{x,y}$ \cite[Thm.~3.7(a)]{janj-rass-sepp-23}.
In the exponential case,  \cite[Thm.~3.4]{fan-sepp-20} shows $\xi\mapsto \Bus^{\xi+}_{x,y}$ are step functions in analogy with \eqref{pure_jump}, so $\{\xi^*(x):\,  x\in\Z^2\}$ is exactly the union of their jump locations \cite[Thm.~3.7(b)]{janj-rass-sepp-23} as in parts~\ref{thm:eta20bb} and \ref{thm:eta20a}.
We prove analogous statements for the inverse-gamma polymer in Theorems~\ref{B-th5} and \ref{thm:ig-cif}.

Finally, part~\ref{thm:eta20c} is a positive-temperature version of \cite[Thm.~3.8(b)]{janj-rass-sepp-23}.
\qedex\end{remark}


\begin{remark}[Open questions]  \label{rm:open4}   $ $ 
\begin{enumerate} [label={\rm(\Roman*)}, ref={\rm(\Roman*)}]  \itemsep=2pt 
\item\label{open4.a}  The fundamental   question is whether the Busemann functions $\xi\mapsto\Bus^{\xi+}_{x-\evec_1,x}$ are continuous.
If not, does the set $\{\eta^*(x):\,  x\in\Z^2\}$ consist entirely of discontinuities of the Busemann functions as in Theorem~\hyperref[thm:eta20]{\ref*{thm:eta20}\ref*{thm:eta20bb}}? 
If so, then the existence and denseness of these discontinuities would follow from Theorem~\hyperref[thm:eta20c]{\ref*{thm:eta20}\ref*{thm:eta20c}}.

\item\label{open4.b}   Do the rich connections between the regularity of the Busemann process and the geometric properties of semi-infinite geodesics in LPP found in \cite[Sec.~3.1]{janj-rass-sepp-23} appear in some form for positive-temperature polymers?   For example, it follows from the coalescence theorem in \cite[App.~A.2]{janj-rass-20-aop} that for each pair  $x,y\in\Z^2$  there exists a dense open subset $\cA\subset\,]\evec_2,\evec_1[\,$ with the following property. 
For each open subinterval  $\,]\zeta, \eta[\,$ of $\cA$, there exists a pair of finite down-right paths that emanate  from $x$ and $y$ and meet at a point $z$, and for each direction $\xi\in\,]\zeta, \eta[\,$, sign $\sigg\in\{-,+\}$ and tiebreaker $\tvec$,  the walks   $X^{x,\xi\sig,\tvec} _\bbullet$ and $X^{y,\xi\sig,\tvec} _\bbullet$  follow these paths to their coalescence point.  Are the coalescence points related  to singularities of the Busemann functions or to the directions $\xi^*(x)$ or $\eta^*(x)$? 
\end{enumerate} 
In Section~\ref{sec:ig_xi} we answer part~\ref{open4.a} in the affirmative for the inverse-gamma polymer, by verifying the pure-jump hypothesis \eqref{pure_jump}. 
Determining whether \eqref{pure_jump} holds in greater generality is an important open problem.
The questions in part~\ref{open4.b} are left for the future even in the exactly solvable case.  
\qedex\end{remark}





The remainder of this section proves Theorems~\ref{thm:eta10} and \ref{thm:eta20}, by appeal to Theorems~\ref{thm:78-63} and \ref{thm:51-32}. 
The proposition below establishes the existence and uniqueness of the directions that dictate where walks split. 
We choose to define our objects in sufficient generality to account for zero-probability events, since that has turned out to be necessary for a full understanding in the zero-temperature case.
Hence below we first define two values $\eta^{*1}_x\preccurlyeq\eta^{*2}_x$ and then show that they agree $\Qwalks^\w$-almost surely for $\P$-almost every $\w$. 

For use below, note that  the limits in \eqref{B_lim} give the degenerate transition kernels 
\be\label{pi-deg} \begin{aligned} 
 \pi^{\evec_r}(x, x-\evec_r)  &=   \lim_{\xi\to\evec_r} \pi^{\xi\sig}(x,x-\evec_r)  =1  \\
 \text{and}\quad  
\pi^{\evec_r}(x, x-\evec_{3-r})  &=   \lim_{\xi\to\evec_r} \pi^{\xi\sig}(x, x-\evec_{3-r})  =0, \quad r\in\{1,2\}. 
\end{aligned} \ee

\begin{proposition} \label{pr:eta6}
For $\P$-almost every $\w$, the following is true.
For any realization of $(U_x)\in(0,1)^{\Z^2}$ and  at each vertex $x$, there exist  unique $\eta^{*1}_x\preccurlyeq\eta^{*2}_x$ in $\,]\evec_2, \evec_1[\,$ such that the following implications are true.
For any $\zeta, \eta\in\,]\evec_2, \evec_1[\,$ and  signs $\sig, \sig'\in\{-,+\}$, 
\begin{subequations} 
\label{Q-eta3} 
\begin{align}
\label{Q-eta3a}
&\zeta \prec \eta^{*1}_x\preccurlyeq\eta^{*2}_x \prec \eta 
\quad\text{implies} \quad 
\pi^{\zeta\sig}(x, x-\evec_1) < U_x <    \pi^{\eta\sig'}(x, x-\evec_1)  \\
\label{Q-eta3b}  
\text{and}\quad & \pi^{\zeta\sig}(x, x-\evec_1) < U_x <    \pi^{\eta\sig'}(x, x-\evec_1) 
\quad\text{implies} \quad 
\zeta \preccurlyeq \eta^{*1}_x\preccurlyeq\eta^{*2}_x \preccurlyeq \eta.
\end{align}
\end{subequations}
Furthermore, we have these inequalities: 
\be\label{Q-eta6} 
\pi^{\eta^{*1}_x-}(x, x-\evec_1) \le  \pi^{\eta^{*2}_x-}(x, x-\evec_1) \le  U_x \le  \pi^{\eta^{*1}_x+}(x, x-\evec_1) \le  \pi^{\eta^{*2}_x+}(x, x-\evec_1)  . \ee
Disagreement $\eta^{*1}_x\ne\eta^{*2}_x$ happens if and only if $[\eta^{*1}_x, \eta^{*2}_x]$ is a maximal  linear segment of  $\gpp$ and  $U_x=\pi^{\xi\sig}(x, x-\evec_1)$ for some {\rm(}and hence any{\rm)} $\xi\in\,]\eta^{*1}_x, \eta^{*2}_x[\,$. 

\end{proposition}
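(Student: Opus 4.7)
The plan is to define
\[
\eta^{*1}_x \coloneqq \sup\bigl\{\zeta \in \,]\evec_2,\evec_1[\,:\, \pi^{\zeta+}(x,x-\evec_1) < U_x\bigr\}
\quad \text{and} \quad
\eta^{*2}_x \coloneqq \inf\bigl\{\eta \in \,]\evec_2,\evec_1[\,:\, \pi^{\eta-}(x,x-\evec_1) > U_x\bigr\},
\]
working on the full-probability event of Theorems~\ref{buse_full} and \ref{thm:78-63}. The boundary limits \eqref{pi-deg} show that $\pi^{\zeta+}\to 0$ as $\zeta\to\evec_2$ and $\pi^{\zeta-}\to 1$ as $\zeta\to\evec_1$, so both sup and inf sets are nonempty and bounded strictly inside $\,]\evec_2,\evec_1[\,$. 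The ordering $\eta^{*1}_x\preccurlyeq\eta^{*2}_x$ follows from monotonicity \eqref{pi_mono}: were $\eta^{*2}_x \prec \eta^{*1}_x$, any two directions $\xi_1 \prec \xi_2$ in $(\eta^{*2}_x,\eta^{*1}_x)$ would satisfy $\pi^{\xi_2+} < U_x < \pi^{\xi_1-}\le \pi^{\xi_2-}$, a contradiction.

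Next I would verify the implications \eqref{Q-eta3} and the chain \eqref{Q-eta6}. For \eqref{Q-eta3a}, given $\zeta \prec \eta^{*1}_x$ the supremum property supplies $\zeta^*\in(\zeta,\eta^{*1}_x)$ in the sup set, and \eqref{pi_mono} then gives $\pi^{\zeta\sig}(x,x-\evec_1)\le\pi^{\zeta^*+}(x,x-\evec_1) < U_x$ for either sign; the upper half is symmetric. Implication \eqref{Q-eta3b} is the contrapositive of the same inclusions. For \eqref{Q-eta6}, the outer inequalities come from $\eta^{*1}_x\preccurlyeq\eta^{*2}_x$ together with \eqref{pi_mono}, while the middle bounds $\pi^{\eta^{*2}_x-}\le U_x\le\pi^{\eta^{*1}_x+}$ use the one-sided continuity in \eqref{B_lim1}: if $\pi^{\eta^{*1}_x+}<U_x$ held, right-continuity would place an entire right-neighborhood of $\eta^{*1}_x$ in the sup set, contradicting the supremum, and symmetrically for the other bound. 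Uniqueness follows by showing any candidate $\tilde\eta^1\preccurlyeq\tilde\eta^2$ satisfying \eqref{Q-eta3} yields the mutual inclusions
$\{\zeta\prec\tilde\eta^1\}\subseteq\{\zeta:\pi^{\zeta+}<U_x\}\subseteq\{\zeta\preccurlyeq\tilde\eta^1\}$
(the second using \eqref{pi-deg} to produce $\eta$ near $\evec_1$ with $\pi^{\eta-}>U_x$ and then invoking \eqref{Q-eta3b}), which force $\tilde\eta^1 = \eta^{*1}_x$; $\tilde\eta^2 = \eta^{*2}_x$ is analogous.

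The main obstacle is the final characterization of when $\eta^{*1}_x\prec\eta^{*2}_x$. Assuming this strict inequality and fixing $\xi_1\prec\xi_2$ in $(\eta^{*1}_x,\eta^{*2}_x)$, the sup/inf definitions give $\pi^{\xi_1+}\ge U_x$ and $\pi^{\xi_2-}\le U_x$, so \eqref{pi_mono} forces $\pi^{\xi_1+} = U_x = \pi^{\xi_2-}$. By the contrapositive of Theorem~\ref{thm:78-63}, $\xi_1$ and $\xi_2$ must lie on a common closed linear segment of $\gpp$; letting $\xi_1,\xi_2$ vary shows that $(\eta^{*1}_x,\eta^{*2}_x)$ lies inside a single closed linear segment $[\alpha,\beta]$, and \eqref{B==} then forces $\pi^{\xi\sig}(x,x-\evec_1)\equiv U_x$ throughout its interior. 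For maximality and the identification of the endpoints: if $\alpha\prec\eta^{*1}_x$, then \eqref{B==} would extend $\pi^{\zeta+} = U_x$ to every $\zeta\in(\alpha,\eta^{*1}_x)$, forbidding such $\zeta$ from the sup set and contradicting the supremum property of $\eta^{*1}_x$; hence $\alpha = \eta^{*1}_x$ and symmetrically $\beta = \eta^{*2}_x$. Conversely, given a maximal linear segment $[\alpha,\beta]$ on whose interior $\pi^{\xi\sig}(x,x-\evec_1)\equiv U_x$, Theorem~\ref{thm:78-63} applied to $\zeta\prec\alpha$ yields $\pi^{\zeta+}<U_x$, pinning $\eta^{*1}_x = \alpha$, and analogously $\eta^{*2}_x = \beta$. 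The delicate point throughout is handling the endpoint signs correctly: \eqref{B==} determines $\pi^{\alpha+}$ and $\pi^{\beta-}$ from the segment interior, while Theorem~\ref{thm:78-63} supplies the strict inequalities just outside that both rule out extending the segment and pin down $\eta^{*1}_x,\eta^{*2}_x$.
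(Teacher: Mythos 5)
Your proof is correct and follows essentially the same route as the paper: you define $\eta^{*1}_x,\eta^{*2}_x$ as the crossing points of the monotone map $\xi\mapsto\pi^{\xi\sig}(x,x-\evec_1)$ (your sup/inf of the strict sublevel/superlevel sets coincide with the paper's $\inf\{\eta:\pi^{\eta\sig}\ge U_x\}$ and $\sup\{\zeta:\pi^{\zeta\sig'}\le U_x\}$), verify \eqref{Q-eta3} and \eqref{Q-eta6} via \eqref{pi_mono} and the one-sided continuity \eqref{B_lim1}, and characterize $\eta^{*1}_x\ne\eta^{*2}_x$ using the strict monotonicity of Theorem~\ref{thm:78-63} together with the constancy \eqref{B==} on linear segments. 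The differences (e.g.\ uniqueness via set inclusions rather than the paper's two-candidate contradiction) are only cosmetic.
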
 

\begin{proof}  {\it Existence.}    Set
\be\label{eta*5} \begin{aligned} 
\eta^{*1}_x &=\inf\{ \eta\in[\evec_2, \evec_1]:\,   \pi^{\eta\sig}(x, x-\evec_1) \ge U_x\} \\
\quad\text{and}\quad 
   \eta^{*2}_x &=\sup\{ \zeta\in[\evec_2, \evec_1]:\,   \pi^{\zeta\sig'}(x, x-\evec_1) \le U_x\}. 
\end{aligned} \ee
Since $\zeta\mapsto\pi^{\zeta-}$ and $\zeta\mapsto\pi^{\zeta+}$ are the left- and right-continuous versions of the same nondecreasing function, these definitions are independent of the signs $\sig, \sig'\in\{-,+\}$.    
It follows from \eqref{pi-deg} that for $0<U_x<1$, the infimum and the supremum are over nonempty sets and each lies in the open segment $\,]\evec_2, \evec_1[\,$.   Suppose $\eta^{*1}_x\succ\alpha$.   Then $\pi^{\alpha\sig}(x, x-\evec_1) < U_x$, which implies  $\eta^{*2}_x\succcurlyeq\alpha$. Thus $\eta^{*2}_x\succcurlyeq\eta^{*1}_x$. 
The definitions \eqref{eta*5} imply the properties   in \eqref{Q-eta3}.  Thus we have found at least one pair  $\eta^{*1}_x\preccurlyeq\eta^{*2}_x$ satisfying   \eqref{Q-eta3}.   




\smallskip 

{\it Uniqueness.}  
Suppose two pairs, $\eta^{*1}_x\preccurlyeq\eta^{*2}_x$ and  $\zeta^{*1}_x\preccurlyeq\zeta^{*2}_x$,   satisfy   \eqref{Q-eta3}.  We show that $\zeta^{*1}_x\ne\eta^{*1}_x$ leads to a contradiction.  We can assume  $\zeta^{*1}_x\prec\eta^{*1}_x$.  Pick $\alpha$ so that $\zeta^{*1}_x\prec\alpha\prec\eta^{*1}_x$.  Then \eqref{Q-eta3a} applied to $\eta^{*1}_x$ implies 
$\pi^{\alpha\sig}(x, x-\evec_1) < U_x$, while \eqref{Q-eta3b} applied to $\zeta^{*1}_x$ implies $\pi^{\alpha\sig}(x, x-\evec_1) \ge U_x$.
  A similar argument establishes the uniqueness of $\eta^{*2}_x$. 

\smallskip 

{\it Properties.}  The extreme inequalities of  \eqref{Q-eta6} follow from \eqref{pi_mono} since $\eta^{*1}_x\preccurlyeq\eta^{*2}_x$.  The inner inequalities of  \eqref{Q-eta6} follow from letting 
  $\zeta \nearrow\eta^{*2}_x  $ and $\eta\searrow\eta^{*1}_x$ in the definitions in \eqref{eta*5}, because $\xi\mapsto\pi^{\xi-}$ is continuous from the left and $\xi\mapsto\pi^{\xi+}$  from the right.  
  
  Suppose $[\alpha, \beta]$ is a maximal linear segment of $\gpp$ and $U_x=\pi^{\xi\sig}(x, x-\evec_1)$ for some $\xi\in\,]\alpha,\beta[\,$. 
  Then for each $\zeta\prec\alpha$, by the strict inequality of Theorem~\ref{thm:78-63}, we have $\pi^{\zeta\sig}(x, x-\evec_1) < U_x = \pi^{\alpha+}(x, x-\evec_1)$. Hence $\eta^{*1}_x=\alpha$ by definition \eqref{eta*5}.  Similarly $\eta^{*2}_x=\beta$. 
  
  Conversely, suppose $\eta^{*1}_x\prec\eta^{*2}_x$.  
  This implies  
$ \pi^{\eta^{*1}_x+}(x, x-\evec_1)\le    \pi^{\eta^{*2}_x-}(x, x-\evec_1)$ because of \eqref{pi_mono}. 
  Then the  middle inequalities of \eqref{Q-eta6} force  
  $\pi^{\eta^{*1}_x+}(x, x-\evec_1)  =  U_x =  \pi^{\eta^{*2}_x-}(x, x-\evec_1)$.
  Again by the strict inequality of Theorem~\ref{thm:78-63},  $[\eta^{*1}_x, \eta^{*2}_x]$ must be a linear segment for $\gpp$.    
  Moreover, it must be a maximal linear segment because Busemann functions are constant on linear segments by \eqref{B==}, yet $\eta^{*1}_x$, $\eta^{*2}_x$ were chosen in \eqref{eta*5} to be extremal.
  \end{proof}

\begin{proof}[Proof of Theorem~\ref{thm:eta10}]
First we argue that $\Qwalks^\w\{\eta^{*1}_x =\eta^{*2}_x\}=1$ so that we can define 
\eeq{ \label{eta==}
\eta^*(x)=\eta^{*1}_x =\eta^{*2}_x \quad \text{$\Qwalks^\w$-almost surely}.
}
By Proposition~\ref{pr:eta6}, we need to rule out the possibility that $U_x = \pi^{\xi\sig}(x,x-\evec_1)$ for some $\xi$ in an open linear segment  
of the shape function  $\gpp$.
Indeed, there are at most countably many such segments and, by \eqref{B==}, $(\xi, \sigg)\mapsto \pi^{\xi\sig}(x,x-\evec_1)$ is constant on each segment.
So $U_x$ needs to avoid only countably many values (depending on $\w$), which occurs $\Qwalks^\w$-almost surely.

Given $\w$, for  each $x$  the variable $\eta^*(x)$ is a function of $U_x$, a fact which is immediate from \eqref{eta*5} and \eqref{eta==}.
Hence the random variables $(\eta^*(x))_{x\in\Z^2}$ are independent under $\Qwalks^\w$.
To obtain the marginal distribution claimed in \eqref{Q-eta14}, we establish inequalities in both directions.
Utilize \eqref{Q-eta3b} and the right-hand side of  \eqref{Q-eta6} to write 
\[   
\Qwalks^\w\{ U_x <  \pi^{\eta-}(x,x-\evec_1)\}  \le  \Qwalks^\w\{  \eta^*(x)\preccurlyeq \eta\}  \le \Qwalks^\w\{ U_x \le   \pi^{\eta+}(x,x-\evec_1)\}.  
\]  
Since $U_x$ is uniform on $(0,1)$, this says
\[   \pi^{\eta-}(x,x-\evec_1) \le  \Qwalks^\w\{  \eta^*(x)\preccurlyeq \eta\} \le  \pi^{\eta+}(x,x-\evec_1)  .  \]  
The second inequality gives one direction of \eqref{Q-eta14}.
To obtain the other direction, we employ the first inequality:
\eq{ 
\pi^{\eta+}(x,x-\evec_1)
=\lim_{\zeta\searrow\eta} \pi^{\zeta-}(x,x-\evec_1)
\leq \lim_{\zeta\searrow\eta} \Qwalks^\w\{  \eta^*(x)\preccurlyeq \zeta\}
= \Qwalks^\w\{  \eta^*(x)\preccurlyeq \eta\}.  
}
The marginal distribution claimed in part~\ref{eta10a} has been verified.

The final observation we need is that 
\be\label{Q670} 
\Qwalks^\w\bigl\{  U_x \ne \pi^{\xi\sig}(x,x-\evec_1) \; \forall\tsp \xi\in\aUset, \, \sigg\in\{-,+\} \bigr\}=1,    
\ee
which is true because $\aUset$ is at most countable and fixed by $\w$.
In light of \eqref{eta==} and \eqref{Q670}, we infer from \eqref{Q-eta6} that
$\Qwalks^\w$-almost surely 
one of these two cases happens at every $x$:
\begin{subequations} \label{Q679} 
\begin{align}
\label{Q679.1}
&\eta^*(x)\notin\aUset \ \text{ and } \ U_x=\pi^{\eta^*(x)\sig}(x,x-\evec_1)  \ \text{ for } \   \sigg\in\{-,+\}; \\[2pt]
\label{Q679.2}      \text{ or }  \ 
&\eta^*(x)\in\aUset \ \text{ and } \  
\pi^{\eta^*(x)-}(x, x-\evec_1) < U_x <  \pi^{\eta^*(x)+}(x, x-\evec_1) .
\end{align} 
\end{subequations}
The claims~\ref{eta10bi}--\ref{eta10biii} 
follow readily from the dichotomy \eqref{Q679}  and definition \eqref{X129}. 
 \end{proof}  
 
\begin{proof}[Proof of Theorem~\ref{thm:eta20}]  
Part~\ref{thm:eta20b} follows from the fact that under $\Qwalks^\w$  the variables $(\eta^*(x))_{x\in\Z^2}$ are independent and, by \eqref{Q-eta14} and Theorem~\ref{thm:51-32},  each $\eta^*(x)$  has the same set $\aUset$  of atoms.  

\smallskip 

Part~\ref{thm:eta20bb}.
Reinterpret \eqref{pure_jump} in terms of the transition probabilities \eqref{pi36}:
  \eeq{ \label{pi55}
  \pi^{\eta+}(x,x-\evec_1) - \pi^{\zeta-}(x,x-\evec_1) = \sum_{\xi\in\aUset\cap[\zeta,\eta]} [\pi^{\xi+}(x,x-\evec_1)-\pi^{\xi-}(x,x-\evec_1)].
  }
Let $\eta\nearrow\evec_1$, $\zeta\searrow\evec_2$ to obtain
\eeq{ \label{pi56}
1 \stackref{pi-deg}{=} 
\pi^{\evec_1}(x,x-\evec_1) - \pi^{\evec_2}(x,x-\evec_1)
\stackref{pi55}{=} \sum_{\xi\in\aUset} [ \pi^{\xi+}(x,x-\evec_1) - \pi^{\xi-}(x,x-\evec_1)].
}
It follows that $\Qwalks^\w$-almost surely  the uniform variable $U_x$ satisfies $\pi^{\xi-}(x,x-\evec_1) < U_x < \pi^{\xi+}(x,x-\evec_1)$ for some $\xi\in\aUset$.   By the dichotomy \eqref{Q679}, we conclude $\eta^*(x)\in\aUset$.

\smallskip 

 Part~\ref{thm:eta20d}.    We claim there is an event $\Omega_0\subset\Omega$ of full $\P$-probability such that for all $\w\in\Omega_0$,
 \be\label{X450} 
\Qwalks^\w\Big\{\lim_{m\to-\infty}   \frac{Z_{X^{v, \xi\sig, \tvec}_m,\tspb x}}{Z_{X^{v, \xi\sig, \tvec}_m,\tspb x-\evec_1}} =  e^{\Bus^{\xi\sig}_{x-\evec_1,x}} 
\ \forall\ \xi\in\,]\evec_2,\evec_1[\,, \ \sigg\in\{-,+\}, \ \tvec\in\{\evec_1, \evec_2\},\ v,x\in\Z^2 \Big\}=1.
\ee
Indeed, by \cite[Rmk.~5.9]{janj-rass-20-aop},   under assumption \eqref{shape_ass}   there exists $\Omega_0\subset\Omega$ of full $\P$-probability   such that  
for each  $\w\in\Omega_0$,   
$\xi\in\;]\evec_2,\evec_1[\,$,  $\sigg\in\{-,+\}$,  and $v\in\Z^2$,  the path measure $Q^{\xi\sig}_v$ from \eqref{pi36} is extreme among the semi-infinite  Gibbs measures rooted at $v$.   
By \cite[Thm.~3.10(d) and Thm.~5.7]{janj-rass-20-aop}, this extremality implies that for all $x<v$, 
\eq{
Q^{\xi\sig}_v\Bigl\{\text{$X_\bbullet$ is $\cL_\xi$-directed and  }\lim_{m\to-\infty}   \frac{Z_{X_m,\tspb x}}{Z_{X_m,\tspb x-\evec_1}} =  e^{\Bus^{\xi\sig}_{x-\evec_1,x}}  \Bigr\} = 1.
}
Since $X^{v,\xi\sigg,\tiebr}_\bbullet$ has distribution $Q^{\xi\sig}_v$ under $\Qwalks^\w$, 
it follows that for   $\tvec \in\{\evec_1, \evec_2\}$ and $\w\in\Omega_0$,
\eeq{ \label{jra530}
\Qwalks^\w\Bigl\{ \text{$X^{v,\xi\sig,\tvec}_\bbullet$ is $\cL_\xi$-directed and  } \lim_{m\to-\infty}   \frac{Z_{X^{v, \xi\sig, \tvec}_n,\tspb x}}{Z_{X^{v, \xi\sig, \tvec}_m,\tspb x-\evec_1}} =  e^{\Bus^{\xi\sig}_{x-\evec_1,x}}  \Bigr\} = 1.
}
This does not immediately imply \eqref{X450} since the event on the left-hand side of \eqref{jra530} is $\xi$-dependent,
but we will extend it as follows.

Let $\cA^\w$ be a countable dense subset of $\,]\evec_2, \evec_1[\,$ that contains the discontinuity set $\aUset$.   
For $\w\in\Omega_0$, the following occurs with full $\Qwalks^\w$-probability by \eqref{jra530}:
\eeq{ \label{X451}
\lim_{m\to-\infty}   \frac{Z_{X^{v, \xi\sig, \tvec}_m,\tspb x}}{Z_{X^{v, \xi\sig, \tvec}_m,\tspb x-\evec_1}} =  e^{\Bus^{\xi\sig}_{x-\evec_1,x}}
 \quad \text{for all $\xi \in \cA^\w$, $\sigg\in\{-,+\}$, $\tiebr\in\{\evec_1,\evec_2\}$, $v,x\in\Z^2$},
}
and also
\eeq{ \label{X452}
\text{$X^{v,\xi\sig,\tvec}_\bbullet$ is $\cL_\xi$-directed} \quad \text{for all $\xi \in \cA^\w$, $\sigg\in\{-,+\}$, $\tiebr\in\{\evec_1,\evec_2\}$, $v\in\Z^2$}.
}
Consider any $\xi\notin\cA^\w$.  
We necessarily have $\xi\notin\aUset$, so $\Bus^{\xi-}=\Bus^{\xi+}$. 
Pick $\zeta, \eta\in\cA^\w$ so that $\zeta\prec\xi\prec\eta$.
By the monotonicity \eqref{pi_mono} and the decision rule \eqref{X129},  we have
\eeq{ \label{hx63}
X^{v, \eta-, \tvec}_m 
\preccurlyeq X^{v, \xi, \tvec}_m    
\preccurlyeq  X^{v, \zeta+, \tvec}_m. 
} 
This ordering and standard monotonicity of partition function ratios (e.g.~\cite[Lem.~A.2]{busa-sepp-22-ejp}) give 
\eeq{  \label{sabn4}
 \frac{Z_{X^{v, \eta-, \tvec}_m,\tspb x}}{Z_{X^{v, \eta-, \tvec}_m,\tspb x-\evec_1}}
 \le  \frac{Z_{X^{v, \xi, \tvec}_m,\tspb x}}{Z_{X^{v, \xi, \tvec}_m,\tspb x-\evec_1}}
 \le  \frac{Z_{X^{v, \zeta+, \tvec}_m,\tspb x}}{Z_{X^{v, \zeta+, \tvec}_m,\tspb x-\evec_1}} \ \text{whenever $X^{v, \eta-, \tvec}_m, X^{v, \xi, \tvec}_m, X^{v, \zeta+, \tvec}_m \leq x-\evec_1$}.
 }
Since \eqref{X451} applies to the leftmost and rightmost ratios above, the subsequential limits of the middle ratio are caught between $e^{\Bus^{\eta-}_{x-\evec_1,\tspb x}}$ and $e^{\Bus^{\zeta+}_{x-\evec_1,\tspb x}}$.
As we let $\zeta\nearrow\xi$ and $\eta\searrow\xi$,  these exponentials converge to  $e^{\Bus^{\xi+}_{x-\evec_1,\tspb x}}=e^{\Bus^{\xi-}_{x-\evec_1,\tspb x}}$ thanks to \eqref{B_lim1}.
We have thus argued that \eqref{X451} is sufficient to establish the claim \eqref{X450}.
It should be noted that our use of \eqref{sabn4} is permitted because \eqref{X452} implies $X^{v, \eta-, \tvec}_\bbullet$ and $X^{v, \zeta+, \tvec}_\bbullet$ are $\cL_\eta$-directed and $\cL_\zeta$-directed, respectively.
By the curvature result \cite[Lem.~B.1]{janj-rass-20-aop}, the closed intervals $\cL_\eta$ and $\cL_\zeta$ do not contain $\evec_1$ or $\evec_2$, so $X^{v, \eta-, \tvec}_m, X^{v, \zeta+, \tvec}_m \leq x-\evec_1$ for all sufficiently negative $m$. 
The ordering \eqref{hx63} then forces $X^{v,\xi,\tvec}_m \leq x-\evec_1$ as well.


To complete the proof of part~\ref{thm:eta20d},  observe that if $X^{v, \zeta\sig,\tvec}_m=X^{v, \eta\sig',\tvec'}_m$ for  infinitely many $m$, then along this subsequence the limits in   \eqref{X450} give $\Bus^{\zeta\sig}_{x-\evec_1,x} = \Bus^{\eta\sig'}_{x-\evec_1,x}$ for all $x$.
Under the assumptions on the pair $(\zeta\sigg, \eta\sigg')$, this violates either Theorem~\ref{thm:78-63} or \ref{thm:51-32}.

\smallskip 

 Part~\ref{thm:eta20a}.   By part~\ref{thm:eta20d}, for each $\xi\in\aUset$,  from any initial vertex the $\xi\pm$ walks separate. By Theorem~\ref{thm:eta10}\ref{eta10bi} and \ref{eta10biii}, this can happen only if $\eta^*(x)=\xi$ for infinitely many $x$. 
 
\smallskip 

 Part~\ref{thm:eta20c} follows as part~\ref{thm:eta20a}.   By part~\ref{thm:eta20d},   for any open interval $\,]\zeta,\eta[\,$ disjoint from closed linear segments, the walks $X^{v, \zeta\sig,\tvec}_\bbullet$ and  $X^{v, \eta\sigg',\tvec}_\bbullet$ eventually separate. 
 By Theorem~\ref{thm:eta10}\ref{eta10bi},  this can happen only if $\eta^*(x)\in[\zeta,\eta]$ for some $x$.
 \end{proof}  



   \section{Main results under inverse-gamma weights}  \label{sec:main2}
 
 \subsection{Inverse-gamma basics}  
The gamma function is $\Gamma(s)=\int_0^\infty x^{s-1}e^{-x}\,\dd x$.   The digamma   and   the trigamma functions are, respectively,      $\psi_0(s)=\Gamma'(s)/\Gamma(s)$ and $\psi_1(s)=\psi_0'(s)$.   A positive  random variable $X$ has the gamma distribution with parameter $\alpha\in\R_{>0}$, abbreviated $X\sim$ Ga$(\alpha)$, if $X$ has density function $f_X(x)=\frac1{\Gamma(\alpha)}x^{\alpha-1}e^{-x}$ for $x>0$. 
$Y$ has the inverse-gamma distribution with parameter $\alpha$, $Y\sim$ Ga$^{-1}(\alpha)$, if its reciprocal satisfies  $Y^{-1}\sim$ Ga$(\alpha)$. Then $Y$ has density function
 $f_Y(x)=\frac1{\Gamma(\alpha)} x^{-1-\alpha} e^{-x^{-1}}$ for $x>0$ and satisfies the identities 
 $\E[\log Y]=-\psi_0(\alpha)$ and  $\Vvv[\log Y]=\psi_1(\alpha)$.   
 A variable $Z\sim$ Beta$(\alpha, \lambda)$ has density $f_Z(x)= \frac1{\BBet(\alpha, \lambda)} x^{\alpha-1}(1-x)^{\lambda-1}$ for $0<x<1$.

Fix $\alpha>0$ and assume that 
	\be\label{m:exp}\begin{aligned} 
		&\text{the weights $\Yw=(\Yw_x)_{x\tsp\in\tsp\Z^2}$ are i.i.d.\ random variables}\\[-3pt]
		 &\text{with marginal distribution  $\Yw_x\sim$
		Ga$^{-1}(\alpha)$. }  
	\end{aligned}\ee

The 
shape function $\gpp$ is  described as follows (see \cite[eq.~(2.15) and (2.16)]{sepp-12-aop-corr}).     On the axes, $\gpp(s\evec_r)=-s\psi_0(\alpha)$ for $s\ge0$. In the interior, for each $\xi=(\xi_1,\xi_2)\in\R_{>0}^2$ there is a unique real $\rho_{\xi}\in (0,\alpha)$ such that 
	\be\label{gpp} \begin{aligned}
	 \gpp(\xi) &= \inf_{\rho\tspa\in\tspa(0,\alpha)}  \{-\xi_1\tsp\psi_0(\alpha-\rho)-  \xi_2\tspa\psi_0(\rho)\} 
	 = -\xi_1\tsp\psi_0(\alpha-\rho_\xi)-  \xi_2\tsp\psi_0(\rho_\xi)  .  
	\end{aligned} \ee 
	 The minimizer $\rho_\xi$ in \eqref{gpp} is the solution of the equation 
		\be\label{rho_x}\begin{aligned} 
	&\frac{\psi_1(\alpha-\rho_\xi)}{\psi_1(\rho_\xi)}=\frac{\xi_2}{\xi_1} \ 
	\  \iff\ \ \  \xi_1\tsp\psi_1(\alpha-\rho_\xi)-  \xi_2\tsp\psi_1(\rho_\xi) =0. 
	\end{aligned} \ee
The shape function $\gpp$  is continuous on $\R_{\ge0}^2$, and  differentiable and  strictly concave throughout $\R_{>0}^2$. In particular, assumption \eqref{shape_ass} is satisfied. 
	
The correspondence \eqref{rho_x}  gives the following bijective  mapping  
	between direction vectors  $\chdir=(\chdirc_1,\chdirc_2)=(\chdirc_1,1-\chdirc_1)\in[\evec_2,\evec_1]$   and parameters  $\rho\in [0,\alpha]$:  
	\be\label{u-rho}   
	 \chdir=
	\chdir(\rho)=\biggl(\frac{\psi_1(\rho)}{\psi_1(\alpha-\rho)+\psi_1(\rho)}   \,,\, 
	\frac{\psi_1(\alpha-\rho)}{\psi_1(\alpha-\rho)+\psi_1(\rho)}
	\biggr)
	\ \Longleftrightarrow \ \rho=\rho_{\chdir}=\rho({\chdir}). 
	\ee
	 The function $\psi_1$ is strictly positive and strictly decreasing on $\R_{>0}$,  with limits $\psi_1(0+)=\infty$ and $\psi_1(\infty)=0$.  Thus the bijection $\chdir\mapsto \rho(\chdir)$ from  $[\evec_2,\evec_1]$ onto $[0,\alpha]$ is strictly decreasing in the southeast ordering $\prec$ on $[\evec_2,\evec_1]$. 
In particular,  the boundary values are $\rho(\evec_1)=0$ and $\rho(\evec_2)=\alpha$.
	 
\subsection{Global Busemann process} \label{ig1_sec}  

As observed in Section~\ref{distB_sec}, the entire Busemann process can be characterized by the joint distribution of horizontal nearest-neighbor increments on a single lattice level. 
We give here a quick preliminary description of this distribution.  Full details rely on the development of Section~\ref{mar_sec} and are presented in Section~\ref{sec:Vig}.   

We introduce  notation for products of inverse-gamma distributions.  Let  $\lambda_{\parng{1}{N}}=(\lambda_1,\dotsc,\lambda_N)\in\R_{>0}^N$  be an $N$-tuple of positive reals.   Let    $Y^{\parng{1}{N}}=(Y^1, \dotsc,Y^N)\in(\R_{>0}^\Z)^N$    denote  an $N$-tuple  of positive bi-infinite random sequences  $Y^i=(Y^i_k)_{k\in\Z}$.   Then define the probability measure $\nu^{\lambda_{\parng{1}{N}}}$ on $(\R_{>0}^\Z)^N$ as follows: 
	\be\label{nu5} \begin{aligned} 
	&\text{$Y^{\parng{1}{N}}$  has distribution $\nu^{\lambda_{\parng{1}{N}}}$ if all the coordinates $(Y^i_k)^{i\in\lzb1,N\rzb}_{k\in\Z}$ are mutually }\\[-3pt]  
	&\text{independent with marginal distributions $Y^i_k\sim$ Ga$^{-1}(\lambda_i)$.} 
	\end{aligned} \ee
	 To paraphrase  \eqref{nu5}, under $\nu^{\lambda_{\parng{1}{N}}}$ each $Y^i$ is a sequence of i.i.d.\ inverse-gamma variables with parameter $\lambda_i$ and the sequences $Y^1, \dotsc,Y^N$  are mutually independent.

Denote the sequence of level-$t$ weights by $\Yw(t)=(\Yw_{(k,t)})_{k\in\Z}$.   Recall the notation \eqref{IB5} for sequences of exponentiated horizontal 
Busemann increments: 
  $ {\brvI}^{\chdir\sig}_k(t) = 
(e^{\Bus^{\chdir\sig}_{(k-1,t),\tsp (k,t)}})_{k\in\Z}$. 
 Fix directions  $\chdir_1\succ\dotsm\succ\chdir_N$  in $\,]\evec_2,\evec_1[\,$ and signs $\sigg_1,\dotsc,\sigg_N\in\{-,+\}$. 
 There exists a sequence space $\cI_{N+1}^\uparrow\subset(\R_{>0}^\Z)^{N+1}$ that supports the product measure $\nu^{(\alpha, \tspa\alpha-\rho(\chdir_1),\dotsc, \tspa\alpha-\rho(\chdir_N))}$  and a Borel mapping $\Daop^{(N+1)}\colon \cI_{N+1}^\uparrow\to\cI_{N+1}^\uparrow$ such that the following theorem holds.

	\begin{theorem}\label{B1_thm} Assume \eqref{m:exp}. 
	 At  each level  $t\in\Z$,  the joint law $\mu^{(\alpha, \tspa \alpha-\rho(\chdir_1),\dotsc, \tspa\alpha-\rho(\chdir_N))}$ of the $(N+1)$-tuple of sequences   $(\Yw(t),\,{\brvI}^{\chdir_1\sig_1}(t), \dotsc, {\brvI}^{\chdir_N\sig_N}(t))$ satisfies   
	 \eq{ 
	 \mu^{(\alpha, \tspa\alpha-\rho(\chdir_1),\dotsc, \tspa\alpha-\rho(\chdir_N))}=\nu^{(\alpha, \tspa\alpha-\rho(\chdir_1),\dotsc, \tspa\alpha-\rho(\chdir_N))}\circ(\Daop^{(N+1)})^{-1}.
	 } 
	\end{theorem}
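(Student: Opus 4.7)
The plan is to combine the Markovian uniqueness from Theorem~\ref{cpthm} with an intertwining identity that transports an explicit product-form inverse-gamma measure onto the joint law of weights and Busemann increments. I envision three main steps.

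First, I would augment the parallel dynamics by one coordinate. Consider the $(N+1)$-tuple $(\Yw(t), {\brvI}^{\chdir_1\sig_1}(t), \dotsc, {\brvI}^{\chdir_N\sig_N}(t))$ at lattice level $t$. Its evolution $t\mapsto t+1$ samples an independent i.i.d.\ \textup{Ga}$^{-1}(\alpha)$ sequence $\Yw(t+1)$ and then applies the parallel transformation $\Taop_{\Yw(t+1)}$ from \eqref{Bmc} to the remaining $N$ coordinates. The joint law is stationary for this Markov chain, and by Theorem~\ref{cpthm} (applied to the $N$ Busemann sequences, with the first marginal fixed to i.i.d.\ \textup{Ga}$^{-1}(\alpha)$), this joint law is pinned down by the marginal log-means $\E[\log {\brvI}^{\chdir_i\sig_i}_k(t)] = \nabla\gpp(\chdir_i\sig_i)\cdot\evec_1$.

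Second, I would push an explicit product measure through the intertwining. The construction of Sections~\ref{map_sec}--\ref{2_proc_sec}, specialized to inverse-gamma weights in Section~\ref{sec:Vig}, yields the map $\Daop^{(N+1)}\colon \cI_{N+1}^\uparrow\to\cI_{N+1}^\uparrow$ intertwining a sequential gRSK-type dynamics with the parallel dynamics above. In the inverse-gamma case, the sequential process admits the product-form invariant measure $\nu^{(\alpha,\,\alpha-\rho(\chdir_1),\dotsc,\,\alpha-\rho(\chdir_N))}$, the positive-temperature analogue of Burke's theorem (Theorem~\ref{thm-I}). Intertwining transfers this invariance through $\Daop^{(N+1)}$, producing an invariant law of the augmented parallel dynamics whose first marginal is i.i.d.\ \textup{Ga}$^{-1}(\alpha)$.

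Third, I would match the marginal log-means and invoke uniqueness. The $i$-th input coordinate of $\Daop^{(N+1)}$ has i.i.d.\ \textup{Ga}$^{-1}(\alpha-\rho(\chdir_i))$ entries with log-mean $-\psi_0(\alpha-\rho(\chdir_i))$. Differentiating \eqref{gpp} using the envelope equation \eqref{rho_x} gives $\nabla\gpp(\chdir_i)\cdot\evec_1 = -\psi_0(\alpha-\rho(\chdir_i))$, matching exactly. Since $\Daop^{(N+1)}$ is designed so that the log-mean of its $i$-th output coordinate coincides with that of the $i$-th input, the pushforward carries the correct marginal means, and Theorem~\ref{cpthm} identifies it with the joint law on the left. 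Strict concavity of $\gpp$ in the inverse-gamma case (no linear segments) removes any ambiguity arising from the signs $\sig_i$.

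The hard part will be the supporting machinery invoked in the second step: the product-form invariance of $\nu^{(\alpha,\,\alpha-\rho(\chdir_1),\dotsc,\,\alpha-\rho(\chdir_N))}$ under the sequential dynamics, which rests on the inverse-gamma algebra of gRSK, and the intertwining identity itself, which demands a careful inductive analysis of the cascade defining $\Daop^{(N+1)}$ via the recovery and cocycle properties of the Busemann process. Once these are in hand, the uniqueness-plus-invariance route above delivers the theorem directly.
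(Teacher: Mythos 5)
Your overall architecture (product-form invariance for the sequential process, intertwining, uniqueness via log-means) is indeed the paper's architecture, but your Step 2 applies that machinery outside its domain of validity, and this is exactly where the real difficulty of the theorem lies. The tuple whose law you want to identify has first component $\Yw(t)$ with Ces\`aro mean exactly $\kappa=-\psi_0(\alpha)$, while the sequential and parallel maps are only defined when every component satisfies $\ces(I^i)>\kappa$ strictly: the very first update $\Dop(\Yw,I^1)$ requires $\ces(\Yw)<\ces(I^1)$, and the series \eqref{J_def} diverges almost surely when $I^1$ is an independent i.i.d.\ $\textup{Ga}^{-1}(\alpha)$ sequence. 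Consequently Theorem~\ref{thm-I} (which requires all parameters in $(0,\alpha)$) does not give invariance of $\nu^{(\alpha,\,\alpha-\rho(\chdir_1),\dots,\,\alpha-\rho(\chdir_N))}$ under any sequential dynamics, and Theorem~\ref{twm_thm} cannot be invoked for this $(N+1)$-tuple, because neither $\Saop$ nor $\Taop$ acts on it. Your ``augmented parallel dynamics'' (replace the first coordinate by fresh noise) is a legitimate Markov chain, but it is not the parallel process on $N+1$ components, so the intertwining identity does not address it; and Theorem~\ref{cpthm} by itself only pins down the law of the $N$ Busemann coordinates, not their joint law with the same-level weights, so Step 1 as written does not close the loop either.

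The paper's proof (Theorem~\ref{B2_thm}) resolves this boundary issue by an approximation: it first applies the uniqueness-plus-intertwining route to $N+1$ \emph{interior} directions $\chdir_0\succ\chdir_1\succ\dots\succ\chdir_N$, where all parameters lie in $(0,\alpha)$ and all the cited machinery is valid, and then lets $\chdir_0\nearrow\evec_1$. This limit requires two nontrivial inputs you do not supply: the weak continuity of $\lambda_{\parng{1}{N}}\mapsto\mu^{\lambda_{\parng{1}{N}}}$ (Theorem~\ref{murho-th1}, proved by a monotone coupling together with Lemma~\ref{lm:D319}), and the almost-sure monotone limit $\Bus^{\chdir\sig}_{x-\evec_1,x}\to\log\Yw_x$ from \eqref{B_lim}, which identifies the limiting first coordinate as $\Yw(t)$. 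An alternative repair of your argument, closer in spirit to what you wrote, would avoid the limit by using the recursive structure of the map: by \eqref{Daop_def} and \eqref{repeat_dog}, $\Daop^{(N+1)}(Y^0,Y^{\parng{1}{N}})=\bigl(Y^0,\Taop_{Y^0}(\Daop^{(N)}(Y^{\parng{1}{N}}))\bigr)$, so once Theorem~\ref{B_thm9} identifies the law of ${\brvI}^{(\chdir\sig)_{\parng{1}{N}}}(t-1)$ as $\mu^{(\alpha-\rho(\chdir_1),\dots,\alpha-\rho(\chdir_N))}$, the independence \eqref{B-ind} of $\Yw(t)$ from the level-$(t-1)$ data and the update $\brvI^{(\chdir\sig)_{\parng{1}{N}}}(t)=\Taop_{\Yw(t)}(\brvI^{(\chdir\sig)_{\parng{1}{N}}}(t-1))$ give the claimed pushforward directly. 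Either way, some explicit treatment of the degenerate parameter $\alpha$ is needed; as it stands, your proof has a gap there.
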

	
The theorem states that on a single horizontal level the joint distribution of the original weights and 	
  the Busemann functions  is  a deterministic  pushforward of the distribution of independent inverse-gamma variables \textit{with the same marginal distributions}.     Since $\gpp$ is differentiable, the  signs $\sigg_1,\dotsc,\sigg_N\in\{-,+\}$ are irrelevant  (recall Remark~\ref{pm_rmk}) and included only for completeness.  For this reason  the parametrization of the  measures ignores  the signs. 
  
The   space $\cI_{N+1}^\uparrow$   and the mapping $\Daop^{(N+1)}$ are defined in equations \eqref{in_up_def} and \eqref{Daop_def}.  The precise version of Theorem~\ref{B1_thm} is proved as Theorem~\ref{B2_thm} in Section~\ref{sec:twin_ig}. 
  The mapping $\Daop^{(N+1)}$ preserves the distributions of individual sequence-valued components: 
  \begin{subequations}
  \label{B5x9}
  \be\label{B569} 
  {\brvI}^{\chdir\sig}(t) =  
(e^{\Bus^{\chdir\sig}_{(k-1,t),\tsp (k,t)}})_{k\in\Z} 
\text{  is i.i.d.\ Ga$^{-1}(\alpha-\rho(\chdir))$ distributed. }
\ee
If instead of horizontal increments on a horizontal line, we considered vertical increments on a vertical line, the statement would be this: 
 \be\label{B579} 
  ({\brvJ}^{\chdir\sig}_k(t))_{t\in\Z}  =  
(e^{\Bus^{\chdir\sig}_{(k,t-1),\tsp (k,t)}})_{t\in\Z} 
\text{  is i.i.d.\ Ga$^{-1}(\rho(\chdir))$ distributed. }
\ee
\end{subequations}
These marginal properties \eqref{B5x9}  of the Busemann functions  were derived earlier  in \cite{geor-rass-sepp-yilm-15}.      They follow from Lemma~\ref{Dop-lm4} in Section~\ref{sec:twin_ig}. 

\subsection{Busemann process across an edge} 
We fix a horizontal edge $(x-\evec_1,x)$ and 
describe the Busemann process  $\{\Bus^{\chdir\sig}_{x-\evec_1,x}\}_{\chdir\tsp\in\tspb]\evec_2,\evec_1]}$.  
	 To have a  process indexed by reals, we switch from $\xi$  to the parameter $\rho=\rho(\chdir)\in[0,\alpha)$.  
	 Then   $(\Bus^{\chdir(\rho)-}_{x-\evec_1,x})_{\rho\tsp\in\tsp[0,\alpha)}$  is  an increasing  cadlag  process which has been extended  to the parameter value $\rho=0=\rho(\evec_1)$ by setting ${\Bus^{\evec_1}_{x-\evec_1,\tspb x}}={\Bus^{\evec_1-}_{x-\evec_1,\tspb x}}=\log\Yw_x$.  This process is continuous at  $\rho=0$ by \eqref{B_lim}.  The minus superscript in $\Bus^{\chdir(\rho)-}_{x-\evec_1,x}$ is just for the path regularity. In statements about finite-dimensional distributions we drop it.  
	
Let  $\Prm$ be  the  inhomogeneous  Poisson point process on  $(0,\alpha)\times\R_{>0}$ with intensity measure 
	\eeq{ \label{PPPintensity}
		\widebar\sigma(\dd s, \dd y)=\sigma(s,y)\ \dd s\, \dd y,\qquad \text{where} \qquad
		\sigma(s,y)=
		\frac{e^{-y(\alpha-s)}}{1-e^{-y}}, \quad (s,y)\in(0,\alpha)\times\R_{>0}.
	}
	The Laplace functional of $\Prm$ is given by 
	\begin{equation}\label{LaplaceF}
		\E\big[e^{-\sum_{(s,y)\in \Prm}F(s,y)}\big]=\exp{\Big\{ -\int_0^{\alpha} \dd s \int_0^{\infty} \! \dd y \, (1-\mathrm{e}^{-F(s,y)})\,\sigma(s,y)\Big\}}
	\end{equation}
	for nonnegative Borel functions $F\colon (0,\alpha)\times\R_{>0}\to\R_{\ge0}$.

Define the nondecreasing cadlag process  $(\Zpr(\rho))_{\rho\tspa\in\tspa  [0,\alpha)}$  so that the initial value $\Zpr(0) \sim \log$ Ga$^{-1}$($\alpha$) is   independent of $\Prm$  and
	\begin{equation}\label{Def:X}
		\Zpr(\rho)=\Zpr(0)+\sum_{(s,y)\,\in\,\Prm} \!\! \ind_{(0,\rho]}(s)\cdot y\quad \text{for }\rho\in (0,\alpha).   
	\end{equation}
The sum in \eqref{Def:X} is almost surely finite since $\E[Z(\rho)-Z(0)] = \int_0^\rho\int_0^\infty y\sigma(s,y)\,\dd y\,\dd s<\infty$ for $\rho\in(0,\alpha)$.
See Figure~\ref{f:trajec} for an example sample path.

	\begin{theorem} \label{B-th5}   Assume \eqref{m:exp}.  For each $x\in\Z^2$, the nondecreasing  cadlag processes $(\Bus^{\chdir(\rho)-}_{x-\evec_1,x})_{\rho\tspa\in\tspa  [0,\alpha)}$ and  $(\Zpr(\rho))_{\rho\tspa\in\tspa  [0,\alpha)}$ 
		are equal in distribution.  
	\end{theorem}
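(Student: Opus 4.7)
My plan is to match the finite-dimensional distributions of the two nondecreasing cadlag processes. Fix $0=\rho_0<\rho_1<\dotsm<\rho_N<\alpha$ with corresponding directions $\xi_i=\chdir(\rho_i)$. Apply Theorem~\ref{B1_thm} at the level $t$ containing $x$: the joint law of the sequence-valued tuple $(\Yw(t),I^{\xi_1}(t),\dotsc,I^{\xi_N}(t))$ is the pushforward $\nu^{(\alpha,\alpha-\rho_1,\dotsc,\alpha-\rho_N)}\circ(\Daop^{(N+1)})^{-1}$. Projecting onto the single $k$-coordinate corresponding to the edge $(x-\evec_1,x)$ gives the joint law of $(\Yw_x,\exp Y(\rho_1),\dotsc,\exp Y(\rho_N))$, where $Y(\rho)\coloneqq\Bus^{\chdir(\rho)-}_{x-\evec_1,x}$; the boundary value $Y(0)=\log\Yw_x\sim\log{\rm Ga}^{-1}(\alpha)$ comes from the limit \eqref{B_lim}.

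The first substantive step is to show that $\rho\mapsto Y(\rho)$ has independent increments, matching the obvious independence of $\rho\mapsto Z(\rho)$ on disjoint $\rho$-intervals. This should follow from the triangular/cascading nature of $\Daop^{(N+1)}$ combined with the product structure of the input measure $\nu^{(\alpha,\alpha-\rho_1,\dotsc,\alpha-\rho_N)}$: as we move from direction $\xi_{i-1}$ to $\xi_i$, the map introduces only the $i$-th independent input sequence (parameter $\alpha-\rho_i$), and the single-edge increment $Y(\rho_i)-Y(\rho_{i-1})$ will be a deterministic function of that sequence alone (or at least of a portion independent of the previous increments). The detailed structure of $\Daop^{(N+1)}$ needed to read this off will be developed in Section~\ref{sec:Vig}; I will rely on the concrete description there.

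Given independent increments, it suffices to match the Laplace transform of each increment with the Poisson-construction formula. Using the inverse-gamma marginals \eqref{B569}, $\E[e^{-\lambda Y(\rho)}]=\Gamma(\alpha-\rho+\lambda)/\Gamma(\alpha-\rho)$, while $\E[e^{-\lambda Z(\rho)}]$ factors via \eqref{LaplaceF} into the Laplace transform of $Z(0)$ times $\exp(-\int_0^{\rho}\!\int_0^\infty(1-e^{-\lambda y})\sigma(s,y)\,\dd y\,\dd s)$. The equivalence of the two expressions reduces (after differentiation in $\rho$) to the standard digamma identity
\[
\psi_0(\alpha-\rho+\lambda)-\psi_0(\alpha-\rho)=\int_0^\infty\frac{e^{-y(\alpha-\rho)}-e^{-y(\alpha-\rho+\lambda)}}{1-e^{-y}}\,\dd y,
\]
which together with the correct value at $\rho=0$ yields the desired match. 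By the independence of increments, the full finite-dimensional Laplace transforms then agree, determining the law on Skorokhod space and completing the identification.

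The main obstacle is the independence-of-increments step. It is not a general fact about random monotone processes indexed by a parameter; it is forced by the specific algebraic structure of the geometric-RSK-type map $\Daop^{(N+1)}$ together with the inverse-gamma product measure. Once that structure is unwound (via the sequential-to-parallel intertwining described in Sections~\ref{sec:i_gRSK} and \ref{mar_sec}), the Laplace-transform calculation above is routine. A secondary concern is joint cadlag regularity: since both processes are nondecreasing with values in a separable space, matching all finite-dimensional marginals suffices to identify their laws on the Skorokhod space $D([0,\alpha),\R)$.
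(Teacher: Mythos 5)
Your overall architecture coincides with the paper's: reduce to finite-dimensional distributions at a single edge, invoke Theorem~\ref{B1_thm} to realize the joint law as the pushforward of independent inverse-gamma sequences under $\Daop^{(N+1)}$, establish independence of the increments in $\rho$, and then match each increment against the Poisson construction through a Laplace transform/digamma computation (the paper identifies the increment as $\log \mathrm{Beta}^{-1}(\alpha-\rho_m,\rho_m-\rho_{m-1})$ and performs exactly the integral you write down). The Laplace-transform step and the remark that matching finite-dimensional marginals of nondecreasing cadlag processes identifies the laws are both fine.

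The genuine gap is the independence-of-increments step, which you flag as the main obstacle but do not prove, and the mechanism you sketch for it is incorrect. Already for two directions, the single-edge increment is $e^{Y(\rho_2)-Y(\rho_1)}=\Dop(I^1,I^2)_k/I^1_k = 1+ I^2_k/J_{k-1}$ with $J=\Sop(I^1,I^2)$ as in \eqref{J_def} and \eqref{m801a}, so it depends on the entire left tails of \emph{both} input sequences; it is not a deterministic function of the newly introduced sequence $I^2$, nor does it factor through a structurally independent ``portion'' of the inputs. The independence from $I^1_k$ and from all lower-level data is a purely distributional consequence of the beta--gamma algebra: $\bigl((I^2_k)^{-1}, J_{k-1}^{-1}\bigr)$ is an independent $(\mathrm{Ga}(\alpha-\rho_2),\mathrm{Ga}(\rho_2-\rho_1))$ pair, and the sum (which produces the lower-level variable) is independent of the Beta ratio (which produces the increment). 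For $N$ directions this fact must be propagated through the whole cascade of $\Daop^{(N+1)}$, which the paper does via the triangular arrays of Definition~\ref{def:arr} and the double induction of Lemma~\ref{lm:eta12}, tracking joint independence of whole left-tail sequences (not just single coordinates) so that Lemma~\ref{lm:invg2} and the beta--gamma identity can be applied at each stage. Without supplying this argument, or an equivalent one, your proof is incomplete; with it, the remainder of your computation goes through and reproduces the paper's proof.
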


	Theorem~\ref{B-th5} is proved in Section~\ref{ss_inv_ga} by establishing  that 
	$\Bus^{\chdir(\rcbullet)-}_{x-\evec_1,x}$ has independent increments as does $\Zpr$,  and by showing that their increments have  identical distributions.  
	Independent increments means that   for $0=\rho_0<\rho_1<\dotsm<\rho_n<\alpha$, the random variables $\log\Yw_x=\Bus^{\chdir(\rho_0)}_{x-\evec_1,x}\,,  \, \Bus^{\chdir(\rho_1)}_{x-\evec_1,x}-\Bus^{\chdir(\rho_0)}_{x-\evec_1,x}\,, \dotsc, \Bus^{\chdir(\rho_n)}_{x-\evec_1,x}-\Bus^{\chdir(\rho_{n-1})}_{x-\evec_1,x}$ are independent.   	
	 From the proof we will see that
	 for $\alpha> \rho>\lambda\geq 0$, 
	 \eeq{ 
	 	 e^{- (\Bus^{\chdir(\rho)}_{x-\evec_1,x}-\Bus^{\chdir(\lambda)}_{x-\evec_1,x})}\sim{\rm Beta}(\alpha-\rho,\,\rho-\lambda),
    }
which is consistent with the expectation following from \eqref{B569}: 
\eq{ 
\E\bigl[  \Bus^{\zeta}_{x-\evec_1,x}- \Bus^{\eta}_{x-\evec_1,x} \bigr]   = \psi_0(\alpha-\rho(\eta))  -  \psi_0(\alpha-\rho(\zeta))  > 0
 \qquad \text{for  }   \evec_2\prec\zeta\prec\eta\prec\evec_1.  
}
 	
	
\begin{figure}
\includegraphics[scale=0.2]{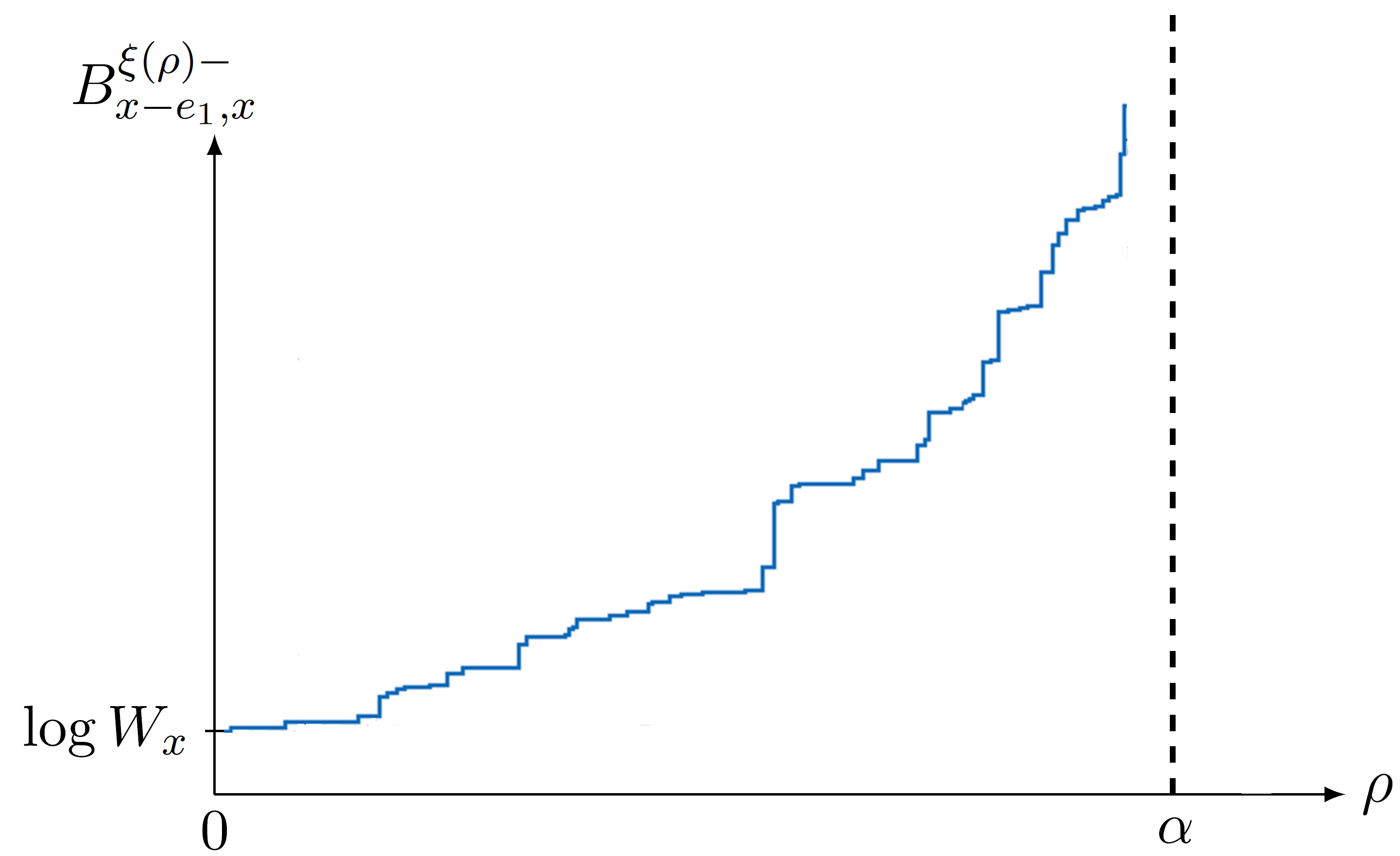}
	\caption{\small A simulated trajectory of the pure jump process $\{\Bus^{\xi(\rho)-}_{x-\evec_1,x}\}_{\rho\,\in\, [0,\alpha)}$, with $\alpha=20$. The initial value is $\log W_x \sim \log$ Ga$^{-1}$($\alpha$) and the jumps are determined by an independent Poisson point process on  $(0,\alpha)\times\R_{>0}$  with	intensity measure  $\frac{e^{-y(\alpha-s)}}{1-e^{-y}}\,\dd s\, \dd y$, according to \eqref{Def:X}.  There are infinitely many jumps on any open interval in $(0,\alpha)$. The process tends to infinity almost surely, as $\rho\nearrow \alpha$.
 }
 	\label{f:trajec} 
\end{figure}

We state a corollary about the jumps of the inverse-gamma Busemann process. 		 
Let $\cM_{\ge\delta}$ be the point process on $\,]\evec_2,\evec_1]$ of downward jumps of size $\ge\delta>0$ of the Busemann function $\xi\mapsto \Bus^{\xi+}_{x-\evec_1,x}$:
\eq{ 
\cM_{\ge\delta}  \bigl(  \,] \zeta, \eta]\bigr)   = \sum_{\xi\,\in \,] \zeta, \eta]} \one\{ \Bus^{\xi-}_{x-\evec_1,x}- \Bus^{\xi+}_{x-\evec_1,x}\, \ge\,\delta \}  \qquad \text{for  }   \evec_2\prec\zeta\prec\eta\preccurlyeq\evec_1. 
}
For distributional statements about $ \cM_{\ge\delta}$ the choice of $x$ is immaterial.  We observe below that large jumps accumulate only at $\evec_2$, while small jumps are dense everywhere. This is consistent with the continuity \eqref{B_lim}  of $\xi\mapsto \Bus^{\xi\sig}_{x-\evec_1,x}$ at the right endpoint $\xi=\evec_1$. 

\begin{corollary} \label{cor:ig_aUset}   Assume   \eqref{m:exp}.
\begin{enumerate}[label={\rm(\alph*)},ref={\rm(\alph*)}] \itemsep=2pt 
\item\label{cor:6.a}  For each $\delta>0$,  $\cM_{\ge\delta}$ is a Poisson process on $\,]\evec_2,\evec_1]$ with intensity measure  
\be\label{cM8} 
\E\bigl[  \tspb  \cM_{\ge\delta}  \bigl(  \,] \zeta, \eta]\bigr) \tspb \bigr]    =  \int_{\rho(\eta)}^{\rho(\zeta)} \dd s \int_\delta^{\infty} \! \dd y \,   \frac{e^{-y(\alpha-s)}}{1-e^{-y}} 
 \qquad \text{for  }   \evec_2\prec\zeta\prec\eta\preccurlyeq\evec_1. 
\ee
In particular, $ \cM_{\ge\delta} (  \,[ \zeta, \evec_1]\tspb)$ is a finite Poisson variable for each $\zeta\in\,]\evec_2,\evec_1[\,$ and so almost surely there is a last jump of size $\ge\delta$ before $\evec_1$. 
By contrast, with probability one, $ \cM_{\ge\delta} (  \,] \evec_2, \eta]\tspb)=\infty$ for each $\eta\in\,]\evec_2,\evec_1[\,$. 

\item\label{cor:6.b}  
With probability one,  the set $\aUset$ of jump directions is dense in $\,]\evec_2,\evec_1[\,$. 
\end{enumerate} 
\end{corollary}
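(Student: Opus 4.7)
The plan is to translate both claims into elementary properties of the Poisson point process $\Prm$ via Theorem \ref{B-th5}. Under the equality in distribution $(\Bus^{\chdir(\rho)-}_{x-\evec_1,x})_{\rho\in[0,\alpha)}\stackrel{d}{=}(\Zpr(\rho))_{\rho\in[0,\alpha)}$, jumps of $\Zpr$ are in explicit bijection with points of $\Prm$: $\Zpr$ has a jump of size $y$ at $\rho_0$ precisely when $(\rho_0,y)\in\Prm$. The reparametrization $\rho\mapsto\chdir(\rho)$ from \eqref{u-rho} is a strictly decreasing bijection from $[0,\alpha)$ onto $\,]\evec_2,\evec_1]$, so using \eqref{B_lim1} to compute
\eq{
\lim_{\rho\nearrow\rho_0}\Bus^{\chdir(\rho)-}_{x-\evec_1,x} = \lim_{\chdir\searrow\chdir(\rho_0)}\Bus^{\chdir-}_{x-\evec_1,x} = \Bus^{\chdir(\rho_0)+}_{x-\evec_1,x},
}
each upward jump of $\Zpr$ at $\rho_0$ of size $y$ corresponds to a downward jump of $\chdir\mapsto\Bus^{\chdir+}_{x-\evec_1,x}$ at $\chdir_0=\chdir(\rho_0)$ of the same size. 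In particular, the jumps of $\chdir\mapsto\Bus^{\chdir+}_{x-\evec_1,x}$ of size $\geq\delta$ lying in $\,]\zeta,\eta]$ are in bijection with the points of $\Prm$ in the strip $[\rho(\eta),\rho(\zeta))\times[\delta,\infty)$.

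For part \ref{cor:6.a}, this identifies $\cM_{\ge\delta}$ on $\,]\zeta,\eta]$ as the Poisson count of $\Prm$ on the corresponding strip, and \eqref{cM8} is then just the integral of $\sigma(s,y)\,\dd s\,\dd y$ over that strip. For the finiteness statement, I would bound the total intensity on $[0,\rho(\zeta)]\times[\delta,\infty)$ using $\alpha-s\geq\alpha-\rho(\zeta)=:c>0$ and $1-e^{-y}\geq 1-e^{-\delta}$, obtaining
\eq{
\int_0^{\rho(\zeta)}\!\!\int_\delta^\infty \frac{e^{-y(\alpha-s)}}{1-e^{-y}}\,\dd y\,\dd s \leq \frac{\rho(\zeta)\,e^{-c\delta}}{c(1-e^{-\delta})}<\infty.
}
For the opposite direction on $[\rho(\eta),\alpha)\times[\delta,\infty)$, the substitution $u=y(\alpha-s)$ in the inner integral gives $\int_\delta^\infty\sigma(s,y)\,\dd y\sim(\alpha-s)^{-1}$ as $s\nearrow\alpha$, producing a non-integrable singularity. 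Hence the Poisson mean is infinite on that strip, forcing infinitely many points almost surely.

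For part \ref{cor:6.b}, Theorem \ref{thm:51-32}\ref{thm:51-32a} identifies $\aUset$ with the set of discontinuities of $\chdir\mapsto\Bus^{\chdir\pm}_{x-\evec_1,x}$, which under the bijection above is the projection of $\Prm$ to the $s$-coordinate. Fix any countable base $\{\,]\zeta_n,\eta_n[\,\}$ of open subintervals of $\,]\evec_2,\evec_1[\,$. It suffices to show that, for each $n$, $\Prm$ has at least one point in the open strip $(\rho(\eta_n),\rho(\zeta_n))\times(0,\infty)$ almost surely. The total intensity is
\eq{
\int_{\rho(\eta_n)}^{\rho(\zeta_n)}\!\!\int_0^\infty \frac{e^{-y(\alpha-s)}}{1-e^{-y}}\,\dd y\,\dd s = \infty,
}
because $\sigma(s,y)\sim 1/y$ as $y\searrow 0$ makes the inner integral diverge for every $s\in(\rho(\eta_n),\rho(\zeta_n))$. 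Infinite mean gives infinitely many points almost surely, and intersecting over the countable base yields almost-sure density of $\aUset$. The only delicate point in the whole argument is keeping track of the orientation-reversing reparametrization together with the one-sided continuity conventions so that the jump correspondence is set up unambiguously; once that is done, both parts reduce to elementary Poisson estimates.
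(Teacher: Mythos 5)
Your proposal is correct and follows essentially the same route as the paper: transfer the jump structure through the distributional identity of Theorem~\ref{B-th5} (the paper justifies this by noting that the finitely many jumps of size $\ge\delta$ on a compact $\rho$-interval are measurable functionals of the cadlag path), map back through the decreasing bijection $\rho\mapsto\xi(\rho)$, and then read off part~\ref{cor:6.a} from the finiteness/divergence of the intensity integrals and part~\ref{cor:6.b} from the divergence of the inner integral as $\delta\searrow0$. Your explicit bounds and the left/right-limit bookkeeping via \eqref{B_lim1} match the paper's argument, just spelled out in slightly more detail.
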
  

We prove the corollary at the end of this section after some further remarks.

\begin{remark}[Inverse-gamma polymer Gibbs measures]    \label{rmk:Q_ig}
Here we combine results from \cite{janj-rass-20-aop} with our results  to state facts about the polymer Gibbs measures of the inverse-gamma model. 
A semi-infinite polymer measure $Q_v$ rooted at $v\in\Z^2$ is said to be \textit{$\xi$-directed} if its sample paths have limiting direction $\xi$ with probability one.
That is, $Q_v(\text{$X_\bbullet$ is $\xi$-directed})=1$.

For each  $\xi\in\,]\evec_2, \evec_1[\,$ there is a $\xi$-dependent full-probability event on which $\Bus^{\xi+}=\Bus^{\xi-}$, 
and then there is a unique $\xi$-directed semi-infinite polymer measure rooted at each $x\in\Z^2$. 
This comes from combining  \cite[Thm.~3.7]{janj-rass-20-aop} with the strict concavity and differentiability of the inverse-gamma shape function. 
The unique measure is $Q^\xi_v$, the Gibbs measure supplied by the Busemann process via Theorem~\ref{pococthm}.

There exists a full-probability event $\Omega_0$ on which the following holds for each $x\in\Z^2$:  For each $\xi\tspa\in\,]\evec_2, \evec_1[\,\setminus\aUset$   there is a unique $\xi$-directed semi-infinite polymer measure rooted at $x$, as above.  
On the other hand, for each $\xi\in\aUset$ there are at least two $\xi$-directed semi-infinite polymer measures  rooted at $x$, namely those Gibbs measures $Q_v^{\xi-}$ and $Q_v^{\xi+}$ associated to the Busemann functions $\Bus^{\xi-}$ and $\Bus^{\xi+}$.
These statements come from  \cite[Thm.~3.10(e)--(f)]{janj-rass-20-aop} and the strict concavity  of the inverse-gamma shape function. 

   An important open problem is the number of extreme Gibbs measures at directions $\xi\in\aUset$, rooted at a particular $x\in\Z^2$.  
   This problem has only been solved in two cases, both of which are zero-temperature models: the exponential corner growth model and the directed landscape.
   The statement there  is that in directions of discontinuity of the Busemann process, there are \textit{exactly two}  semi-infinite geodesics from each initial vertex  \cite{coup-11, janj-rass-sepp-23, busani_seppalainen_sorensen24, busani??}. 
   Based on this, the natural conjecture is that,  rooted at each $x$,  there are exactly two  extreme semi-infinite polymer  measures in directions $\xi\in\aUset$. 
\qedex\end{remark}
 

\begin{proof}[Proof of Corollary~\ref{cor:ig_aUset}]  For both processes  $\Bus^{\chdir(\rbbullet)-}_{x-\evec_1,x}$ and  $\Zpr$,  on any compact interval $[0,\lambda]\subset[0,\alpha)$  the finite ordered sequence of  jumps of size $\ge\delta>0$ can be captured with measurable functions of the path.  Thus the processes of such jumps have the same distribution for both $\Bus^{\chdir(\rbbullet)-}_{x-\evec_1,x}$ and  $\Zpr$. For $\Zpr$ the Poisson description of these jumps is clear from \eqref{Def:X}.  Hence the same description works for $\Bus^{\chdir(\rbbullet)-}_{x-\evec_1,x}$. To get the first statement of part~\ref{cor:6.a}, map this Poisson process back to $\,]\evec_2,\evec_1]$ via the decreasing bijection $\rho\mapsto\xi(\rho)$ from \eqref{u-rho}. 
The remaining statements of part~\ref{cor:6.a} follow from the observation that for any $\rho\in(0,\alpha)$,
\eq{
\int_{0}^{\rho} \dd s \int_\delta^{\infty} \! \dd y \,   \frac{e^{-y(\alpha-s)}}{1-e^{-y}} < \infty \qquad \text{while} \qquad
\int_{\rho}^{\alpha} \dd s \int_\delta^{\infty} \! \dd y \,   \frac{e^{-y(\alpha-s)}}{1-e^{-y}} = \infty.
}
Part~\ref{cor:6.b} follows because the inner integral  in \eqref{cM8} diverges  to $+\infty$ as $\delta\searrow0$.
\end{proof}

\subsection{Competition interface under inverse-gamma weights} \label{sec:ig_xi}

In the inverse-gamma case we can answer the questions in Remark~\ref{rm:open4}\ref{open4.a}. 

\begin{theorem}\label{thm:ig-cif} 
 Assume i.i.d.\ inverse-gamma weights \eqref{m:exp}. 
  Then the following hold $\Qwalks^\w$-almost surely, for $\P$-almost every $\w$:  $\{\eta^*(x):\, x\in\Z^2\}=\aUset$ and for each $x\in\Z^2$,   $\xi^*(x)\in\aUset$.
\end{theorem}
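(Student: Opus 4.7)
The plan is to leverage two ingredients available in the inverse-gamma case: the explicit distributional description of the Busemann process on a horizontal edge from Theorem~\ref{B-th5}, and the strict concavity and differentiability of the inverse-gamma shape function, which make assumption~\eqref{shape_ass} automatic. The first task will be to verify the pure-jump hypothesis~\eqref{pure_jump}. Once that is in hand, the equality $\{\eta^*(x):x\in\Z^2\}=\aUset$ follows by combining parts~\ref{thm:eta20bb} and~\ref{thm:eta20a} of Theorem~\ref{thm:eta20}, and the claim $\xi^*(x)\in\aUset$ follows from the distributional identity~\eqref{xilaw5} together with the same pure-jump property.

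For the pure-jump property I would fix $x\in\Z^2$ and consider the nondecreasing cadlag process $\rho\mapsto\Bus^{\chdir(\rho)-}_{x-\evec_1,x}$ on $[0,\alpha)$. By Theorem~\ref{B-th5} it is equal in law to the process $\Zpr$ from~\eqref{Def:X}, which is manifestly pure-jump: the formula
\[
\Zpr(\rho)-\Zpr(\lambda)=\sum_{(s,y)\in\Prm}\ind_{(\lambda,\rho]}(s)\cdot y
\]
expresses each increment as the exact sum of the jumps on the interval. Since being pure-jump is a measurable property of cadlag paths (an equality of two measurable functionals on Skorokhod space, verifiable by testing over rational endpoints), the Busemann process inherits this property almost surely for the fixed $x$; a countable intersection extends it to all $x\in\Z^2$ simultaneously, and translating back to the $\xi$ variable via the bijection~\eqref{u-rho}, together with Theorem~\ref{thm:51-32}\ref{thm:51-32a} to identify the jump sets, yields~\eqref{pure_jump}. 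Part~\ref{thm:eta20bb} of Theorem~\ref{thm:eta20} then gives $\{\eta^*(x):x\in\Z^2\}\subset\aUset$, and part~\ref{thm:eta20a} (applicable because the inverse-gamma shape function has no linear segments, so \eqref{shape_ass} holds) yields the reverse inclusion.

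For the second claim, the distributional identity~\eqref{xilaw5} says that under $\Qwalks^\w$ the direction $\xi^*(x)$ has cumulative distribution function $F(\xi)=\Yw_x\tspb e^{-\Bus^{\xi+}_{x-\evec_1,x}}$, and the recovery relation~\eqref{B_reco} together with the boundary limits~\eqref{B_lim} give $F(\evec_2+)=0$ and $F(\evec_1-)=1$. Since $\xi\mapsto\Bus^{\xi+}_{x-\evec_1,x}$ is pure-jump by the previous step, $F$ is a purely atomic distribution function whose atoms are precisely the elements of $\aUset$, so $\Qwalks^\w\{\xi^*(x)\in\aUset\}=1$ for each $x\in\Z^2$; a countable intersection over $x$ completes the proof. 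The main obstacle is the pure-jump step: one must formulate the property at the level of cadlag paths so that distributional identity transfers it from $\Zpr$ to the Busemann process, and one must keep careful track of the reverse orientation between the $\rho$ and $\xi$ parametrizations and the associated one-sided limits; the remaining steps are immediate consequences of the general framework already established in the paper.
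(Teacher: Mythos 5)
Your proposal is correct and follows essentially the same route as the paper: establish the pure-jump property \eqref{pure_jump} by transferring it from the Poisson functional $\Zpr$ via Theorem~\ref{B-th5} (with Theorem~\hyperref[thm:51-32a]{\ref*{thm:51-32}\ref*{thm:51-32a}} identifying the jump set), then invoke Theorem~\ref{thm:eta20}\ref{thm:eta20bb} and \ref{thm:eta20a} for $\{\eta^*(x)\}=\aUset$, and observe that the atoms of the law \eqref{xilaw5} of $\xi^*(x)$ carry full mass. Your extra care about measurability of the pure-jump property and the $\rho\leftrightarrow\xi$ reparametrization, and your phrasing of the last step as ``the CDF is purely atomic'' rather than the paper's explicit summation \eqref{pi56}, are only cosmetic differences.
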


 \begin{proof}    
  The  process $Z$ in \eqref{Def:X} is a monotone pure jump process. 
  Therefore, the same is true for $\xi\mapsto\Bus^{\xi+}_{x-\evec_1,x}$ by Theorem~\ref{B-th5}; that is, \eqref{pure_jump} holds.
  The equality $\{\eta^*(x):\, x\in\Z^2\}=\aUset$ now follows from Theorem~\ref{thm:eta20}, parts ~\ref{thm:eta20bb} and \ref{thm:eta20a}.
  The membership $\xi^*(x)\in\aUset$ follows from the observation 
  \[
\Qwalks^\w\{\xi^*(x)\in\aUset\} \overset{\eqref{xilaw5},\eqref{pi36}}{=}
 \sum_{\xi\in\aUset}[\pi^{\xi+}(x,x-\evec_1)-\pi^{\xi-}(x,x-\evec_1)]
 \stackref{pi56}{=} 1. \qedhere
\]
 \end{proof}

\subsection{Zero-temperature limit of the Busemann process}   \label{sec:0temp} 

The zero-temperature limit of the inverse-gamma polymer is the corner growth model (CGM) with exponential weights.  
We write $Y\sim\mathrm{Exp}(\rho)$ when $Y$ is exponentially distributed with rate parameter $\rho>0$, i.e.~$Y$ has density function $f_Y(x) = \rho e^{-\rho x}$ for $x>0$.
In order to stay within the exactly solvable family of inverse-gamma polymers,  we do not  add a separate temperature parameter to the model. Instead we view the parameter $\alpha\in\R_{>0}$ of the weight distribution in \eqref{m:exp} as the temperature and send it to zero.  To describe this, we include $\alpha$ explicitly in the notation and, when necessary,  use the superscript $0$ to identify objects that arise in the $\alpha\searrow0$ limit.    

Let $(\Yw^\alpha_x)_{x\tsp\in\tsp\Z^2}$ denote i.i.d.\ Ga$^{-1}(\alpha)$ weights and $(Y_x)_{x\tsp\in\tsp\Z^2}$  i.i.d.\ Exp$(1)$ weights.   Then as $\alpha\searrow0$, the weights converge in distribution: 
$(\alpha\log\Yw^\alpha_x)_{x\tsp\in\tsp\Z^2} \dto  (Y_x)_{x\tsp\in\tsp\Z^2}$. 
The normalized free energy of the inverse-gamma polymer thus converges to the last-passage value in the Exp$(1)$ CGM:
for $u\le v$ in $\Z^2$, 
\eq{ 
\alpha \log Z^\alpha_{u,v}  
\stackref{part56}{=}  \alpha \log \!\! \!\! \sum_{x_\brbullet\tspa\in\tspa\pathsp_{u,v}} \!\! \!\!   e^{\alpha^{-1} \sum_{i=m+1}^{n}  \alpha\log \Yw^\alpha_{x_i}} 
\ \underset{\alpha\searrow0}{\dtoo} \    \max _{x_\brbullet\tspa\in\tspa\pathsp_{u,v}}  \sum_{i=m+1}^{n}  Y_{x_i} 
\equiv L_{u,v}.
}
In this section we establish an analogous convergence for the Busemann processes.

The Busemann process of the inverse-gamma polymer with weights $\Yw^\alpha$ is now denoted by 
$B^{\alpha, \xi\sig}_{x,y}$ for $\alpha>0$.  
The Busemann process of the Exp(1) CGM is denoted by $\Bus^{0, \xi\sig}_{x,y}$.   
It has properties analogous to those collected in Theorems~\ref{buse_dir} and \ref{buse_full}.  
In particular, for each direction $\xi\in\,]\evec_2, \evec_1[\,$, we have the Busemann limit on a $\xi$-dependent event of full probability:
\[    \Bus^{0, \xi\sig}_{x,y} = \lim_{\ell\to-\infty} [ L_{x_\ell,y}-L_{x_\ell, x}] \quad \text{whenever $\lim_{\ell\to-\infty} x_\ell/\ell=\xi$}.
\]
Furthermore, for each $x\in\Z^2$ the map $\xi\mapsto \Bus^{0,\,\xi+}_{x-\evec_1,x}$ is nonincreasing and right-continuous on $\,]\evec_2,\evec_1]$, blowing up at $\evec_2$ and taking the value $Y_x$ at $\evec_1$.
We will soon give a process-level description in \eqref{hz3f21}.
Further descriptions of $\Bus^0$  appear  in Sections 2 and 3 of \cite{fan-sepp-20} (but note that Busemann variables are parametrized by their means in \cite{fan-sepp-20} rather than directions) and in Section 2 and Appendix A of \cite{janj-rass-sepp-23} (but note that semi-infinite geodesics go northeast in \cite{janj-rass-sepp-23} rather than southwest).

To state our convergence result, we view the process $(\Bus^{\alpha, \xi+}_{x-\evec_1,x}:\, \xi\in\,]\evec_2,\evec_1])$ in the space of real-valued cadlag paths, denoted by $D(\,]\evec_2, \evec_1], \R)$.
We can place a Polish topology on this space by adapting the standard Skorohod topology used for $D([0,\infty),\R)$ (see for example \cite[Sec.~3.5]{ethi-kurt}).   
Namely, a family $(X^\alpha)_{\alpha>0}$ converges as $\alpha\searrow0$ to $X^0$ in $D(\,]\evec_2, \evec_1], \R)$ if and only if there exist increasing Lipschitz bijections $u^\alpha\colon\,]\evec_2, \evec_1]\to\,]\evec_2, \evec_1]$ such that
\begin{subequations}
\label{jc3x1x}
\begin{align} \label{jc3x1xa}
&\lim_{\alpha\searrow0}\sup_{\evec_2\prec\xi\prec\eta\preccurlyeq\evec_1}\Big|\log \frac{|u^\alpha(\xi)-u^\alpha(\eta)|_1}{|\xi-\eta|_1}\Big| = 0, \quad \text{and} \\ \label{jc3x1xb}
&\lim_{\alpha\searrow0} \sup_{\xi\in[\zeta,\evec_1]}|X^\alpha(u^\alpha(\xi)) - X^0(\xi)| = 0 \quad \text{for all $\zeta\in\,]\evec_2, \evec_1]$}.
\end{align}
\end{subequations}
Under this topology, the inverse-gamma polymer Busemann processes converge weakly in the zero-temperature limit to those of Exp(1) CGM.

\begin{theorem}\label{thm:igexp2} 
As $\alpha\searrow0$,   the process 
$\{\alpha  \Bus^{\alpha, \,\xi+}_{x-\evec_1,x} :\, \xi\in\,]\evec_2, \evec_1]\,\}$ converges weakly to the process 
$\{\Bus^{0, \,\xi+}_{x-\evec_1,x} :\, \xi\in\,]\evec_2, \evec_1]\,\}$ in the space   $D(\,]\evec_2, \evec_1]\,, \R)$. 
\end{theorem}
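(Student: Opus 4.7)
The plan is to exploit the explicit Poisson point process representation of the inverse-gamma Busemann process from Theorem~\ref{B-th5}. First I would pass from the direction variable $\xi$ to the parameter $\rho\in[0,\alpha)$ via \eqref{u-rho}, then rescale by setting $r=\rho/\alpha\in[0,1)$ and $Z^\alpha_\ast(r)=\alpha \Zpr(\alpha r)$, where $\Zpr$ is the pure-jump process from \eqref{Def:X}. By Theorem~\ref{B-th5}, $(Z^\alpha_\ast(r))_{r\in[0,1)}$ has the same law as $(\alpha \Bus^{\alpha,\xi(\alpha r)-}_{x-\evec_1,x})_{r\in[0,1)}$. The initial value $Z^\alpha_\ast(0)=\alpha \log \Yw^\alpha_x$ converges weakly to Exp$(1)$ because its moment generating function $\Gamma(\alpha(1+s))/\Gamma(\alpha)\to 1/(1+s)$ as $\alpha\searrow 0$ via $\Gamma(\epsilon)\sim 1/\epsilon$. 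A change of variables $s=\alpha r$, $y=z/\alpha$ in \eqref{PPPintensity} shows that the jumps of $Z^\alpha_\ast$ form a Poisson point process on $(0,1)\times\R_{>0}$ with intensity
\begin{equation*}
\widebar\sigma^\alpha(\dd r,\dd z)=\frac{e^{-z(1-r)}}{1-e^{-z/\alpha}}\,\dd r\,\dd z,
\end{equation*}
independent of $Z^\alpha_\ast(0)$. As $\alpha\searrow 0$ these intensities converge to $e^{-z(1-r)}\,\dd r\,\dd z$ uniformly on every set $(0,1-\delta)\times(\epsilon,\infty)$ with $\delta,\epsilon>0$.

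Next I would show weak convergence of $Z^\alpha_\ast$ in $D([0,1-\delta],\R)$ for each $\delta>0$ to the process $Z^0(r)=Z^0(0)+\sum_{(r',z)\in\Prm^0,\, r'\leq r}z$, where $Z^0(0)\sim$ Exp$(1)$ is independent of a Poisson point process $\Prm^0$ on $(0,1)\times\R_{>0}$ with intensity $e^{-z(1-r)}\,\dd r\,\dd z$. Weak convergence of the restrictions of $\Prm^\alpha$ to $(0,1-\delta)\times(\epsilon,\infty)$ follows from convergence of their Laplace functionals \eqref{LaplaceF}. Control of small jumps near $z=0$ comes from the bound $(1-e^{-z/\alpha})^{-1}\leq \alpha/z+O(1)$ as $z\searrow 0$, which gives $\int_0^{1-\delta}\!\int_0^\epsilon z\,\widebar\sigma^\alpha(\dd r,\dd z)\leq C_\delta\, \epsilon$ uniformly for small $\alpha$. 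Tightness in $D([0,1-\delta],\R)$ follows from monotonicity of sample paths together with uniform boundedness of their first moments.

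The limit $Z^0$ must then be identified with the horizontal-edge Busemann process of the Exp$(1)$ corner growth model under the parametrization $r=\sqrt{\xi_2}/(\sqrt{\xi_1}+\sqrt{\xi_2})$, obtained as the $\alpha\searrow 0$ limit of \eqref{u-rho} with $\rho=\alpha r$ using $\psi_1(\alpha(1-r))\sim 1/(\alpha(1-r))^2$. The marginal $Z^0(r)\sim$ Exp$(1-r)$ matches the known Busemann mean $\nabla\gpp^0(\xi(r))\cdot\evec_1=1+\sqrt{\xi_2/\xi_1}=(1-r)^{-1}$ of the Exp$(1)$ CGM, and the independent-increment structure dictated by $\Prm^0$ matches the description established in \cite[Sec.~2--3]{fan-sepp-20}. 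To transfer the convergence from $r$-coordinates to the topology \eqref{jc3x1x} on $D(\,]\evec_2,\evec_1],\R)$ I would choose time changes $u^\alpha$ defined so that $u^\alpha(\xi^0(r))=\xi(\alpha r)$, where $\xi^0(r)$ is the limiting direction. Smooth uniform convergence $\xi(\alpha\cdot)\to\xi^0(\cdot)$ on compact subsets of $[0,1)$ yields the Lipschitz condition \eqref{jc3x1xa}, and convergence of $Z^\alpha_\ast$ on $[0,1-\delta]$ yields \eqref{jc3x1xb}. The switch between the left- and right-continuous modifications of the Busemann process in $\xi$ affects at most a countable set and is absorbed into the time change.

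The main obstacle I anticipate is the precise matching of the limit's L\'evy measure to the explicit description of the Exp$(1)$ CGM Busemann process in \cite{fan-sepp-20}, since identification via marginals alone is insufficient. A secondary technical point is the uniform control of small jumps of $\Prm^\alpha$ near $z=0$, where $\widebar\sigma^\alpha$ blows up like $\alpha/z$; this is handled by the moment bound above. The blow-up of the process at $\xi=\evec_2$ is sidestepped because \eqref{jc3x1x} requires convergence only on sub-intervals bounded away from $\evec_2$, reducing the problem to standard Skorohod convergence on compact sub-intervals.
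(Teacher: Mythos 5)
Your proposal is correct in substance and follows the same overall skeleton as the paper's proof: represent $\alpha\Bus^{\alpha,\,\xi(\alpha r)-}_{x-\evec_1,x}$ through the Poisson functional of Theorem~\ref{B-th5}, rescale $(s,y)\mapsto(s/\alpha,\alpha y)$ to get the intensity $\frac{e^{-z(1-r)}}{1-e^{-z/\alpha}}\,\dd r\,\dd z$ (the paper's \eqref{newZZ7}), identify the $\alpha\searrow0$ limit with the Exp$(1)$ CGM Busemann process via the known representation \eqref{hz3f21}--\eqref{cz8g} from \cite{fan-sepp-20}, and transfer to the direction variable with exactly the time change $u^\alpha(\xi)=\xi^\alpha(\alpha s^0(\xi))$ of \eqref{ua7}. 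Where you differ is the mechanism for the core convergence of the rescaled processes on $[0,1-\delta]$: the paper builds an explicit almost-sure coupling by attaching an independent uniform mark to a single Poisson process and thinning, retaining a point $(s,y)$ at temperature $\alpha$ with probability $\tfrac{1-e^{-y}}{1-e^{-y/\alpha}}$ and at zero temperature with probability $1-e^{-y}$ (compare \eqref{newZZ7} and \eqref{newZZ13}), together with a common uniform for the initial values; the coupled paths then converge uniformly on $[0,1-\delta]$ almost surely, so weak convergence follows with no tightness or Laplace-functional arguments at all. Your route instead proves weak convergence directly — Laplace functionals for the point processes truncated away from $z=0$, the uniform first-moment bound $(1-e^{-z/\alpha})^{-1}\le 1+\alpha/z$ for the small jumps, and tightness — which is standard and works, but is longer and leaves two steps that need a little more care than you give them: (i) tightness in $D([0,1-\delta],\R)$ is not literally a consequence of monotonicity plus first moments; the clean statement is that for nondecreasing paths, convergence of finite-dimensional distributions at a dense set of times together with the absence of fixed discontinuities of the limit yields Skorohod convergence; and (ii) the Lipschitz condition \eqref{jc3x1xa} is a global requirement on $\,]\evec_2,\evec_1]$, so locally uniform smooth convergence of $\xi^\alpha(\alpha\,\cdot\,)$ on compacts of $[0,1)$ is not quite enough — one should use the global trigamma bounds behind Lemma~\ref{lm:ua13} (which give $\sup_\rho|\xi^\alpha(\alpha\rho)-\xi^0(\rho)|_1\le C\alpha^2$ and analogous derivative control) exactly as the paper does. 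The obstacle you flag about identifying the limit's L\'evy measure is resolved the same way the paper resolves it, by citing \cite[Thm.~3.4]{fan-sepp-20}; so what the paper's coupling buys is economy (a.s. convergence of coupled versions, no compensator estimates), while your argument buys nothing extra here beyond being the more generic weak-convergence template.
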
 

\begin{remark}[Weak convergence of vertical process]
The analogous theorem holds  for the Busemann process $\{\alpha  \Bus^{\alpha, \,\xi+}_{x-\evec_2,x} : \xi\in[\evec_2, \evec_1[\,\}$ on a vertical edge, with the difference that this process blows up at $\evec_1$.   As indicated by \eqref{u-rho} and \eqref{B5x9}, the reflection of the lattice that switches $\evec_1$ and $\evec_2$ corresponds to replacing the parameter $\rho(\xi)\in[0,\alpha]$  with $\alpha-\rho(\xi)$.   
\qedex\end{remark} 

The proof strategy for Theorem~\ref{thm:igexp2} is to exhibit a coupling of the processes $X^\alpha(\xi) = \alpha\Bus^{\alpha,\,\xi+}_{x-\evec_1,x}$ and $X^0(\xi) = \Bus^{0,\,\xi+}_{x-\evec_1,x}$ which admits increasing Lipschitz bijections $u^\alpha\colon\,]\evec_2, \evec_1]\to\,]\evec_2, \evec_1]$ satisfying \eqref{jc3x1x}.
In fact, our $u^\alpha$ will be deterministic; its role is to reparametrize the bijection $[0,\alpha]\ni\rho\mapsto\xi^\alpha(\rho)\in[\evec_2,\evec_1]$ from \eqref{u-rho} so that the domain does not depend on $\alpha$.
The reparametrization is necessary because our description of the Busemann process---namely, as a functional \eqref{Def:X} of a Poisson point process---passes through this bijection.

Meanwhile, the coupling will be achieved by progressively thinning a single Poisson point process.
The goal is to achieve the correct density \eqref{PPPintensity} for each $\alpha>0$, and then to verify that the fully thinned point process yields the correct Busemann function at zero temperature.
Comparison of  formulas \eqref{newZZ7} and  \eqref{newZZ13}  in the proof below shows  that 
a jump discontinuity in $\Bus^{\alpha,\,\bbullet}_{x-\evec_1, x}$ of size $y$ is  retained in
 $\Bus^{0,\,\bbullet}_{x-\evec_1, x}$ with probability $1-e^{-y}$.
 In particular, jumps of small magnitude $y$ are unlikely to be retained.  This accounts for the major qualitative difference between the positive- and zero-temperature Busemann processes: $\xi\mapsto \Bus^{\alpha, \,\xi+}_{x-\evec_1,x}$ has a dense set of jumps, while the jumps of $\xi\mapsto \Bus^{0, \,\xi+}_{x-\evec_1,x}$ are isolated in $\,]\evec_2, \evec_1[\,$ and accumulate only at $\evec_2$. 
 
 We emphasize that Theorem~\ref{thm:igexp2} and the statement above about jumps are distributional only, and they are made possible by the Poisson point process representation \eqref{Def:X}.  
 We do not presently have a proof based on properties of polymer paths and geodesics. 
 Finding such a proof remains an interesting open problem and may enable one to go beyond the exactly solvable case.   

In preparation, we give the zero-temperature version of the bijection $\rho\mapsto\xi^\alpha(\rho)$ from \eqref{u-rho}.
It is a decreasing map $\xi^0\colon [0,1]\to[\evec_2, \evec_1]$ from rate parameters to direction vectors, given by
\be\label{rhoxi}  \xi^0(\rho)=\bigl( \tfrac{(1-\rho)^2}{\rho^2+(1-\rho)^2}\,,\, \tfrac{\rho^2}{\rho^2+(1-\rho)^2} \bigr) . \ee
With this parametrization,  the marginal distributions of nearest-neighbor Busemann functions are  $\Bus^{0, \xi^0(\rho)\sig}_{x-\evec_1,x}\sim\mathrm{Exp}(1-\rho)$ and $\Bus^{0, \xi^0(\rho)\sig}_{x-\evec_2,x}\sim\mathrm{Exp}(\rho)$
\cite[Cor.~5.1]{geor-rass-sepp-yilm-15}.
For a fixed horizontal edge $(x-\evec_1,x)$, the process $\{\Bus^{0, \,\xi^0(\rho)-}_{x-\evec_1,x} : \rho\in [0,1)\}$ was shown in \cite[Thm.~3.4]{fan-sepp-20} to have the same distribution as the following cadlag process:
\eeq{ \label{hz3f21}
Z^0(\rho) = Y_x + \sum_{(s,y)\,\in\,\Prm^0}\ind_{(0,\rho]}(s)\cdot y, \quad \rho\in[0,1),
}
where $\Prm^0$ is a Poisson point process independent of $Y_x$ with intensity measure\footnote{Since the Busemann processes are parametrized in \cite{fan-sepp-20} by their means rather than directions, one needs to push forward the marked point process $\{(t,Z_t):\, t\in N\}$ in \cite{fan-sepp-20} by the map $[1,\infty)\times\R_{>0}\ni (t,z)\mapsto (s,y)=(1-t^{-1},z)\in[0,1)\times \R_{>0}$ to yield \eqref{cz8g}, and correspondingly take our $Z^0(\rho)$ to be $X((1-\rho)^{-1})$ from \cite[eq.~(3-6)]{fan-sepp-20}.}
\eeq{ \label{cz8g}
e^{-y(1-s)}\ \dd s\,\dd y, \quad (s,y)\in(0,1)\times\R_{>0}.
}
The marginal weak convergence of Busemann processes that points to the correct process-level convergence goes as follows, for each fixed $\rho\in[0,1)$: 
\eq{ 
\alpha  \Bus^{\alpha, \,\xi^\alpha(\alpha\rho)\sig}_{x-\evec_1,x} \; \stackref{B569}{\sim} \;\alpha\log{\rm Ga}^{-1}(\alpha(1-\rho))  
\ \underset{\alpha\searrow0}\dtoo  \ {\rm Exp}(1-\rho)  \; \sim \;   
\Bus^{0, \,\xi^0(\rho)\sig}_{x-\evec_1,x}. 
}
Therefore, the coupling in the proof of Theorem~\ref{thm:igexp2} will use a map $u^\alpha$ that identifies $\xi^\alpha(\alpha\rho)$ with $\xi^0(\rho)$.
To this end, we denote the inverse of \eqref{rhoxi} by $s^0\colon[\evec_2, \evec_1]\to[0,1]$, given by
\eq{
s^0(\xi) = \frac{(1-\xi_1)^{1/2}}{(1-\xi_1)^{1/2}+\xi_1^{1/2}}.
}
We then have the following lemma.

\begin{lemma}\label{lm:ua13}    
Define $u^\alpha: [\evec_2, \evec_1]\to[\evec_2, \evec_1]$ by 
\be\label{ua7}  u^\alpha(\xi)= \xi^\alpha(\alpha \tsp s^0(\xi)). \ee
There exists a constant $C$ so that we have the uniform bound 
\be\label{ua13}     \sup_{\xi\tsp\in[\evec_2, \evec_1]} \abs{ u^\alpha(\xi)-\xi}_1 \le C\alpha^2
\qquad\text{for all }   \alpha>0.  \ee
\end{lemma}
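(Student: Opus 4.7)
The plan is to reduce the uniform bound to an algebraic identity involving the trigamma function and then control the two pieces with a Mittag--Leffler expansion and an AM--GM inequality.

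First I would observe that, since $u^\alpha(\xi)$ and $\xi$ both lie on the simplex $[\evec_2,\evec_1]=\{x\in\R_{\ge 0}^2:x\cdot\evec_1+x\cdot\evec_2=1\}$, the coordinate differences $(u^\alpha(\xi)-\xi)\cdot\evec_1$ and $(u^\alpha(\xi)-\xi)\cdot\evec_2$ are negatives of each other. Hence $|u^\alpha(\xi)-\xi|_1 = 2|u^\alpha(\xi)\cdot\evec_1-\xi\cdot\evec_1|$, and it suffices to estimate the first coordinate. Next, introduce the parameter $\rho=s^0(\xi)\in[0,1]$, which is the inverse of $\xi^0$, so that $\xi=\xi^0(\rho)$ and $u^\alpha(\xi)=\xi^\alpha(\alpha\rho)$. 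The cases $\rho\in\{0,1\}$ are immediate from \eqref{u-rho} and \eqref{rhoxi} (the limits $\psi_1(0+)=\infty$ force $\xi^\alpha(0)=\evec_1$ and $\xi^\alpha(\alpha)=\evec_2$, matching $\xi^0(0)$ and $\xi^0(1)$). For $\rho\in(0,1)$, set $\beta=\alpha\rho$ and $\gamma=\alpha(1-\rho)$.

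Using the explicit formulas \eqref{u-rho} and \eqref{rhoxi} together with the identity $\gamma^2/(\beta^2+\gamma^2)=\xi^0(\rho)\cdot\evec_1$, a direct manipulation yields
\eq{
\xi^\alpha(\alpha\rho)\cdot\evec_1-\xi^0(\rho)\cdot\evec_1
=\frac{\beta^2\psi_1(\beta)-\gamma^2\psi_1(\gamma)}{(\psi_1(\beta)+\psi_1(\gamma))(\beta^2+\gamma^2)}
=\frac{f(\beta)-f(\gamma)}{(\psi_1(\beta)+\psi_1(\gamma))(\beta^2+\gamma^2)},
}
where $f(s):=s^2\psi_1(s)-1$. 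The Mittag--Leffler expansion $\psi_1(s)=\sum_{k=0}^\infty(s+k)^{-2}$ gives $f(s)=\sum_{k=1}^\infty s^2/(s+k)^2\ge 0$ and the term-by-term bound $f(s)\le s^2\sum_{k=1}^\infty k^{-2}=\pi^2 s^2/6$, valid for every $s\ge 0$.

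Now I would bound the two pieces. For the denominator, expand into four terms: by $\psi_1(s)\ge s^{-2}$ one has $\psi_1(\beta)\beta^2\ge 1$ and $\psi_1(\gamma)\gamma^2\ge 1$, while AM--GM gives
\eq{
\psi_1(\beta)\gamma^2+\psi_1(\gamma)\beta^2\ge 2\beta\gamma\sqrt{\psi_1(\beta)\psi_1(\gamma)}\ge 2,
}
so $(\psi_1(\beta)+\psi_1(\gamma))(\beta^2+\gamma^2)\ge 4$. For the numerator, nonnegativity of $f$ and the term-by-term bound yield
\eq{
|f(\beta)-f(\gamma)|\le f(\beta)+f(\gamma)\le \tfrac{\pi^2}{6}(\beta^2+\gamma^2)=\tfrac{\pi^2}{6}\alpha^2(\rho^2+(1-\rho)^2)\le \tfrac{\pi^2}{6}\alpha^2.
}
Combining these gives $|u^\alpha(\xi)\cdot\evec_1-\xi\cdot\evec_1|\le \pi^2\alpha^2/24$, hence $|u^\alpha(\xi)-\xi|_1\le \pi^2\alpha^2/12$ uniformly in $\xi$ and $\alpha>0$, so $C=\pi^2/12$ works.

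The only delicate point is spotting the right algebraic form: rewriting the numerator $\beta^2\psi_1(\beta)-\gamma^2\psi_1(\gamma)$ as $f(\beta)-f(\gamma)$ is what makes both estimates transparent, because $f$ is manifestly nonnegative and $O(s^2)$ near $0$. Everything else is a routine application of $\psi_1(s)\ge s^{-2}$ and AM--GM. I do not expect difficulty with the boundary $\rho\in\{0,1\}$ or with large $\alpha$: the final bound is loose when $\alpha$ is large, but the statement only requires $\le C\alpha^2$, which remains valid in that regime.
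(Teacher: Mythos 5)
Your proof is correct and follows essentially the same route as the paper: reduce to the reparametrized comparison $\sup_{\rho}|\xi^\alpha(\alpha\rho)-\xi^0(\rho)|_1$, compare the $\evec_1$-coordinates, and control everything with the trigamma series bounds $s^{-2}\le\psi_1(s)\le s^{-2}\bigl(1+\tfrac{\pi^2}{6}s^2\bigr)$. The only difference is bookkeeping: the paper sandwiches $\xi^\alpha(\alpha\rho)\cdot\evec_1$ multiplicatively between $(1\pm\tfrac{\pi^2}{6}\alpha^2)\,\xi^0(\rho)\cdot\evec_1$, whereas you write the difference as a single fraction with numerator $f(\beta)-f(\gamma)$, $f(s)=s^2\psi_1(s)-1$, and bound numerator and denominator separately (with AM--GM giving the denominator bound $4$), which is equally valid and even yields the slightly better constant $C=\pi^2/12$ in place of the paper's $\pi^2/3$.
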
 

\begin{proof}   
Since $s^0$ is the inverse of $\xi^0$, we have
\eq{
u^\alpha(\xi) - \xi = \xi^\alpha(\alpha s^0(\xi)) - \xi^0(s^0(\xi)).
} 
Therefore, the desired bound \eqref{ua13} is equivalent to
\be\label{new206}
\sup_{\rho\in [0,1]} \abs{\xi^\alpha(\alpha\rho)- \xi^0(\rho)}_1 \le C \alpha^2 \qquad\text{for all }   \alpha>0.
\ee
We thus proceed to show \eqref{new206}.
Since the functions are continuous in $\rho$, it suffices to consider $\rho\in(0,1)$.
Also, since $|\zeta - \eta|_1 = 2|\zeta_1-\zeta_1|$ for $\zeta,\eta\in[\evec_2,\evec_1]$, it suffices to look at the $\evec_1$ coordinates:
\eq{
\xi^\alpha(\alpha\rho)\cdot \evec_1 
&\stackref{u-rho}{=}  \frac{\psi_1(\alpha\rho)}{\psi_1(\alpha(1-\rho))+\psi_1(\alpha\rho)} \quad \text{and} \quad
\xi^0(\rho)\cdot\evec_1\stackref{rhoxi}=\frac{(1-\rho)^2}{\rho^2+(1-\rho)^2}.
}
 From the series representation of the trigamma function  
\eq{ 
\psi_1(x)=\sum_{k=0}^\infty \frac1{(k+x)^2} =  \frac1{x^2}\Bigl( 1+  \sum_{k=1}^\infty \frac{x^2}{(k+x)^2} \Bigr)
}
we obtain the following with $b=\pi^2/6$:
\eq{ 
x^{-2} \le  \psi_1(x) \le    x^{-2}( 1+   b\tsp x^2)  
\quad\text{for all } x>0. 
}
Now apply the upper bound to the numerator of $\xi^\alpha(\alpha\rho)\cdot \evec_1$, and lower bound to the denominator:\begin{align*}
\xi^\alpha(\alpha\rho)\cdot \evec_1 
\leq \frac{\alpha^{-2}\rho^{-2} (1+ b\tsp\alpha^2)}{  \alpha^{-2}(1-\rho)^{-2} + \alpha^{-2}\rho^{-2} }
=\frac{ (1-\rho)^{2} (1+ b\tsp\alpha^2)}{   (1-\rho)^{2} +  \rho^{2} }
=(1+ b\tsp\alpha^2) \, \xi^0(\rho)\cdot \evec_1 . 
\end{align*}
Then perform the opposite applications:
\begin{align*}
\xi^\alpha(\alpha\rho)\cdot \evec_1 
&\ge \frac{\alpha^{-2}\rho^{-2} }{  \alpha^{-2}(1-\rho)^{-2}(1+ b\tsp\alpha^2(1-\rho)^2) + \alpha^{-2}\rho^{-2}(1+ b\tsp\alpha^2\rho^2) } \\[4pt] 
&\ge  (1+ b\tsp\alpha^2)^{-1} \xi^0(\rho)\cdot  \evec_1 \ge (1- b\tsp\alpha^2) \, \xi^0(\rho)\cdot \evec_1 . 
\end{align*}
Hence \eqref{new206} holds with $C = 2b = \pi^2/3$.
\end{proof}

\begin{proof}[Proof of Theorem~\ref{thm:igexp2}]


 First we define  a single point process $\cZ$ and initial values $\{Z^{\alpha}(0):\alpha\in[0,1]\}$ from  which we will construct versions of the Busemann processes.
 %
For the purpose of thinning, we add a uniform $(0,1)$-valued mark to the process $\Prm$ described in \eqref{PPPintensity} with $\alpha=1$.   (The choice of starting value $\alpha=1$  is arbitrary since in the end we let $\alpha\searrow0$.)
  In other words, we let $\cZ$ be the inhomogeneous Poisson point process on $(0,1)\times\R_{>0}\times(0,1)$  with
	intensity measure   
	\eq{ 
	 \frac{e^{-y(1-s)}}{1-e^{-y}}\ \dd s\, \dd y\, \dd u, \quad (s,y,u)\in(0,1)\times\R_{>0}\times(0,1).
	}
%
%
%
%
%
%
%
%
%
%
  For $\alpha>0$, let $F_\alpha$ be the CDF of the  $\alpha\log{\rm Ga}^{-1}(\alpha)$ distribution, and $F_0$ the CDF of  Exp(1).   
 Let  $U\sim$ Unif$(0,1)$ be independent of $\cZ$.    
For each $\alpha\in[0,1]$,    define  $Z^\alpha(0)=F_\alpha^{-1}(U)$ so that  $Z^\alpha(0)\sim\alpha\log{\rm Ga}^{-1}(\alpha)$ and $Z^0(0)\sim{\rm Exp}(1)$ are independent of $\cZ$, and 
\be\label{newZZ}
\lim_{\alpha\searrow0} Z^\alpha(0) = Z^0(0). 
\ee

Now we construct versions of the Busemann processes.
For $\alpha\in(0,1]$ define the cadlag process 
\begin{equation}\label{newZZ7}\begin{aligned} 
	Z^{\alpha}(\rho) 
	= \Zpr^{\alpha}(0)+\sum_{(s,y,u)\in\cZ} \ind_{\bigl(0\,, \,\tfrac{1-e^{-y}}{1-e^{-y/\alpha}}\bigr]}(u)   
	\cdot  \ind_{(0,\rho]}(s) \cdot y, \quad \rho\in [0,1).
\end{aligned} 	\end{equation}
By thinning, the set $\big\{(s,y)\in(0,1)\times\R_{>0}:\, (s,y,u)\in\cZ \text{ and } u\leq \tfrac{1-e^{-y}}{1-e^{-y/\alpha}}\big\}$ is a Poisson point process with intensity measure
\eq{
\frac{e^{-y(1-s)}}{1-e^{-y/\alpha}}\tspb\ \dd s\, \dd y, \quad (s,y)\in(0,1)\times\R_{>0}.
}
This is precisely the pushforward of \eqref{PPPintensity} under the map $(s,y)\mapsto(s/\alpha,\alpha y)$.
So upon comparison of \eqref{newZZ7} with \eqref{Def:X}, Theorem~\ref{B-th5} can be restated in the equivalent form 
\be\label{new82} 
\{\alpha  \Bus^{\alpha, \,\xi^\alpha(\alpha\rho)-}_{x-\evec_1,x} : \rho\in [0,1)\}
\;\deq\;  \{ Z^{\alpha}(\rho)  : \rho\in [0,1)\} \quad \text{for each $\alpha\in(0,1]$}.
\ee 
For $\alpha=0$  we set
\begin{equation}\label{newZZ13}\begin{aligned} 
	Z^{0}(\rho) = \Zpr^{0}(0)+\sum_{(s,y,u)\in\cZ} \ind_{(0,  1-e^{-y}]}(u)  \cdot  \ind_{(0,\rho]}(s) \cdot y,  
	\quad \rho\in [0,1).
\end{aligned} 	\end{equation}
By thinning, \eqref{newZZ13} is equivalent to our earlier description \eqref{hz3f21} which used intensity measure \eqref{cz8g}, and so
\be\label{new86} 
\{ \Bus^{0, \,\xi^0(\rho)-}_{x-\evec_1,x} : \rho\in [0,1)\}
\;\deq\;  \{ Z^0(\rho)  : \rho\in [0,1)\}. 
\ee 
From \eqref{newZZ7} and \eqref{newZZ13} we have
\eq{
(Z^{\alpha}(\rho)-Z^{\alpha}(0))  - (Z^{0}(\rho) -Z^0(0))
= \sum_{(s,y,u)\in\cZ} \Bigl(\ind_{\bigl(0\,, \,\tfrac{1-e^{-y}}{1-e^{-y/\alpha}}\bigr]}(u)   
-  \ind_{(0,  1-e^{-y}]}(u)   \Bigr) 	\cdot  \ind_{(0,\rho]}(s) \cdot y.
}
The right-hand side is nondecreasing in $\rho$ and vanishes as $\alpha\searrow0$; so for any $\delta>0$,
\eeq{ \label{o8g2x}
&\sup_{\rho\in[0,1-\delta]} \big|(Z^{\alpha}(\rho)-Z^{\alpha}(0))  - (Z^{0}(\rho) -Z^0(0))\big| \\
&= (Z^{\alpha}(1-\delta)-Z^{\alpha}(0))  - (Z^{0}(1-\delta) -Z^0(0)) \to 0 \quad \text{as $\alpha\searrow0$}.
}
Limits \eqref{newZZ} and \eqref{o8g2x} combine to show
\eeq{ \label{gkd93x}
\lim_{\alpha\searrow0} \sup_{\rho\in[0,1-\delta]} |Z^\alpha(\rho)-Z^0(\rho)| = 0.
}

Our final step is to reparametrize. 
For $\alpha>0$ let $s^\alpha\colon [\evec_2, \evec_1]\to[0,\alpha]$ be the decreasing inverse of $\xi^\alpha$ from \eqref{u-rho}. 
Define $X^\alpha(\xi) = Z^\alpha(\alpha^{-1}s^\alpha(\xi)-)$ for $\alpha\in(0,1]$, so that \eqref{new82} reads
\eeq{ \label{nnew82}
\{\alpha  \Bus^{\alpha, \,\xi+}_{x-\evec_1,x} :\, \xi\in\,]\evec_2,\evec_1]\}
\;\deq\;  \{ X^{\alpha}(\xi)  :\, \xi\in\,]\evec_2,\evec_1]\} \quad \text{for each $\alpha\in(0,1]$}.
}
For $\alpha=0$ set $X^0(\xi) = Z^0(s^0(\xi)-)$ so that \eqref{new86} reads
\eeq{ \label{nnew86}
\{ \Bus^{0, \,\xi+}_{x-\evec_1,x} :\, \xi\in\,]\evec_2,\evec_1]\}
\;\deq\;  \{ X^{0}(\xi)  :\, \xi\in\,]\evec_2,\evec_1]\}.
}
Taking $u^\alpha$ as in Lemma~\ref{lm:ua13}, we have
\eq{
X^\alpha(u^\alpha(\xi))-X^0(\xi) 
\stackref{ua7}{=} Z^\alpha(\alpha^{-1}s^\alpha(\xi^\alpha(\alpha s^0(\xi)))-)
- Z^0(s^0(\xi)-)
= Z^\alpha(s^0(\xi)-) - Z^0(s^0(\xi)-).
}
Hence \eqref{gkd93x} implies the uniform limit \eqref{jc3x1xb}.
Meanwhile, \eqref{jc3x1xa} follows from \eqref{ua13}.
We have thus established that $X^\alpha$ converges to $X^0$ in $D(\,]\evec_2, \evec_1], \R)$ as $\alpha\searrow0$.
In light of \eqref{nnew82} and \eqref{nnew86}, this proves the claimed weak convergence.
\end{proof} 

\section{Proofs in the general environment}
\label{mar_sec}

This section identifies the joint distribution of finitely many Busemann functions on a lattice level  as the unique invariant distribution of a Markov chain. 
This Markov chain (the parallel process) intertwines with another Markov chain (the sequential process) which utilizes geometric row insertion. 
Following this development are the proofs of four main results:
\begin{itemize}
\item Theorem~\ref{cpthm} (stated more precisely as Theorem~\ref{B_thm9}) in Section~\ref{2_proc_sec};
\item Theorem~\ref{thm:78-63} also in Section~\ref{2_proc_sec};
\item Theorems~\ref{thm:51-32} and \ref{thm:allxy} in Section~\ref{sec:pf_disc}.
\end{itemize}
The gRSK connection is explained in Section~\ref{sec:grsk}. 
Outcomes of intertwining for the inverse-gamma polymer are pursued in Section~\ref{sec:twin_ig}.

\subsection{Update map} \label{map_sec}

As in the CGM in \cite{fan-sepp-20}, to capture the Busemann process we formulate the directed polymer model on a half-plane. The {update map}  constructs ratios of partition functions from one lattice level to the next. 
Similar mechanics were developed in \cite[Sec.~4]{janj-rass-20-jsp} to study the ergodicity and uniqueness of the distribution of a recovering cocycle. 

Our basic state space is the space of bi-infinite sequences $I = (I_k)_{k\in\Z}$ of strictly  positive real numbers  for which a finite left tail  logarithmic Ces\`aro limit exists:
\eeq{ \label{ces_def}
\ces(I) \coloneqq \lim_{n\to\infty} \frac{1}{n}\sum_{k=-n+1}^{0}\log I_k \in (-\infty,\infty).
}
Let $\cI\subset\R^\Z_{>0}$ denote the space of such sequences. Then  define the space 
\eeq{ \label{i2_up_def}
\cI_2^\uparrow\coloneqq\{(\Yw,I)\in\cI\times\cI:\, \ces(\Yw)<\ces(I)\}.
}
On $\cI_2^\uparrow$  we define the \textit{update map}   $\Dop\colon\cI_2^\uparrow\to\cI$ together with two related maps $\Rop\colon\cI_2^\uparrow\to\cI$ and $\Sop\colon\cI_2^\uparrow\to\R^\Z_{>0}$ 
 that are central to our analysis.
Given  input $(\Yw,I)\in\cI_2^\uparrow$, let us locally denote the outputs of these three maps by
\be
\label{drs_def}   
\wt I=(\wt I_k)_{k\in\Z}=\Dop(\Yw,I),  \quad \wt \Yw=(\wt\Yw_k)_{k\in\Z}=\Rop(\Yw,I) , \quad J=(J_k)_{k\in\Z}=\Sop(\Yw,I). 
\ee
The definitions that follow may seem obscure in origin, but they are manifestations of the dynamics obeyed by the Busemann process.
More specifically, if we make the identifications
\eq{
I_k = e^{\Bus((k-1,t-1),(k,t-1))},\quad
\wt I_k = e^{\Bus((k-1,t),(k,t))}, \quad
J_k = e^{\Bus((k,t-1),(k,t))}, \quad
W_k = W_{(k,t)},
}
then \eqref{J_def} and the first identity in \eqref{I_W_def} are obtained by repeated applications of \eqref{rec_coc}, while the second definition in \eqref{I_W_def} makes intertwining possible via Lemma~\ref{m:D-lm4}.

First define $\Sop$ by setting 
\eeq{ \label{J_def}
J_k 
= \sum_{i=-\infty}^k \Yw_i\prod_{j=i+1}^k \frac{\Yw_j}{I_j}
= \Yw_k + \sum_{i=-\infty}^{k-1} \Yw_i\prod_{j=i+1}^k\frac{\Yw_j}{I_j}.
}
Note that the right-hand side is finite if and only if
\eq{ 
  \sum_{i=-\infty}^{0}  {\Yw_i} \prod_{j=i+1}^{0}\frac{\Yw_j}{I_j}<\infty, \quad \text{equivalently} \quad
  \sum_{i=-\infty}^{0} e^{\sum_{j = i}^0\log \Yw_j-\sum_{j=i+1}^0 \log I_j}<\infty.
}
Consequently, it suffices to have $\ces(\Yw) < \ces(I)$ for $\Sop(\Yw,I)$  to be well-defined.
Since all quantities are positive, it is clear that   $\Sop$ maps $\cI_2^\uparrow$ into $\R_{>0}^\Z$.
Moreover, the definition \eqref{J_def} leads to a recursion:
\eeq{ \label{mja}
J_k 
=\Yw_k\Big(1+\frac{1}{I_k}\sum_{i=-\infty}^{k-1}\Yw_i\prod_{j=i+1}^{k-1}\frac{\Yw_j}{I_j}\Big)
=\Yw_k\Big(1+\frac{J_{k-1}}{I_k}\Big).
}
Finally, define the transformations  $\Dop$ and $\Rop$ in \eqref{drs_def}  by
\eeq{ \label{I_W_def}
\wt I_k \coloneqq \frac{I_{k}J_k}{J_{k-1}} \qquad\text{and}\qquad  \wt \Yw_k \coloneqq (I_k^{-1}+ J_{k-1}^{-1})^{-1} \qquad \text{for }  k\in\Z.
}
The lemma below checks that $\Dop$ and $\Rop$ map $\cI_2^\uparrow$ into $\cI$ and preserve Ces\`aro means.
Afterward we prove additional technical lemmas  for later use.
The reader may proceed to Section~\ref{twin_sec} and return   when needed.  

\begin{lemma} \label{ces_lem}    
For $(\Yw,I)\in\cI_2^\uparrow$, the sequences $\wt I = \Dop(\Yw,I)$ and $\wt\Yw = \Rop(\Yw,I)$ defined in \eqref{I_W_def} satisfy  $\ces(\wt I) = \ces(I)$  and  $\ces(\wt\Yw) = \ces(\Yw)$. 
\end{lemma}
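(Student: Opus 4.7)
My plan is to deduce both Cesàro identities from two telescoping relations involving $J$. From the recursion \eqref{mja} one computes $J_k/J_{k-1} = \Yw_k(I_k^{-1} + J_{k-1}^{-1}) = \Yw_k/\wt\Yw_k$, so $\log\wt\Yw_k = \log\Yw_k - (\log J_k - \log J_{k-1})$. Meanwhile the first definition in \eqref{I_W_def} gives $\log\wt I_k = \log I_k + (\log J_k - \log J_{k-1})$. Summing each identity over $k\in\lzb -n+1,0\rzb$ and dividing by $n$ telescopes the $J$ terms into $\pm(\log J_0 - \log J_{-n})/n$, so the entire lemma reduces to verifying
\[
\lim_{n\to\infty}\frac{\log J_{-n}}{n} = 0.
\]

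For the lower bound I observe that $J_{-n}\ge \Yw_{-n}$ by keeping only the $i=-n$ summand in \eqref{J_def}, so $\log J_{-n}\ge\log\Yw_{-n}$. Since $\ces(\Yw)$ exists and is finite, $\log\Yw_{-n}/n\to 0$ by the standard fact that any sequence whose Cesàro average converges must have $o(n)$ terms.

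For the upper bound, set $\delta = \ces(I) - \ces(\Yw) > 0$, fix $\varepsilon \in (0,\delta/3)$, and rewrite
\[
J_{-n} = \Yw_{-n} + \sum_{m=1}^{\infty}\exp\bigl(\log\Yw_{-n-m} + (A^{\Yw}_{-n-m} - A^{\Yw}_{-n}) - (A^{I}_{-n-m} - A^{I}_{-n})\bigr),
\]
where $A^{\Yw}_{-k} = \sum_{j=-k+1}^{0}\log\Yw_j$ and analogously for $I$. For all sufficiently large indices, \eqref{ces_def} supplies $|A^{\Yw}_{-k} - k\ces(\Yw)| < \varepsilon k$, $|A^{I}_{-k} - k\ces(I)| < \varepsilon k$, and $|\log\Yw_{-k}|<\varepsilon k$. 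Substituting these bounds into the exponent, each $m$-th summand will be controlled by $\exp(-m(\delta - 3\varepsilon) + 5\varepsilon n)$, so geometric summation in $m$ yields $J_{-n} \le \Yw_{-n} + C_\varepsilon e^{5\varepsilon n}$ for some constant $C_\varepsilon$. Taking logarithms gives $\log J_{-n}\le \log 2 + \max(\log\Yw_{-n},\,\log C_\varepsilon + 5\varepsilon n)$, so $\limsup_n\log J_{-n}/n \le 5\varepsilon$, and letting $\varepsilon\searrow 0$ finishes the argument.

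The main obstacle I anticipate is the non-uniformity of the Cesàro error: the difference $A^{\Yw}_{-n-m} - A^{\Yw}_{-n} - m\ces(\Yw)$ is controlled only at rate $\varepsilon(2n+m)$ rather than $\varepsilon m$, and likewise for $I$ and for $\log\Yw_{-n-m}$. One must be careful that, for $\varepsilon$ small relative to $\delta$, the geometric decay $e^{-m\delta}$ still dominates the combined error contributions in $m$, and that the residual exponential factor $e^{5\varepsilon n}$ contributes only $5\varepsilon$ to the limsup, which is then extinguished by $\varepsilon\searrow 0$.
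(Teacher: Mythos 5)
Your proposal is correct and follows essentially the same route as the paper: the identical telescoping identities reduce everything to $n^{-1}\log J_{-n}\to 0$, which is then established by the same Ces\`aro $\varepsilon$-estimate on the partial log-sums (the paper bounds the ratio $J_k/\Yw_k$ by factoring out a prefix and summing a fixed convergent series, arriving at a $2\varepsilon$ bound, while you bound each summand of $J_{-n}$ directly and arrive at $5\varepsilon$ — a purely cosmetic difference). The non-uniformity issue you flag is handled exactly as you anticipate, so no gap remains.
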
 

\begin{proof}
The definition of $\wt I_k$ in \eqref{I_W_def} gives $J_{k}/J_{k-1} = \wt I_k/I_k$.
Similarly, dividing both sides of \eqref{mja} by $J_{k-1}$ gives $J_{k}/J_{k-1} = \Yw_k/\wt\Yw_k$.
From these two equalities of ratios,  
\eq{
\sum_{k=-n+1}^{0}\log\frac{\wt I_k}{I_k} = \sum_{k=-n+1}^{0}\log\frac{\Yw_k}{\wt\Yw_k} = \sum_{k=-n+1}^{0}\log \frac{J_k}{J_{k-1}} = \log J_{0} - \log J_{-n}.
}
Therefore, both statements in the lemma  are implied by
\eeq{ \label{log_J_zero}
\lim_{n\to\infty} 
n^{-1} \log J_{-n}= 0.
} 
The remainder of the proof establishes this limit.

Since $\ces(\Yw)$ exists and is finite, we necessarily have $n^{-1}\log \Yw_{-n}\to0$ as $n\to\infty$.
It thus suffices to show that $(\log J_{-n} - \log \Yw_{-n})/n\to0$.
To this end, for $k<0$ we use \eqref{J_def} to write
\be \label{joy_setup}\begin{aligned} 
\frac{J_k}{\Yw_k} &= 1 + \sum_{i=-\infty}^{k-1} e^{\sum_{j=i}^{k-1} \log\Yw_j \,-\, \sum_{j=i+1}^{k} \log I_j} \\
&= 1 +  e^{-\sum_{j=k}^0 \log\Yw_j \, +\, \sum_{j=k+1}^0 \log I_j} \sum_{i=-\infty}^{k-1} e^{\sum_{j=i}^{0} \log\Yw_j \,-\, \sum_{j=i+1}^{0} \log I_j}.
\end{aligned} \ee
Now, given any $\e>0$, let us identify $k_0$ sufficiently negative that
\eq{
\Big|\frac{1}{k}\Big[\sum_{j = k}^0 \log \Yw_j-\sum_{j={k+1}}^0\log I_j\Big] + \ces(\Yw)-\ces(I)\Big| < \e \quad \text{for all $k\le k_0$}.
}
Applying this estimate inside all the exponentials of \eqref{joy_setup}, we obtain the following for all $k\le k_0$ and $\e<\ces(I)-\ces(\Yw)$:
\eeq{ \label{sandwich}
1 \leq \frac{J_k}{\Yw_k} &\leq 1 + e^{k(\ces(\Yw)-\ces(I)-\e)}\sum_{i={-\infty}}^{k-1}e^{-i(\ces(\Yw)-\ces(I)+\e)} \\
&= 1 + e^{k(\ces(\Yw)-\ces(I)-\e)}\cdot\frac{e^{-(k-1)(\ces(\Yw)-\ces(I)+\e)}}{1-e^{\ces(\Yw)-\ces(I)+\e}} \\
&= 1 + \frac{e^{\ces(\Yw)-\ces(I)-(2k-1)\e}}{1-e^{\ces(\Yw)-\ces(I)+\e}}
= 1+\frac{e^{-2k\e}}{e^{\ces(I)-\ces(\Yw)-\e}-1}.
}
Observe that for any positive constant $C$ and $\eps>0$ we have
\eq{
\lim_{k\to-\infty} |k|^{-1}{\log\big(1+C{e^{-2k\e}}\big)} = 2\eps,
}
so that \eqref{sandwich} implies
\eq{
0\leq \varliminf_{k\to-\infty}|k|^{-1}\log\frac{J_{k}}{\Yw_k} \leq \varlimsup_{k\to-\infty}|k|^{-1}\log\frac{J_{k}}{\Yw_k}\leq 2\eps.
}
Since $\eps$ is arbitrary and $k^{-1}\log \Yw_k\to 0$ by existence of $\ces(\Yw)$, \eqref{log_J_zero} follows.
%
%
%
\end{proof} 

The next lemma shows that $I\mapsto\Dop(\Yw,I)$ is injective for any given weight sequence $W$, unlike its $(\max, +)$ analogue in \cite[eq.~(2-22)]{fan-sepp-20}.

\begin{lemma} \label{inj_lem}
The map $(\Yw,I)\mapsto(\Yw,\Dop(\Yw,I))$ is injective on $\cI_2^\uparrow$ and has a continuous inverse mapping defined on its image.
\end{lemma}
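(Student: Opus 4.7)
The plan is to produce an explicit inversion formula expressing each coordinate $I_k$ as a continuous function of only finitely many coordinates of $(\Yw,\wt I)$, from which both injectivity and continuity of the inverse follow immediately.

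\smallskip

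First, I would use the two defining relations to eliminate $J$. Fix $(\Yw,I)\in\cI_2^\uparrow$ and set $\wt I=\Dop(\Yw,I)$, $J=\Sop(\Yw,I)$. Combining $\wt I_k = I_k J_k/J_{k-1}$ from \eqref{I_W_def} with the recursion $J_k = \Yw_k(1+J_{k-1}/I_k)$ from \eqref{mja} yields
\[
\wt I_k \;=\; \frac{I_k \Yw_k}{J_{k-1}}+\Yw_k,
\]
so that $\wt I_k>\Yw_k$ on the image and
\[
J_{k-1} \;=\; \frac{I_k\Yw_k}{\wt I_k-\Yw_k}.
\]
Plugging this back into \eqref{mja} collapses the dependence on $I_k$, giving the closed form
\[
J_{k-1}\;=\;\frac{\wt I_{k-1}\,\Yw_{k-1}}{\wt I_{k-1}-\Yw_{k-1}}\qquad\text{for all }k\in\Z.
\]
Equating the two expressions for $J_{k-1}$ and solving for $I_k$ gives the inversion formula
\[
I_k \;=\; \frac{\Yw_{k-1}\,\wt I_{k-1}\,(\wt I_k-\Yw_k)}{\Yw_k\,(\wt I_{k-1}-\Yw_{k-1})}.
\]

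\smallskip

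This formula already does all the work. Injectivity of $(\Yw,I)\mapsto(\Yw,\Dop(\Yw,I))$ on $\cI_2^\uparrow$ follows because fixing the first coordinate $\Yw$ and the output $\wt I$ determines each $I_k$. Continuity of the inverse on the image follows because, in the product topology on $\R_{>0}^\Z$, convergence is coordinate-wise, and each $I_k$ is a rational function of $\Yw_{k-1},\Yw_k,\wt I_{k-1},\wt I_k$ whose denominators $\Yw_k(\wt I_{k-1}-\Yw_{k-1})$ are strictly positive on the image (as noted above, $\wt I_j>\Yw_j$ automatically). Finally, to confirm that the inverse actually lands in $\cI_2^\uparrow$ (rather than merely in $\R_{>0}^\Z\times\R_{>0}^\Z$), I would invoke Lemma~\ref{ces_lem}: it gives $\ces(\wt I)=\ces(I)>\ces(\Yw)$, so the defining inequality \eqref{i2_up_def} is preserved.

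\smallskip

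No step here seems difficult; the only conceptual point is the lucky algebraic collapse that turns the infinite-sum definition of $J_{k-1}$ into a two-point function of $\wt I$ and $\Yw$, and that is forced by the recursion \eqref{mja}. If there is a subtlety, it is simply verifying that the map's codomain is the right space, which I have isolated as the last step via the Ces\`aro-preservation lemma.
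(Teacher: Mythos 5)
Your proposal is correct and takes essentially the same route as the paper: both arguments manipulate the recursions \eqref{mja} and \eqref{I_W_def} to obtain the same coordinate-wise inversion formula $I_k = \frac{\Yw_{k-1}\,\wt I_{k-1}\,(\wt I_k-\Yw_k)}{\Yw_k\,(\wt I_{k-1}-\Yw_{k-1})}$, observe that $\wt I>\Yw$ on the image so the denominators are positive, and deduce injectivity together with continuity of the inverse in the product topology. (Your intermediate closed form for $J_{k-1}$ is a harmless rephrasing of the paper's substitution step, and the only nitpick is that it is the dependence on $I_{k-1}$, not $I_k$, that collapses there.)
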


\begin{proof}
Insert the recursion \eqref{mja} into the definition \eqref{I_W_def}  of $\wt I_k$:
\eeq{ \label{m801a}
\wt I_k = \frac{I_k}{J_{k-1}}\cdot \Yw_k\Big(1+\frac{J_{k-1}}{I_k}\Big) = \Yw_k\Big(1 + \frac{I_k}{J_{k-1}}\Big).
}
Solving for $I_k$ results in
\eeq{ \label{ik_first}
I_k = \frac{\wt I_k-\Yw_k}{\Yw_k}\cdot J_{k-1}.
}
Now insert the expression $J_{k-1} = J_{k-2}\wt I_{k-1}/I_{k-1}$ from \eqref{I_W_def} into the right-hand side:
\eeq{ \label{inv_iden}
I_k = \frac{\wt I_k-\Yw_k}{\Yw_k}\cdot\frac{J_{k-2}\wt I_{k-1}}{I_{k-1}}
&\overset{\eqref{ik_first}}= \frac{\wt I_k-\Yw_k}{\Yw_k}\cdot\frac{W_{k-1}\wt I_{k-1}}{\wt I_{k-1}-W_{k-1}}.
}
We note that \eqref{m801a} implies $\wt I_k > \Yw_k$ for all $k$,  so the final expression in \eqref{inv_iden} is well-defined.
Indeed, \eqref{inv_iden} shows that $I$ is uniquely determined by $\Yw$ and $\wt I = \Dop(\Yw,I)$, meaning $I \mapsto \Dop(\Yw,I)$ is injective for any fixed $\Yw$.
Continuity of the inverse map is evident from   \eqref{inv_iden}, since the 
image of $(\Yw,\Dop(\Yw,I))$ is a subset of $\{(\Yw,\wt I\,)\in\cI_2^\uparrow:\, \wt I>\Yw\}$.
\end{proof}

Under a non-explosion condition,  recursions  \eqref{mja} and  \eqref{m801a}  uniquely identify the outputs: 


\begin{lemma} \label{lm:D13}    
Let    $(W,I)\in\cI_2^\uparrow$.  
Assume $\wc J\in\R_{>0}^\Z$ satisfies
\eeq{ \label{b6x3dd}
\varliminf_{k\to-\infty} \frac{\log \wc J_k}{|k|} \leq 0.
}
If $\wc J$ satisfies the recursion
	\be\label{cJ-rec}
	\wc J_k= \Yw_k\biggl(1+  \frac{\wc J_{k-1}}{I_k}   \biggr)\qquad  
	\text{for all } \ k\in\Z,   \ee
then $\wc J=\Sop(\Yw,I)$,
Furthermore, if $\wc I\in\R_{>0}^\Z$ satisfies the recursion 
	\be\label{cJ8}
	\wc I_k= \Yw_k\biggl(1+  \frac{I_k}{\wc J_{k-1}}   \biggr)\qquad  
	\text{for all } \ k\in\Z,   \ee
then $\wc I=\Dop(I, \Yw)$.  	
\end{lemma}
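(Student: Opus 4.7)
The plan is to iterate the recursion \eqref{cJ-rec} backward and match the result against the explicit series \eqref{J_def} defining $\Sop(\Yw,I)$. For each $k\in\Z$ and $n\ge 1$, unrolling \eqref{cJ-rec} yields
\eq{
\wc J_k = \sum_{i=k-n+1}^{k} \Yw_i \prod_{j=i+1}^{k} \frac{\Yw_j}{I_j} \,+\, \wc J_{k-n} \prod_{j=k-n+1}^{k} \frac{\Yw_j}{I_j}.
}
As $n\to\infty$, the first sum converges to $\Sop(\Yw,I)_k$ by definition \eqref{J_def}. The task reduces to showing that the boundary term on the right vanishes along some subsequence $n_m\to\infty$; this is enough since the left-hand side $\wc J_k$ does not depend on $n$.

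To control the boundary term I would take logarithms:
\eq{
\log\Big(\wc J_{k-n} \prod_{j=k-n+1}^{k} \frac{\Yw_j}{I_j}\Big) = \log \wc J_{k-n} + \sum_{j=k-n+1}^{k}\bigl(\log \Yw_j - \log I_j\bigr).
}
Since $(\Yw,I)\in\cI_2^\uparrow$, we have $\ces(\Yw)<\ces(I)$, and a short calculation using \eqref{ces_def} shows that the windowed Ces\`aro average $n^{-1}\sum_{j=k-n+1}^{k}(\log \Yw_j - \log I_j)$ tends to $\ces(\Yw)-\ces(I)$ as $n\to\infty$. The hypothesis \eqref{b6x3dd} supplies a subsequence $k_m\to-\infty$ along which $\log\wc J_{k_m}/|k_m|$ tends to some $c\in[-\infty,0]$. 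Setting $n_m = k-k_m$, so that $n_m/|k_m|\to 1$, we obtain
\eq{
n_m^{-1}\log\Big(\wc J_{k-n_m} \prod_{j=k-n_m+1}^{k} \frac{\Yw_j}{I_j}\Big) \longrightarrow c + \ces(\Yw) - \ces(I) < 0.
}
Hence the boundary term decays (at least) exponentially to zero along $n_m$, and passing to the limit in the unrolled recursion produces $\wc J_k = \Sop(\Yw,I)_k$ for every $k\in\Z$, proving the first assertion.

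For the second assertion, once $\wc J = \Sop(\Yw,I)$ has been identified with the genuine sequence $J$ from \eqref{J_def}, the recursion \eqref{cJ8} reads $\wc I_k = \Yw_k\bigl(1 + I_k/J_{k-1}\bigr)$. This is exactly the formula \eqref{m801a} for $\Dop(\Yw,I)_k$, so $\wc I = \Dop(\Yw,I)$ follows with no further work.

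I expect the main obstacle to be the boundary-term analysis: \eqref{b6x3dd} is only a one-sided $\varliminf$ bound and cannot be upgraded to a $\varlimsup$ bound, so one cannot send $n\to\infty$ along the full sequence. The subsequential argument above is exactly right for this setting because the identity $\wc J_k = \Sop(\Yw,I)_k$ at a fixed $k$ is a statement about a single number, not a limit process. Verifying that the windowed Ces\`aro averages on $[k-n+1,k]$ indeed center at $\ces(\Yw)$ and $\ces(I)$ (rather than at some shifted value depending on $k$) is a short but not entirely trivial check that should be written out carefully.
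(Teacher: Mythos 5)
Your proof is correct and follows essentially the same route as the paper: unroll the recursion $n$ steps, use the $\varliminf$ hypothesis \eqref{b6x3dd} together with $\ces(\Yw)<\ces(I)$ to kill the boundary term along a subsequence, and then read off the second claim from \eqref{m801a}. The careful point you flag (windowed averages over $[k-n+1,k]$ converging to the same Ces\`aro limits, and $n_m/|k_m|\to1$) is handled implicitly in the paper's one-line subsequence argument, so your write-up is if anything slightly more explicit.
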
 

\begin{proof}  The assumption  $(W,I)\in\cI_2^\uparrow$  
guarantees that $J=\Sop(\Yw,I)$ and $\wt I=\Dop(\Yw,I)$ are well-defined.  
		Performing $n$ iterations of the assumed recursion \eqref{cJ-rec} 
	gives
	\eq{
	\wc J_k
	&= \sum_{i=k-n+1}^k \Yw_i\prod_{j=i+1}^k\frac{\Yw_j}{I_j}
 + \wc J_{k-n}\prod_{j=k-n+1}^k\frac{\Yw_j}{I_j} \\
	&= \sum_{i=k-n+1}^k\Yw_{i}\prod_{j=i+1}^k \frac{\Yw_j}{I_j}
 \,+\,
 \exp\biggl\{  n\biggl(\frac{\log\wc J_{k-n}}{n}  + \frac{1}{n}\sum_{i=k-n+1}^k\log\Yw_i  \, -\, \frac{1}{n} \sum_{i=k-n+1}^k\log I_i\biggr)\biggr\}. 
	}
	By the assumptions, there is a subsequence $n_\ell\to\infty$ along which the second term on  the last line is eventually $\le e^{-n_\ell\delta}$ for some $\delta>0$.  Passing to the limit along this subsequence shows that $\wc J_k$ matches the formula \eqref{J_def} for $J_k$.    Now \eqref{cJ8} agrees with \eqref{m801a} for $\wt I$. 
\end{proof}

The next lemma concerns monotonicity.
The inequalities   are understood coordinatewise: $I'\ge I$ means  $I_k'\ge I_k$ for every $k\in\Z$. 
Similarly, $I'>I$ means $I_k'>I_k$ for every $k\in\Z$.

\begin{lemma} \label{mon_lem}
Let $(\Yw,I)$ be any element of $\cI_2^\uparrow$.
\begin{enumerate}[label=\textup{(\alph*)}]

\item \label{mon_lema}
We have $\Dop(\Yw,I) > \Yw$.

\item \label{mon_lemb}
 If $I'\ge I$, then $\Dop(\Yw,I') \geq \Dop(\Yw,I)$.
If we further know that $I_{k_0}'>I_{k_0}$, then 
\eeq{ \label{dogk}
\Dop(\Yw,I')_k > \Dop(\Yw,I)_k \quad \text{for all $k\geq k_0$}.
}
\end{enumerate}
\end{lemma}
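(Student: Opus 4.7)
My plan is to read both claims off the identity $\wt I_k = W_k(1 + I_k/J_{k-1})$ from \eqref{m801a} combined with the explicit series \eqref{J_def} for $J = \Sop(W,I)$. Part~\ref{mon_lema} is immediate: because $W_k, I_k, J_{k-1} > 0$, the identity forces $\wt I_k > W_k$ for every $k \in \Z$, which is the strict coordinatewise inequality $\Dop(W,I) > W$.

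For part~\ref{mon_lemb}, let $J = \Sop(W,I)$ and $J' = \Sop(W,I')$. The key observation is that the series \eqref{J_def} for $J_k$ consists of positive summands and depends on $I$ only through the factors $1/I_j$. Consequently, $I' \geq I$ yields $J'_k \leq J_k$ coordinatewise, and substituting into the identities $\wt I'_k = W_k(1 + I'_k/J'_{k-1})$ and $\wt I_k = W_k(1 + I_k/J_{k-1})$ gives $\wt I'_k \geq \wt I_k$ for all $k$. This establishes the weak monotonicity claim.

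To upgrade to the strict inequality \eqref{dogk} under the hypothesis $I'_{k_0} > I_{k_0}$, I would track how this single strict improvement propagates along the index $k$. For any $k \geq k_0$, the summand in \eqref{J_def} indexed by $i = k_0 - 1$ contains the factor $1/I_{k_0}$, which is strictly larger than $1/I'_{k_0}$, while the remaining factors are weakly comparable in the correct direction. Hence that single summand strictly exceeds its counterpart in $J'_k$, producing $J'_k < J_k$ for every $k \geq k_0$. For $k > k_0$, combining this strict inequality at index $k - 1$ with $I'_k \geq I_k$ gives $I'_k/J'_{k-1} > I_k/J_{k-1}$ and therefore $\wt I'_k > \wt I_k$. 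For the boundary case $k = k_0$, the comparison $J'_{k_0 - 1} \leq J_{k_0 - 1}$ may be non-strict, but the strict jump $I'_{k_0} > I_{k_0}$ alone already forces $I'_{k_0}/J'_{k_0 - 1} > I_{k_0}/J_{k_0 - 1}$, hence $\wt I'_{k_0} > \wt I_{k_0}$.

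The statement is elementary once the formulas of Section~\ref{map_sec} are in hand, so I do not anticipate a serious obstacle. The only step requiring genuine care is the bookkeeping in the previous paragraph: isolating one specific summand of \eqref{J_def} that carries the factor $1/I_{k_0}$, and verifying that this single source of strict improvement is carried by the recursion \eqref{mja} into every downstream index $k \geq k_0$ without being washed out.
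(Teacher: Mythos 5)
Your proof is correct and follows essentially the same route as the paper: part (a) is read off \eqref{m801a}, the weak monotonicity comes from $J'\le J$ via the series \eqref{J_def}, and the strict inequality \eqref{dogk} is obtained exactly as in the paper by noting that for $k\ge k_0$ at least one of the two inequalities $I'_k\ge I_k$, $J'_{k-1}\le J_{k-1}$ is strict (your case split $k=k_0$ versus $k>k_0$ is just a spelled-out version of that observation).
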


\begin{proof}
Part~\ref{mon_lema} is immediate from \eqref{m801a}.
For part~\ref{mon_lemb}, let us write $\wt I'=\Dop(\Yw,I')$ and
$J' = \Sop(\Yw,I')$.
Then \eqref{J_def} implies $J_k'\leq J_k$, where the inequality is strict as soon as $k \geq k_0$.
In view of \eqref{m801a}, the combination of $I_k'\ge I_k$ and $J_{k-1}'\leq J_{k-1}$ implies $\Dop(\Yw,I') \geq \Dop(\Yw,I)$.
Furthermore, when $k\geq k_0$, at least one of these two inequalities is strict, hence \eqref{dogk}.
\end{proof}

The last lemma shows that when additional control is available,  the update map itself possesses continuity in the product topology.

\begin{lemma}  \label{lm:D319}  Let $(\Yw,I)\in\cI_2^\uparrow$ and let $\{ (\Yw^h,I^h)\}_{h\tspa\in\tspa\Z_{>0}}$ be a sequence of elements of $\cI_2^\uparrow$ such that $(\Yw^h,I^h)\to (\Yw,I)$ coordinatewise as $h\to\infty$.  Assume there is a pair $(\Yw'',I')\in\cI_2^\uparrow$ such that $\Yw^h\le\Yw''$ and $I'\le I^h$ $\forall h\in\Z_{>0}$.   Define the outputs $\wt I=\Dop(\Yw,I)$   and  
$\wt I^h=\Dop(\Yw^h,I^h)$.   Then $\wt I^h\to\wt I$ coordinatewise. 

\end{lemma}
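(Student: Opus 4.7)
The plan is to reduce the claim about $\wt I^h = \Dop(W^h, I^h)$ to the convergence of the auxiliary sequences $J^h = \Sop(W^h, I^h)$, and then to prove the latter by dominated convergence against the fixed sequence $J'' = \Sop(W'', I')$, which is finite since $(W'', I') \in \cI_2^\uparrow$.

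First I would apply the identity \eqref{m801a}, which gives
\eq{
\wt I^h_k = W^h_k\Bigl(1 + \frac{I^h_k}{J^h_{k-1}}\Bigr)
\quad\text{and}\quad
\wt I_k = W_k\Bigl(1 + \frac{I_k}{J_{k-1}}\Bigr).
}
Since $W^h_k \to W_k$ and $I^h_k \to I_k$ by assumption, and since positivity of $J$ together with its recursive formula \eqref{mja} ensures that $J^h_{k-1}$ stays uniformly bounded away from $0$ (indeed $J^h_{k-1} \geq W^h_{k-1}$, and $W^h_{k-1} \to W_{k-1} > 0$), coordinatewise convergence $\wt I^h_k \to \wt I_k$ will follow at once from the claim $J^h_{k-1} \to J_{k-1}$ for every $k\in\Z$.

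Next I would establish this convergence of $J$-values via the explicit formula \eqref{J_def},
\eq{
J^h_k = \sum_{i=-\infty}^k W^h_i \prod_{j=i+1}^k \frac{W^h_j}{I^h_j}.
}
For each fixed $i\le k$, the summand converges to $W_i\prod_{j=i+1}^k W_j/I_j$ by the coordinatewise convergence of $(W^h, I^h)$ to $(W, I)$. The domination is provided by the uniform bounds $W^h_j \le W''_j$ and $I^h_j \ge I'_j$, which imply
\eq{
W^h_i \prod_{j=i+1}^k \frac{W^h_j}{I^h_j} \;\le\; W''_i \prod_{j=i+1}^k \frac{W''_j}{I'_j},
}
and the sum of the right-hand side over $i \in \,\rzb -\infty, k\rzb$ equals $J''_k = \Sop(W'',I')_k < \infty$ precisely because $(W'',I') \in \cI_2^\uparrow$. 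Dominated convergence on counting measure on $\Z_{\le k}$ then yields $J^h_k \to J_k$, completing the argument.

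I expect the proof to be essentially routine; the only place where care is needed is the verification that the finiteness condition $\ces(W'') < \ces(I')$ on the dominating pair is genuinely used to provide a summable envelope, which is exactly the content of the estimate \eqref{sandwich} in the proof of Lemma~\ref{ces_lem}. No additional uniformity in $k$ is required because coordinatewise convergence is all that is claimed, so the dominated convergence step can be carried out separately for each $k$.
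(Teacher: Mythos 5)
Your proof is correct and follows essentially the same route as the paper: both arguments reduce the claim to $J^h_k\to J_k$ via the identity \eqref{m801a} (equivalently \eqref{I_W_def}), and both obtain that convergence by dominated convergence on the series \eqref{J_def}, with the summable envelope supplied by the bounds $\Yw^h\le\Yw''$, $I'\le I^h$ and the finiteness of $\Sop(\Yw'',I')$ guaranteed by $\ces(\Yw'')<\ces(I')$. The only (immaterial) difference is that the paper dominates the factored form \eqref{joy_setup} while you dominate the series for $J^h_k$ directly, which is equally valid.
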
 

\begin{proof}   Let   $J=\Sop(\Yw,I)$   and  
$J^h=\Sop(\Yw^h,I^h)$.   
We verify that 
\be\label{J680}   \lim_{h\to\infty}   J^h_k =  J_k \quad \text{for all $k\in\Z$}. \ee
By the recursive formula \eqref{mja}, it suffices to show that \eqref{J680} holds for arbitrarily large negative $k$. 
From \eqref{joy_setup} write 
\be \label{joy_setup6}\begin{aligned} 
\frac{J^h_k}{\Yw^h_k} 
= 1 +  e^{-\sum_{j=k}^0 \log\Yw^h_j \, +\, \sum_{j=k+1}^0 \log I^h_j} \sum_{n=-\infty}^{k-1} e^{\sum_{j=n}^{0} \log\Yw^h_j \,-\, \sum_{j=n+1}^{0} \log I^h_j}.
\end{aligned} \ee
For each $h$ and $n<0$ we have 
\[    e^{\sum_{j=n}^{0} \log\Yw^h_j \,-\, \sum_{j=n+1}^{0} \log I^h_j}  
\le e^{\sum_{j=n}^{0} \log\Yw''_j \,-\, \sum_{j=n+1}^{0} \log I'_j}  
\]  
and the latter terms are summable by the assumption $\ces(\Yw'') < \ces(I')$.   Thus the right-hand side of \eqref{joy_setup6} converges to the same expression without the $h$-superscripts and \eqref{J680}  has been verified.    From \eqref{I_W_def}  follows then that $\wt I^h\to\wt I$. 
\end{proof}

\subsection{Intertwined dynamics on sequences: fixed weight sequence} \label{twin_sec}	
For any positive integer $N$ and real number $\kappa$, define the space
\eeq{ \label{in_ces_def}
\cI_{N\!,\tspb\kappa} \coloneqq \{(I^1,\dots,I^N)\in\cI^N:\, \ces(I^i)>\kappa \text{ for each $i$}\}.
}
To condense notation, we   write
$
I^{\parng{i}{j}} = (I^i,\dots,I^j).
$
%
	Fix a weight sequence $\Yw\in\cI$ with 
	\eeq{ \label{fixed_ces}
	\ces(\Yw) = \kappa.
	} 
	We now define two   $\cI_{N\!,\tspb\kappa}\to\cI_{N\!,\tspb\kappa}$  mappings.
	

\smallskip

	(A) The \textit{parallel transformation}    $\Taop_\Yw\colon\cI_{N\!,\tspb\kappa}\to\cI_{N\!,\tspb\kappa}$ is the simultaneous application of the update map $D$ to several sequences $I^1,\dots,I^N$ with the \textit{same} weight sequence $\Yw$:  
\eeq{ \label{Taop_def}
\Taop_\Yw(I^{\parng{1}{N}}) = \big(\Dop(\Yw,I^1),\dots,\Dop(\Yw,I^N)\big). 
}
This is the transformation we ultimately care about, as it is the one obeyed by Busemann functions.
By Lemma~\ref{ces_lem}, the Ces\`aro limits of the input sequences are all preserved:
\eeq{ \label{T_pres}
\ces\big(\Dop(\Yw,I^i)\big) = \ces(I^i) \quad \text{for each $i\in\lzb 1, N \rzb$}.
}

\smallskip 

(B) The    \textit{sequential transformation}   $\Saop_\Yw\colon\cI_{N\!,\tspb\kappa}\to\cI_{N\!,\tspb\kappa}$ 
  applies  the update map $\Dop$ to each sequence $I^i$, but with weights that are updated between each application: 
\begin{subequations} \label{Saop_def}
\eeq{
\Saop_\Yw(I^{\parng{1}{N}}) = \big(\Dop(\Yw^1,I^1),\dots,\Dop(\Yw^N,I^N)\big), \label{Saop_def_1}
}
where (recall the map $\Rop$ from \eqref{drs_def} and \eqref{I_W_def})
\eeq{ \label{Saop_def_2}
\Yw^1 = \Yw \quad \text{and} \quad \Yw^i = \Rop(\Yw^{i-1},I^{i-1}) \quad \text{for $i\geq2$}.
}
\end{subequations}
 Lemma~\ref{ces_lem} guarantees $\ces(\Yw^1) = \ces(\Yw^{2}) = \cdots = \ces(\Yw^N)$, hence all the operations in \eqref{Saop_def} are well-defined and again preserve Ces\`aro limits:
\eeq{ \label{S_pres}
\ces\big(\Dop(\Yw^i,I^i)\big) = \ces(I^i) \quad \text{for each $i\in\lzb 1, N \rzb$}.
}
The definition \eqref{Saop_def} has also a recursive formulation:
\eeq{ \label{Saop_alt}
\Saop_\Yw(I^{\parng{1}{N}}) = \big(\Dop(\Yw,I^1),\Saop_{\Rop(\Yw,I^1)}(I^{\parng{2}{N}})\big).
}

Next we construct a mapping $\Daop$ that intertwines  $\Taop_\Yw$ and $\Saop_\Yw$.
Its domain is  
 the following 
``ordered" space that generalizes \eqref{i2_up_def}:
\eeq{ \label{in_up_def}
\cI_{N}^\uparrow &\coloneqq \{(I^1,\dots,I^N)\in\cI^N:\, \ces(I^1) < \ces(I^2) < \cdots < \ces(I^N)\}.
}
To begin the construction, Lemma~\ref{ces_lem} allows us to apply the update map $\Dop$ iteratively, as follows.
%
Define $\Dop^{(1)}\colon\cI\to\cI$ as the identity map.  
Take $\Dop^{(2)}\colon\cI_2^\uparrow\to\cI$ to be the map $D$ itself, as in \eqref{drs_def}:  $\Dop^{(2)}(I^1,I^2) \coloneqq \Dop(I^1,I^2)$.
For $i\geq3$ define $\Dop^{(i)}\colon\cI_i^\uparrow\to\cI$   recursively:
	\be\label{repeat_dog}  \begin{aligned}
		\Dop^{(i)}(I^{\parng{1}{i}}) &\coloneqq \Dop\bigl(I^1,\Dop^{(i-1)}(I^{\parng{2}{i}})\bigr).  
	\end{aligned} \ee
By Lemma~\ref{ces_lem} the Ces\`aro means are again preserved: 
\eeq{ \label{D_pres}
\ces\big(\Dop^{(i)}(I^{\parng{1}{i}})\big) = \ces(I^i).
}
 Furthermore, we have this strict monotonicity: 

	\begin{lemma} 	\label{cert_lem}
		For any $I^{\parng{1}{N}}\in\cI_N^\uparrow$, we have
		$\Dop^{(N)}(I^{\parng{1}{N}}) >   \Dop^{(N-1)}(I^{\parng{1}{N-1}})$.  
	\end{lemma}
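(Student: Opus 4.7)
The plan is to induct on $N\geq 2$. The base case $N=2$ asks for $\Dop^{(2)}(I^1,I^2)=\Dop(I^1,I^2)>I^1=\Dop^{(1)}(I^1)$, which is exactly the strict coordinatewise inequality supplied by Lemma~\ref{mon_lem}\ref{mon_lema} (valid because $(I^1,I^2)\in\cI_2^\uparrow$ as $\ces(I^1)<\ces(I^2)$).

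For the inductive step, assume the claim holds at level $N-1$ and apply it to the shifted tuple $I^{\parng{2}{N}}\in\cI_{N-1}^\uparrow$, yielding
\eq{
\Dop^{(N-1)}(I^{\parng{2}{N}}) > \Dop^{(N-2)}(I^{\parng{2}{N-1}}) \quad\text{coordinatewise.}
}
Now apply the update map with weight sequence $I^1$ to both sides. The Ces\`aro-preservation identity \eqref{D_pres} gives
\eq{
\ces\bigl(\Dop^{(N-1)}(I^{\parng{2}{N}})\bigr)=\ces(I^N) \quad\text{and}\quad \ces\bigl(\Dop^{(N-2)}(I^{\parng{2}{N-1}})\bigr)=\ces(I^{N-1}),
}
both of which strictly exceed $\ces(I^1)$ by the ordering defining $\cI_N^\uparrow$. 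Hence both pairs lie in $\cI_2^\uparrow$, so the applications of $\Dop(I^1,\cdot)$ are well-defined and produce precisely $\Dop^{(N)}(I^{\parng{1}{N}})$ and $\Dop^{(N-1)}(I^{\parng{1}{N-1}})$ by the recursion \eqref{repeat_dog}.

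To convert the inductive coordinatewise strict inequality into the desired conclusion, invoke the monotonicity statement \eqref{dogk} in Lemma~\ref{mon_lem}\ref{mon_lemb}: given any $k\in\Z$, since the input inequality is strict at some $k_0\leq k$, we obtain
\eq{
\Dop\bigl(I^1,\Dop^{(N-1)}(I^{\parng{2}{N}})\bigr)_k > \Dop\bigl(I^1,\Dop^{(N-2)}(I^{\parng{2}{N-1}})\bigr)_k,
}
which is the claim at coordinate $k$. Since $k\in\Z$ was arbitrary, the strict inequality propagates to every coordinate and closes the induction. There is no real obstacle beyond bookkeeping: the monotonicity lemma transmits strict inequality from $k_0$ to all $k\geq k_0$, and the assumption of strict inequality at \emph{every} coordinate in the input lets us choose $k_0$ arbitrarily negative, so the strictness survives at every output coordinate.
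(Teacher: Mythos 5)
Your proof is correct and follows essentially the same route as the paper: induction on $N$ with base case $\Dop(I^1,I^2)>I^1$ from Lemma~\hyperref[mon_lema]{\ref*{mon_lem}\ref*{mon_lema}}, and the inductive step obtained by applying $\Dop(I^1,\cdot)$ to the strict inequality at level $N-1$ via Lemma~\hyperref[mon_lemb]{\ref*{mon_lem}\ref*{mon_lemb}}. The extra care you take (the Ces\`aro/well-definedness check and the observation that strictness at every input coordinate lets $k_0$ be chosen arbitrarily negative so strictness survives at every output coordinate) is exactly the bookkeeping the paper leaves implicit.
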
 
	
	\begin{proof}
		The proof uses induction on $N$. The case $N=2$ is Lemma \hyperref[mon_lema]{\ref*{mon_lem}\ref*{mon_lema}}.
		Under the hypothesis $\Dop^{(N-1)}(I^{\parng{2}{N}}) > \Dop^{(N-2)}(I^{\parng{2}{N-1}})$,   Lemma \hyperref[mon_lemb]{\ref*{mon_lem}\ref*{mon_lemb}} gives the following inequality: 
\[
\Dop^{(N)}(I^{\parng{1}{N}})=\Dop\bigl(I^1,\Dop^{(N-1)}(I^{\parng{2}{N}})\bigr)  > \Dop\bigl(I^1,\Dop^{(N-2)}(I^{\parng{2}{N-1}})\bigr) = \Dop^{(N-1)}(I^{\parng{1}{N-1}}).
\qedhere \]
\end{proof}

Finally, define the map  $\Daop=\Daop^{(N)}\colon \cI_{N}^\uparrow\to\cI_N^\uparrow$ by
\eeq{ \label{Daop_def}
\Daop(I^{\parng{1}{N}}) = \big(\Dop^{(1)}(I^1),\Dop^{(2)}(I^{\parng{1}{2}}),\dots,\Dop^{(N)}(I^{\parng{1}{N}})\big).
}
By \eqref{D_pres}, $\Daop$ preserves the Ces\`aro means of the component sequences. 
By Lemma~\ref{cert_lem}, the output is a coordinatewise strictly ordered $N$-tuple of sequences.
It also has an inverse:

\begin{lemma} \label{invc_lem}  Fix $N\in\Z_{>0}$.  
\begin{enumerate}[label={\rm(\alph*)}, ref={\rm(\alph*)}]  \itemsep=3pt  
 \item \label{invc.a}  There exists a Borel set $\cH_N\subset (\R_{>0}^\Z)^N$ and a continuous mapping
$\Haop^{(N)}\colon \cH_N \to (\R_{>0}^\Z)^N$ such that $\Daop^{(N)}(\cI_{N}^\uparrow)\subset\cH_N$ and $\Haop^{(N)}\circ\Daop^{(N)}$ is the identity on $\cI_{N}^\uparrow$. 
 \item  \label{invc.b} Let $\Yw\in\cI$ with $\ces(\Yw)=\kappa$. Then the  maps  $\Saop_\Yw$ and $\Taop_\Yw$ are injective on $\cI_{N\!,\tsp\kappa}$.
\end{enumerate} 
\end{lemma}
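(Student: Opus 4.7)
The strategy for both parts is to use Lemma~\ref{inj_lem}, which provides a continuous inverse of the map $(W,I)\mapsto(W,\Dop(W,I))$ on its image via the explicit formula \eqref{inv_iden}. For part (a) I would proceed by induction on $N$, peeling off one sequence at a time; for part (b) the injectivity of each of $\Taop_W$ and $\Saop_W$ is a direct corollary of the fact that for fixed $W$, the map $I\mapsto\Dop(W,I)$ is injective with continuous inverse.

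For part (a), the base case $N=1$ is immediate since $\Daop^{(1)}$ is the identity. For the inductive step, suppose $\Haop^{(N-1)}$ has been constructed on a Borel set $\cH_{N-1}\subset(\R_{>0}^\Z)^{N-1}$. Given $\wt I^{\parng{1}{N}}\in\Daop^{(N)}(\cI_N^\uparrow)$, we have $\wt I^1=I^1$ and, for $i\geq 2$, $\wt I^i=\Dop(I^1,\Dop^{(i-1)}(I^{\parng{2}{i}}))$ by \eqref{repeat_dog}. Lemma~\ref{mon_lem}\ref{mon_lema} guarantees $\wt I^i>I^1$ componentwise, so formula \eqref{inv_iden} with $W=\wt I^1$ and $\wt I=\wt I^i$ unambiguously recovers $Z^{(i-1)}:=\Dop^{(i-1)}(I^{\parng{2}{i}})$. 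These $Z^{(1)},\dots,Z^{(N-1)}$ form $\Daop^{(N-1)}(I^{\parng{2}{N}})$, to which we apply the inductive inverse $\Haop^{(N-1)}$ to recover $I^{\parng{2}{N}}$. I would define $\cH_N$ as the collection of $\wt I^{\parng{1}{N}}\in(\R_{>0}^\Z)^N$ with $\wt I^1_k>0$ for all $k$, $\wt I^i_k>\wt I^1_k$ for all $i\geq 2$ and all $k$, and such that the sequences $Z^{(1)},\dots,Z^{(N-1)}$ produced by \eqref{inv_iden} lie in $\cH_{N-1}$. This is Borel because \eqref{inv_iden} is a rational function of coordinates and $\cH_{N-1}$ is Borel by induction. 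Continuity of $\Haop^{(N)}=(\wt I^1,\Haop^{(N-1)}(Z^{(1)},\dots,Z^{(N-1)}))$ follows from continuity of \eqref{inv_iden} and continuity of $\Haop^{(N-1)}$ in the product topology. The inclusion $\Daop^{(N)}(\cI_N^\uparrow)\subset\cH_N$ is verified by induction using the $\wt I^i>I^1$ bound from Lemma~\ref{mon_lem}\ref{mon_lema}.

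For part (b), injectivity of $\Taop_W$ is immediate: each component $\Dop(W,I^i)$ of $\Taop_W(I^{\parng{1}{N}})$ determines $I^i$ uniquely via \eqref{inv_iden}. For $\Saop_W$, I would recover the input recursively. From the output $(\wt I^1,\dots,\wt I^N)=\Saop_W(I^{\parng{1}{N}})$, the first component $\wt I^1=\Dop(W,I^1)$ together with the known $W^1=W$ determines $I^1$ via \eqref{inv_iden}; then $W^2=\Rop(W,I^1)$ is determined by \eqref{Saop_def_2}; now $\wt I^2=\Dop(W^2,I^2)$ with known $W^2$ determines $I^2$; and so on. Each step is well-defined because $(W^i,I^i)\in\cI_2^\uparrow$ by \eqref{fixed_ces} and \eqref{S_pres}, ensuring Lemma~\ref{inj_lem} applies at every stage. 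The only genuine technical point—and the one requiring care—is producing a \emph{Borel} description of $\cH_N$ rather than merely defining $\Haop^{(N)}$ on the image; but since each defining condition is an explicit inequality or positivity constraint on finitely many coordinates, this poses no real obstacle.
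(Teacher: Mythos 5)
Your proposal is correct and takes essentially the same route as the paper: both build the inverse from the explicit formula \eqref{inv_iden} of Lemma~\ref{inj_lem}, define $\Haop^{(N)}$ by the recursion $X^{\parng{1}{N}}\mapsto\bigl(X^1,\Haop^{(N-1)}(\Hop(X^1,X^2),\dots,\Hop(X^1,X^N))\bigr)$ (which unfolds to the paper's assembled map), and invert $\Taop_\Yw$ coordinatewise and $\Saop_\Yw$ by the same peel-off recursion through $\Rop$. The only cosmetic remark: the well-definedness of each step of the $\Saop_\Yw$ inversion rests on Lemma~\ref{ces_lem} giving $\ces(\Yw^i)=\kappa<\ces(I^i)$ (rather than \eqref{S_pres}, which concerns the outputs of $\Dop$), but this does not affect the argument.
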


\begin{proof}  Part~\ref{invc.a}.  
Our building block will be the inverse map 
 found in Lemma~\ref{inj_lem}.   Let 
 \[ 
 \cA_2=\{(X,Y)\in (\R_{>0}^\Z)^2:\,   X_k < Y_k \,\forall k\in\Z\}. 
 \]  
 Following \eqref{inv_iden}  define the image $I=H(X,Y)$ of the mapping $\Hop\colon \cA_2\to\R_{>0}^\Z$ by 
 \eq{ 
 I_k=\frac{Y_k-X_k}{X_k} \cdot \frac{X_{k-1}Y_{k-1}}{Y_{k-1}-X_{k-1}}\,, \qquad k\in\Z. 
 }
As in the proof of Lemma~\ref{inj_lem}, $\Hop$ is easily seen to be continuous. 
 
 Extend $\Hop$ to a sequence of   mappings $\Hop^{(m)}\colon \cA_m\to\R_{>0}^\Z$  for $m\in\Z_{>0}$   as follows.  
 Let  $\Hop^{(1)}(\brvcp) = \brvcp$ be the identity mapping on $\cA_1=\R_{>0}^\Z$.  Next let 
 $\Hop^{(2)}(\brvcp^{\parng{1}{2}}) = \Hop(\brvcp^{\parng{1}{2}})$ with $\cA_2$ as above. 
For $m\ge3$ define inductively the domain 
\begin{align*}
\cA_m&= \bigl\{ \brvcp^{\parng{1}{m}} \in (\R_{>0}^\Z)^m: 
\brvcp^1<\brvcp^i\ \forall i\in\lzb2,m\rzb,  
\, (\Hop(\brvcp^1,\brvcp^2),\dots,\Hop(\brvcp^1,\brvcp^m)) \in \cA_{m-1} \bigr\}, 
\end{align*} 
and then the map $\Hop^{(m)}\colon \cA_m\to\R_{>0}^\Z$ by 
 \be \label{repeat_hop} \begin{aligned} 
 \Hop^{(m)}(\brvcp^{\parng{1}{m}}) = \Hop^{(m-1)}\big(\Hop(\brvcp^1,\brvcp^2),\dots,\Hop(\brvcp^1,\brvcp^m)\big)  . 
\end{aligned}\ee
One sees inductively that each $\cA_m$ is a Borel set and each $\Hop^{(m)}$ 
is continuous. 

Next we show that 
\be\label{H436} 
\Daop^{(m)}(\cI_{m}^\uparrow)\subset\cA_m \qquad\text{for each $m\ge 1$.}  
\ee
 The $m=1$ case of \eqref{H436} is trivial since $\Daop^{(1)}$ is the identity on $\cI_1^\uparrow=\cI\subset\R_{>0}^\Z=\cA_1$.
For $m\geq2$, assume inductively $\Daop^{(m-1)}(\cI_{m-1}^\uparrow)\subset\cA_{m-1}$ and consider any $I^{\parng{1}{m}}\in\cI_m^\uparrow$.
Because of the definition $\Dop^{(i)}(I^{\parng{1}{i}}) = \Dop\big(I^1,\Dop^{(i-1)}(I^{\parng{2}{i}})\big)$, Lemma~\ref{mon_lem}\ref{mon_lema} shows 
\eeq{ \label{n4u0}
I^1 < \Dop^{(i)}(I^{\parng{1}{i}}) \quad \text{for each $i\in\lzb2,m\rzb$}.
}
In particular, $H\big(I^1,\Dop^{(i)}(I^{\parng{1}{i}})\big)$ is well-defined. 
Furthermore, from the proof of Lemma~\ref{inj_lem}, 
\eeq{
\label{Hop_inv}
\Hop\big(\Yw,\Dop(\Yw,I)\big) = I \quad \text{for any $(\Yw,I)\in\cI_2^\uparrow$}. 
}
Putting these facts together and writing temporarily $Y^i=\Dop^{(i)}(I^{\parng{1}{i}})$, we find
\eq{ 
\Hop(Y^1,Y^i)
\stackref{repeat_dog}{=} \Hop\big(I^1,\Dop\big(I^1,\Dop^{(i-1)}(I^{\parng{2}{i}})\big)\big)
\overset{\eqref{Hop_inv}}{=} \Dop^{(i-1)}(I^{\parng{2}{i}}).
}
As this holds for every $i\in\lzb2,m\rzb$, we have shown
\eq{
\big(\Hop(Y^1,Y^2),\dots,\Hop(Y^1,Y^m)\big)
=\big(\Dop^{(1)}(I^2),\dots,\Dop^{(m-1)}(I^{\parng{2}{m}})\big)
\stackref{Daop_def}{=} \Daop^{(m-1)}(I^{\parng{2}{m}}).
}
By induction, the rightmost expression belongs to $\cA_{m-1}$.
This observation, combined with \eqref{n4u0}, means that $Y^{\parng{1}{m}}\in\cA_m$, thereby verifying \eqref{H436}.
Furthermore,
\be\label{hop700} \begin{aligned}
\Hop^{(m)}(Y^{\parng{1}{m}})
&\overset{\eqref{repeat_hop}}{=} \Hop^{(m-1)}\big(\Hop(Y^1,Y^2),\dots,\Hop(Y^1,Y^m)\big) \\
&\stackrefp{repeat_hop}{=} \Hop^{(m-1)}(\Daop^{(m-1)}(I^{\parng{2}{m}})) . 
\end{aligned}\ee


Finally, define the Borel set
\[  \cH_N= \bigl\{ \brvcp^{\parng{1}{N}} \in (\R_{>0}^\Z)^N: 
\brvcp^{\parng{1}{m}}\in\cA_m \ \forall m\in\lzb1,N\rzb  \bigr\},  \] 
and then combine the maps from \eqref{repeat_hop}  into a continuous mapping $\Haop^{(N)}\colon \cH_N \to(\R_{>0}^\Z)^N$ by
\be  \label{Haop_def}
\Haop^{(N)}(\brvcp^{\parng{1}{N}})=\big(\Hop^{(1)}(\brvcp^1),\Hop^{(2)}(\brvcp^{\parng{1}{2}}),\dots,\Hop^{(N)}(\brvcp^{\parng{1}{N}})\big).
\ee
From the structure of $\Daop^{(N)}$ in \eqref{Daop_def},   $\Daop^{(N)}(I^{\parng{1}{N}})^{\parng{1}{m}} 
=  \Daop^{(m)}(I^{\parng{1}{m}})$ for $m\in\lzb1,N\rzb$.    Thus \eqref{H436} gives $\Daop^{(N)}(\cI_{N}^\uparrow)\subset\cH_N$.

When $N=1$,   $\Haop^{(1)}\circ\Daop^{(1)}$ is a composition of identity maps and hence itself the identity map on $\cI$.  For general $N\geq1$, apply \eqref{hop700} to the definition \eqref{Haop_def}:
\eq{
\Haop^{(N)}\big(\Daop^{(N)}(I^{\parng{1}{N}})\big)
= \big(I^1,\Haop^{(N-1)}\big(\Daop^{(N-1)}(I^{\parng{2}{N}})\big)\big).
}
By induction,   $\Haop^{(N)}\circ\Daop^{(N)}$ is the identity on $\cI_{N}^\uparrow$ for each $N\geq1$.

\smallskip 

Part~\ref{invc.b}.   
It is now clear that $\Taop_\Yw$ has an inverse map given by
\[
\Taop_\Yw^{-1}(\brvcp^{\parng{1}{N}}) = \big(\Hop(\Yw,\brvcp^1),\dots,\Hop(\Yw,\brvcp^N)\big) \quad \text{for $\brvcp^{\parng{1}{N}}\in\Taop_\Yw(\cI_{N\!,\tspb\kappa})$.}    
\]
It is also straightforward to check from \eqref{Saop_alt} that $\Saop_\Yw$ has inverse map given by the recursion
\[
\Saop_\Yw^{-1}(\brvcp^{\parng{1}{N}}) = \big(\Hop(\Yw,\brvcp^1),\Saop^{-1}_{\Rop(\Yw,\Hop(\Yw,\brvcp^1))}(\brvcp^{\parng{2}{N}})\big) \quad \text{for $\brvcp^{\parng{1}{N}}\in\Saop_\Yw(\cI_{N\!,\tspb\kappa})$.}
\qedhere \]
 \end{proof}

The  main goal of this section is  the identity \eqref{twine} below.
In order for its compositions  to make sense, we intersect the domain of $\Taop_\Yw$ and $\Saop_\Yw$ (see \eqref{in_ces_def}) with that of $\Daop$ (see \eqref{in_up_def}):
 \eeq{ \label{kappa_up}
 \cI_{N\!,\tspb\kappa}^\uparrow &\coloneqq \cI_{N\!,\tspb\kappa}\cap\cI_N^\uparrow
 =\{(I^1,\dots,I^N)\in\cI^N:\, \kappa < \ces(I^1) < \ces(I^2) < \cdots < \ces(I^N)\}.
 }
 Because of \eqref{T_pres}, \eqref{S_pres}, and \eqref{D_pres}, all three of  $\Taop_\Yw$, $\Saop_\Yw$, and $\Daop$ map $\cI_{N\!,\tspb\kappa}^\uparrow$ into itself.  So the compositions   in \eqref{twine} are well-defined on this space.

\begin{proposition} \label{g:itlm5}   For any $\Yw\in\cI$ with $\ces(\Yw)=\kappa$, we have the following equality of maps on $\cI_{N\!,\tspb\kappa}^\uparrow$:  
  \eeq{ \label{twine}
  \Taop_\Yw\circ\Daop = \Daop\circ\Saop_\Yw. 
  }
\end{proposition}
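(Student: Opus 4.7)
The plan is to proceed by induction on $N$. When $N=1$, the map $\Daop^{(1)}$ is the identity and both sides of \eqref{twine} collapse to $\Dop(\Yw,I^1)$, so the identity holds trivially. For $N\ge 2$, observe from the recursive definitions \eqref{repeat_dog}, \eqref{Daop_def}, and \eqref{Saop_alt} that the first $N-1$ output coordinates of $\Taop_\Yw\circ\Daop$ and $\Daop\circ\Saop_\Yw$ depend only on $I^{\parng{1}{N-1}}$ and agree with the corresponding $(N-1)$-case. Hence the inductive hypothesis handles those coordinates, and only the $N$-th coordinate remains.

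For the $N$-th coordinate, applying \eqref{repeat_dog} once rewrites the left side as $\Dop(\Yw,\Dop(I^1,\Dop^{(N-1)}(I^{\parng{2}{N}})))$. On the right, set $\wt I^1=\Dop(\Yw,I^1)$ and $\Yw^2=\Rop(\Yw,I^1)$, so that the $N$-th coordinate equals $\Dop(\wt I^1,\Dop^{(N-1)}(\wt I^{\parng{2}{N}}))$. Invoking the inductive hypothesis on the $(N-1)$-tuple $I^{\parng{2}{N}}$ with weight sequence $\Yw^2$ (note that $\ces(\Yw^2)=\ces(\Yw)=\kappa$ by Lemma~\ref{ces_lem}, so the assumptions are met), the last coordinate of $\Daop(\Saop_{\Yw^2}(I^{\parng{2}{N}}))$ equals $\Dop(\Yw^2,\Dop^{(N-1)}(I^{\parng{2}{N}}))$. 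Writing $J=\Dop^{(N-1)}(I^{\parng{2}{N}})$, the entire claim reduces to the \emph{local intertwining identity}
\[
\Dop\bigl(\Yw,\Dop(I^1,J)\bigr) \;=\; \Dop\bigl(\Dop(\Yw,I^1),\,\Dop(\Rop(\Yw,I^1),J)\bigr),
\]
valid whenever $\ces(\Yw)<\ces(I^1)<\ces(J)$.

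To prove this local identity I would apply the uniqueness criterion of Lemma~\ref{lm:D13}. Introduce $P=\Sop(\Yw,I^1)$ and use the compact formula $\Dop(A,B)_k = B_k\Sop(A,B)_k/\Sop(A,B)_{k-1}$ from \eqref{I_W_def}. A direct manipulation of the recursions \eqref{mja}, \eqref{m801a}, and the definition \eqref{I_W_def} yields the useful algebraic relations $\wt I^1_k = I^1_k P_k/P_{k-1}$, $\Yw^2_k = I^1_k P_{k-1}/(I^1_k+P_{k-1})$, and the key product identity $\wt I^1_k\,\Yw^2_k = W_k I^1_k$. Using these, I would construct an explicit candidate $\widehat S$ for $\Sop(\wt I^1, \Dop(\Yw^2,J))$ built from $P$ and $\Sop(\Yw,\Dop(I^1,J))$, verify by substitution that $\widehat S$ satisfies the defining recursion \eqref{cJ-rec} driven by $(\wt I^1,\Dop(\Yw^2,J))$, and check the asymptotic bound \eqref{b6x3dd} using the strict Ces\`aro ordering $\ces(\Yw)<\ces(I^1)<\ces(J)$ together with Lemma~\ref{ces_lem} applied to each of the intermediate sequences. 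Once $\widehat S$ is identified, the formula $\Dop(A,B)_k = B_k\Sop(A,B)_k/\Sop(A,B)_{k-1}$ gives both sides of the local identity at each $k$ a common closed form.

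The main obstacle is the algebraic bookkeeping needed to produce $\widehat S$ and to verify the recursion; it involves manipulating nested double sums of the shape $\sum_{i\le k}W_i L_i\prod_{j=i+1}^k W_j/I^1_j$ against $\sum_{i\le k}\wt I^1_i T_i\prod_{j=i+1}^k \wt I^1_j/\wt I^2_j$, and finding the correct substitution that exhibits their common value. A conceptual shortcut is available through the partition-function interpretation: each iterated $\Sop$ can be read as the polymer partition function on a three-row strip with weights $\Yw$, $I^1$, $J$, and under this reading the local identity expresses the associativity of appending the top row of $\Yw$-weights to the strip---exactly the commutativity of row insertion that underlies geometric RSK, as discussed in Section~\ref{sec:grsk}. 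This second viewpoint should both motivate the form of $\widehat S$ and clarify why the identity must hold.
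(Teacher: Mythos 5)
Your reduction is correct and, at its core, is the same argument as the paper's: after peeling off the first $N-1$ coordinates (which do match the $(N-1)$-case, since the $i$-th output of either composition depends only on $I^{\parng{1}{i}}$), everything hinges on the three-sequence identity
\[
\Dop\bigl(\Yw,\Dop(I^1,J)\bigr)=\Dop\bigl(\Dop(\Yw,I^1),\,\Dop(\Rop(\Yw,I^1),J)\bigr),
\]
which is exactly Lemma~\ref{m:D-lm4} (equation \eqref{dog3}), imported by the paper from \cite{busa-sepp-22-ejp}; the paper then bootstraps it through Lemma~\ref{m:D-lm6} (induction on the splitting index) rather than your induction on $N$, but these are interchangeable packagings of the same idea, and your bookkeeping of Ces\`aro means ($\ces(\Yw^2)=\ces(\Yw)=\kappa$ via Lemma~\ref{ces_lem}, $\ces(J)=\ces(I^N)$ via \eqref{D_pres}) is in order. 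The one place where your write-up falls short of a proof is the local identity itself: you propose to establish it through the uniqueness criterion of Lemma~\ref{lm:D13} by exhibiting a candidate $\widehat S$ for $\Sop\bigl(\Dop(\Yw,I^1),\Dop(\Rop(\Yw,I^1),J)\bigr)$, but that candidate is never written down and the recursion \eqref{cJ-rec} is never verified, so as stated the key step is a plan rather than an argument. Your preliminary relations are correct (indeed $\wt I^1_k\,\Yw^2_k=\Yw_k I^1_k$ follows from \eqref{mja} and \eqref{I_W_def}), and the Lemma~\ref{lm:D13} route would work, but since the identity is already available as the cited Lemma~\ref{m:D-lm4} the cleanest fix is simply to invoke it at that point, after which your induction closes the proof.
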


The following result from \cite{busa-sepp-22-ejp} is the essential ingredient that leads to our intertwining identity \eqref{twine}.
Originally \eqref{dog3} appeared in its zero-temperature form as  \cite[Lem.~4.4]{fan-sepp-20}. 
Recall the map $(\Yw,I)\mapsto R(\Yw,I)$ from \eqref{drs_def} and defined in \eqref{I_W_def}.

\begin{lemma} \textup{\cite[Lem.~A.5]{busa-sepp-22-ejp}} \label{m:D-lm4}   Given $(\Yw^1,I^1,I^2)\in\cI_3^\uparrow$, set $\Yw^2=\Rop(\Yw^1,I^1)$. 
	Then
	\be\label{dog3}  
	\Dop^{(3)}(\Yw^1,I^1,I^2)
	= \Dop\bigl(\Yw^1, \Dop(I^1, I^2)\bigr)
	=  \Dop\bigl( \Dop(\Yw^1,I^1), \Dop(\Yw^2,I^2)\bigr). 
	\ee 
\end{lemma}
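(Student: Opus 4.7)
\medskip

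The first equality in \eqref{dog3} requires no real work: it is just the definition \eqref{repeat_dog} applied with $(I^1,I^2,I^3)=(\Yw^1,I^1,I^2)$, which gives $\Dop^{(3)}(\Yw^1,I^1,I^2)=\Dop(\Yw^1,\Dop^{(2)}(I^1,I^2))=\Dop(\Yw^1,\Dop(I^1,I^2))$. All the content of the lemma is in the second equality, a ``local commutation'' identity for $\Dop$ that is the positive-temperature analogue of the $(\max,+)$ identity \cite[Lem.~4.4]{fan-sepp-20} at the heart of that paper and appears in exactly the stated form as \cite[Lem.~A.5]{busa-sepp-22-ejp}.

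My plan for the second equality is to exploit the uniqueness criterion in Lemma~\ref{lm:D13} so that the claim reduces to checking an algebraic recursion. Name the auxiliary sequences produced by $\Sop$ along the two computational paths:
\begin{equation*}
J^1=\Sop(\Yw^1,I^1),\quad J^2=\Sop(\Yw^2,I^2),\quad K=\Sop(I^1,I^2),\quad L=\Sop(\Yw^1,\Dop(I^1,I^2)),
\end{equation*}
with $\Yw^2=\Rop(\Yw^1,I^1)$, $I_*=\Dop(I^1,I^2)$, and $\wt I^i=\Dop(\Yw^i,I^i)$ for $i=1,2$. By \eqref{I_W_def}, the $k$th entry of the LHS equals $(I_*)_k L_k/L_{k-1}$, and the $k$th entry of the RHS equals $\wt I^2_k M_k/M_{k-1}$ where $M=\Sop(\wt I^1,\wt I^2)$. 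All of $J^1,J^2,K,L,M$ lie in $\cI$, since the inputs at each stage are in $\cI_2^{\uparrow}$ by Lemma~\ref{ces_lem} together with the ordered-Ces\`aro hypothesis $(\Yw^1,I^1,I^2)\in\cI_3^{\uparrow}$. In particular each of these $\Sop$-sequences satisfies the sub-exponential growth condition \eqref{b6x3dd}, which is what lets us invoke Lemma~\ref{lm:D13}.

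The technical core is to produce explicit formulas for $M_k$ and for RHS${}_k$ in terms of $\Yw^1,I^1,I^2,J^1,K,L$ and then to identify them with the corresponding expressions on the left. Concretely, I would seek a multiplicative ansatz of the form $M_k=L_k\cdot\Phi_k$ for an appropriate ratio $\Phi_k$ built out of $J^1,K$ (and their shifts), then verify by induction that this ansatz solves the defining recursion $M_k=\wt I^1_k\,(1+M_{k-1}/\wt I^2_k)$; once this is done, the ratio $\wt I^2_k M_k/M_{k-1}$ collapses to $(I_*)_k L_k/L_{k-1}$ after using the four elementary recursions
\begin{equation*}
J^1_k=\Yw^1_k\bigl(1+J^1_{k-1}/I^1_k\bigr),\ K_k=I^1_k\bigl(1+K_{k-1}/I^2_k\bigr),\ L_k=\Yw^1_k\bigl(1+L_{k-1}/(I_*)_k\bigr),\ \wt I^i_k=\Yw^i_k\bigl(1+I^i_k/J^i_{k-1}\bigr),
\end{equation*}
together with $\Yw^2_k=\Yw^1_k J^1_{k-1}/J^1_k\cdot(\cdot)$ coming from \eqref{I_W_def}. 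Uniqueness from Lemma~\ref{lm:D13} (with the sub-exponential growth guaranteed by Lemma~\ref{ces_lem}) then pins down both sides of \eqref{dog3} as solutions of the same $\Dop$-recursion from the same pair of inputs, yielding equality.

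The main obstacle is that no conceptual shortcut is evident: the identity is genuinely an algebraic statement that must be checked by reconciling several nested recursions. Its naturalness becomes clear only once one identifies it as the Noumi--Yamada local move of the geometric RSK bijection (as made explicit in Section~\ref{sec:grsk} and Remark~\ref{rmk:a_grsk} of the paper) or, equivalently, as the manifestation of associativity of path summation for a three-row polymer partition function; either viewpoint gives an alternative but essentially equivalent route. In practice the cleanest presentation is to carry out the substitution above line by line, as was done in \cite[Lem.~A.5]{busa-sepp-22-ejp}, and to close the argument with the uniqueness statement of Lemma~\ref{lm:D13}.
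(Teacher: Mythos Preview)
The paper does not prove this lemma; it simply cites it as \cite[Lem.~A.5]{busa-sepp-22-ejp}. So there is no ``paper's own proof'' to compare against beyond the external reference.

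Your proposal is an outline, not a proof. The first equality is indeed definitional, and your strategy for the second equality---reduce to a coordinatewise algebraic identity and close with the uniqueness criterion of Lemma~\ref{lm:D13}---is a sound framework. But the actual content is missing: you write ``I would seek a multiplicative ansatz of the form $M_k=L_k\cdot\Phi_k$'' without ever producing $\Phi_k$ or verifying that it satisfies the recursion $M_k=\wt I^1_k(1+M_{k-1}/\wt I^2_k)$. That verification is the entire lemma; everything else is bookkeeping. Your formula for $\Yw^2_k$ also trails off with ``$\cdot(\cdot)$'', which suggests you had not fully worked out the substitution (the correct identity from \eqref{I_W_def} and \eqref{mja} is $\Yw^2_k=\Yw^1_k J^1_{k-1}/J^1_k$). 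Finally, the appeal to Lemma~\ref{lm:D13} at the end is slightly confused: if you have already shown $\wt I^2_k M_k/M_{k-1}=(I_*)_k L_k/L_{k-1}$ coordinatewise, you are done and need no uniqueness; uniqueness is only relevant if you instead check that a candidate for $M$ satisfies the $\Sop$-recursion and want to conclude it equals $\Sop(\wt I^1,\wt I^2)$.

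In short: your plan is correct in spirit and matches what the cited reference does, but as written it defers the one nontrivial step to that same reference rather than carrying it out.
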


To prove Proposition~\ref{g:itlm5}, we first extend Lemma~\ref{m:D-lm4} by induction.

\begin{lemma} \label{m:D-lm6}  Let $N\ge 2$ and $(\Yw^1, I^1, I^2, \dotsc, I^N)\in\cI_{N+1}^\uparrow$.
As in \eqref{Saop_def_2},  iteratively define  
	\eq{ 
	\Yw^i=\Rop(\Yw^{i-1},I^{i-1}) \quad\text{ for $i\in\lzb 2, N \rzb$.}
	}
	Then the following identity holds whenever $1\le k\le N-1$:  
	\be\label{dogmore}  \begin{aligned}
		\Dop^{(N+1)}(\Yw^1,I^{\parng{1}{N}})
		=\Dop^{(k+1)}\bigl( \Dop( \Yw^1, I^1), \dots, \Dop( \Yw^{k},I^{k}) , \Dop^{(N-k+1)}(\Yw^{k+1},I^{\parng{k+1}{N}})\bigr).  
	\end{aligned}   \ee 
In particular, when $k=N-1$, \eqref{dogmore} becomes 
\be\label{dogpart} \begin{aligned}
	\Dop^{(N+1)}(\Yw^1,I^{\parng{1}{N}}) 
	=\Dop^{(N)}\bigl(\Dop(\Yw^1,I^1), \dots, \Dop( \Yw^{N},I^{N})\bigr).
\end{aligned}  \ee 
\end{lemma}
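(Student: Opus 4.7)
I would prove \eqref{dogmore} by induction on $k\in\lzb 1,N-1\rzb$, extracting the base case $k=1$ directly from Lemma~\ref{m:D-lm4} and handling the inductive step by recursive unfolding and refolding via \eqref{repeat_dog}. Before starting I would verify that all intermediate inputs lie in the appropriate ordered spaces: since $(\Yw^1,I^{\parng{1}{N}})\in\cI_{N+1}^\uparrow$ and Lemma~\ref{ces_lem} gives $\ces(\Yw^j)=\ces(\Yw^1)$ for every $j\in\lzb 1,N\rzb$, each tuple $(\Yw^j,I^j,\dotsc,I^N)$ belongs to $\cI_{N-j+2}^\uparrow$, and \eqref{D_pres} makes the output sequences $\Dop(\Yw^j,I^j)$ and $\Dop^{(N-j+1)}(\Yw^j,I^{\parng{j}{N}})$ inherit Ces\`aro means $\ces(I^j)$ and $\ces(I^N)$ respectively; these bookkeeping facts render legal every composition I invoke below.

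For the base case $k=1$, two unfoldings of \eqref{repeat_dog} give
\[
\Dop^{(N+1)}(\Yw^1,I^{\parng{1}{N}}) = \Dop\bigl(\Yw^1,\Dop(I^1,\Dop^{(N-1)}(I^{\parng{2}{N}}))\bigr).
\]
The triple $(\Yw^1,I^1,\Dop^{(N-1)}(I^{\parng{2}{N}}))$ lies in $\cI_3^\uparrow$ since $\ces(\Dop^{(N-1)}(I^{\parng{2}{N}}))=\ces(I^N)>\ces(I^1)>\ces(\Yw^1)$, so Lemma~\ref{m:D-lm4} applies and rewrites the right side as $\Dop\bigl(\Dop(\Yw^1,I^1),\Dop(\Yw^2,\Dop^{(N-1)}(I^{\parng{2}{N}}))\bigr)$; refolding the second argument by \eqref{repeat_dog} then produces the desired $\Dop^{(2)}(\Dop(\Yw^1,I^1),\Dop^{(N)}(\Yw^2,I^{\parng{2}{N}}))$. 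For the inductive step, assuming \eqref{dogmore} holds for some $k\le N-2$, I would apply the base case (with $N$ replaced by $N-k$) to the innermost factor, producing
\[
\Dop^{(N-k+1)}(\Yw^{k+1},I^{\parng{k+1}{N}})=\Dop\bigl(\Dop(\Yw^{k+1},I^{k+1}),\Dop^{(N-k)}(\Yw^{k+2},I^{\parng{k+2}{N}})\bigr).
\]

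The step is then closed by the observation that iterating \eqref{repeat_dog} yields
\[
\Dop^{(k+1)}\bigl(X_1,\dotsc,X_k,\Dop(Y,Z)\bigr)=\Dop^{(k+2)}(X_1,\dotsc,X_k,Y,Z)
\]
for any admissible $X_i,Y,Z$, since both sides equal the same left-nested $\Dop$-expression $\Dop(X_1,\Dop(X_2,\dotsc,\Dop(X_k,\Dop(Y,Z))\dotsc))$. Specializing $X_j=\Dop(\Yw^j,I^j)$, $Y=\Dop(\Yw^{k+1},I^{k+1})$, and $Z=\Dop^{(N-k)}(\Yw^{k+2},I^{\parng{k+2}{N}})$ absorbs the extra $\Dop$ into the outer arity and delivers the $k+1$ version of \eqref{dogmore}; taking $k=N-1$ then specializes to \eqref{dogpart}. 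The main obstacle is purely notational bookkeeping of Ces\`aro orderings at each composition; all algebraic content is concentrated in Lemma~\ref{m:D-lm4}, and what remains is entirely structural manipulation of the recursive definition.
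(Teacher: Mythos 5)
Your proof is correct and follows essentially the paper's argument: the base case $k=1$ is exactly the paper's computation (unfolding \eqref{repeat_dog} twice and applying Lemma~\ref{m:D-lm4} to the triple $(\Yw^1,I^1,\Dop^{(N-1)}(I^{\parng{2}{N}}))$, which is legitimate since $\ces(\Yw^1)<\ces(I^1)<\ces(I^N)$), and your refolding identity $\Dop^{(k+1)}(X_1,\dots,X_k,\Dop(Y,Z))=\Dop^{(k+2)}(X_1,\dots,X_k,Y,Z)$ is the same use of \eqref{repeat_dog} that closes the paper's induction. The only difference is organizational—you induct on $k$ at fixed $N$, re-invoking the $k=1$ case for the tail tuple $(\Yw^{k+1},I^{\parng{k+1}{N}})$, whereas the paper inducts on $N$ and applies the full statement to $(\Yw^2,I^{\parng{2}{N}})$—and your Ces\`aro-mean bookkeeping (via Lemma~\ref{ces_lem} and \eqref{D_pres}) correctly justifies every intermediate composition, so nothing is missing.
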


\begin{proof}
For $k=1$, observe that \eqref{dogmore} is implied by Lemma~\ref{m:D-lm4}:
\eq{
\Dop^{(N+1)}(\Yw^1,I^{\parng{1}{N}})
&\stackref{repeat_dog}{=} \Dop\big(\Yw^1,\Dop^{(N)}(I^{\parng{1}{N}})\big) \\
&\stackref{repeat_dog}{=} \Dop\big(\Yw^1,\Dop\big(I^1,\Dop^{(N-1)}(I^{\parng{2}{N}})\big)\big) \\
&\stackref{dog3}{=} \Dop\big(\Dop(\Yw^1,I^1),\Dop(\Yw^2,\Dop^{(N-1)}(I^{\parng{2}{N}})\big) \\
&\stackref{repeat_dog}{=} \Dop\big(\Dop(\Yw^1,I^1),\Dop^{(N)}(\Yw^2,I^{\parng{2}{N}})\big).
}
Now, in the base case $N=2$, we can only have $k=1$, so there is nothing more to show.
So let us take $N\geq3$ and assume inductively that for each $k \in \lzb 2, N-1 \rzb$, we have
\eeq{ \label{dnk_step}
\Dop^{(N)}(\Yw^2,I^{\parng{2}{N}})
= \Dop^{(k)}\big(\Dop(\Yw^2,I^2),\dots,\Dop(\Yw^k,I^k),\Dop^{(N-k+1)}(\Yw^{k+1},I^{\parng{k+1}{N}})\big).
}
Beginning with the same sequence of equalities as above, we find that
\eq{
&\Dop^{(N+1)}(\Yw^1,I^{\parng{1}{N}}) 
= \Dop\big(\Dop(\Yw^1,I^1),\Dop^{(N)}(\Yw^2,I^{\parng{2}{N}})\big) \\
&\stackref{dnk_step}{=} \Dop\big(\Dop(\Yw^1,I^1),\Dop^{(k)}\big(\Dop(\Yw^2,I^2),\dots,\Dop(\Yw^k,I^k),\Dop^{(N-k+1)}(\Yw^{k+1},I^{\parng{k+1}{N}})\big)\big) \\
&\stackref{repeat_dog}{=} \Dop^{(k+1)}\bigl( \Dop( \Yw^1, I^1), \dots, \Dop( \Yw^{k},I^{k}) , \Dop^{(N-k+1)}(\Yw^{k+1},I^{\parng{k+1}{N}})\bigr). \qedhere
}
\end{proof}

\begin{proof}[Proof of Proposition~\ref{g:itlm5}] 
Given $I^{\parng{1}{N}}\in\cI_{N\!,\tspb\kappa}^\uparrow$, let $(A^1,\dots,A^N) = \Taop_\Yw\big(\Daop(I^{\parng{1}{N}})\big)$.  From \eqref{Daop_def} followed by  \eqref{Taop_def},  we have $A^i =\Dop\bigl( \Yw^1, \Dop^{(i)}(I^{\parng{1}{i}}) \bigr)$. 
    Similarly, let $(B^1,\dots,B^N) = \Daop\big(\Saop_\Yw(I^{\parng{1}{N}})\big)$. From \eqref{Saop_def} followed by \eqref{Daop_def},  we have $B^i = \Dop^{(i)}\bigl( \Dop(\Yw^1, I^1),\dotsc,  \Dop(\Yw^i, I^i)\bigr)$.
    Making use of Lemma~\ref{m:D-lm6}, we conclude
\begin{align*}
 A^i=\Dop\bigl( \Yw^1, \Dop^{(i)}(I^{\parng{1}{i}}) \bigr) 
  &\stackref{repeat_dog}{=} \Dop^{(i+1)}(\Yw^1,I^{\parng{1}{i}})\\
&\stackref{dogpart}{=} \Dop^{(i)}\bigl( \Dop( \Yw^1,I^1),\dotsc,  \Dop(\Yw^i,I^i)\bigr)  = B^i. \qedhere
\end{align*} 
\end{proof}

\subsection{Intertwined dynamics on sequences: random weight sequence} \label{twin_sec_2}
In the previous section, we defined $\Saop_\Yw$ and $\Taop_\Yw$ for any fixed weight sequence $\Yw\in\cI$.
Now we take $\Yw = \Yw(\w)$ to be random, according to the following assumption:
\begin{subequations}  \label{ran_ass}
\eeq{ \label{ran_assa}
&\text{$\Yw=(\Yw_k)_{k\in\Z}$ are positive, i.i.d.~random variables on $\OSP$ such that $\E\abs{\log \Yw_0}<\infty$}.
}
Consequently, the Ces\`aro limit $\ces(\Yw)$ from \eqref{ces_def} almost surely exists and is equal to $\E[\log \Yw_0]$.
Matching the notation from \eqref{fixed_ces}, we set 
\eeq{ \label{ran_assb}
\kappa = \E[\log \Yw_0],
}
\end{subequations}
so that almost surely $\Saop_\Yw$ and $\Taop_\Yw$ are well-defined maps $\cI_{N\!,\tspb\kappa}\to\cI_{N\!,\tspb\kappa}$.
For the purposes of discussing measures below,   $\cI_{N\!,\tspb\kappa}$ inherits the standard product topology of $(\R^\Z)^N$.

Given a probability measure $\mu$ on $\cI_{N\!,\tspb\kappa}$, let $\mu\circ\Saop^{-1}$ be the probability measure on $\cI_{N\!,\tspb\kappa}$ defined by
\eq{ 
[\mu\circ\Saop^{-1}](\cB) = \E\mu\big(\Saop_{\Yw}^{-1}(\cB)\big) \quad \text{for any Borel set $\cB\subset\cI_{N\!,\tspb\kappa}$} 
}
where the expectation $\E$ averages over the random weight sequence $\Yw$.
Similarly define the measure $\mu\circ\Taop^{-1}$ by
\eeq{ \label{push_T}
[\mu\circ\Taop^{-1}](\cB) = \E\mu\big(\Taop_{\Yw}^{-1}(\cB)\big) \quad \text{for any Borel set $\cB\subset\cI_{N\!,\tspb\kappa}$}. 
}
%
%
%
%
%
In other words, if $I^{\parng{1}{N}}$ is a random element of $\cI_{N\!,\tspb\kappa}$ independent of $\Yw$ and distributed according to $\mu$, then $\mu\circ\Saop^{-1}$ and $\mu\circ\Taop^{-1}$ are the laws of $\Saop_\Yw(I^{\parng{1}{N}})$ and $\Taop_\Yw(I^{\parng{1}{N}})$, respectively.
Finally, when $\mu$ is a probability measure on the ordered space $\cI_{N\!,\tspb\kappa}^\uparrow$ from \eqref{kappa_up}, we write $\mu\circ\Daop^{-1}$ for the usual pushforward by $\Daop$.
Because of intertwining, we have the following equivalence.

\begin{theorem} \label{twm_thm}
For any probability measure $\mu$ on $\cI_{N\!,\tspb\kappa}^\uparrow$, we have the following equality of measures on $\cI_{N\!,\tspb\kappa}^\uparrow$:
\eeq{ \label{twine_meas}
\mu\circ\Daop^{-1}\circ\Taop^{-1} = \mu\circ\Saop^{-1}\circ\Daop^{-1}.
}
In particular, if $\nu$ is a probability measure on $\cI_{N\!,\tspb\kappa}^\uparrow$ such that $\nu\circ\Saop^{-1}=\nu$, then the pushforward $\mu=\nu\circ\Daop^{-1}$ satisfies $\mu\circ\Taop^{-1} = \mu$.
\end{theorem}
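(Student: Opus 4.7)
The plan is to deduce both assertions directly from the pointwise intertwining identity in Proposition~\ref{g:itlm5}, by viewing pushforward measures as laws of random compositions. The only real work is bookkeeping around the almost-sure event on which $\ces(\Yw)=\kappa$ and checking measurability of the maps involved.

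First I would set up a canonical coupling. Let $I^{\parng{1}{N}}$ be a random element of $\cI_{N\!,\tspb\kappa}^\uparrow$ with law $\mu$, chosen independently of the weight sequence $\Yw = (\Yw_k)_{k\in\Z}$ satisfying \eqref{ran_ass}. By the strong law of large numbers, $\ces(\Yw) = \kappa$ almost surely, so on a full-probability event the maps $\Saop_\Yw$ and $\Taop_\Yw$ are well-defined on $\cI_{N\!,\tspb\kappa}^\uparrow$ and send it into itself (via \eqref{T_pres} and \eqref{S_pres}), while $\Daop$ is a Borel map on $\cI_{N\!,\tspb\kappa}^\uparrow$ that preserves $\cI_{N\!,\tspb\kappa}^\uparrow$ by \eqref{D_pres} and Lemma~\ref{cert_lem}. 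Joint measurability of $(\w, I^{\parng{1}{N}}) \mapsto \Saop_{\Yw(\w)}(I^{\parng{1}{N}})$ and $\Taop_{\Yw(\w)}(I^{\parng{1}{N}})$ follows from inspection of \eqref{J_def}, \eqref{I_W_def}, \eqref{Taop_def}, and \eqref{Saop_def}, which express the outputs as pointwise limits of continuous functions of finitely many coordinates of $\Yw$ and $I^{\parng{1}{N}}$. With this in hand, the four distributions appearing in \eqref{twine_meas} are identified with the laws of
\begin{align*}
\Taop_\Yw\bigl(\Daop(I^{\parng{1}{N}})\bigr) &\ \sim\ \mu\circ\Daop^{-1}\circ\Taop^{-1},\\
\Daop\bigl(\Saop_\Yw(I^{\parng{1}{N}})\bigr) &\ \sim\ \mu\circ\Saop^{-1}\circ\Daop^{-1},
\end{align*}
where pushforward notation on the $\Taop$ and $\Saop$ side is understood via \eqref{push_T} and its analogue, i.e.~integrating out the independent $\Yw$.

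Next I would invoke Proposition~\ref{g:itlm5}: on the full-probability event $\{\ces(\Yw)=\kappa\}$, the pointwise equality $\Taop_\Yw\circ\Daop = \Daop\circ\Saop_\Yw$ holds as maps on $\cI_{N\!,\tspb\kappa}^\uparrow$. Applying both sides to the $\mu$-distributed random variable $I^{\parng{1}{N}}$ gives
\[
\Taop_\Yw\bigl(\Daop(I^{\parng{1}{N}})\bigr) = \Daop\bigl(\Saop_\Yw(I^{\parng{1}{N}})\bigr) \qquad \text{almost surely.}
\]
Equality of random variables almost surely implies equality in distribution, which is precisely \eqref{twine_meas}.

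For the concluding assertion, assume $\nu\circ\Saop^{-1}=\nu$ and set $\mu = \nu\circ\Daop^{-1}$. Then \eqref{twine_meas}, applied with $\mu$ replaced by $\nu$, gives
\[
\mu\circ\Taop^{-1} = \nu\circ\Daop^{-1}\circ\Taop^{-1} = \nu\circ\Saop^{-1}\circ\Daop^{-1} = \nu\circ\Daop^{-1} = \mu,
\]
so $\mu$ is $\Taop$-invariant as claimed. The only conceivable obstacle is the measurability/independence scaffolding in the first step, but this is routine once one notes that all the operations in Section~\ref{map_sec} are defined by a.s.-convergent series and algebraic manipulations in the coordinates; I would record this once as a short lemma and then let the above one-line identity do the rest.
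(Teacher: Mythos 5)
Your proof is correct and follows essentially the same route as the paper: both rest on the pointwise intertwining $\Taop_\Yw\circ\Daop = \Daop\circ\Saop_\Yw$ of Proposition~\ref{g:itlm5} together with the definition \eqref{push_T} of the averaged pushforwards, and the invariance consequence is deduced identically. The only difference is presentational—you phrase the identity via a coupling of a $\mu$-distributed input with independent weights, while the paper evaluates both measures on an arbitrary Borel set—so no further comment is needed.
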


\begin{proof}
Evaluated at some Borel set $\cB\subset\cI_{N\!,\tspb\kappa}^\uparrow$, the right-hand side of \eqref{twine_meas} gives
\eq{
[\mu\circ\Saop^{-1}]\big(\Daop^{-1}(\cB)\big)
= \E\mu\big[\Saop_\Yw^{-1}\big(\Daop^{-1}(\cB)\big)\big],
}
while the left-hand side gives
\eq{
\E[\mu\circ\Daop^{-1}]\big(\Taop_\Yw^{-1}(\cB)\big)
= \E\mu\big[\Daop^{-1}\big(\Taop_\Yw^{-1}(\cB)\big)\big].
}
By the intertwining identity \eqref{twine}, we have $\Saop_\Yw^{-1}\big(\Daop^{-1}(\cB)\big)=\Daop^{-1}\big(\Taop_\Yw^{-1}(\cB)\big)$, so we are done.
\end{proof}

%
%

 Theorem~\ref{twm_thm}  generates invariant distributions for the parallel transformation $\Taop$ from those of the sequential transformation $\Saop$.
This is useful for inverse-gamma weights discussed in Section~\ref{sec:twin_ig}.   We could go the other direction also, by considering $\Taop$-invariant measures that are supported on the intersection of $\cI_{N\!,\tspb\kappa}$ and the domain of the mapping $\Haop$.  We have presently no use for that direction so we leave it for potential future interest. 

Next we address the issue of uniqueness.
We restrict our attention to distributions that are also stationary with respect to the translation $\tau$:
\eq{
(\tau I)_k \coloneqq I_{k-1} \quad \text{for $I = (I_k)_{k\in\Z}$}.
}
The operator $\tau$ extends to any $N$-tuple of sequences in the obvious way:
\eq{ 
\tau I^{\parng{1}{N}} \coloneqq (\tau I^1,\dots,\tau I^N).
}
We say a probability measure $\mu$ on $(\R^\Z)^N$ is \textit{shift-stationary} if $\mu(\cB) = \mu(\tau^{-1}\cB)$ for every Borel set $\cB \subset (\R^\Z)^N$.
Additionally, a shift-stationary $\mu$ is called \textit{shift-ergodic} if $\mu(\cB)\in\{0,1\}$ whenever $\cB = \tau^{-1}\cB$.
Since the Ces\`aro limits $\ces(I^i)$ from \eqref{ces_def} are preserved under $\tau$, these limits must be deterministic under any shift-ergodic measure $\mu$ on  $\cI_{N\!,\tspb\kappa}$.
It turns out this is enough to separate ergodic components, as the next theorem explains.


Whenever $\mu$ is a probability measure on $\cI_{N\!,\tspb\kappa}$ satisfying
\be \label{log_ass}
\sum_{i=1}^N \,\int_{\cI_{N\!,\tspb\kappa}}|\log I^i_0|\ \mu(\dd I^{\parng{1}{N}}) < \infty,
\ee
define the following average for each $i\in\lzb1,N\rzb$:
\eq{
\cesm{\mu}{i} \coloneqq \int_{\cI_{N\!,\tspb\kappa}} \log I_0^i \ \mu(\dd I^{\parng{1}{N}}).
}

\begin{theorem} \label{uniq_thm}  
Assume \eqref{ran_ass}.
Let  $\kappa_1,\dots,\kappa_N$ be strictly greater than $\kappa$ in \eqref{ran_assb}. 

\begin{enumerate}[label={\rm(\alph*)},ref={\rm(\alph*)}]  \itemsep=3pt 

\item \label{uniq.a}  There exists at most one    shift-ergodic probability measure $\mu$ on $\cI_{N\!,\tspb\kappa}$ such that \eqref{log_ass} holds, 
\be \label{uniqT}
\mu\circ\Taop^{-1}=\mu \qquad \text{and} \qquad \cesm{\mu}{i} = \kappa_i \quad \text{for each $i\in\lzb 1, N \rzb$}.
\ee 
If $\brvcp^{\parng{1}{N}}$ has distribution $\mu$   and $\cesm{\mu}{i} = \cesm{\mu}{j}$, then $\brvcp^i = \brvcp^j$ almost surely.

\item \label{uniq.b}  Assume further  that $\kappa_1,\dots,\kappa_N$ are all  distinct.  
 Then   there exists at most one    shift-ergodic probability measure $\nu$ on $\cI_{N\!,\tspb\kappa}$ such that \eqref{log_ass} holds, 
\be \label{uniqS}
\nu\circ\Saop^{-1}=\nu \qquad \text{and} \qquad \cesm{\mu}{i} = \kappa_i \quad \text{for each $i\in\lzb 1, N \rzb$}.
\ee 
\end{enumerate} 

%
%
\end{theorem}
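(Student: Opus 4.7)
My strategy is to deduce Part~\ref{uniq.b} from Part~\ref{uniq.a} using intertwining, and then to establish Part~\ref{uniq.a} by induction on $N$ with a monotone coupling argument based on Lemma~\ref{mon_lem}.

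For Part~\ref{uniq.b}, suppose $\nu_1,\nu_2$ are shift-ergodic $\Saop$-invariant measures on $\cI_{N\!,\kappa}$ satisfying \eqref{uniqS}. Since the $\kappa_i$ are distinct, I may relabel so that $\kappa_1<\kappa_2<\cdots<\kappa_N$; shift-ergodicity combined with Birkhoff's theorem then forces each $\nu_i$ to be supported on $\cI_{N\!,\kappa}^\uparrow$. Set $\mu_i\coloneqq\nu_i\circ\Daop^{-1}$. By Theorem~\ref{twm_thm} (applied with $\mu=\nu_i$), each $\mu_i$ is $\Taop$-invariant. Since $\Daop$ is built from the maps $\Dop^{(k)}$ of \eqref{repeat_dog}, which by inspection of \eqref{J_def}--\eqref{I_W_def} commute with the coordinate shift $\tau$, the pushforwards $\mu_i$ inherit shift-ergodicity from $\nu_i$. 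The identity \eqref{D_pres} preserves the Cesaro means, and \eqref{log_ass} transfers since $\Dop^{(i)}(I^{\parng{1}{i}})_0>I^1_0$ by Lemma~\ref{cert_lem} while the preserved Cesaro mean $\kappa_i$ together with shift-stationarity controls the upper tail. Part~\ref{uniq.a} then gives $\mu_1=\mu_2$. Finally, Lemma~\ref{invc_lem}\ref{invc.a} provides a Borel inverse $\Haop^{(N)}$ of $\Daop^{(N)}$ on $\Daop(\cI_N^\uparrow)$, so $\nu_i=\mu_i\circ(\Haop^{(N)})^{-1}$ and $\nu_1=\nu_2$.

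For Part~\ref{uniq.a}, I argue by induction on $N$. The base case $N=1$ is the single-direction uniqueness result of \cite[Thm.~3.5]{janj-rass-20-jsp}. For the inductive step, let $\mu_1,\mu_2$ be two shift-ergodic $\Taop$-invariant measures on $\cI_{N\!,\kappa}$ satisfying \eqref{uniqT}. Since $\Taop_\Yw$ acts coordinatewise on the $N$ sequences (see \eqref{Taop_def}), the marginal of each $\mu_i$ on the first $N-1$ components is shift-ergodic, $\Taop$-invariant on $\cI_{N-1,\kappa}$, and has Cesaro averages $(\kappa_1,\dots,\kappa_{N-1})$; by induction these marginals coincide. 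It remains to show that the conditional law of $I^N$ given $I^{\parng{1}{N-1}}$ is determined. To this end I will couple $\mu_1$ and $\mu_2$ with their common $(I^{\parng{1}{N-1}},\Yw)$ process and use the strict monotonicity of $\Dop(\Yw,\cdot)$ from Lemma~\ref{mon_lem} to produce an ordered shift-stationary, $\Taop_\Yw$-invariant coupling $(I^N,(I')^N)$ with $I^N\le(I')^N$ coordinatewise. Equality of the Cesaro means $\cesm{\mu_1}{N}=\cesm{\mu_2}{N}=\kappa_N$, together with the shift-ergodic theorem applied to $\log I^N_0-\log (I')^N_0$, forces almost-sure coordinatewise equality. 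This same coupling scheme yields the rigidity statement: if $\cesm{\mu}{i}=\cesm{\mu}{j}$, then $(I^i,I^j)$ under the shared $\Taop_\Yw$ dynamics is a shift-stationary, $\Taop_\Yw$-invariant coupling of two marginals (equal by the $N=1$ case) with the same Cesaro mean, which the monotone coupling collapses to the diagonal.

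\textbf{Main obstacle.} The technical heart lies in the monotone-coupling collapse. Given a shift-stationary coupling $(I,I')$ with $I\le I'$ coordinatewise, $\Taop_\Yw$-invariant under a joint driving $\Yw$, and $\E[\log I_0]=\E[\log I'_0]$, I must conclude $I=I'$ a.s. Lemma~\ref{mon_lem}\ref{mon_lemb} says that any strict inequality $I_{k_0}<I'_{k_0}$ propagates to $\Dop(\Yw,I)_k<\Dop(\Yw,I')_k$ for all $k\ge k_0$; combined with shift-ergodicity and the Cesaro-mean constraint, this strict propagation must be ruled incompatible with stationary statistics. The delicate point is that \emph{a priori} strict inequalities could be continually ``replenished'' from $k=-\infty$, so the argument must quantify the unboundedness of such propagation in the ergodic average---this is the step I anticipate requiring the most care, and it is where the $L^1$ hypothesis \eqref{log_ass} and the existence of $\ces(\Yw)$ will be essential.
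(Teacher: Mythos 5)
Your Part~\ref{uniq.b} argument is essentially the paper's own proof: permute so the means are ordered, push forward by $\Daop$, invoke Theorem~\ref{twm_thm}, preservation of Ces\`aro means, Part~\ref{uniq.a}, and then the inverse map $\Haop^{(N)}$ from Lemma~\ref{invc_lem}\ref{invc.a}. That part is fine.

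Part~\ref{uniq.a} has a genuine gap, and it sits exactly where you wave your hands: the production of an \emph{ordered} coupling. Lemma~\ref{mon_lem} only says that the update $\Dop(\Yw,\cdot)$ \emph{preserves} a coordinatewise ordering of its inputs; it gives you no mechanism for \emph{creating} one. Given two shift-ergodic $\Taop$-invariant measures (or, in your inductive step, two conditional laws of $I^N$ given the common $I^{\parng{1}{N-1}}$) there is in general no shift-stationary, dynamics-compatible coupling with $I^N\le (I')^N$ coordinatewise -- neither law stochastically dominates the other, and if such an ordered coupling did exist, the conclusion would be immediate: $\E[\log (I')^N_0-\log I^N_0]=0$ with a nonnegative integrand already forces a.s.\ equality, no dynamics and no care about ``replenishment from $k=-\infty$'' needed. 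So your ``main obstacle'' paragraph addresses the easy step, while the step you assert in one clause (``use the strict monotonicity \dots to produce an ordered \dots coupling'') is the whole difficulty. The same flaw undercuts your treatment of the rigidity claim: $(\brvcp^i,\brvcp^j)$ is a stationary coupling of two equal marginal laws, but it is not ordered, so nothing ``collapses to the diagonal.''

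The paper resolves precisely this issue with the $\bar\rho$-contraction of Proposition~\ref{rho_prop} (Chang's argument): take an \emph{arbitrary} shift-ergodic coupling $(\brvcp,\brvcpp)$ attaining $\bar\rho(\mu_1,\mu_2)$, pass to the majorant $\brvcz=\brvcp\vee\brvcpp$, use the identity $\E|\log \brvcp^i_0-\log\brvcpp^i_0| = 2\ces(\brvcz^i)-\ces(\brvcp^i)-\ces(\brvcpp^i)$ together with the Ces\`aro-preservation of $\Dop$ (Lemma~\ref{ces_lem}) to show $\bar\rho$ cannot increase under $\Taop$, and then Claim~\ref{or43_clm} plus the strict propagation in Lemma~\ref{mon_lem}\ref{mon_lemb} to show it \emph{strictly} decreases unless the coupling is a.s.\ ordered -- which, with equal means, forces $\mu_1=\mu_2$. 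Your induction on $N$ also buys little: knowing the first $N-1$ marginal and the $N$-th marginal (the latter from the $N=1$ case) does not determine the joint law, so everything is carried by the unconstructed coupling. Finally, the second claim of Part~\ref{uniq.a} is obtained in the paper not by a coupling but by a duplication argument: project out one of the two components with equal means, duplicate the other, check the resulting measure is still shift-ergodic and $\Taop$-invariant with the same means, and apply the uniqueness just proved; you would need either that device or the full contraction to close your version.
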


The second claim of part~\ref{uniq.a} is not valid for $\Saop$.  In the inverse-gamma case the components of an $\Saop$-invariant measure  are independent, regardless of their means (Theorem~\ref{thm-I} below). 

We prove the uniqueness in part~\ref{uniq.a} by  a version of a contraction argument (Proposition~\ref{rho_prop}) originally due to \cite{chan-94}, earlier adapted to the polymer setting in \cite{janj-rass-20-jsp}. 
From this  we deduce the uniqueness in part~\ref{uniq.b} by appeal to Theorem~\ref{twm_thm} and Lemma~\ref{invc_lem}. 
Recall from \cite[Sec.~9.4]{gray09}
 the  ``rho-bar'' distance between  shift-stationary probability measures $\mu_1$ and $\mu_2$ on $\cI_{N\!,\tspb\kappa}$: 
\eeq{ \label{rho_bar}
\bar\rho(\mu_1,\mu_2) \coloneqq \inf_{(\brvcp^{\parng{1}{N}},\brvcpp^{\parng{1}{N}})} \sum_{i=1}^N\E|\log \brvcp_0^i-\log \brvcpp_0^i|,
}
where the infimum is over couplings $(\brvcp^{\parng{1}{N}},\brvcpp^{\parng{1}{N}})=(\brvcp^{\parng{1}{N}}_k,\brvcpp^{\parng{1}{N}}_k)_{k\in\Z}$ such that 
\begin{enumerate}[label=\textup{(\roman*)}]

\item \label{rho_bar1}  $\brvcp^{\parng{1}{N}}$ has distribution $\mu_1$ and $\brvcpp^{\parng{1}{N}}$ has distribution $\mu_2$; and

\item \label{rho_bar2} the joint distribution of $(\brvcp^{\parng{1}{N}},\brvcpp^{\parng{1}{N}})$ on $\cI_{2N\!,\tspb\kappa}$ is shift-stationary.

\end{enumerate}
We can always assume that  these couplings are defined on the same probability space $\OSP$ as the random noise $\Yw$.

\begin{remark}[Ergodic case] \label{inf_rem}
If both $\mu_1$ and $\mu_2$ are also shift-ergodic, then the infimum is achieved by a coupling for which \ref{rho_bar2} is upgraded to 
shift-ergodic.
See the proof of \cite[Thm.~9.2]{gray09}.
\qedex\end{remark}

Since the metric \eqref{rho_bar} is defined only for shift-stationary distributions, the following facts need to be checked.

\begin{lemma} \label{sta_pres} The following statements hold.
\begin{enumerate}[label=\textup{(\alph*)},ref=\textup{(\alph*)}]

\item \label{sta_prea}
$\tau\circ\Daop = \Daop\circ\tau$ as maps $\cI_N^\uparrow\to\cI_N^\uparrow$.

\item \label{sta_presa}
If $\mu$ is a shift-stationary probability measure on $\cI_{N}^\uparrow$, then $\mu\circ\Daop^{-1}$ is also shift-stationary.
The same holds for  shift-ergodicity.

\item  \label{sta_preb}
$\tau\circ\Saop_{\Yw} = \Saop_{\tau\Yw} \circ \tau$ and $\tau\circ\Taop_{\Yw} = \Taop_{\tau\Yw} \circ \tau$
as maps $\cI_{N\!,\tspb\kappa}\to\cI_{N\!,\tspb\kappa}$, for any $\Yw\in\cI$ with $\ces(\Yw) = \kappa$.

\item \label{sta_presb}
Assume \eqref{ran_ass}.
If $\mu$ is a shift-stationary probability measure on $\cI_{N\!,\tspb\kappa}$,
then $\mu\circ\Saop^{-1}$ and $\mu\circ\Taop^{-1}$ are also shift-stationary.  The same holds for  shift-ergodicity.

\end{enumerate}
\end{lemma}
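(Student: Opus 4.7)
\textbf{Proof plan for Lemma~\ref{sta_pres}.}

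For part~(a), the plan is to establish directly from the formula \eqref{J_def} that the auxiliary map $\Sop$ commutes with the shift: substituting $(\tau\Yw,\tau I)$ into \eqref{J_def} and re-indexing the sum by $j \mapsto j-1$ yields $\Sop(\tau\Yw,\tau I)_k = \Sop(\Yw,I)_{k-1}$. The quotient definition $\wt I_k = I_k J_k / J_{k-1}$ in \eqref{I_W_def} then gives $\Dop(\tau\Yw,\tau I) = \tau\Dop(\Yw,I)$ on $\cI_2^\uparrow$. Using the recursive definition \eqref{repeat_dog}, an induction on $i$ yields $\Dop^{(i)}\circ\tau = \tau\circ\Dop^{(i)}$, and hence $\Daop\circ\tau = \tau\circ\Daop$ by \eqref{Daop_def}.

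For part~(b), the calculation $[\mu\circ\Daop^{-1}](\tau^{-1}\cB) = \mu(\tau^{-1}\Daop^{-1}\cB) = \mu(\Daop^{-1}\cB)$ uses part~(a) and shift-stationarity of $\mu$, proving stationarity of the pushforward. For shift-ergodicity, any $\tau$-invariant $\cB$ has $\Daop^{-1}\cB$ also $\tau$-invariant by part~(a), and hence $\mu$-measure zero or one.

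For part~(c), a direct computation using $\wt\Yw_k = (I_k^{-1} + J_{k-1}^{-1})^{-1}$ from \eqref{I_W_def} shows that $\Rop$ also intertwines with $\tau$: $\Rop(\tau\Yw,\tau I) = \tau\Rop(\Yw,I)$. Combined with the $\Dop$-identity from part~(a), an induction on the recursive construction in \eqref{Saop_def_2} gives $\Saop_{\tau\Yw}\circ\tau = \tau\circ\Saop_\Yw$. The identity for $\Taop_\Yw$ is immediate from \eqref{Taop_def} and $\Dop(\tau\Yw,\tau I) = \tau\Dop(\Yw,I)$.

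For part~(d), the shift-stationarity of $\mu\circ\Saop^{-1}$ follows by unfolding the pushforward: for any Borel $\cB$, part~(c) gives $\Saop_\Yw^{-1}(\tau^{-1}\cB) = \tau^{-1}\Saop_{\tau\Yw}^{-1}(\cB)$, so
\[
[\mu\circ\Saop^{-1}](\tau^{-1}\cB) = \E\,\mu(\tau^{-1}\Saop_{\tau\Yw}^{-1}(\cB)) = \E\,\mu(\Saop_{\tau\Yw}^{-1}(\cB)) = [\mu\circ\Saop^{-1}](\cB),
\]
where the middle equality uses shift-stationarity of $\mu$ and the last uses $\tau\Yw\deq\Yw$ (from \eqref{ran_assa}). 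The argument for $\Taop$ is identical. For shift-ergodicity, I introduce the joint shift $\wt\tau(\Yw,I^{\parng{1}{N}}) = (\tau\Yw,\tau I^{\parng{1}{N}})$ on $\cI\times\cI_{N\!,\tspb\kappa}$, equipped with the product measure $\P\otimes\mu$. Since $\P$ is a Bernoulli (hence mixing) shift and $\mu$ is shift-ergodic, the product is $\wt\tau$-ergodic by the standard fact that the product of a mixing system with an ergodic one is ergodic. For any $\tau$-invariant Borel $\cB$, the set $A = \{(\Yw,I^{\parng{1}{N}}): \Taop_\Yw(I^{\parng{1}{N}})\in\cB\}$ satisfies, by part~(c),
\[
\wt\tau^{-1}A = \{(\Yw,I^{\parng{1}{N}}): \tau\Taop_\Yw(I^{\parng{1}{N}})\in\cB\} = A,
\]
so $(\P\otimes\mu)(A) = [\mu\circ\Taop^{-1}](\cB)\in\{0,1\}$. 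The same reasoning handles $\Saop$.

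The only mild subtlety will be verifying the commutation $\Rop\circ\tau = \tau\circ\Rop$ and its inductive propagation through the definition \eqref{Saop_def_2} of $\Yw^i$ for part~(c); the rest is bookkeeping. The ergodicity claim in part~(d) hinges on the product ergodicity fact, which is available because the weight field is i.i.d.
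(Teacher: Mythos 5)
Your proposal is correct and follows essentially the same route as the paper: the shift-commutation identities for $\Sop$, $\Rop$, $\Dop$ via re-indexing \eqref{J_def} and \eqref{I_W_def}, induction to get $\Dop^{(i)}$, $\Daop$, $\Saop_\Yw$, $\Taop_\Yw$, and then the passage to measures, with the ergodicity claim in part (d) resting on the same fact the paper invokes (the product of the i.i.d.\ weight shift with a shift-ergodic measure is ergodic — your explicit joint-shift argument just spells this out).
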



\begin{proof}   
Part~\ref{sta_presa} is immediate from part~\ref{sta_prea}.
Similarly, part~\ref{sta_presb} follows from part~\ref{sta_preb}, since the product of an i.i.d.~distribution (namely, that of $(W_k)_{k\in\Z}$) and a stationary/ergodic one is stationary/ergodic.
So we just prove parts~\ref{sta_prea} and \ref{sta_preb}.

Begin by showing the following three identities for any $(\Yw,I)\in\cI_2^\uparrow$:
\eeq{ \label{tau_out}
\Sop(\tau \Yw,\tau I) = \tau \Sop(\Yw,I), \quad
\Rop(\tau \Yw,\tau I) = \tau \Rop(\Yw,I), \quad
\Dop(\tau \Yw,\tau I) = \tau \Dop(\Yw,I), \quad
}
The first of these is clear from \eqref{J_def}: replacing every $k$ with $k-1$ on the right-hand side yields $J_{k-1}$.
Then the other two identities in \eqref{tau_out} follow by applying similar logic to \eqref{I_W_def}.

The third identity in \eqref{tau_out} easily extends by induction: for any $i\geq2$, if we assume that $\tau\circ\Dop^{(i-1)} = \Dop^{(i-1)}\circ\tau$, then
\eeq{ \label{tau_out_2}
\tau\Dop^{(i)}(I^{\parng{1}{i}})
&\stackref{repeat_dog}{=} \tau\Dop\big(I^1,\Dop^{(i-1)}(I^{\parng{2}{i}})\big) \\
&\stackref{tau_out}{=} \Dop\big(\tau I^1,\tau\Dop^{(i-1)}(I^{\parng{2}{i}})\big) \\
&\stackrefp{tau_out}{=} \Dop\big(\tau I^1,\Dop^{(i-1)}(\tau I^{\parng{2}{i}})\big)
\stackref{repeat_dog}{=} \Dop^{(i)}(\tau I^{\parng{1}{i}}).
}
Now $\tau\circ\Daop = \Daop\circ\tau$ follows by applying \eqref{tau_out_2} to each coordinate in \eqref{Daop_def}.
Similarly, the identity $\tau\circ\Taop_{\Yw} = \Taop_{\tau\Yw}\circ\tau$ is obtained by applying $\tau \Dop(\Yw,I) = \Dop(\tau \Yw,\tau I)$ to each coordinate in \eqref{Taop_def}.
Moreover, the $N=1$ case of $\tau\circ\Saop_{\Yw} = \Saop_{\tau\Yw}\circ\tau$ is handled, since in that case $\Saop_{\Yw}(I) = \Taop_{\Yw}(I) = \Dop(\Yw,I)$.
The general case follows from induction:
 if we assume that $\tau\circ\Saop_{\Yw} = \Saop_{\tau\Yw}\circ\tau$ on $\cI_{N-1,\kappa}$, then
\eq{
\tau\Saop_{\Yw}(I^{\parng{1}{N}})
&\stackref{Saop_alt}{=} \big(\tau\Dop(\Yw,I^1),\tau\Saop_{\Rop(\Yw,I^1)}(I^{\parng{2}{N}})\big) \\
&\stackrefp{Saop_alt}{=} \big(\tau\Dop(\Yw,I^1),\Saop_{\tau\Rop(\Yw,I^1)}(\tau I^{\parng{2}{N}})\big) \\
&\stackref{tau_out}{=} \big(\Dop(\tau\Yw,\tau I^1),\Saop_{\Rop(\tau\Yw,\tau I^1)}(\tau I^{\parng{2}{N}})\big)
\stackref{Saop_alt}{=} \Saop_{\tau\Yw}(\tau I^{\parng{1}{N}}). \qedhere
}
\end{proof}

\begin{proposition}\label{rho_prop}
	Assume \eqref{ran_ass}. Let $\mu_1$ and $\mu_2$ be shift-ergodic probability measures on $\cI_{N, \kappa}$ that satisfy \eqref{log_ass}.
	Then  
	\begin{equation}\label{rho_ineq}
	\bar\rho(\mu_1\circ\Taop^{-1},\mu_2\circ\Taop^{-1})\leq\bar\rho(\mu_1,\mu_2).
	\end{equation} 
	Furthermore, if  $\mu_1\neq\mu_2$ and $\cesm{\mu_1}{i}=\cesm{\mu_2}{i}$ for each $i\in\lzb1,N\rzb$, then this inequality is strict. 
\end{proposition}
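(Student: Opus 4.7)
The plan is a positive-temperature adaptation of the contraction argument of \cite{chan-94} and \cite[Prop.~4.8]{janj-rass-20-jsp}: construct an admissible coupling for $\bar\rho(\mu_1\circ\Taop^{-1},\mu_2\circ\Taop^{-1})$ by pushing forward an optimal coupling for $\bar\rho(\mu_1,\mu_2)$, then compare coordinatewise distances via the update recursions of Section~\ref{map_sec}. By Remark~\ref{inf_rem} I would fix a shift-ergodic coupling $(X^{\parng{1}{N}},Y^{\parng{1}{N}})$ of $\mu_1$ and $\mu_2$ achieving the infimum in \eqref{rho_bar}, chosen independently of the weight sequence $\Yw$. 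Applying $\Taop_\Yw$ coordinatewise yields a coupling $(\wt X^{\parng{1}{N}},\wt Y^{\parng{1}{N}})$ of $\mu_1\circ\Taop^{-1}$ and $\mu_2\circ\Taop^{-1}$, shift-stationary by Lemma~\hyperref[sta_presb]{\ref*{sta_pres}\ref*{sta_presb}}, so it is admissible for $\bar\rho$.

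Fix an index $i$ and write $d_k=\log X_k^i-\log Y_k^i$, $\wt d_k=\log \wt X_k^i-\log \wt Y_k^i$, and $A_k=\log J_k(\Yw,X^i)-\log J_k(\Yw,Y^i)$, where $J(\Yw,\abullet)=\Sop(\Yw,\abullet)$. The identity \eqref{m801a} gives $\wt I_k=W_k(1+I_k/J_{k-1})$, so
\[
\wt d_k = \log\bigl(1+X_k^i/J_{k-1}(\Yw,X^i)\bigr)-\log\bigl(1+Y_k^i/J_{k-1}(\Yw,Y^i)\bigr).
\]
Since $\phi(t)=\log(1+e^t)$ is $1$-Lipschitz with $\phi'\in(0,1)$, there exists a (random) $\rho_k\in(0,1)$ with $\wt d_k=\rho_k(d_k-A_{k-1})$. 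Combining this with \eqref{mja} expressed as $\log J_k=\log W_k+\log(1+J_{k-1}/I_k)$ and using that $\log\wt I_k=\log W_k+\log J_k - \log J_{k-1}$ (from \eqref{I_W_def}), an algebraic manipulation yields the companion identity $A_k=(1-\rho_k)(A_{k-1}-d_k)$. These two identities are the heart of the argument.

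From $|\wt d_k|\le \rho_k(|d_k|+|A_{k-1}|)$ and $|A_k|\le(1-\rho_k)(|d_k|+|A_{k-1}|)$ I would derive the pointwise telescoping estimate
\[
|\wt d_k|-|d_k|\le |A_{k-1}|-|A_k|.
\]
Summing over $k\in\lzb-n+1,0\rzb$ and dividing by $n$, the left-hand side converges almost surely to $\E|\wt d_0|-\E|d_0|$ by Birkhoff's ergodic theorem (using shift-ergodicity of the coupling and the integrability hypothesis \eqref{log_ass}), while the right-hand side collapses to $(|A_{-n}|-|A_0|)/n\to 0$ almost surely by Lemma~\ref{ces_lem} applied to $J(\Yw,X^i)$ and $J(\Yw,Y^i)$. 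Hence $\E|\wt d_0^i|\le \E|d_0^i|$; summing over $i\in\lzb1,N\rzb$ gives \eqref{rho_ineq}.

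For the strict inequality, suppose $\cesm{\mu_1}{i}=\cesm{\mu_2}{i}$ for every $i$ but $\mu_1\neq\mu_2$. Then $\E d_0^i=0$ for all $i$, and for some index $i$ we have $\P(d_0^i\neq 0)>0$, so both events $\{d_0^i>0\}$ and $\{d_0^i<0\}$ have positive probability under the coupling. Equality throughout the telescoping chain would force $|A_{k-1}-d_k|=|A_{k-1}|+|d_k|$ almost surely for all $k$, equivalently $d_k$ and $A_{k-1}$ always have opposite sign (or one vanishes). I would then derive a contradiction by feeding this rigidity into the recursion $A_k=(1-\rho_k)(A_{k-1}-d_k)$ together with the shift-stationarity of $(d_k,A_k)$ and the mean-zero property of $d_0^i$: the sign constraint propagates forward and, combined with the fact that $\rho_k\in(0,1)$ is itself a nontrivial function of $(\Yw,X^i,Y^i)$, forces $A_{k-1}$ and $d_k$ to be independent of the sign of $d_{k-1}$ in an inconsistent way. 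The main obstacle is this last step: making the sign-propagation argument fully rigorous is the delicate point, since it requires exploiting the joint ergodicity of the coupling together with the i.i.d.\ noise $\Yw$ to rule out the degenerate scenario where $A_{\abullet}$ and $d_{\abullet}$ conspire to cancel in the telescoping.
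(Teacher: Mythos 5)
Your proof of the weak inequality \eqref{rho_ineq} is correct and takes a genuinely different route from the paper's. You exploit the exact relations $\wt d_k=\rho_k(d_k-A_{k-1})$ and $A_k=-(1-\rho_k)(d_k-A_{k-1})$ (the latter is equivalent to $\wt d_k+A_k=d_k+A_{k-1}$, i.e.\ to $\wt I_kJ_{k-1}=I_kJ_k$ from \eqref{I_W_def}), which give the one-step balance $|\wt d_k|+|A_k|=|d_k-A_{k-1}|\le |d_k|+|A_{k-1}|$; summing, applying Birkhoff to the jointly ergodic process $(\Yw,X^{\parng{1}{N}},Y^{\parng{1}{N}})$, and killing the boundary term via $n^{-1}\log J_{-n}\to 0$ (the content of Lemma~\ref{ces_lem}) gives $\E|\wt d_0^i|\le\E|d_0^i|$. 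The paper instead compares both coupled configurations with the majorant $\brvcz^i=\brvcp^i\vee\brvcpp^i$ and uses monotonicity of $\Dop$ (Lemma~\ref{mon_lem}) plus preservation of Ces\`aro means. Your route even yields an exact defect formula, $\E|d_0^i|-\E|\wt d_0^i|=2\,\E\bigl[\min(|d_0^i|,|A_{-1}^i|)\,\one\{d_0^iA_{-1}^i>0\}\bigr]$, which is a nice by-product; you should just make explicit why Birkhoff applies (the optimal coupling of Remark~\ref{inf_rem} is taken independent of $\Yw$, and an i.i.d.\ field times an ergodic law is ergodic) and note that integrability of $|\wt d_0^i|$ falls out of the inequality itself.

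The genuine gap is in the strictness claim, which you flag but do not close. By the defect formula, strictness is exactly the statement that $\P(d_0^iA_{-1}^i>0)>0$ for some $i$, i.e.\ that with positive probability the discrepancies $X_0^i\ne Y_0^i$ and $J_{-1}(\Yw,X^i)\ne J_{-1}(\Yw,Y^i)$ point in the same direction. Your sketched contradiction (``forces $A_{k-1}$ and $d_k$ to be independent of the sign of $d_{k-1}$ in an inconsistent way'', with the randomness of $\rho_k$ doing some work) is not an argument, and the randomness of $\rho_k$ is actually irrelevant. Within your framework the correct finish is a sign-persistence argument: if $\P(d_0^iA_{-1}^i>0)=0$ for every $i$, then a.s.\ $d_k^iA_{k-1}^i\le0$ for all $k$, and the recursion $A^i_k=(1-\rho_k)(A^i_{k-1}-d^i_k)$ makes the sign of $A^i_k$ persistent in $k$ (while $A^i\equiv0$ forces $d^i\equiv0$); the three shift-invariant events $\{\exists k:A^i_k>0\}$, $\{\exists k:A^i_k<0\}$, $\{A^i\equiv0\}$ are a.s.\ disjoint, so by ergodicity one has full probability, and in the first two cases $d^i_k$ has an eventually constant weak sign, which together with stationarity and $\E d_0^i=\cesm{\mu_1}{i}-\cesm{\mu_2}{i}=0$ forces $d^i\equiv0$; hence $X^i=Y^i$ a.s.\ for every $i$ and $\mu_1=\mu_2$, a contradiction. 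The paper's own strictness step is different: Claim~\ref{or43_clm} shows that $\mu_1\ne\mu_2$ with equal means forces, with positive probability, the coordinatewise order of $\brvcp^i$ and $\brvcpp^i$ to cross at two sites, and then the strict monotonicity \eqref{dogk} of the update map makes the majorant comparison strict with positive probability. Either finish works, but as submitted your proof of the strict inequality is incomplete.
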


\begin{proof}  
Let   $\brvcp^{\parng{1}{N}} = (\brvcp^1,\dots,\brvcp^N)$ and $\brvcpp^{\parng{1}{N}}=(\brvcpp^1,\dots,\brvcpp^N)$ be $\cI_{N, \kappa}$-valued random variables that are independent of $\Yw$  and  satisfy conditions~\ref{rho_bar1} and \ref{rho_bar2} for the definition \eqref{rho_bar}.
By Remark~\ref{inf_rem}, we may assume that
\eq{
\bar\rho(\mu_1,\mu_2) = \sum_{i=1}^N\E|\log \brvcp_0^i-\log \brvcpp_0^i|  
}
and that the joint distribution of $(\brvcp^{\parng{1}{N}},\brvcpp^{\parng{1}{N}})$ is shift-ergodic.
Set  $\wt \brvcp^i = \Dop\big(\Yw,\brvcp^i)$ and  $\wt \brvcpp^i = \Dop\big(\Yw,\brvcpp^i)$. 
Then $(\wt\brvcp^{\parng{1}{N}},\wt\brvcpp^{\parng{1}{N}})$ is a valid coupling for bounding $\bar\rho(\mu_1\circ\Taop^{-1},\mu_2\circ\Taop^{-1})$, by Lemma~\hyperref[sta_presb]{\ref*{sta_pres}\ref*{sta_presb}}.
For \eqref{rho_ineq}  it suffices to show that
\begin{equation}\label{sts_rho}
\sum_{i=1}^N\E|\log \wt\brvcp_0^i-\log \wt\brvcpp_0^i| \leq
\sum_{i=1}^N\E|\log \brvcp_0^i-\log \brvcpp_0^i|. 
\end{equation}
We show that each summand on the left is dominated by the corresponding summand on the right. 

To begin, consider the majorizing process $\brvcz^{\parng{1}{N}}$ defined as $ \brvcz^i_k=\brvcp^i_k \vee  \brvcpp^i_k$.
We  have
\eeq{ \label{gr65}
|\log \brvcp_0^i - \log \brvcpp_0^i| = 2\log \brvcz_0^i - \log \brvcp_0^i - \log \brvcpp_0^i.
}
By shift-ergodicity,  
\eq{ 
\E[\log \brvcz_0^i] = \lim_{n\to\infty}\frac{1}{n}\sum_{k=-n+1}^{0}\log \brvcz_k^i = \ces(\brvcz^i) \quad \mathrm{a.s.}
}
and similarly  $\E[\log \brvcp_0^i] = \ces(\brvcp^i)$ and $\E[\log \brvcpp_0^i] = \ces(\brvcpp^i)$.
Taking expectation in \eqref{gr65} yields
\eeq{ \label{gr75}
\E|\log \brvcp_0^i - \log \brvcpp_0^i| = 2\ces(\brvcz^i) - \ces(\brvcp^i) - \ces(\brvcpp^i) \quad \mathrm{a.s.}
}
Since  $\ces(\brvcz^i) \geq \ces(\brvcp^i)\vee\ces(\brvcpp^i)>\kappa$, the sequence $\wt\brvcz^i = \Dop(\Yw,\brvcz^i)$ is well-defined and 
  by Lemma \hyperref[mon_lemb]{\ref*{mon_lem}\ref*{mon_lemb}} satisfies 
$\wt \brvcz^i \ge \wt \brvcp^i \vee \wt \brvcpp^i$.
This leads to the following inequality:
\eeq{ \label{fq34}
|\log \wt \brvcp_0^i - \log \wt \brvcpp_0^i| &= 2\log(\wt \brvcp_0^i \vee \wt \brvcpp_0^i) - \log \wt \brvcp_0^i - \log \wt \brvcpp_0^i \\
&\leq 2\log \wt \brvcz_0^i - \log \wt \brvcp_0^i - \log \wt \brvcpp_0^i.
}
By joint shift-ergodicity of $(\Yw,\brvcp^{\parng{1}{N}},\brvcpp^{\parng{1}{N}})$, we further have 
\eq{
\ces(\wt \brvcz^i) 
&\stackrefpp{ces_def}{tau_out}{=}  \lim_{n\to\infty}\frac{1}{n}\sum_{k=-n+1}^{0}\log \wt\brvcz_k^i  
=\lim_{n\to\infty}\frac{1}{n}\sum_{k=-n+1}^{0}\log \Dop(\Yw,\brvcz^i)_k  \\
&\stackref{tau_out}{=} \lim_{n\to\infty}\frac{1}{n}\sum_{k=-n+1}^{0}\log \Dop(\tau^{-k}\Yw,\tau^{-k}\brvcz^i)_0
= \E[\log\wt \brvcz^i_0] \quad \mathrm{a.s.}
}
Similarly   $\ces(\wt \brvcp^i) =\E[\log \wt \brvcp_0^i]$ and $\ces(\wt \brvcpp^i) = \E[\log \wt \brvcpp_0^i]$ almost surely.  Now  \eqref{fq34} leads to
	\eeq{ \label{tp17}
	\E|\log \wt\brvcp_0^i-\log \wt\brvcpp_0^i|
	&\leq 2\ces(\wt\brvcz^i) -  \ces(\wt\brvcp^i) -  \ces(\wt\brvcpp^i) \\
	&= 2\ces(\brvcz^i) -  \ces(\brvcp^i) -  \ces(\brvcpp^i)
	\overset{\eqref{gr75}}{=} \E|\log \brvcp_0^i-\log \brvcpp_0^i|,
	}
where the penultimate equality is due to Lemma~\ref{ces_lem}.
This completes the proof of \eqref{rho_ineq}.

For the second part of the proposition,   
  we show that \eqref{sts_rho}  is strict for at least one summand. 

\begin{claim} \label{or43_clm}
If $\mu_1\neq\mu_2$ and $\cesm{\mu_1}{i}=\cesm{\mu_2}{i}$ for each $i\in\lzb 1, N \rzb$, then there is some $i\in\lzb 1, N \rzb$ and $\ell_1,\ell_2\in\Z$ such that
\eeq{ \label{or43}
\P\big(\{\brvcp_{\ell_1}^i>\brvcpp_{\ell_1}^i\}\cap\{\brvcp_{\ell_2}^i<\brvcpp_{\ell_2}^i\}\big) > 0.
}
\end{claim}

\begin{proofclaim}
Suppose   that the claim were false.
Then with probability one, for each $i$ one of the following two events occurs:
\eq{
\bigcap_{\ell\in\Z}\{\brvcp_\ell^i\leq \brvcpp_\ell^i\} \quad \text{or} \quad
\bigcap_{\ell\in\Z}\{\brvcp_\ell^i\geq \brvcpp_\ell^i\}.
}
Each of these events is invariant under translation, so by shift-ergodicity, at least one occurs with probability one.
But because $\E[\log \brvcp_k^i] = \E[\log \brvcpp_k^i]$, this forces $\brvcp_k^i = \brvcpp_k^i$ for all $k\in\Z$, which contradicts the assumption that $\mu_1 \neq \mu_2$.
\end{proofclaim}

Let $i$, $\ell_1$, $\ell_2$ be as in Claim~\ref{or43_clm}.
By \eqref{or43} and shift-ergodicity, with probability one there are infinitely many $k\ge\ell_1\vee\ell_2$ such that the following event occurs:
\[
\{\brvcp^i_{\ell_1-k}>\brvcpp_{\ell_1-k}^i\} \cap \{\brvcp^i_{\ell_2-k}<\brvcpp_{\ell_2-k}^i\}
=\{\brvcz^i_{\ell_1-k}>\brvcpp_{\ell_1-k}^i\} \cap \{\brvcz^i_{\ell_2-k}>\brvcp_{\ell_2-k}^i\}.
\]
On this intersection, by Lemma \hyperref[mon_lemb]{\ref*{mon_lem}\ref*{mon_lemb}}, $\wt \brvcz_0^i > \wt \brvcpp_0^i \vee \wt \brvcp_0^i$.
The inequality in   \eqref{tp17} is now strict.
\end{proof}

%
%
%

\begin{proof}[Proof of Theorem~\ref{uniq_thm}]  

Part~\ref{uniq.a}.  
Proposition~\ref{rho_prop} implies the uniqueness claim. 

Suppose  $\kappa_a=\kappa_{a+1}$. (We can always permute the sequence-valued components to make  the coinciding $\kappa_i$-values adjacent.)    Let shift-ergodic  $\mu$ satisfy \eqref{uniqT}.   Define $\mu'$ on $\cI_{N\!,\tspb\kappa}$ with the same means  
$\cesm{\mu'}{i}=\cesm{\mu}{i}$ 
by 
\[  \int_{\cI_{N\!,\tspb\kappa}}  f(y^{\parng{1}{N}})\,\mu'(\dd y^{\parng{1}{N}})  
=   \int_{\cI_{N\!,\tspb\kappa}}  f(x^{\parng{1}{a}}, x^a, x^{a+2,N} )\,\mu(\dd x^{\parng{1}{N}}). 
\]  
In other words, project $\mu$ to the components $(x^i)^{i\ne a+1}$ and then duplicate $x^a$ to create the (new) component $x^{a+1}$.  These operations preserve shift-ergodicity.    Projection commutes with the parallel mapping, and hence the $\mu$-marginal distribution of $(X^i)^{i\ne a+1}$ is still invariant under $\Taop$.  Duplicating the $X^a$-component also commutes with the parallel mapping, and thereby $\mu'$ is also invariant. The uniqueness part   implies that $\mu=\mu'$, in other words, $\mu(X^a=X^{a+1})=1$. 

\smallskip 

Part~\ref{uniq.b}. 
 Now assume  that the $\kappa_1,\dotsc,\kappa_N$ are all distinct.  
 Suppose $\nu_1$ and $\nu_2$ are shift-ergodic  probability  measures on $\cI_{N\!,\tspb\kappa}$  that satisfy \eqref{uniqS}.  
By permuting the sequence-valued components we can assume $\kappa<\kappa_1<\dotsm<\kappa_N$.  Then the measures $\nu_1$ and $\nu_2$ are supported by the space $\cI_{N\!,\tspb\kappa}^\uparrow$ defined in \eqref{kappa_up}, which is the domain of the mapping $\Daop$.  
Then $\mu_1=\nu_1\circ\Daop^{-1}$ and $\mu_2=\nu_2\circ\Daop^{-1}$ are probability  measures on $\cI_{N\!,\tspb\kappa}^\uparrow$ that satisfy \eqref{uniqT}. Here we use the fact that $\Daop$ preserves Ces\`aro means. Hence  $\mu_1=\mu_2$. By Lemma~\ref{invc_lem}\ref{invc.a}, $\mu_1(\cH_N)=\mu_2(\cH_N)=1$.    Thus for $i\in\{1,2\}$ we can define measures $\nu'_i=\mu_i\circ\Haop^{-1}$ on $(\R_{>0}^\Z)^N$ that also agree.    Again by Lemma~\ref{invc_lem}\ref{invc.a}, $\nu'_i= (\nu_i\circ\Daop^{-1}) \circ\Haop^{-1}  = \nu_i \circ(\Haop\circ\Daop)^{-1} =\nu_i$. 
\end{proof}

\subsection{Sequential process and parallel process} \label{2_proc_sec}

%
As the final step towards the characterization of the distribution of the Busemann process, 
we construct  Markov processes from  the previously defined transformations, by using fresh i.i.d.~driving weights $\Yw$ at each step.
Return to the polymer setting of \eqref{ranenv} with a slightly weaker moment assumption:
\be \label{weak_ass} \begin{aligned} 
&\text{the weights $\Yw=(\Yw_x)_{x\tsp\in\tsp\Z^2}$ are strictly positive, i.i.d.~random variables on $\OSP$}\\
& \text{such that $\Yw_x(\w) = \Yw_\zevec(\shift_x\w)$ and $\E\abs{\log \Yw_0}<\infty$.  
Let $\kappa=\E[\log\Yw_\zevec]$. } 
\end{aligned}\ee
Let $\Yw(t)=(\Yw_{(k,t)})_{k\in\Z}$ denote the sequence of weights at level $t\in\Z$.  
Almost surely $W(t)\in\cI$ with $\ces(W(t)) = \kappa$ for every $t\in\Z$.


Pick an initial time $t_0\in\Z$ and let $\brvY^{\parng{1}{N}}(t_0)$ and $\brvcp^{\parng{1}{N}}(t_0)$ be initial states in the space $\cI_{N\!,\tspb\kappa}$ from \eqref{in_ces_def}.
These states may be random but are presumed independent of the random field $\Yw$.   
 Then  the  {\it sequential process} $\brvY^{\parng{1}{N}}(\aabullet)$  is defined for integer times $t\geq t_0+1$ by the iteration 
\be\label{g:multil}  \brvY^{\parng{1}{N}}(t)=\Saop_{\Yw(t)}(\brvY^{\parng{1}{N}}(t-1)\big). \ee
Similarly  the  {\it parallel process}  $\brvcp^{\parng{1}{N}}(\aabullet)$ is defined by
\be\label{g:cpl}
\brvcp^{\parng{1}{N}}(t)=\Taop_{\Yw(t)}(\brvcp^{\parng{1}{N}}(t-1)). 
\ee
Since $\Saop_\Yw$ and $\Taop_\Yw$ both preserve Ces\`aro limits (recall \eqref{S_pres} and \eqref{T_pres}), the processes $\brvY(\aabullet)$ and $\brvcp(\aabullet)$ can be viewed as discrete-time Markov chains on the  state space $\cI_{N\!,\tspb\kappa}$ or on the smaller   space $\cI_{N\!,\tspb\kappa}^\uparrow$ from \eqref{kappa_up}.   

We begin by stating the immediate corollaries of Theorems~\ref{twm_thm} and \ref{uniq_thm}.  

\begin{corollary} \label{twm_cor}  Assume \eqref{weak_ass}.   If the sequential process has an invariant distribution  $\nu$  on the space $\cI_{N\!,\tspb\kappa}^\uparrow$, then  $\mu=\nu\circ\Daop^{-1}$ is invariant for the parallel process.
\end{corollary}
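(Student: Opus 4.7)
The plan is to observe that this corollary is an almost immediate translation of the second assertion of Theorem~\ref{twm_thm} into the language of Markov chain invariance; the work has already been done in setting up the intertwining identity \eqref{twine_meas}.

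First I would unpack what ``invariant distribution'' means for each of the two chains. By construction \eqref{g:multil}--\eqref{g:cpl}, the weight sequence $\Yw(t)$ is independent of the state $\brvcp^{\parng{1}{N}}(t-1)$ (respectively $\brvY^{\parng{1}{N}}(t-1)$), since fresh i.i.d.\ weights are drawn at each level. Hence, matching the definition \eqref{push_T}, if $\brvcp^{\parng{1}{N}}(t-1)$ has law $\mu$ then $\brvcp^{\parng{1}{N}}(t)=\Taop_{\Yw(t)}(\brvcp^{\parng{1}{N}}(t-1))$ has law $\mu\circ \Taop^{-1}$. Therefore $\mu$ is invariant for the parallel chain on $\cI_{N\!,\tspb\kappa}^\uparrow$ if and only if $\mu\circ\Taop^{-1}=\mu$, and the analogous equivalence holds between $\Saop$-invariance and invariance of the sequential chain.

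Second, I would verify that the pushforward $\mu=\nu\circ\Daop^{-1}$ is well-defined as a probability measure on $\cI_{N\!,\tspb\kappa}^\uparrow$. This uses that $\Daop$ maps $\cI_{N\!,\tspb\kappa}^\uparrow$ into itself: the Ces\`aro means $\ces(\Dop^{(i)}(I^{\parng{1}{i}}))=\ces(I^i)$ are preserved by \eqref{D_pres}, and the output is strictly ordered in the coordinatewise sense by Lemma~\ref{cert_lem}. Since $\nu$ is supported on $\cI_{N\!,\tspb\kappa}^\uparrow$, so is $\mu$.

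Third and finally, invoke Theorem~\ref{twm_thm} directly. Applied to the $\Saop$-invariant measure $\nu$, the intertwining identity \eqref{twine_meas} reads
\[
\mu\circ\Taop^{-1}
=\nu\circ\Daop^{-1}\circ\Taop^{-1}
=\nu\circ\Saop^{-1}\circ\Daop^{-1}
=\nu\circ\Daop^{-1}
=\mu,
\]
which is exactly $\Taop$-invariance of $\mu$, hence invariance under the parallel Markov chain by the first step. There is no genuine obstacle in this argument; all of the substance sits inside \eqref{twine_meas}, which in turn rests on Proposition~\ref{g:itlm5} and Lemma~\ref{m:D-lm4}. The only thing to be careful about is making the trivial but necessary bookkeeping observation that pushforward under $\Taop$ (defined by averaging over the independent noise $\Yw$) does coincide with the one-step transition of the parallel process.
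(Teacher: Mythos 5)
Your argument is correct and matches the paper, which states this corollary as an immediate consequence of Theorem~\ref{twm_thm}: the second assertion of that theorem is precisely the identity $\mu\circ\Taop^{-1}=\mu$ for $\mu=\nu\circ\Daop^{-1}$, and your remaining steps (that $\Daop$ maps $\cI_{N\!,\tspb\kappa}^\uparrow$ into itself and that $\mu\circ\Taop^{-1}$ is the one-step transition of the parallel chain) are exactly the implicit bookkeeping the paper relies on.
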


 As before, the logarithmic mean of the $i$th component under a shift-stationary measure $\mu$ is  denoted by $\cesm{\mu}{i}=\int_{\cI_{N\!,\tspb\kappa}} \log x^i_0\,\mu(\dd x^{\parng{1}{N}})$. 
   
\begin{corollary} \label{uniq_cor}  
Assume \eqref{weak_ass} and let  $\kappa_1,\dots,\kappa_N > \kappa$. 

\begin{enumerate}[label={\rm(\alph*)},ref={\rm(\alph*)}]  \itemsep=3pt 

\item \label{uniq_cor.a}  The parallel process has   at most one    shift-ergodic invariant  measure $\mu$ on $\cI_{N\!,\tspb\kappa}$ such that $\cesm{\mu}{i} = \kappa_i$ for each $i\in\lzb 1, N \rzb$.  
 
\item \label{uniq_cor.b}  Assume further  that $\kappa_1,\dots,\kappa_N$ are distinct.  
 Then   the sequential process has   at most one    shift-ergodic invariant  measure $\nu$ on $\cI_{N\!,\tspb\kappa}$ such that \eqref{log_ass} holds and  $\cesm{\nu}{i} = \kappa_i$ for each $i\in\lzb 1, N \rzb$.  \end{enumerate} 
\end{corollary}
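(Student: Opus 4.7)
The plan is a direct reduction to Theorem~\ref{uniq_thm} via the standard correspondence between invariant measures of a Markov chain with i.i.d.~driving noise and fixed points of the pushforward operator on measures. First I would observe: because the weight sequence $\Yw(t)$ used to update the state at time $t$ is independent of the state at time $t-1$ and has the same law as the ambient $\Yw$, a probability measure $\mu$ on $\cI_{N\!,\tspb\kappa}$ is invariant for the parallel Markov chain \eqref{g:cpl} if and only if $\mu\circ\Taop^{-1}=\mu$ in the pushforward sense of \eqref{push_T}. The analogous equivalence holds for the sequential chain \eqref{g:multil} with $\Saop$ in place of $\Taop$.

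With this translation in hand, part~\ref{uniq_cor.a} follows from Theorem~\ref{uniq_thm}\ref{uniq.a}. The hypothesis $\cesm{\mu}{i}=\kappa_i\in\R$ implicitly requires that \eqref{log_ass} hold, since otherwise the defining integral of $\cesm{\mu}{i}$ would not be well-defined and finite. Thus the hypotheses of Theorem~\ref{uniq_thm}\ref{uniq.a} are satisfied verbatim, yielding uniqueness of $\mu$.

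For part~\ref{uniq_cor.b}, the same Markov-to-pushforward translation yields $\nu\circ\Saop^{-1}=\nu$, and the remaining hypotheses---\eqref{log_ass}, the prescribed logarithmic means $\kappa_i$, and their distinctness---match Theorem~\ref{uniq_thm}\ref{uniq.b} directly. Uniqueness of $\nu$ then follows immediately.

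I do not expect any substantive obstacle here: all the real work lies in Theorem~\ref{uniq_thm}, whose proof via the $\bar\rho$-contraction (Proposition~\ref{rho_prop}) together with the intertwining identity (Theorem~\ref{twm_thm}) already accounts for both the parallel and sequential cases. The corollary simply repackages that content in the Markov-chain language used elsewhere in the paper, and the only point worth verifying is that existence of the logarithmic means $\cesm{\mu}{i}$ automatically forces the integrability assumption \eqref{log_ass}.
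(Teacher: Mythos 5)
Your proposal is correct and matches the paper's treatment: the paper states Corollary~\ref{uniq_cor} as an immediate consequence of Theorem~\ref{uniq_thm}, with exactly the translation you describe between invariance of the Markov chains \eqref{g:multil}, \eqref{g:cpl} and the pushforward fixed-point conditions $\nu\circ\Saop^{-1}=\nu$, $\mu\circ\Taop^{-1}=\mu$ of \eqref{push_T}. Your side remark that finiteness of the means $\cesm{\mu}{i}=\kappa_i\in\R$ already forces the integrability condition \eqref{log_ass} is also right, since a Lebesgue integral can only equal a finite real number when the integrand is absolutely integrable.
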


Finally we connect this development back to the Busemann process.
Recall from Section~\ref{distB_sec} the notation for sequences of exponentiated horizontal nearest-neighbor Busemann increments: 
\begin{subequations} \label{IJnot}
\eeq{\label{IB5.5}  
\brvI^{\chdir\sig}(t) &=  (  {\brvI}_k^{\chdir\sig}(t) )_{k\in\Z} \,, 
 \quad  \text{where} \quad {\brvI}_k^{\chdir\sig}(t)  =  e^{\Bus^{\chdir\sig}_{(k-1,t),\tsp (k,t)}}.
}
%
We use similar notation for vertical increments:
\eeq{ \label{IB5.8}
\brvJ^{\chdir\sig}(t) = (\brvJ^{\chdir\sig}_k(t))_{k\in\Z}, \quad \text{where} \quad
\brvJ^{\chdir\sig}_k(t) =   e^{\Bus^{\chdir\sig}_{(k,t-1),\tsp (k,t)}}. 
}
\end{subequations}
The lemma below checks that exponentiated Busemann increments respect the dynamics of the update map.

\begin{lemma} \label{lemresp}
There is a full-probability event on which the following statements hold simultaneously for all $\xi\in\,]\evec_2,\evec_1[$, $\sigg\in\{-,+\}$, and $t\in\Z$:
\begin{enumerate}[label=\textup{(\alph*)}]

\item \label{lemrespa}
$\ces(\brvI^{\chdir\sig}(t)) = \nabla\gpp(\xi\sigg)\cdot\evec_1$

\item \label{lemrespb}
$\brvI^{\chdir\sig}(t)=\Dop( \Yw(t), \brvI^{\chdir\sig}(t-1))$ and $\brvJ^{\chdir\sig}(t)=\Sop( \Yw(t), \brvI^{\chdir\sig}(t-1))$.
\end{enumerate}
\end{lemma}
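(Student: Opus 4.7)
My plan has two parts corresponding to the two claims. Both lean on the cocycle identity \eqref{coc1}, the recovery relation \eqref{B_reco}, and especially the uniform shape theorem of Theorem~\ref{all_lln}, which is crucial for obtaining a single full-probability event that works simultaneously in $\xi$ and $\sigg$.

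For~\ref{lemrespa}, I would specialize Theorem~\ref{all_lln} to the vertex $x=(-n,0)$ at level $t=0$. The telescoping from the cocycle gives $\Bus^{\xi\sig}_{\zevec,(-n,0)} = -\sum_{k=-n+1}^0 \log\brvI^{\xi\sig}_k(0)$, so the simultaneous shape limit rewrites as
\[
\lim_{n\to\infty}\frac{1}{n}\sum_{k=-n+1}^0 \log\brvI^{\xi\sig}_k(0) = \nabla\gpp(\xi\sigg)\cdot\evec_1
\]
on a full-probability event, holding simultaneously for every $\xi\in\,]\evec_2,\evec_1[\,$ and $\sigg\in\{-,+\}$. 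Since the $\evec_2$-translation preserves $\P$ and $\Z$ is countable, intersecting the corresponding events over $t\in\Z$ extends the Ces\`aro identity to every level without sacrificing the simultaneity in $\xi$ and $\sigg$.

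For~\ref{lemrespb}, I would work on the event $\Omega_0$ of Theorem~\ref{buse_full} and first derive the required recursions by hand. Applying the cocycle around the unit square with corners $(k-1,t-1), (k,t-1), (k-1,t), (k,t)$ yields
\[
\log\brvI^{\xi\sig}_k(t-1) + \log\brvJ^{\xi\sig}_k(t) = \log\brvJ^{\xi\sig}_{k-1}(t) + \log\brvI^{\xi\sig}_k(t),
\]
while recovery at $(k,t)$ reads $\brvI^{\xi\sig}_k(t)^{-1}+\brvJ^{\xi\sig}_k(t)^{-1}=\Yw_{(k,t)}^{-1}$. Eliminating $\brvI^{\xi\sig}_k(t)$ between these two identities produces exactly the defining recursions of $\Sop$ and $\Dop$ from \eqref{mja} and \eqref{m801a}:
\[
\brvJ^{\xi\sig}_k(t) = \Yw_{(k,t)}\bigl(1+\brvJ^{\xi\sig}_{k-1}(t)/\brvI^{\xi\sig}_k(t-1)\bigr), \quad \brvI^{\xi\sig}_k(t) = \Yw_{(k,t)}\bigl(1+\brvI^{\xi\sig}_k(t-1)/\brvJ^{\xi\sig}_{k-1}(t)\bigr).
\]

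To upgrade these pointwise identities to the functional statements $\brvJ^{\xi\sig}(t)=\Sop(\Yw(t),\brvI^{\xi\sig}(t-1))$ and $\brvI^{\xi\sig}(t)=\Dop(\Yw(t),\brvI^{\xi\sig}(t-1))$, I plan to invoke Lemma~\ref{lm:D13}. That lemma requires the non-explosion bound $\varliminf_{k\to-\infty} |k|^{-1}\log\brvJ^{\xi\sig}_k(t) \le 0$, and securing this simultaneously in $\xi, \sigg$, and $t$ is the main technical obstacle. For a single direction the Birkhoff ergodic theorem applied to the $L^1$-variable $\log\brvJ^{\xi\sig}_0(t)$ would suffice, but to avoid exceptional $\xi$-dependent null sets I would instead expand via the cocycle,
\[
\log\brvJ^{\xi\sig}_k(t) = \Bus^{\xi\sig}_{(0,t-1),(0,t)} + \Bus^{\xi\sig}_{(0,t),(k,t)} - \Bus^{\xi\sig}_{(0,t-1),(k,t-1)},
\]
and then apply Theorem~\ref{all_lln} at the two levels $t-1$ and $t$: the linear $k\,\nabla\gpp(\xi\sigg)\cdot\evec_1$ contributions from the last two terms cancel, leaving an $o(|k|)$ remainder uniformly in $\xi,\sigg$. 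Intersecting the resulting events over $t\in\Z$ produces a single full-probability event on which Lemma~\ref{lm:D13} applies for every $\xi$, $\sigg$, and $t$, completing the proof of both identities in~\ref{lemrespb}.
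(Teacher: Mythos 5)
Your proof is correct, and its overall skeleton matches the paper's: part~\ref{lemrespa} from the cocycle identity \eqref{coc1} plus the simultaneous shape theorem (Theorem~\ref{all_lln}), and part~\ref{lemrespb} by deriving the recursions \eqref{zb72dw} from \eqref{coc1} and \eqref{B_reco} and then invoking Lemma~\ref{lm:D13}. The genuine difference is how you verify the non-explosion hypothesis \eqref{b6x3dd}. The paper notes that for each fixed $\xi$ the variables $(\log \brvJ^{\xi\sig}_k(t))_{k\in\Z}$ are identically distributed (by \eqref{coc2}) and hence tight, so $\varliminf_{k\to-\infty}|k|^{-1}\log \brvJ^{\xi\sig}_k(t)\le 0$ almost surely, and then upgrades the $\xi$-dependent null sets to a single event via the monotonicity \eqref{B_mono2}, extending from a countable dense set of directions to all of them. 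You instead decompose $\log \brvJ^{\xi\sig}_k(t)=\Bus^{\xi\sig}_{(0,t-1),(0,t)}+\Bus^{\xi\sig}_{(0,t),(k,t)}-\Bus^{\xi\sig}_{(0,t-1),(k,t-1)}$ and apply Theorem~\ref{all_lln} to the last two terms, so the linear parts cancel and $|k|^{-1}\log\brvJ^{\xi\sig}_k(t)\to 0$ directly, on the same single event already used for part~\ref{lemrespa}. Your route buys simultaneity in $\xi$ without the tightness-plus-monotonicity step (and yields the stronger limit $=0$), at the price of reusing the heavier input Theorem~\ref{all_lln}; the paper's route is more economical per direction and showcases the monotone structure of the Busemann process.

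Two small points. First, ``$o(|k|)$ uniformly in $\xi,\sigg$'' overstates what Theorem~\ref{all_lln} gives: the limit holds simultaneously for all $(\xi,\sigg)$ on one event, but no uniform rate in $\xi$ is claimed; this is harmless since you only need the condition separately for each $(\xi,\sigg,t)$. Second, Lemma~\ref{lm:D13} also assumes $(\Yw(t),\brvI^{\xi\sig}(t-1))\in\cI_2^\uparrow$, i.e.\ $\ces(\Yw(t))<\ces(\brvI^{\xi\sig}(t-1))$; this follows from your part~\ref{lemrespa}, the strong law for the i.i.d.\ weights, and \eqref{inmono1}, and is worth one explicit sentence (the paper defers this check to the proof of Theorem~\ref{B_thm9}).
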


\begin{proof}
Part~\ref{lemrespa} follows from Theorem~\ref{all_lln}:
\eq{  
\ces({\brvI}^{\chdir\sig}(t))
=  \lim_{n\to\infty} \frac1n \sum_{k=-n+1}^0 \log{\brvI}_k^{\chdir\sig}(t) 
\stackref{coc1}{=}\lim_{n\to\infty} n^{-1}\Bus^{\xi\sig}_{(-n,t),(0,t)}
\stackref{bfanfx}{=} \nabla\gpp(\xi\sigg)\cdot\evec_1.
}
For part~\ref{lemrespb}, observe that in the notation of \eqref{IJnot}, additivity \eqref{coc1} and   recovery \eqref{B_reco}  are re-expressed as 
		\eq{ 
		\brvJ^{\chdir\sig}_k(t) \tspa\brvI^{\chdir\sig}_k(t-1)=\brvI^{\chdir\sig}_k(t) \tspa\brvJ^{\chdir\sig}_{k-1}(t) 
		\qquad\text{and}\qquad
		 \Yw_{(k,t)}^{-1} =  \brvI^{\chdir\sig}_k(t)^{-1}+ \brvJ^{\chdir\sig}_k(t)^{-1}. 
		}
From these one deduces 
\eeq{ \label{zb72dw}    \brvJ^{\chdir\sig}_k(t)= \Yw_{(k,t)}\biggl(1+  \frac{\brvJ^{\chdir\sig}_{k-1}(t)}{\brvI^{\chdir\sig}_k(t-1)}   \biggr)\qquad  	\text{and}  \qquad 
 \brvI^{\chdir\sig}_k(t)= \Yw_{(k,t)}\biggl(1+  \frac{\brvI^{\chdir\sig}_k(t-1)}{\brvJ^{\chdir\sig}_{k-1}(t)}   \biggr).
}
In other words, the recursions \eqref{cJ-rec} and \eqref{cJ8} required by Lemma~\ref{lm:D13} are satisfied. 
That lemma's last remaining hypothesis \eqref{b6x3dd} holds almost surely---and with equality---simply because $(\log J_k^{\xi\sig}(t))_{k\in\Z}$ are identically distributed (thanks to translation invariance \eqref{coc2}) and hence tight.
A priori the almost-sure event $\{\varliminf_{k\to-\infty} |k|^{-1}\log J_k^{\xi\sig}(t) \leq 0\}$ might depend on $\xi$; but thanks to monotonicity \eqref{B_mono2}, it holds for all $\xi$ as soon as it holds for a countable dense set of $\xi$.
Therefore, Lemma~\ref{lm:D13} provides the desired conclusion.
\end{proof}

We can now state and prove a precise version of Theorem~\ref{cpthm} for the Busemann process.
Given directions $\xi_1,\dotsc, \xi_N\in\,]\evec_2,\evec_1[\,$  and signs $\sigg_1,\dotsc,\sigg_N\in\{-,+\}$, we write ${\brvI}^{(\chdir\sig)_{\parng{1}{N}}}(t)$ for the $N$-tuple of sequences $\big({\brvI}^{\chdir_1\sig_1}(t), {\brvI}^{\chdir_2\sig_2}(t),  \dotsc, {\brvI}^{\chdir_N\sig_N}(t)\big)$.


\begin{theorem}\label{B_thm9}
Assume \eqref{ranenv} 
 and let $\kappa=\E[\log\Yw_\zevec]$. 
 \begin{enumerate}[label=\textup{(\alph*)}]

\item \label{B_thm9a}
$\{{\brvI}^{(\chdir\sig)_{\parng{1}{N}}}(t):\, t\in\Z\}$ is a stationary version of the parallel process on the state space $\cI_{N\!,\tspb\kappa}$.

\item \label{B_thm9b}
The law of ${\brvI}^{(\chdir\sig)_{\parng{1}{N}}}(0)$ is the unique shift-ergodic invariant measure of Corollary~\ref{uniq_cor}\ref{uniq_cor.a}  determined by  $\kappa_i=\nabla\gpp(\xi_i\sigg_i)\cdot\evec_1$  for   $i\in\lzb1,N\rzb$. In particular, said invariant distribution exists. 

\end{enumerate}
\end{theorem}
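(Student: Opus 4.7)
Coordinatewise application of Lemma~\ref{lemresp}\ref{lemrespb} across the $N$ directions gives, on a full-probability event, the pathwise recursion
\begin{equation*}
\brvI^{(\chdir\sig)_{\parng{1}{N}}}(t) = \Taop_{\Yw(t)}\bigl(\brvI^{(\chdir\sig)_{\parng{1}{N}}}(t-1)\bigr),\qquad t\in\Z.
\end{equation*}
To upgrade this pathwise identity to the Markov property, I apply the independence statement \eqref{B-ind} with $A=\{(k,s):k\in\Z,\ s\le t-1\}$: every vertex $u$ with $u\cdot\evec_2\ge t$ lies in $A^{\not\le}$, so the level-$t$ weight sequence $\Yw(t)$ is independent of the entire past $\bigl(\Yw(s),\brvI^{(\chdir\sig)_{\parng{1}{N}}}(s)\bigr)_{s\le t-1}$. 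Stationarity in $t$ then follows from the covariance identity \eqref{coc2} for $\shift_{-\evec_2}$ together with the $\shift_{-\evec_2}$-invariance of $\P$.

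\textbf{Plan for part (b).} Let $\mu$ denote the law of $\brvI^{(\chdir\sig)_{\parng{1}{N}}}(0)$. By Lemma~\ref{lemresp}\ref{lemrespa} its Ces\`aro means satisfy $\cesm{\mu}{i}=\nabla\gpp(\xi_i\sigg_i)\cdot\evec_1=\kappa_i$, and \eqref{inmono1} yields $\kappa_i>\kappa$ for each $i$, so $\mu$ is supported on $\cI_{N,\kappa}$. The $L^1$ statement in Theorem~\ref{buse_full} gives the integrability hypothesis \eqref{log_ass}, and $\mu$ is $\Taop$-invariant by part (a). Once joint shift-ergodicity is established, uniqueness is immediate from Corollary~\ref{uniq_cor}\ref{uniq_cor.a}.

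\textbf{Main obstacle: joint shift-ergodicity of $\mu$.} For a single direction this is \cite[Thm.~3.5]{janj-rass-20-jsp}; the novelty here is the extension to $N$-tuples, since the underlying probability space carrying the Busemann process is generally not known to be $\shift_{\evec_1}$-ergodic (Remark~\ref{diffrmk}). My approach is to decompose $\mu=\int\mu_\alpha\,d\nu(\alpha)$ into $\tau$-ergodic components. Single-direction ergodicity of each marginal law forces the deterministic values $\cesm{\mu_\alpha}{i}=\kappa_i$ for $\nu$-almost every $\alpha$. The delicate step is to verify that $\nu$-a.e.~$\mu_\alpha$ is itself $\Taop$-invariant; this would follow from Lemma~\hyperref[sta_presb]{\ref*{sta_pres}\ref*{sta_presb}} (preservation of shift-ergodicity under $\Taop$-pushforward) combined with uniqueness of the ergodic decomposition of $\mu=\mu\circ\Taop^{-1}$, together with an iteration argument that rules out nontrivial permutations of components via the strict contraction in Proposition~\ref{rho_prop}. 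Once $\Taop$-invariance of $\mu_\alpha$ is in hand, any two distinct components $\mu_\alpha\neq\mu_{\alpha'}$ with shared Ces\`aro means would satisfy, by Proposition~\ref{rho_prop},
\begin{equation*}
\bar\rho(\mu_\alpha,\mu_{\alpha'})=\bar\rho\bigl(\mu_\alpha\circ\Taop^{-1},\mu_{\alpha'}\circ\Taop^{-1}\bigr)<\bar\rho(\mu_\alpha,\mu_{\alpha'}),
\end{equation*}
a contradiction. Hence the ergodic decomposition is trivial, so $\mu$ is shift-ergodic, and Corollary~\ref{uniq_cor}\ref{uniq_cor.a} finishes the proof. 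Making the component-level permutation analysis rigorous is where I expect the principal difficulty to lie.
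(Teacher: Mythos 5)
Your proposal is correct and takes essentially the same route as the paper: part~(a) is the paper's argument verbatim (Lemma~\ref{lemresp} for the pathwise recursion and the Ces\`aro means, \eqref{B-ind} for independence of the driving level from the past, \eqref{coc2} for stationarity), and part~(b) is the paper's three-step scheme of ergodic decomposition, deterministic Ces\`aro means for almost every component, $\Taop$-invariance of almost every component, and then uniqueness. The one step you flag as the principal difficulty is closed in the paper without any component-level permutation analysis, and it is worth noting why: by uniqueness of the ergodic decomposition (using Lemma~\hyperref[sta_presb]{\ref*{sta_pres}\ref*{sta_presb}}), the map $\mu\mapsto\mu\circ\Taop^{-1}$ preserves the decomposition measure $P$, hence
\begin{equation*}
\int\bar\rho\bigl(\mu,\mu\circ\Taop^{-1}\bigr)\,P(\dd\mu)
=\int\bar\rho\bigl(\mu\circ\Taop^{-1},\mu\circ\Taop^{-1}\circ\Taop^{-1}\bigr)\,P(\dd\mu);
\end{equation*}
since Proposition~\ref{rho_prop} makes the second integrand pointwise no larger than the first, the integrands agree $P$-a.e., and the strict-contraction clause (Ces\`aro means are preserved by $\Taop$, so they match) then forces $\mu=\mu\circ\Taop^{-1}$ for $P$-a.e.\ $\mu$. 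In particular no iteration or cycle-ruling argument is needed, which matters because the decomposition need not be atomic, so ruling out nontrivial permutations is not quite the right target; with this substitution your closing contradiction (two distinct invariant ergodic components with equal means would violate the strict inequality of Proposition~\ref{rho_prop}, using finiteness of $\bar\rho$ under the integrability \eqref{log_ass}) is exactly the uniqueness statement Theorem~\ref{uniq_thm}\ref{uniq.a}, which the paper simply invokes in its final step.
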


\begin{proof}[Proof of Theorem~\ref{B_thm9}]
Part~\ref{B_thm9a}. 
By Lemma~\hyperref[lemrespa]{\ref*{lemresp}\ref*{lemrespa}}, Ces\`aro averages are deterministic and constant in $t$:
 \eeq{ \label{8cfk4}
\P\Big(\ces\big({\brvI}^{(\chdir\sig)_{\parng{1}{N}}}(t)\big) =\nabla\gpp(\xi_i\sigg_i)\cdot\evec_1 \text{ for all $i\in\lzb1,N\rzb$, $t\in\Z$}\Big) = 1.
 }
Thanks to \eqref{inmono1}, these deterministic values all exceed $\kappa$: 
\eq{
\nabla\gpp(\xi_i\sigg_i)\cdot\evec_1>\E[\log\Yw_\zevec]=\kappa \quad \text{for $i\in\lzb1,N\rzb$}.
} 
It follows from the two previous displays that
${\brvI}^{(\chdir\sig)_{\parng{1}{N}}}(t)$ is almost surely a member of the space $\cI_{N\!,\tspb\kappa}$ from \eqref{in_ces_def}.
Lemma~\hyperref[lemrespb]{\ref*{lemresp}\ref*{lemrespb}} ensures $({\brvI}^{(\chdir\sig)_{\parng{1}{N}}}(t))_{t\in\Z}$ obeys the parallel process \eqref{g:cpl}, where \eqref{B-ind} supplies the independence of $\Yw(t)$ and ${\brvI}^{(\chdir\sig)_{\parng{1}{N}}}(t-1)$ that is assumed in \eqref{g:cpl}.
Finally, $({\brvI}^{(\chdir\sig)_{\parng{1}{N}}}(t))_{t\in\Z}$ is stationary in $t$ by the translation invariance of the Busemann process recorded in \eqref{coc2}.

We prove part~\ref{B_thm9b} in three steps.

 \smallskip

 \textit{Step 1.} We perform an ergodic decomposition.
 Let $\Pscr_e(\cI_{N\!,\tspb\kappa})$ denote the space of shift-ergodic probability measures on $\cI_{N\!,\tspb\kappa}$.
 Write $\mu_0$ for the distribution of ${\brvI}^{(\chdir\sig)_{\parng{1}{N}}}(0)$.  
 This is a shift-stationary measure because of translation invariance of the Busemann process.
 Therefore, by the ergodic decomposition theorem, there exists a probability measure $P$ on $\Pscr_e(\cI_{N\!,\tspb\kappa})$ such that $\mu_0 = \int_{\Pscr_e(\cI_{N\!,\tspb\kappa})}\mu\, P(\dd\mu)$.
 Specializing \eqref{8cfk4} to the case $t=0$, we have
 \eeq{ \label{7fvvc}
 \mu_0\{I^{\parng{1}{N}}\in\cI_{N\!,\tspb\kappa}:\, \ces(I^i)=\nabla\gpp(\xi_i\sigg_i)\cdot\evec_1 \text{ for $i\in\lzb1,N\rzb$}\} = 1.
 }
 Moreover, the integrability assumption \eqref{log_ass} holds with $\mu=\mu_0$ since Busemann functions are integrable (see the sentence containing \eqref{EB}).
Consequently, \eqref{log_ass} holds for $P$-almost every $\mu$, so we are allowed to write $\ces_i(\mu)$. 
Now \eqref{7fvvc} implies
 \eeq{ \label{cesb7}
 \cesm{\mu}{i} =\nabla\gpp(\xi_i\sigg_i)\cdot\evec_1 \quad \text{for $i\in\lzb1,N\rzb$.}
 }
 
 \smallskip
 
  \textit{Step 2.} We show that $P\{\mu:\, \mu\circ\Taop^{-1}=\mu\}=1$.
  For any Borel set $\cB\subset\cI_{N\!,\tspb\kappa}$,
  \eeq{ \label{nfeq}
   \int_{\Pscr_e(\cI_{N\!,\tspb\kappa})}\mu(\cB)\, P(\dd\mu) =
  \mu_0(\cB) 
  &\stackrefp{push_T}{=} \E\mu_0\big(\Taop_{\Yw}^{-1}(\cB)\big) \quad \text{by part \ref{B_thm9a}} \\
  &\stackrefp{push_T}{=}  \int_{\Pscr_e(\cI_{N\!,\tspb\kappa})}\E\mu\big(\Taop_{\Yw}^{-1}(\cB)\big)\, P(\dd\mu)  \\
  &\stackref{push_T}{=}  \int_{\Pscr_e(\cI_{N\!,\tspb\kappa})}[\mu\circ\Taop^{-1}](\cB)\, P(\dd\mu).
  }
  Recall from Lemma \hyperref[sta_presb]{\ref*{sta_pres}\ref*{sta_presb}} that $\mu\circ\Taop^{-1}$ is again a shift-ergodic measure on $\cI_{N\!,\tspb\kappa}$.
  Therefore, by uniqueness in the ergodic decomposition theorem, it follows from \eqref{nfeq} that for any bounded measurable function $f \colon \Pscr_e(\cI_{N\!,\tspb\kappa}) \to \R$,
  \eq{
   \int_{\Pscr_e(\cI_{N\!,\tspb\kappa})} f(\mu)\, P(\dd\mu)
   =  \int_{\Pscr_e(\cI_{N\!,\tspb\kappa})} f(\mu\circ\Taop^{-1})\, P(\dd\mu).
  }
  For instance, choose $f$ given by $f(\mu) = \bar\rho(\mu,\mu\circ\Taop)$, where $\bar\rho$ is the distance in \eqref{rho_bar}.
  This choice leads to
  \eq{
  \int_{\Pscr_e(\cI_{N\!,\tspb\kappa})} \bar\rho(\mu,\mu\circ\Taop^{-1})\, P(\dd\mu)
  =  \int_{\Pscr_e(\cI_{N\!,\tspb\kappa})} \bar\rho(\mu\circ\Taop^{-1},\mu\circ\Taop^{-1}\circ\Taop^{-1})\, P(\dd\mu).
  }
  By Proposition~\ref{rho_prop}, the integrand on the left-hand side pointwise dominates the integrand on the right-hand side.
  Hence $\bar\rho(\mu,\mu\circ\Taop^{-1}) = \bar\rho(\mu\circ\Taop^{-1},\mu\circ\Taop^{-1}\circ\Taop^{-1})$ for $P$-almost every $\mu$.
  Furthermore, since the parallel transformation preserves Ces\`aro limits (recall \eqref{T_pres}), it is always the case that $\cesm{\mu}{i} = \cesm{\mu\circ\Taop^{-1}}{i}$.
  Consequently, the last statement in Proposition~\ref{rho_prop} forces $\mu = \mu\circ\Taop^{-1}$ for $P$-almost every $\mu$.
 
  \smallskip
 
  \textit{Step 3.} We conclude that $\mu_0$ is shift-ergodic.
  Indeed, Step 2 says that $P$ places all its mass on shift-ergodic invariant measures satisfying \eqref{cesb7}.
  Theorem \hyperref[uniq.a]{\ref*{uniq_thm}\ref*{uniq.a}} says there is only one such measure, so it must be $\mu_0$.
 \end{proof}  
 
  \begin{proof}[Proof of Theorem~\ref{thm:78-63}] 
 There are four inequalities in \eqref{eq:78-63}.
 The fourth follows from the first by the recovery property \eqref{B_reco}, and the second and third inequalities already appear in \eqref{B_Mono}.
 So we just prove the first inequality in \eqref{eq:78-63}.
 
For $\zeta\prec\eta$ not belonging to the same linear segment of $\gpp$, we have $\nabla\gpp(\zeta+)\neq\nabla\gpp(\eta-)$. 
By \eqref{inmono}, this means $\nabla\gpp(\zeta+)\cdot\evec_1 > \nabla\gpp(\eta-)\cdot\evec_1$.
The recursion \eqref{Bmc} with $N=2$ says
\eq{ 
(I^{\zeta+}(t+1),I^{\eta-}(t+1)) 
&\stackrefp{Taop_def}{=} \Taop_{W(t+1)}(I^{\zeta+}(t),I^{\eta-}(t)) \\
&\stackref{Taop_def}{=} \big(D(W(t+1),I^{\zeta+}(t)),D(W(t+1),I^{\eta-}(t))\big).
}
By monotonicity \eqref{B_mono1}, we already know $I_k^{\zeta+}(t) \geq I_k^{\eta-}(t)$ for every $(k,t)\in\Z^2$.
Furthermore, for any given $t$, it cannot be the case that equality holds for every $k$, since 
\eq{
&\lim_{n\to\infty}\frac{1}{n}\sum_{k=-n+1}^0 \log I_k^{\zeta+}(t) 
\stackref{coc1}{=} \lim_{n\to\infty}\frac{1}{n} \Bus^{\zeta+}_{(-n,t),(0,t)}
\stackref{bfanfx}{=} \nabla\gpp(\zeta+)\cdot\evec_1 \\
&\qquad\qquad> \nabla\gpp(\eta-)\cdot\evec_1
\stackref{bfanfx}{=}  \lim_{n\to\infty}\frac{1}{n}\Bus^{\eta-}_{(-n,t),(0,t)}
\stackref{coc1}{=} \lim_{n\to\infty}\frac{1}{n}\sum_{k=-n+1}^0 \log I_k^{\eta-}(t). 
}
More specifically, for any positive integer $n$, there is $k_0\leq -n$ such that $I_{k_0}^{\zeta+}(t)>I_{k_0}^{\eta-}(t)$.
It now follows from Lemma \hyperref[mon_lemb]{\ref*{mon_lem}\ref*{mon_lemb}} that $I_k^{\zeta+}(t+1) > I_k^{\eta-}(t+1)$ for all $k\geq k_0$, in particular for $k\geq-n$.
Letting $n\to\infty$, we conclude that $I^{\zeta+}(t+1) > I^{\eta-}(t+1)$.
As $t$ is arbitrary, we have argued that $\Bus^{\zeta+}_{x-\evec_1,x} > \Bus^{\eta-}_{x-\evec_1,x}$ for all $x\in\Z^2$.
%
\end{proof}

\subsection{Discontinuities in the direction variable} 
 \label{sec:pf_disc}

This section proves Theorems~\ref{thm:51-32} and \ref{thm:allxy}. 
Given $x\in\Z^2$, consider the nearest-neighbor Busemann functions $\xi\mapsto\Bus^{\xi\pm}_{x-\evec_r,x}$. 
By monotonicity \eqref{B_mono}, discontinuity at the direction $\xi$ can only occur in one way:
 \be\label{51-25} \begin{aligned}   
 \Bus^{\xi-}_{x-\evec_1, x}\ne \Bus^{\xi+}_{x-\evec_1, x}
 \ &\Longleftrightarrow \ 
 \Bus^{\xi-}_{x-\evec_1, x}> \Bus^{\xi+}_{x-\evec_1, x}  \quad \text{and} \\
    \Bus^{\xi-}_{x-\evec_2, x}\ne \Bus^{\xi+}_{x-\evec_2, x}
 \ &\Longleftrightarrow \ 
 \Bus^{\xi-}_{x-\evec_2, x}< \Bus^{\xi+}_{x-\evec_2, x}.  \end{aligned}\ee
 By recovery \eqref{B_reco}, the two equivalences in \eqref{51-25} happen together or not at all.
   Call $x$ a {\it $\xi$-discrepancy point}  if the statements in \eqref{51-25} hold.    
 Denote the set of $\xi$-discrepancy points by
 \eq{ 
 \bD^\xi=\{x\in\Z^2:\,    \Bus^{\xi-}_{x-\evec_1,x}\ne \Bus^{\xi+}_{x-\evec_1,x} \}.
 }
  By observations just made, the definition is the same if $\evec_1$ is replaced with $\evec_2$.
  Theorem~\hyperref[thm:51-32a]{\ref*{thm:51-32}\ref*{thm:51-32a}} will be obtained from the combination of the next two propositions, which separately provide northeast and southwest propagation of discrepancy points.
  Recall that $y>x$ means $y\cdot\evec_1>x\cdot\evec_1$ and $y\cdot\evec_2>x\cdot\evec_2$.
  
  \begin{proposition} \label{nepg}
  The following holds almost surely:
  for all $\xi\in\,]\evec_2,\evec_1[\,$, if $x\in\bD^\xi$ and $y>x$, then $y\in\bD^\xi$.
  \end{proposition}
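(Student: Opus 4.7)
My plan is to work on the full-probability event $\Omega_0$ of Theorem~\ref{buse_full}, where additivity \eqref{coc1} and recovery \eqref{B_reco} hold simultaneously for all $\xi$ and both signs $\sigg\in\{-,+\}$. Any $y>x$ in $\Z^2$ can be reached from $x$ by a finite chain of unit $\evec_1$- and $\evec_2$-steps, so by induction it suffices to establish the two base cases $y=x+\evec_1$ and $y=x+\evec_2$, argued uniformly in $\xi\in\,]\evec_2,\evec_1[\,$.

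The key tools would be the pair of recursions \eqref{zb72dw} expressing $I_k^{\xi\pm}(t)$ and $J_k^{\xi\pm}(t)$ in terms of their previous-column and previous-row values (themselves consequences on $\Omega_0$ of \eqref{coc1} and \eqref{B_reco}), combined with the global coordinatewise monotonicities $I^{\xi-}\ge I^{\xi+}$ and $J^{\xi-}\le J^{\xi+}$ furnished by \eqref{B_mono}. Writing $x=(k,t)$, the hypothesis $x\in\bD^\xi$ is equivalent via \eqref{51-25} and recovery to the pair of strict inequalities $I_k^{\xi-}(t)>I_k^{\xi+}(t)$ and $J_k^{\xi-}(t)<J_k^{\xi+}(t)$.

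For the horizontal step $x\to x+\evec_1$, I would apply the $I$-recursion from \eqref{zb72dw} at the vertex $(k+1,t)$. The numerator of the ratio on the right-hand side, $I_{k+1}^{\xi\pm}(t-1)$, satisfies the weak monotone inequality $I_{k+1}^{\xi-}(t-1)\ge I_{k+1}^{\xi+}(t-1)>0$, while the denominator $J_k^{\xi\pm}(t)$ obeys the \emph{strict} inequality $J_k^{\xi-}(t)<J_k^{\xi+}(t)$ inherited from the discrepancy at $x$. A weak numerator inequality combined with a strict denominator inequality forces strict inequality of the ratios, and multiplication by the common positive factor $\Yw_{(k+1,t)}$ preserves it, so $I_{k+1}^{\xi-}(t)>I_{k+1}^{\xi+}(t)$ and hence $x+\evec_1\in\bD^\xi$. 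The vertical step $x\to x+\evec_2$ is the mirror image: apply the $I$-recursion at $(k,t+1)$, where now strictness enters through the numerator $I_k^{\xi-}(t)>I_k^{\xi+}(t)$ supplied by the hypothesis, while the denominator $J_{k-1}^{\xi-}(t+1)\le J_{k-1}^{\xi+}(t+1)$ is only weakly monotone.

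The whole argument is deterministic on $\Omega_0$ and uniform in $\xi$, so the almost sure quantifier over $\xi$ is automatic. I do not anticipate a real obstacle: the only delicate point is the bookkeeping of strict versus weak inequalities, since losing strictness at either substep would break the induction.
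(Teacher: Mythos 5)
Your proof is correct. The only things to verify are that the recursions \eqref{zb72dw} hold simultaneously for all $\xi$ and both signs on the event $\Omega_0$ (they do, being direct consequences of the sure cocycle property \eqref{coc1} and of recovery \eqref{B_reco}), that the hypothesis $x\in\bD^\xi$ yields both strict inequalities $I_k^{\xi-}(t)>I_k^{\xi+}(t)$ and $J_k^{\xi-}(t)<J_k^{\xi+}(t)$ (this is exactly the remark after \eqref{51-25}), and that the ratio comparisons preserve strictness when one of numerator/denominator is only weakly ordered; all three points are handled correctly, and positivity of $I,J,W$ rules out any degeneracy.

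Your route differs from the paper's in packaging rather than substance. The paper writes the evolution of the pair $(I^{\xi-}(\cdot),I^{\xi+}(\cdot))$ as the $N=2$ parallel process \eqref{Bmc}, i.e.\ $I^{\xi\pm}(t+1)=\Dop(W(t+1),I^{\xi\pm}(t))$, and invokes the strict monotonicity of the update map, Lemma~\hyperref[mon_lemb]{\ref*{mon_lem}\ref*{mon_lemb}}: a strict gap at one coordinate $k_0$ at level $t$ forces a strict gap at every $k\ge k_0$ at level $t+1$, after which one inducts on $t$. You instead propagate the discrepancy one edge at a time, using the pointwise recursions \eqref{zb72dw} to push strictness horizontally (through the strict $J$-gap at $x$) and vertically (through the strict $I$-gap at $x$), and then chain unit steps to reach any $y>x$. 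The underlying mechanism is identical — Lemma~\ref{mon_lem}\ref{mon_lemb} is itself proved from \eqref{m801a}, which is the same formula as the second recursion in \eqref{zb72dw}, together with monotonicity of $J=\Sop(W,I)$ in $I$ — so your argument is a more local, self-contained unwinding of that lemma that avoids the sequence-space machinery; the paper's formulation buys economy by reusing a lemma already needed elsewhere (e.g.\ in the proof of Theorem~\ref{thm:78-63}) and by propagating to an entire half-line in one step. Your version also yields same-level propagation to $x+\evec_1$ directly, which the paper obtains only implicitly; either way the conclusion for all $y>x$ follows.
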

  
  \begin{proof}
  Recall the notation $I_k^{\xi\sig}(t) = e^{\Bus^{\xi\sig}_{(k-1,t),(k,t)}}$ and $W(t) = (W_{(k,t)})_{k\in\Z}$.
    Write $x = (k_0,t)$ so that the assumption $x\in\bD^\xi$ means $I_{k_0}^{\xi-}(t) > I_{k_0}^{\xi+}(t)$.
  As observed above, monotonicity \eqref{B_mono1} implies $I^{\xi-}(t)\geq I^{\xi+}(t)$.
The recursion \eqref{Bmc} with $N=2$ says that
\eq{ 
(I^{\xi-}(t+1),I^{\xi+}(t+1)) 
&\stackrefp{Taop_def}{=} \Taop_{W(t+1)}(I^{\xi-}(t),I^{\xi+}(t)) \\
&\stackref{Taop_def}{=} \big(D(W(t+1),I^{\xi-}(t)),D(W(t+1),I^{\xi+}(t))\big).
}
Therefore, Lemma \hyperref[mon_lemb]{\ref*{mon_lem}\ref*{mon_lemb}} shows that $I_k^{\xi-}(t+1) > I_k^{\xi+}(t+1)$ for all $k\geq k_0$.
That is, every $y = (k,t+1)$ with $k\geq k_0$ belongs to $\bD^\xi$.
Inducting on $t$ extends this to all $y>x$. 
  \end{proof}
  

For the second proposition, we must restrict to $\Udiff$, the subset of $]\evec_2,\evec_1[\,$ at which the shape function $\gpp$ is differentiable.

\begin{proposition} \label{lm:51-32}   
The following holds almost surely:
 for all $\xi\in\Udiff$, if $x\in\bD^\xi$, then there exists $z<x$ such that $z\in\bD^\xi$.
 \end{proposition}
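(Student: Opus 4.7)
The plan is to assume for contradiction that $x=(k_0,t)\in\bD^\xi$ for some $\xi\in\Udiff$ while no $z<x$ lies in $\bD^\xi$, and to deduce that $\bD^\xi$ must in fact be the right half-plane $\{k\ge k_0\}\times\Z$, which clashes with the differentiability hypothesis $\xi\in\Udiff$. I would proceed in three steps: propagate the discrepancy backward along the column $\{k_0\}\times\Z$, extend it horizontally using Proposition~\ref{nepg}, and then invoke differentiability to close out.

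For the first step I would show, by backward induction on $t'$, that $(k_0,t')\in\bD^\xi$ for every $t'\le t$. The inductive step relies on the recursion in Lemma~\hyperref[lemrespb]{\ref*{lemresp}\ref*{lemrespb}}, which writes $I_{k_0}^\pm(t')=\Yw_{(k_0,t')}\bigl(1+I_{k_0}^\pm(t'-1)/J_{k_0-1}^\pm(t')\bigr)$ and exhibits $J_{k_0-1}^\pm(t')$ as a deterministic function of $\Yw(t')$ together with $I_j^\pm(t'-1)$ for $j\le k_0-1$. The standing hypothesis $\bD^\xi\cap\{z:z<x\}=\emptyset$ forces $I_j^-(t'-1)=I_j^+(t'-1)$ whenever $j<k_0$ and $t'-1<t$, whence $J_{k_0-1}^-(t')=J_{k_0-1}^+(t')$. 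Combined with the inductive hypothesis $I_{k_0}^-(t')>I_{k_0}^+(t')$, the recursion immediately forces $I_{k_0}^-(t'-1)>I_{k_0}^+(t'-1)$. Proposition~\ref{nepg} applied at each $(k_0,t')$ then extends the discrepancy to all of $\{k\ge k_0\}\times\Z$, while a forward-in-$t'$ twin of the backward induction shows no $(j,t')$ with $j<k_0$ lies in $\bD^\xi$. Hence $\bD^\xi=\{k\ge k_0\}\times\Z$, and in particular $\log I_{k_0}^-(t')-\log I_{k_0}^+(t')>0$ at every $t'\in\Z$.

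For the final step I would exploit $\xi\in\Udiff$, that is, $\nabla\gpp(\xi-)\cdot\evec_1=\nabla\gpp(\xi+)\cdot\evec_1$. For a deterministic $\xi\in\Udiff$ the conclusion is immediate from Theorem~\hyperref[uniq.a]{\ref*{uniq_thm}\ref*{uniq.a}} applied to the two-component parallel Markov chain launched from the direction pair $(\xi-,\xi+)$: equal marginal means force the unique shift-ergodic invariant measure onto the diagonal, so $I^{\xi-}(t')=I^{\xi+}(t')$ almost surely and the half-plane produced by the previous paragraph has probability zero. To handle a random $\xi\in\Udiff$ I would pick a countable dense $D\subset\Udiff$, fix via Remark~\ref{pm_rmk} the full-probability event on which $\Bus^{\zeta-}=\Bus^{\zeta+}$ simultaneously for every $\zeta\in D$, approximate the hypothetical bad $\xi$ by $\zeta_n\nearrow\xi$ and $\eta_n\searrow\xi$ in $D$, and use the monotone limits \eqref{B_lim1} together with the simultaneous shape theorem (Theorem~\ref{all_lln}) and continuity of $\nabla\gpp$ at $\xi$ to force the column-wise vertical telescopes $\Bus^{\xi+}_{(k_0,t-T),(k_0,t)}-\Bus^{\xi-}_{(k_0,t-T),(k_0,t)}$ to be $o(T)$; ergodicity of the vertical weight shift then rules out the pattern of $T$ strictly positive summands required by the previous paragraph.

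The hard part will be this last piece. For a deterministic $\xi\in\Udiff$ the uniqueness-of-invariant-measure argument gives $\bD^\xi=\emptyset$ outright, but the proposition must accommodate a random $\xi\in\Udiff\cap\aUset$; this requires coordinating uncountably many direction-dependent null events using the monotonicity of the Busemann process in $\xi$, the density of $D$, and differentiability of $\gpp$ at the approximated direction.
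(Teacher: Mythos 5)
Your opening moves are sound: the backward induction down the column $\{k_0\}\times\Z_{\le t}$ using the recursion $I_{k_0}^{\pm}(t')=\Yw_{(k_0,t')}\bigl(1+I_{k_0}^{\pm}(t'-1)/J_{k_0-1}^{\pm}(t')\bigr)$, the observation that the no-discrepancy hypothesis to the southwest forces $J_{k_0-1}^{-}(t')=J_{k_0-1}^{+}(t')$, and the propagation via Proposition~\ref{nepg} are all correct (this is a legitimate alternative to the paper's use of Lemma~\ref{lm:51-31}, which instead propagates along the horizontal ray $x-k\evec_1$). The problem is the final step, and it is a genuine gap, not a technicality. Knowing that the column lies in $\bD^\xi$ gives you $T$ \emph{strictly positive} vertical jumps $S^{\xi,\evec_2}_{(k_0,s)}$, while Theorem~\ref{all_lln} at $\xi\in\Udiff$ makes the telescoped difference $\Bus^{\xi-}_{(k_0,t-T),(k_0,t)}-\Bus^{\xi+}_{(k_0,t-T),(k_0,t)}$ equal to $o(T)$. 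These two facts are perfectly compatible: a sum of $T$ strictly positive terms can easily be $o(T)$ if the terms shrink, so there is no contradiction yet. The appeal to ``ergodicity of the vertical weight shift'' cannot close this, because the summands depend on the random direction $\xi$ (chosen as a function of $\w$), so they are not a stationary ergodic sequence to which you may apply an ergodic theorem; likewise the invariant-measure uniqueness argument (Theorem~\ref{uniq_thm}\ref{uniq.a}) and the countable-dense-$D$ approximation only control \emph{fixed, deterministic} directions, i.e.\ they reprove Remark~\ref{pm_rmk}, whereas the whole content of the proposition concerns the random exceptional directions $\xi\in\Udiff\cap\aUset$ (in the inverse-gamma case these are dense and every direction is a differentiability direction, so no argument concluding ``the jump at $\xi\in\Udiff$ vanishes'' can be right).

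What is missing is a quantitative, $\xi$-uniform lower bound on a positive density of the jump sizes along the ray. The paper manufactures exactly this: Lemma~\ref{lm:51-31}\ref{lm:51-31b} shows that under the contradiction hypothesis the total jump is transmitted undiminished along the horizontal ray, so the vertical jumps there are bounded below by a fixed $\delta>0$; the deterministic recovery comparison Lemma~\ref{rjco} converts these into horizontal jumps of size $\ge \delta e^{-(2L+\log L)}$ at every site where weights and Busemann increments are controlled; and Lemma~\ref{g85n} (proved for all $\xi$ in a compact subinterval at once, using only ergodicity of fixed objects plus the simultaneous shape theorem) shows such sites have positive density. Then the horizontal telescope grows linearly, contradicting $\nabla\gpp(\xi-)\cdot\evec_1=\nabla\gpp(\xi+)\cdot\evec_1$. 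Your column picture could in principle be completed the same way---your own identities give $S^{\xi,\evec_1}_{(k_0,s-1)}=S^{\xi}_{(k_0,s)}\ge S^{\xi}_x>0$, i.e.\ the \emph{horizontal} jumps along the column are bounded below, and a column analogue of Lemmas~\ref{rjco} and~\ref{g85n} would transfer this to the vertical jumps at a positive density of levels and contradict the $o(T)$ vertical telescope---but as written your proposal contains none of this machinery, and without it the argument does not reach a contradiction.
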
 
 
The proof 
is quite technical, so we postpone it until after proving Theorems~\ref{thm:51-32} and \ref{thm:allxy}.

 \begin{proof}[Proof of Theorem~\ref{thm:51-32}]  
From Propositions~\ref{nepg} and~\ref{lm:51-32}, the following statement holds almost surely:
for all $\xi\in\Udiff$, the set $\bD^\xi$ is either empty or the entire lattice $\Z^2$.
If we can also show that for all $\xi\notin\Udiff$, the set $\bD^\xi$ is the entire lattice, then both parts of the theorem will have been verified.
So the remainder the proof is to establish this second statement.

There are at most countably many nondifferentiability points, so it suffices to show that for a given $\xi\in\,]\evec_2,\evec_1[\,\setminus\Udiff$,  the set $\bD^\xi$ almost surely equals the entire lattice.  
To that end, note that homogeneity \eqref{homog} implies $\xi\cdot\nabla\gpp(\xi\pm) = \gpp(\xi)$ (see \cite[Lem.~4.6]{janjigian_rassoulagha20a_arxiv}).
In particular $\xi\cdot(\nabla\gpp(\xi-)-\nabla\gpp(\xi+))=0$.
But $\nabla\gpp(\xi-)\neq\nabla\gpp(\xi+)$ since $\xi\notin\Udiff$, so the latter identity must be a consequence of cancelation between a positive term and negative term (see Remark~\ref{dmnr}):
\eq{
\nabla\gpp(\xi-)\neq\nabla\gpp(\xi+) \quad \iff \quad
\nabla\gpp(\xi-)\cdot\evec_1 > \nabla\gpp(\xi+)\cdot\evec_1 \text{ and }
\nabla\gpp(\xi-)\cdot\evec_2 < \nabla\gpp(\xi+)\cdot\evec_2.
}
For an inner product with any direction other than $\xi$, these positive and negative terms cannot fully cancel.
For instance,
\eeq{ \label{nycx}
\nabla\gpp(\xi-)\neq\nabla\gpp(\xi+),\ \zeta \prec \xi \quad \implies \quad
\zeta\cdot(\nabla\gpp(\xi-)-\nabla\gpp(\xi+)) < 0.
}
Now fix some $\zeta\in\,]\evec_2, \xi[\,$ and consider any down-left nearest-neighbor path $(x_n)_{n\leq0}$ such that $x_0=\zevec$ and $x_n/n\to\zeta$ as $n\to-\infty$.
The latter condition implies $\lim_{n\to-\infty}x_n\cdot\evec_1=\lim_{n\to-\infty}x_n\cdot\evec_2=-\infty$, so
\eeq{ \label{hg63}
\text{for any $y\in\Z^2$, there is $n_0$ such that $x_n < y$ for all $n\leq n_0$}.
}
By the cocycle property \eqref{coc1} and the Busemann shape theorem \eqref{bfanfx}, 
\eq{   \lim_{n\to-\infty}\frac{1}{|n|}\sum_{k=n+1}^{0} \Bus^{\xi\pm}_{x_{k-1},x_{k}} =
\lim_{n\to-\infty}\frac{1}{|n|}\Bus^{\xi\pm}_{x_n,\zevec} = \nabla\gpp(\xi\pm)\cdot\zeta.
}
The $\pm$ versions of the right-hand side are distinct because of \eqref{nycx}.
Carrying this distinction over to left-hand side implies
\eeq{ \label{bn78}
\varlimsup_{k\to-\infty}|\Bus^{\xi-}_{x_{k-1},x_k} - \Bus^{\xi+}_{x_{k-1},x_{k}}| > 0.
}
By construction $x_{n-1}\in\{x_n-\evec_1,x_n-\evec_2\}$, so \eqref{bn78} shows there are infinitely many $n$ such that $x_n\in\bD^\xi$.
Thanks to \eqref{hg63} and Proposition~\ref{nepg}, this implies $\bD^\xi$ is all of $\Z^2$.
\end{proof}

\begin{proof}[Proof of Theorem~\ref{thm:allxy}]
Consider any $(a,b)=x\neq y=(k,t)$ in $\Z^2$.
Without loss of generality, we assume $a<k$.
Denote $\aUset_{x,y}=\{\xi\in\,]\evec_2,\evec_1[\,:\,    \Bus^{\xi-}_{x,y}(\w)\ne  \Bus^{\xi+}_{x,y}(\w)\bigr\}$.
We wish to show $\aUset_{x,y}$ is equal to the set $\aUset$ of discontinuity points of nearest-neighbor Busemann functions, defined in \eqref{aUsetdef}.
In general $\aUset_{x,y}\subset\aUset$ since the cocycle property \eqref{coc1} implies $\Bus_{x,y}^{\xi\sig}$ is a sum of nearest-neighbor Busemann functions.
So we just need to show $\aUset_{x,y}\supset\aUset$.

\smallskip

\textit{Case 1.} If $b\geq t$, use the cocycle property \eqref{coc1} to decompose $\Bus^{\xi\sig}_{x,y}$ along a down-right path:
\eq{ 
\Bus^{\xi\sig}_{x,y} 
= \sum_{j=t+1}^{b}\Bus^{\xi\sig}_{(a,j),(a,j-1)} + \sum_{i=a+1}^{k}\Bus^{\xi\sig}_{(i-1,t),(i,t)},
}
where the first sum is empty if $b=t$.
%
If $\xi\in\aUset$, then Theorem~\hyperref[thm:51-32a]{\ref*{thm:51-32}\ref*{thm:51-32a}} together with monotonicity \eqref{B_mono} implies $\Bus^{\xi-}_{(a,j),(a,j-1)}>\Bus^{\xi+}_{(a,j),(a,j-1)}$ for all $j$, and $\Bus^{\xi-}_{(i-1,t),(i,t)}>\Bus^{\xi+}_{(i-1,t),(i,t)}$ for all $i$.
Hence $\Bus^{\xi-}_{x,y}>\Bus^{\xi+}_{x,y}$ and so $\xi\in\aUset_{x,y}$.

\smallskip

\textit{Case 2.} If $b<t$, then $x<y$. 
Suppose $\xi\notin\aUset_{x,y}$, meaning
\eeq{ \label{bjeg30}
0 = \Bus_{x,y}^{\xi+}-\Bus_{x,y}^{\xi-}
\stackref{coc1}{=} \big(\Bus^{\xi+}_{x,y-\evec_2}-\Bus^{\xi-}_{x,y-\evec_2}\big)
+ \big(\Bus^{\xi+}_{y-\evec_2,y}-\Bus^{\xi-}_{y-\evec_2,y}\big).
}
In the notation of \eqref{IJnot}, the equality \eqref{bjeg30} reads
\eq{
\Bus_{x,y-\evec_2}^{\xi-} - \Bus_{x,y-\evec_2}^{\xi+}
= \log J_k^{\xi+}(t)-\log J_k^{\xi-}(t).
}
Applying \eqref{zb72dw} twice yields
\eq{
&\log J_k^{\xi+}(t)-\log J_k^{\xi-}(t)
= \log\Big(1+\frac{J_{k-1}^{\xi+}(t)}{I_k^{\xi+}(t-1)}\Big)-\log\Big(1+\frac{J_{k-1}^{\xi-}(t)}{I_k^{\xi-}(t-1)}\Big) \\
&= \log\bigg(1+\frac{\Yw_{(k-1,t)}}{I_k^{\xi+}(t-1)}\Big(1+\frac{J_{k-2}^{\xi+}(t)}{I_{k-1}^{\xi+}(t-1)}\Big)\bigg)
-\log\bigg(1+\frac{\Yw_{(k-1,t)}}{I_k^{\xi-}(t-1)}\Big(1+\frac{J_{k-2}^{\xi-}(t)}{I_{k-1}^{\xi-}(t-1)}\Big)\bigg).
}
To condense notation, we define
\eq{
\ug^{\xi\sig} = \frac{1}{I_k^{\xi\sig}(t-1)}\Big(1+\frac{J_{k-2}^{\xi\sig}(t)}{I_{k-1}^{\xi\sig}(t-1)}\Big),
}
and then the two previous displays together show
\eeq{ \label{hvqe93}
\Bus_{x,y-\evec_2}^{\xi-} - \Bus_{x,y-\evec_2}^{\xi+} = \log(1+\ug^{\xi+}\Yw_{y-\evec_1}) - \log(1+\ug^{\xi-}\Yw_{y-\evec_1}).
}
Note for later that by Theorem~\hyperref[thm:51-32a]{\ref*{thm:51-32}\ref*{thm:51-32a}} together with monotonicity \eqref{B_mono},
\eeq{ \label{54njc0}
\xi\in\aUset \quad \implies \quad \ug^{\xi+} > \ug^{\xi-}.
}
After algebraic manipulations, \eqref{hvqe93} is equivalent to
\eeq{ \label{hvqe94}
\Yw_{y-\evec_1}\big(\ug^{\xi+}-\ug^{\xi-}(e^{\Bus_{x,y-\evec_2}^{\xi-} - \Bus_{x,y-\evec_2}^{\xi+}})\big)
= e^{\Bus_{x,y-\evec_2}^{\xi-} - \Bus_{x,y-\evec_2}^{\xi+}}-1.
}
In order to show $\xi\notin\aUset$, it now suffices to prove that with probability one, \eqref{hvqe94} fails for every $\xi\in\aUset$.

To this end, let $A = \{v\in\Z^2:\,\text{$v\le y-\evec_2$ or $v\le y-2\evec_1$}\}$, and let $\mathfrak{S}_{A}$ denote the $\sigma$-algebra generated by the random variables 
\eq{
\{\Yw_v, \Bus^{\xi\sig}_{u,v} :\,  \xi\in\,]\evec_2,\evec_1[\,, \sigg\in\{-,+\},  v\in A, u\le v\}.
}
Since $y-\evec_1\not\le v$ for every $v\in A$, the weight $\Yw_{y-\evec_1}$ is independent of $\mathfrak{S}_{A}$ by \eqref{B-ind}.
On the other hand, $\ug^{\xi\sig}$ and $\Bus_{x,y-\evec_2}^{\xi\sig}$  are $\mathfrak{S}_A$-measurable, for any fixed $\xi\in\,]\evec_2,\evec_1[\,$.
We next argue this same measurability when $\xi$ is replaced by a discontinuity direction.

For $\delta>0$ and $\zeta\in\,]\evec_2,\evec_1[$, let $\tau_1^\delta \succ\tau_2^\delta\succ\cdots\succ\tau^\delta_{N_{\delta,\zeta}}$ be the (possibly empty) list of directions in $[\zeta,\evec_1[$ at which $\xi\mapsto\Bus_{x-\evec_1,x}^{\xi+}$ has a jump discontinuity of absolute size $\ge\delta$; this list is finite by monotonicity \eqref{B_Mono1}.
We record these directions in a random sequence padded by a tail of $\evec_2$'s:
\eq{ 
\tau^{\delta,\zeta} = (\tau^{\delta,\zeta}_1,\tau^{\delta,\zeta}_2,\dots)
=(\tau_1^\delta,\tau_2^\delta,\dots,\tau_{N_{\delta,\zeta}}^\delta,\evec_2,\evec_2,\dots)\in[\evec_2,\evec_1[\,^{\{1,2,\dots\}}.
}
That is, $\tau^{\delta,\zeta}_i = \tau^\delta_i$ if $i\leq N_{\delta,\zeta}$, or $\tau^{\delta,\zeta}_i=\evec_2$ if $i>N_{\delta,\zeta}$.
Because $\xi\mapsto\Bus_{x-\evec_1,x}^{\xi+}$ is cadlag, this sequence is a measurable function of $(\Bus_{x-\evec_1,x}^{\xi+}:\,\xi\in\,]\evec_2,\evec_1[\,)$,  and so $\omega\mapsto\tau^{\delta,\zeta}(\omega)$ is $\mathfrak{S}_A$-measurable.\footnote{This statement remains true if $x$ is replaced by any $v\in A$, so the choice of edge $(x-\evec_1,x)$ is not special.} 
Furthermore, Theorem~\hyperref[thm:51-32a]{\ref*{thm:51-32}\ref*{thm:51-32a}} implies that any given $\xi\in\aUset$ will appear as a coordinate of $\tau^{\delta,\zeta}$ when $\delta$ is sufficiently small and $\zeta$ is sufficiently close to $\evec_2$.
More precisely, for any sequences $\delta_j\searrow0$ and $\zeta_j\searrow\evec_2$,
\eeq{ \label{nj5v1}
\aUset = \bigcup_{j=1}^\infty \big\{\tau_1^{\delta_j},\tau_2^{\delta_j},\cdots,\tau^{\delta_j}_{N_{\delta_j,\zeta_j}}\big\}.
}
Meanwhile, $\xi\mapsto\Bus_{x,y-\evec_2}^{\xi-}-\Bus_{x,y-\evec_2}^{\xi+}$ is the difference of two $\mathfrak{S}_A$-measurable random functions, the first caglad and the second cadlag.
Therefore, evaluating this difference at any $\mathfrak{S}_A$-measurable random value yields a $\mathfrak{S}_A$-measurable random variable. 
In particular, the random sequence
\eq{
\Delta^{\delta,\zeta}
&=(\Delta^{\delta,\zeta}_1,\Delta^{\delta,\zeta}_2,\dots) \\
&=\big(\Bus_{x,y-\evec_2}^{\tau_1^\delta-}-\Bus_{x,y-\evec_2}^{\tau_1^\delta+},\Bus_{x,y-\evec_2}^{\tau_2^\delta-}-\Bus_{x,y-\evec_2}^{\tau_2^\delta+},\dots,\Bus_{x,y-\evec_2}^{\tau^\delta_{N_{\delta,\zeta}}-}-\Bus_{x,y-\evec_2}^{\tau^\delta_{N_{\delta,\zeta}}+},0,0,\dots\big)\in\R_{\ge0}^{\{1,2,\dots\}}
}
is $\mathfrak{S}_A$-measurable.
By similar reasoning, the random sequences
\eq{
\ug^{\delta,\zeta,+}
&=(\ug^{\delta,\zeta,+}_1,\ug^{\delta,\zeta,+}_2,\dots)
=(\ug^{\tau_1^\delta+},\ug^{\tau_2^\delta+},\dots,\ug^{\tau_{N_{\delta,\zeta}}^\delta+},1,1,\dots)\in\R_{\ge0}^{\{1,2,\dots\}} \\
\text{and}\quad 
\ug^{\delta,\zeta,-}
&= (\ug^{\delta,\zeta,-}_1,\ug^{\delta,\zeta,-}_2,\dots)=(\ug^{\tau_1^\delta-},\ug^{\tau_2^\delta-},\dots,\ug^{\tau_{N_{\delta,\zeta}}^\delta-},0,0,\dots)\in\R_{\ge0}^{\{1,2,\dots\}}
}
are $\mathfrak{S}_A$-measurable.
Since $\tau_i^{\delta,\zeta}\in\aUset$ whenever $i\le N_{\delta,\zeta}$, \eqref{54njc0} implies
\eeq{ \label{bzm7m0}
\ug^{\delta,\zeta,+}_i-\ug^{\delta,\zeta,-}_i>0 \quad \text{for all $i\in\{1,2,\dots\}$}.
}
The equality \eqref{nj5v1} implies 
\eeq{ \label{nj5v2}
\{\text{\eqref{hvqe94} holds for some $\xi\in\aUset$}\}
\subset \bigcup_{j=1}^\infty\bigcup_{i=1}^\infty \Big\{\Yw_{y-\evec_1}\big(\ug^{\delta_j,\zeta_j,+}_i-\ug^{\delta_j,\zeta_j,-}_ie^{\Delta^{\delta_j,\zeta_j}_i}\big)
= e^{\Delta^{\delta_j,\zeta_j}_i} - 1\Big\}.
}

Now recall the assumption that $\Yw_{y-\evec_1}$ has a continuous distribution.
To capitalize on this assumption, we claim that for any given value of the triple $(V_i^{\delta_j,\zeta_j,+},V_i^{\delta_j,\zeta_j,-},\Delta_i^{\delta_j,\zeta_j})$, there is at most one strictly positive value of $\Yw_{y-\evec_1}$ that solves the equation
\eeq{ \label{6bg2bx}
\Yw_{y-\evec_1}\big(\ug^{\delta_j,\zeta_j,+}_i-\ug^{\delta_j,\zeta_j,-}_ie^{\Delta^{\delta_j,\zeta_j}_i}\big)
= e^{\Delta^{\delta_j,\zeta_j}_i} - 1.
}
Indeed, if $\Delta^{\delta_j,\zeta_j}_i\neq 0$, then the right-hand side is nonzero, and the claim is clear.
If $\Delta^{\delta_j,\zeta_j}_i= 0$, then \eqref{6bg2bx} fails: the right-hand side is 0 while the left-hand side is positive thanks to \eqref{bzm7m0}.
Since the triple $(V_i^{\delta_j,\zeta_j,+},V_i^{\delta_j,\zeta_j,+},\Delta_i^{\delta_j,\zeta_j})$ is $\mathfrak{S}_A$-measurable and thus independent of $\Yw_{y-\evec_1}$, it follows from our claim---and the continuous distribution assumption---that \eqref{6bg2bx} fails with probability one.
As this holds for every $i$ and $j$, the right-hand side of \eqref{nj5v2} has probability zero.
Hence the left-hand side does too, and the proof is complete. 
\end{proof}

 To prove Proposition~\ref{lm:51-32}, we will need some additional notation and three lemmas.
 Define the jumps at $x$ in direction $\xi$ as 
\eeq{ \label{ge71}
S^{\xi,\evec_1}_x = \Bus^{\xi-}_{x-\evec_1, x} -  \Bus^{\xi+}_{x-\evec_1, x} \quad \text{and} \quad
S^{\xi,\evec_2}_x = \Bus^{\xi+}_{x-\evec_2, x} -  \Bus^{\xi-}_{x-\evec_2, x}.
}
By \eqref{51-25}, these quantities are 
nonnegative.
Denote the total jump at $x$ in direction $\xi$ by
\be\label{S4950}   S^\xi_x = S^{\xi,\evec_1}_x + S^{\xi,\evec_2}_x.
    \ee 
By the discussion following \eqref{51-25}, membership  $x\in\bD^\xi$ is equivalent to $S_x^\xi > 0$.

The first two lemmas involve deterministic statements.
 
 \begin{lemma} \label{lm:51-31} 
 If $x\in\bD^\xi$, then the following statements hold.
 \begin{enumerate}[label=\textup{(\alph*)}]
\item \label{lm:51-31a}
At least one of  $x-\evec_ 1$ and $x-\evec_2$ belongs to $\bD^\xi$.
\item \label{lm:51-31b}
If $x-\evec_ 2\notin\bD^\xi$, then $S^{\xi,\evec_2}_{x-\evec_1} = S^\xi_x$.
Similarly, if $x-\evec_ 1\notin\bD^\xi$, then $S^{\xi,\evec_1}_{x-\evec_2} = S^\xi_x$.
\end{enumerate}
  \end{lemma}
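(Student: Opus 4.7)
The plan is to derive a single deterministic identity relating the total jump $S_x^\xi$ at $x$ to the component jumps at the two predecessor vertices $x - \evec_1$ and $x - \evec_2$, and then read off both (a) and (b) as immediate consequences. The key observation is that the cocycle property, applied around the unit square with vertices $x, x-\evec_1, x-\evec_2, x-\evec_1-\evec_2$, gives
\begin{equation*}
\Bus^{\xi\sig}_{x-\evec_1, x} - \Bus^{\xi\sig}_{x-\evec_2, x} = \Bus^{\xi\sig}_{x-\evec_1-\evec_2,\, x-\evec_2} - \Bus^{\xi\sig}_{x-\evec_1-\evec_2,\, x-\evec_1}
\end{equation*}
for each choice of sign $\sigg \in \{-,+\}$.

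First, I would write this identity down for $\sigg = -$ and for $\sigg = +$ and subtract. Using the definitions of $S^{\xi,\evec_1}_x$ and $S^{\xi,\evec_2}_x$ in \eqref{ge71}, and recognizing on the right-hand side that $\Bus^{\xi\sig}_{x-\evec_1-\evec_2,\, x-\evec_2}$ is the horizontal increment ending at $x - \evec_2$ (so its $(- \text{ minus } +)$ difference equals $S^{\xi,\evec_1}_{x-\evec_2}$) and that $\Bus^{\xi\sig}_{x-\evec_1-\evec_2,\, x-\evec_1}$ is the vertical increment ending at $x - \evec_1$ (so its $(+ \text{ minus } -)$ difference equals $S^{\xi,\evec_2}_{x-\evec_1}$), the result is the identity
\begin{equation*}
S^\xi_x \;=\; S^{\xi, \evec_1}_x + S^{\xi, \evec_2}_x \;=\; S^{\xi, \evec_2}_{x-\evec_1} + S^{\xi, \evec_1}_{x-\evec_2}.
\end{equation*}
Each of the four $S^{\xi, \evec_r}$ quantities is nonnegative by \eqref{51-25}.

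From this identity both claims follow immediately. For part (a), $x \in \bD^\xi$ means $S^\xi_x > 0$, so at least one of $S^{\xi, \evec_2}_{x-\evec_1}$ or $S^{\xi, \evec_1}_{x-\evec_2}$ is positive; in the first case $x - \evec_1 \in \bD^\xi$ (by the already-noted equivalence in \eqref{51-25}, which tells us $S^{\xi,\evec_1}_y > 0 \iff S^{\xi,\evec_2}_y > 0 \iff y \in \bD^\xi$), and in the second case $x - \evec_2 \in \bD^\xi$. For part (b), if $x - \evec_2 \notin \bD^\xi$ then $S^{\xi, \evec_1}_{x-\evec_2} = 0$, so the identity gives $S^\xi_x = S^{\xi, \evec_2}_{x-\evec_1}$; the symmetric statement follows by the same reasoning.

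There is essentially no obstacle here: the entire content is the square-cocycle identity, which is purely algebraic once one bookkeeps which Busemann increments correspond to horizontal versus vertical edges at $x - \evec_1$ and $x - \evec_2$. The only care needed is correctly tracking signs when converting $\Bus^{\xi-} - \Bus^{\xi+}$ into the $S^{\xi, \evec_r}$ quantities (positive for $\evec_1$, positive for $\evec_2$ but with the opposite convention). Everything else is a direct combination with the basic dichotomy from \eqref{51-25}.
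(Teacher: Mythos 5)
Your proof is correct and follows essentially the same route as the paper: both derive the square-cocycle identity $S^{\xi,\evec_2}_{x-\evec_1}+S^{\xi,\evec_1}_{x-\evec_2}=S^\xi_x$ from \eqref{coc1} and then read off parts (a) and (b) using nonnegativity and the dichotomy \eqref{51-25}. The sign bookkeeping in your derivation checks out, so there is nothing to add.
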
 
 
 \begin{proof}  
 Both parts of the lemma are immediate from the identity
   \be\label{51-31.1} 
   S^{\xi,\evec_2}_{x-\evec_1}+ S^{\xi,\evec_1}_{x-\evec_2} = S^\xi_x,
\ee  
which we will show is valid for all $x\in\Z^2$.
Start by applying the definitions \eqref{ge71} to the left-hand side:
\eq{
S^{\xi,\evec_2}_{x-\evec_1}+ S^{\xi,\evec_1}_{x-\evec_2}
&= \Bus^{\xi+}_{x-\evec_1-\evec_2, x-\evec_1} -  \Bus^{\xi-}_{x-\evec_1-\evec_2, x-\evec_1} \\
&\phantom{=} -\Bus^{\xi+}_{x-\evec_2-\evec_1, x-\evec_2}  + \Bus^{\xi-}_{x-\evec_2-\evec_1, x-\evec_2}.
}
Now add the terms vertically on the right-hand side, according to the cocycle rule \eqref{coc1}:
\eq{
S^{\xi,\evec_2}_{x-\evec_1}+ S^{\xi,\evec_1}_{x-\evec_2} = \Bus^{\xi+}_{x-\evec_2,x-\evec_1} + \Bus^{\xi-}_{x-\evec_1,x-\evec_2}.
}
Use \eqref{coc1} again to expand each term on the right-hand side:
\eq{
S^{\xi,\evec_2}_{x-\evec_1}+ S^{\xi,\evec_1}_{x-\evec_2}
&= \Bus^{\xi+}_{x-\evec_2,x} + \Bus^{\xi+}_{x,x-\evec_1} + \Bus^{\xi-}_{x-\evec_1,x} + \Bus^{\xi-}_{x,x-\evec_2} \\
&= \Bus^{\xi+}_{x-\evec_2,x} - \Bus^{\xi+}_{x-\evec_1,x} + \Bus^{\xi-}_{x-\evec_1,x} - \Bus^{\xi-}_{x-\evec_2,x}.
}
The right-hand side is exactly \eqref{S4950}, so we have proved \eqref{51-31.1}.
\end{proof} 

\begin{lemma} \label{rjco}
Almost surely the following implication is true for all $x\in\Z^2$, $\xi\in \,]\evec_2,\evec_1[\, $, and $r\in\{1,2\}$.
If $|\log \Yw_x| \leq L$, $|\Bus^{\xi\sigg}_{x-\evec_r,x}| \leq L$, and $\Bus^{\xi\sigg}_{x-\evec_r,x}-\log\Yw_x\geq 1/L$ for both signs $\sigg\in\{-,+\}$ and some $L\geq1$, then $S^{\xi,\evec_r}_x \geq e^{-(2L+\log L)}S^{\xi,\evec_{3-r}}_x$.
\end{lemma}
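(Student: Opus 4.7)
The plan is to reduce the lemma to a short algebraic consequence of the recovery property \eqref{B_reco} and the hypothesis $\Bus^{\xi\sigg}_{x-\evec_r,x} - \log\Yw_x \geq 1/L$.

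First I will subtract the two instances of recovery,
\[
e^{-\Bus^{\xi\sigg}_{x-\evec_1,x}} + e^{-\Bus^{\xi\sigg}_{x-\evec_2,x}} = \Yw_x^{-1} \qquad (\sigg\in\{-,+\}),
\]
and factor the resulting equation using the definitions \eqref{ge71} together with the monotonicity directions in \eqref{B_mono}. This produces the identity
\[
e^{-\Bus^{\xi+}_{x-\evec_1,x}}\bigl(1-e^{-S^{\xi,\evec_1}_x}\bigr) = e^{-\Bus^{\xi-}_{x-\evec_2,x}}\bigl(1-e^{-S^{\xi,\evec_2}_x}\bigr),
\]
coupling the two jumps. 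The analogous identity with $\evec_1,\evec_2$ (and correspondingly $-,+$) interchanged follows from the same derivation, so it suffices to prove the lemma for $r=1$.

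Setting $a=S^{\xi,\evec_1}_x$ and $b=S^{\xi,\evec_2}_x$, I will apply $1-e^{-a}\leq a$ on the left of the identity and $1-e^{-b}\geq b\,e^{-b}$ on the right, then use $e^{-b}\cdot e^{-\Bus^{\xi-}_{x-\evec_2,x}} = e^{-\Bus^{\xi+}_{x-\evec_2,x}}$ to reorganize into
\[
a \;\geq\; b\cdot \frac{e^{-\Bus^{\xi+}_{x-\evec_2,x}}}{e^{-\Bus^{\xi+}_{x-\evec_1,x}}} \;=\; b\cdot \frac{1-\pi^{\xi+}(x,x-\evec_1)}{\pi^{\xi+}(x,x-\evec_1)},
\]
where the last equality invokes recovery at sign $+$ as in \eqref{pi36}.

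The hypothesis $\Bus^{\xi+}_{x-\evec_1,x}-\log\Yw_x \geq 1/L$ is exactly $\pi^{\xi+}(x,x-\evec_1)\leq e^{-1/L}$. Since $t\mapsto (1-t)/t$ is decreasing on $(0,1]$,
\[
\frac{1-\pi^{\xi+}(x,x-\evec_1)}{\pi^{\xi+}(x,x-\evec_1)} \;\geq\; e^{1/L}-1 \;\geq\; \frac{1}{L} \;\geq\; e^{-(2L+\log L)}
\]
for $L\geq 1$, where the middle inequality is $e^x\geq 1+x$. This completes the argument.

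I do not anticipate any real obstacle, since the proof is a transparent algebraic consequence of recovery. The hypotheses $|\log\Yw_x|\leq L$ and $|\Bus^{\xi\sigg}_{x-\evec_r,x}|\leq L$ are not actually needed for this bound, which in fact yields the stronger conclusion $S^{\xi,\evec_r}_x \geq L^{-1}S^{\xi,\evec_{3-r}}_x$; the weaker form $e^{-(2L+\log L)}$ in the statement leaves ample slack to absorb losses in alternative groupings of the exponentials.
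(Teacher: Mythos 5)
Your proof is correct, but it takes a genuinely different route from the paper's. Both arguments start from the same consequence of recovery, namely $e^{-\Bus^{\xi-}_{x-\evec_2,x}} - e^{-\Bus^{\xi+}_{x-\evec_2,x}} = e^{-\Bus^{\xi+}_{x-\evec_1,x}} - e^{-\Bus^{\xi-}_{x-\evec_1,x}}$, but they exploit it differently. The paper first uses the integral representation $e^{-\Bus^{\xi\pm}_{x-\evec_2,x}}=\int_{\log\Yw_x}^{\Bus^{\xi\pm}_{x-\evec_1,x}}e^{-s}\,\dd s$ together with all three hypotheses to deduce $|\Bus^{\xi\pm}_{x-\evec_2,x}|\le L+\log L$, and then bounds the two sides of the displayed identity by mean-value-type estimates ($\le e^{L}S^{\xi,\evec_1}_x$ on one side, $\ge e^{-(L+\log L)}S^{\xi,\evec_2}_x$ on the other), which is exactly where the constant $e^{-(2L+\log L)}$ comes from. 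You instead factor the identity exactly as $e^{-\Bus^{\xi+}_{x-\evec_1,x}}(1-e^{-S^{\xi,\evec_1}_x})=e^{-\Bus^{\xi-}_{x-\evec_2,x}}(1-e^{-S^{\xi,\evec_2}_x})$, apply $1-e^{-a}\le a$ and $1-e^{-b}\ge be^{-b}$ (valid since both jumps are nonnegative by \eqref{B_mono} and \eqref{B_reco}), and identify the resulting ratio as $(1-\pi^{\xi+}(x,x-\evec_1))/\pi^{\xi+}(x,x-\evec_1)$ with $\pi^{\xi+}(x,x-\evec_1)=e^{\log\Yw_x-\Bus^{\xi+}_{x-\evec_1,x}}\le e^{-1/L}$. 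This uses only the gap hypothesis (for one sign), dispenses with the bounds $|\log\Yw_x|\le L$ and $|\Bus^{\xi\sigg}_{x-\evec_r,x}|\le L$, and yields the sharper conclusion $S^{\xi,\evec_r}_x\ge(e^{1/L}-1)S^{\xi,\evec_{3-r}}_x\ge L^{-1}S^{\xi,\evec_{3-r}}_x\ge e^{-(2L+\log L)}S^{\xi,\evec_{3-r}}_x$; your symmetry remark for $r=2$ (swapping $\evec_1,\evec_2$ and the signs $-,+$) is also accurate. Since the weaker constant suffices where the lemma is used (Proposition~\ref{lm:51-32}, via Lemma~\ref{g85n}, which supplies all three hypotheses anyway), neither approach has a downstream advantage, but yours is shorter and slightly stronger.
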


\begin{proof}
Assume for simplicity that $r=1$, since the $r=2$ case is analogous.
Consider any $x$ for which the hypotheses are true.
By the recovery property \eqref{B_reco}, we have
\eeq{ \label{ncy6}
e^{-\Bus^{\xi-}_{x-\evec_1,\tspb x}} + e^{-\Bus^{\xi-}_{x-\evec_2,\tspb x}}
= \Yw_x^{-1} = 
e^{-\Bus^{\xi+}_{x-\evec_1,\tspb x}} + e^{-\Bus^{\xi_+}_{x-\evec_2,\tspb x}}.
}
Solving for the $\evec_2$ terms results in
\eq{
e^{-\Bus^{\xi\pm}_{x-\evec_2,\tspb x}} &= \int_{\log \Yw_x}^{\Bus^{\xi\pm}_{x-\evec_1,\tspb x}} e^{-s}\ \dd s 
\geq (\Bus^{\xi\pm}_{x-\evec_1,x}-\log\Yw_x)e^{-L}
\geq \frac{1}{L}e^{-L} = e^{-L-\log L}.
}
Now take logarithms to see that $\Bus^{\xi\pm}_{x-\evec_2,x} \leq L+\log L$.
Thanks to \eqref{B_Mono2}, we also have $\Bus^{\xi\pm}_{x-\evec_2,x}  > \log \Yw_x \geq -L$, so $|\Bus^{\xi\pm}_{x-\evec_2,x}| \leq L+\log L$.

Next manipulate \eqref{ncy6} in a different way: put $\evec_1$ terms on the right-hand side and $\evec_2$ terms on the left-hand side:
\eeq{ \label{nx74}
e^{-\Bus^{\xi-}_{x-\evec_2,\tspb x}} - e^{-\Bus^{\xi+}_{x-\evec_2,\tspb x}} 
= e^{-\Bus^{\xi+}_{x-\evec_1,\tspb x}} -  e^{-\Bus^{\xi-}_{x-\evec_1,\tspb x}} .
}
By the hypothesis $|\Bus^{\xi\pm}_{x-\evec_1,x}| \leq L$,
\eq{
\text{R.H.S. of \eqref{nx74}} \leq e^{L}(\Bus^{\xi-}_{x-\evec_1,x}-\Bus^{\xi+}_{x-\evec_1,x}) = e^L S^{\xi,\evec_1}_x.
}
On the other hand, thanks to our earlier finding $|\Bus^{\xi\pm}_{x-\evec_2,x}| \leq L+\log L$,
\eq{
\text{L.H.S. of \eqref{nx74}} \geq e^{-(L+\log L)}(\Bus^{\xi+}_{x-\evec_2,x}-\Bus^{\xi-}_{x-\evec_2,x}) = e^{-(L+\log L)}S^{\xi,\evec_2}_x.
}
The combination of these two statements proves the claimed inequality.
\end{proof}

 The third and final lemma shows that the hypotheses of Lemma~\ref{rjco} are satisfied at a positive density of vertices.
 
  \begin{lemma} \label{g85n}
 Given $r\in\{1,2\}$ and $x\in\Z^2$, consider the straight-line path $(x_k)_{k\leq0}$ given by $x_k = x-k\evec_r$.
 There is a family of positive constants $(L^\xi:\, \xi\in\,]\evec_2,\evec_1[\,)$ such that the following holds almost surely: for every $\xi\in\,]\evec_2,\evec_1[\,$ and $\sigg\in\{-,+\}$,
 \eeq{ \label{gnb8}
 \varliminf_{n\to-\infty}\frac{1}{|n|}\sum_{k=n+1}^0 \one\Big\{
 |\Bus^{\xi\sig}_{x_{k-1},x_k}|\leq L^\xi, \
 |\log\Yw_{x_k}|\leq L^\xi, \
 \Bus^{\xi\sig}_{x_{k-1},x_k}-\log\Yw_{x_k}\geq\frac{1}{L^\xi} \Big\} 
 \geq \frac{1}{L^\xi}.
 }
 \end{lemma}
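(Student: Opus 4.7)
The plan is to apply Birkhoff's ergodic theorem to the shift-stationary process of Busemann increments and weights along the straight-line path, and then to handle uniformity in $\chdir$ through a countable dense set of directions combined with the monotonicity \eqref{B_mono}. For a fixed pair $(\chdir, \sigg)$ and $L \ge 1$, denote by $A_k^{\chdir\sig, L}$ the event whose indicator appears in \eqref{gnb8}. By covariance \eqref{coc2}, the triple $(\Bus^{\chdir-}_{x_{k-1}, x_k}, \Bus^{\chdir+}_{x_{k-1}, x_k}, \log \Yw_{x_k})$ is stationary in $k$ under the lattice translation along $\evec_r$. In the horizontal case $r = 1$, joint shift-ergodicity of this triple together with the nearby weights follows from Theorem~\ref{cpthm} applied with $N = 2$, combined with \eqref{B-ind}, which gives independence of the level-$t$ weights from Busemann increments at level $t-1$; the vertical case $r = 2$ follows either via the reflection symmetry of Remark~\ref{rmk:vB} or from the uniqueness of the invariant distribution of the parallel Markov chain \eqref{Bmc}, which forces the stationary chain to be ergodic under time-shifts. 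Birkhoff's theorem then yields $|n|^{-1} \sum_{k=n+1}^0 \one_{A_k^{\chdir\sig, L}} \to \P(A_0^{\chdir\sig, L})$ almost surely.

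I would next show that $\P(A_0^{\chdir\sig, L}) \to 1$ as $L \to \infty$: the integrability provided by \eqref{EB} and \eqref{ranenv} gives $\P(|\Bus^{\chdir\sig}_{x_{k-1}, x_k}| \le L,\, |\log \Yw_{x_k}| \le L) \to 1$, while the strict monotonicity \eqref{B_Mono} together with recovery \eqref{B_reco} yields $\Bus^{\chdir\sig}_{x_{k-1},x_k} > \log \Yw_{x_k}$ almost surely, so that $\P(\Bus^{\chdir\sig}_{x_{k-1},x_k} - \log \Yw_{x_k} \ge 1/L) \to 1$ by monotone convergence. Consequently, for sufficiently large $L$ the probability $\P(A_0^{\chdir\sig, L})$ exceeds $2/L$, so the Birkhoff limit is at least $1/L$.

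The main obstacle is that the null set produced by Birkhoff's theorem depends on $\chdir$, whereas \eqref{gnb8} must hold on a single almost-sure event that is uniform in the uncountable family of directions. To address this, fix a countable dense $\Xi \subset \,]\evec_2, \evec_1[\,$ and apply the preceding argument simultaneously to the countable family of enlarged events
\begin{equation*}
\tilde A_k^{\zeta, \eta, L} = \bigl\{\Bus^{\zeta+}_{x_{k-1}, x_k},\, \Bus^{\eta-}_{x_{k-1}, x_k} \in [-L, L],\ |\log \Yw_{x_k}| \le L,\ \Bus^{*}_{x_{k-1}, x_k} - \log \Yw_{x_k} \ge 1/L\bigr\}
\end{equation*}
indexed by $\zeta \prec \eta$ in $\Xi$ and $L \in \Z_{>0}$, where $\Bus^{*}$ stands for whichever of $\Bus^{\zeta+}$ or $\Bus^{\eta-}$ provides the weaker lower bound under \eqref{B_mono} (i.e., $\Bus^{\eta-}$ when $r = 1$ and $\Bus^{\zeta+}$ when $r = 2$, since horizontal Busemann increments are decreasing in $\chdir$ while vertical ones are increasing). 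This yields a single full-probability event $\Omega_*$ on which Birkhoff's conclusion holds for every such triple. For an arbitrary $\chdir$, choose $\zeta, \eta \in \Xi$ with $\zeta \prec \chdir \prec \eta$; by \eqref{B_mono}, the Busemann increment $\Bus^{\chdir\sig}_{x_{k-1}, x_k}$ is sandwiched between $\Bus^{\zeta+}_{x_{k-1}, x_k}$ and $\Bus^{\eta-}_{x_{k-1}, x_k}$ for both signs $\sigg$, so $\tilde A_k^{\zeta, \eta, L} \subset A_k^{\chdir\sig, L}$. Choosing $L^\chdir \in \Z_{>0}$ large enough that $\P(\tilde A_0^{\zeta, \eta, L^\chdir}) \ge 1/L^\chdir$, which is possible by the estimates of the previous paragraph applied to the joint distribution of $(\Bus^{\zeta+}, \Bus^{\eta-}, \Yw)$, completes the proof on $\Omega_*$.
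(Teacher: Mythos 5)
Your proposal is correct, but it takes a genuinely different route from the paper's. The paper fixes a compact interval $[\zeta,\eta]\subset\,]\evec_2,\evec_1[\,$, sets $\delta=\nabla\gpp(\eta+)\cdot\evec_1-\E[\log\Yw_x]>0$, and uses the all-directions Busemann shape theorem \eqref{bfanfx} to lower-bound the Ces\`aro average of $\Bus^{\xi\sig}_{x_{k-1},x_k}-\log\Yw_{x_k}$ by $\delta$ uniformly in $\xi\in[\zeta,\eta]$; it then runs a quantitative truncation argument (the $\Asf_n,\Bsf_{n,1},\Bsf_{n,2},\Bsf_{n,3}$ decomposition), invoking ergodic averages only for the single extreme direction $\zeta$ (via Theorem~\ref{B_thm9}) and for the i.i.d.\ weights separately, to convert the average lower bound into a positive density of good indices. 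You instead apply Birkhoff directly to the sandwich events $\tilde A_k^{\zeta,\eta,L}$ for countably many pairs $\zeta\prec\eta$ and levels $L$, transfer to arbitrary $\xi$ through monotonicity \eqref{B_mono}, and get the positive density from the a.s.\ strict inequality in \eqref{B_Mono} plus integrability, which makes $\P(\tilde A_0^{\zeta,\eta,L})\to1$. Your route is shorter and avoids both Theorem~\ref{all_lln} and the truncation bookkeeping, but it requires joint shift-ergodicity of the same-level triple $(\Bus^{\zeta+}_{x_{k-1},x_k},\Bus^{\eta-}_{x_{k-1},x_k},\log\Yw_{x_k})_k$, which is slightly more than Theorem~\ref{cpthm} literally asserts; your citation of \eqref{B-ind} points in the right direction, but to close it you should say explicitly that the ergodic product law of $(\Yw(t),\brvI^{\zeta+}(t-1),\brvI^{\eta-}(t-1))$ (ergodic because an i.i.d.\ sequence times an ergodic one is ergodic, the factors being independent by \eqref{B-ind}) is pushed forward to the level-$t$ triple through the shift-equivariant map $(\Yw,I^1,I^2)\mapsto(\Yw,\Dop(\Yw,I^1),\Dop(\Yw,I^2))$, using Lemma~\ref{lemresp}\ref{lemrespb} and the equivariance \eqref{tau_out} as in Lemma~\ref{sta_pres}; the $r=2$ case then transfers by the reflection symmetry of Remark~\ref{rmk:vB}, exactly as in the paper (your alternative appeal to uniqueness of the invariant measure to get time-ergodicity of the chain is not justified and should be dropped). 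In exchange, the paper's argument gets by with ergodicity of the Busemann increments alone and of the weights alone, never needing their joint ergodicity on a common level.
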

 

\begin{proof}
We will assume $r=1$, since the $r=2$ case follows by symmetry (see Remark~\ref{rmk:vB}).
We may work on a compact subinterval $[\zeta,\eta] \subset \,]\evec_2,\evec_1[\,$, as the full result follows by taking a countable sequence $\zeta_k\searrow\evec_2$ and $\eta_k\nearrow\evec_1$.

Having fixed $\zeta$ and $\eta$, define the following positive number:
\eeq{ \label{bqeqs}
\delta = 
\nabla\gpp(\eta+)\cdot\evec_1 - \E[\log\Yw_x] \stackref{inmono1}{>} 0.
}
We know from \eqref{EB} and \eqref{ranenv} that $\Bus^{\zeta-}_{x-\evec_1,x}$ and $\log\Yw_x$ are integrable.
So for any $\eps>0$, there is $L\geq1$ large enough that
\eq{
\E\big(|\Bus^{\zeta-}_{x-\evec_1,x}|\cdot\one\{|\Bus^{\zeta-}_{x-\evec_1,x}|\geq L\}\big) \leq \eps 
\quad \text{and} \quad
\E\big(|\log \Yw_x|\cdot\one\{|\log \Yw_x|\geq L\}\big) \leq \eps. 
}
By the ergodicity in Theorem~\ref{B_thm9}, it follows that almost surely
\begin{subequations} \label{bne15}
\eeq{ \label{bne7}
\varlimsup_{n\to-\infty}\frac{1}{|n|}\sum_{k=n+1}^0 |\Bus^{\zeta-}_{x_{k-1},x_k}|\cdot \one\{|\Bus^{\zeta-}_{x_{k-1},x_k}|\geq L\} \leq \eps.
}
Similarly, because the weights $(\Yw_{x_k})$ are i.i.d.~and hence ergodic, almost surely
\eeq{ \label{bne8}
\varlimsup_{n\to-\infty}\frac{1}{|n|}\sum_{k=n+1}^0 |\log \Yw_{x_k}| \cdot \one\{|\log \Yw_{x_k}|\geq L\} \leq \eps.
}
\end{subequations}
Because we assumed $L\geq1$, these inequalities 
still hold if the multiplicative factors are dropped:
\begin{subequations} \label{bne9}
\begin{align}
\label{bne9a}
\varlimsup_{n\to-\infty}\frac{1}{|n|}\sum_{k=n+1}^0 \one\{|\Bus^{\zeta-}_{x_{k-1},x_k}|\geq L\} &\leq \eps \quad \text{and} \\
\label{bne9b}
\varlimsup_{n\to-\infty}\frac{1}{|n|}\sum_{k=n+1}^0 \one\{|\log\Yw_{x_k}|\geq L\} &\leq \eps.
\end{align}
\end{subequations}

Now consider any $\xi\in[\zeta,\eta]$ and $\sigg\in\{-,+\}$.
The constant $L^\xi$ in the statement of the lemma will be realized as $L^\xi = \max\{L_1,18L_2/\delta,L_3\}$, where $L_1$, $L_2$, $L_3$ will be specified below and depend only on $\zeta$ and $\eta$.
Define the quantity
 \begin{align}
 \label{g8o0}
 \Asf_n &= \frac{1}{|n|}\sum_{k=n+1}^0 \one\Big\{\Bus^{\xi\sig}_{x_{k-1},x_k} - \log\Yw_{x_k} \geq \frac{1}{L_1}\Big\}.
 \end{align}
To understand the asymptotics of $\Asf_n$ as $n\to\infty$, we introduce auxiliary quantities
\begin{align}
\label{g8o1}
 \Bsf_{n,1} &= \frac{1}{|n|}\sum_{k=n+1}^0 (\Bus^{\xi\sig}_{x_{k-1},x_k}- \log\Yw_{x_k}) \cdot \one\Big\{\frac{1}{L_1} \leq \Bus^{\xi\sig}_{x_{k-1},x_k} - \log\Yw_{x_k} < 2L_2\Big\}, \\
 \label{g8o2}
  \Bsf_{n,2} &= \frac{1}{|n|}\sum_{k=n+1}^0 (\Bus^{\xi\sig}_{x_{k-1},x_k}- \log\Yw_{x_k}) \cdot \one\Big\{\Bus^{\xi\sig}_{x_{k-1},x_k} - \log\Yw_{x_k} \geq 2L_2\Big\}, \\
  \label{g8o3}
 \Bsf_{n,3} &= \frac{1}{|n|}\sum_{k=n+1}^0 (\Bus^{\xi\sig}_{x_{k-1},x_k}- \log\Yw_{x_k}) \cdot \one\Big\{\Bus^{\xi\sig}_{x_{k-1},x_k} - \log\Yw_{x_k} < \frac{1}{L_1}\Big\}.
 \end{align}
 Since the indicator variables add to $1$ for every $k$, we have
\eq{ 
  \Bsf_{n,1}+\Bsf_{n,2}+\Bsf_{n,3} &= \frac{1}{|n|}\sum_{k=n+1}^0( \Bus^{\xi\sig}_{x_{k-1},x_k}-\log \Yw_{x_k})
 \stackref{coc1}{=}\frac{1}{|n|}\Bus^{\xi\sig}_{x_n,x_0} - \frac{1}{|n|}\sum_{k=n+1}^0\log\Yw_{x_k}.
}
  Since $x_n=x_0-n\evec_1$, \eqref{bfanfx} guarantees that
 \eq{ 
 \lim_{n\to-\infty}{|n|^{-1}}\Bus^{\xi\sig}_{x_n,x_0} = \nabla\gpp(\xi\sigg)\cdot\evec_1.
 }
 In addition, the i.i.d.~random variables $(\Yw_{x_k})_{k\leq0}$ almost surely obey their own law of large numbers, resulting in a smaller limit:
 \eq{
 \lim_{n\to\infty}\frac{1}{|n|}\sum_{k=n+1}^0\log\Yw_{x_k} = \E[\log \Yw_{x_k}] \stackref{inmono}{<}\nabla\gpp(\xi\sigg)\cdot\evec_1.
 }
The three previous displays lead to
\eeq{ \label{c7r5}
\lim_{n\to-\infty}( \Bsf_{n,1}+\Bsf_{n,2}+\Bsf_{n,3})
&\stackrefp{inmono1}{=} \nabla\gpp(\xi\sigg)\cdot\evec_1 - \E[\log\Yw_{x_k}] \\
&\stackref{inmono1}{\geq}
\nabla\gpp(\eta+)\cdot\evec_1 - \E[\log\Yw_{x_k}] \stackref{bqeqs}{=} \delta.
}
From the definition \eqref{g8o3}, it is trivial that $\Bsf_{n,3} < 1/L_1$.
So choose $L_1$ large enough that $1/L_1\leq \delta/3$, and then \eqref{c7r5} can be revised as
\eeq{ \label{c7r6}
\varliminf_{n\to-\infty} \Bsf_{n,1} \geq \frac{2}{3}\delta - \varlimsup_{n\to-\infty}\Bsf_{n,2}.
}
Our next step is to show that $\Bsf_{n,2}$ is small.

By monotonicity \eqref{B_mono1}, each summand in \eqref{g8o2} admits the following upper bound:
\eq{
&(\Bus^{\xi\sig}_{x_{k-1},x_k}- \log\Yw_{x_k}) \cdot \one\big\{\Bus^{\xi\sig}_{x_{k-1},x_k} - \log\Yw_{x_k} \geq 2L_2\big\} \\
&\leq 
(\Bus^{\zeta-}_{x_{k-1},x_k}- \log\Yw_{x_k}) \cdot \one\big\{\Bus^{\zeta-}_{x_{k-1},x_k} - \log\Yw_{x_k} \geq 2L_2\big\}.
}
The indicator on the right-hand side can be further bounded from above:
\eq{
&\one\big\{\Bus^{\zeta-}_{x_{k-1},x_k} - \log\Yw_{x_k} \geq 2L_2\big\}
\leq \one\big\{\max\big(|\Bus^{\zeta-}_{x_{k-1},x_k}|,|\log\Yw_{x_k}|\big) \geq L_2 \big\} \\
&= \one\big\{|\Bus^{\zeta-}_{x_{k-1},x_k}| > |\log\Yw_{x_k}|,\  |\Bus^{\zeta-}_{x_{k-1},x_k}| \geq L_2 \big\}
+\one\big\{|\Bus^{\zeta-}_{x_{k-1},x_k}| \leq |\log\Yw_{x_k}|,\  |\log\Yw_{x_k}| \geq L_2 \big\}.
}
Now multiply each of the last two indicators by the difference $\Bus^{\zeta-}_{x_{k-1},x_k}- \log\Yw_{x_k}$.
The resulting products trivially satisfy
\eq{
(\Bus^{\zeta-}_{x_{k-1},x_k}- \log\Yw_{x_k})  \cdot \one\big\{|\Bus^{\zeta-}_{x_{k-1},x_k}| > |\log\Yw_{x_k}|,\  |\Bus^{\zeta-}_{x_{k-1},x_k}| \geq L_2 \Big\}
&\leq 2|\Bus^{\zeta-}_{x_{k-1},x_k}|\cdot \one\big\{|\Bus^{\zeta-}_{x_{k-1},x_k}| \geq L_2 \big\}, \\
(\Bus^{\zeta-}_{x_{k-1},x_k}- \log\Yw_{x_k}) \cdot \one\big\{|\Bus^{\zeta-}_{x_{k-1},x_k}| \leq |\log\Yw_{x_k}|,\ |\log\Yw_{x_k}|\geq L_2\big\}
&\leq 2 |\log\Yw_{x_k}| \cdot \one\big\{|\log\Yw_{x_k}| \geq L_2 \big\}.
}
Now choose $L_2$ large enough that \eqref{bne15} applies with $\eps=\delta/12$.
Then the cumulative result of the three previous displays is
\eq{
\varlimsup_{n\to-\infty} \Bsf_{n,2} \leq \frac{4}{12}\delta.
}
Inserting this estimate into \eqref{c7r6} results in
\eq{
\varliminf_{n\to-\infty}\Bsf_{n,1} \geq \frac{1}{3}\delta.
}
Comparing the definitions \eqref{g8o1} and \eqref{g8o0}, we see $\Bsf_{n,1} \leq 2L_2\Asf_n$, and so
\eeq{ \label{bnw4}
\varliminf_{n\to-\infty}\Asf_{n} \geq \frac{1}{6L_2}\delta.
}
Finally, choose $L_3$ so that \eqref{bne9} applies with $\eps = \delta/(36L_2)$.
Since $\Bus^{\zeta-}_{x_{k-1},x_k} \geq \Bus^{\xi\sig}_{x_{k-1},x_k}>\log\Yw_{x_k}$ by \eqref{B_Mono1}, the two statements in \eqref{bne9} together yield
\eeq{ \label{gbc4}
\varlimsup_{n\to-\infty}\frac{1}{|n|}\sum_{k=n+1}^0 \one\{|\Bus^{\xi\sig}_{x_{k-1},x_k}|\geq L_3\} &\leq \frac{1}{18L_2}\delta.
}
Of course, \eqref{bne9b} in isolation says
\eeq{ \label{gbc8}
\varlimsup_{n\to-\infty}\frac{1}{|n|}\sum_{k=n+1}^0 \one\{|\log \Yw_{x_k}|\geq L_3\} &\leq \frac{1}{18L_2}\delta.
}
Finally, observe that
\eq{
&\one\Big\{
 |\Bus^{\xi\sig}_{x_{k-1},x_k}| < L_3, \
 |\log\Yw_{x_k}| < L_3, \
 \Bus^{\xi\sig}_{x_{k-1},x_k}-\log\Yw_{x_k}\geq\frac{1}{L_1} \Big\} \\
 &\geq
\one\Big\{\Bus^{\xi\sig}_{x_{k-1},x_k} - \log\Yw_{x_k} \geq \frac{1}{L_1}\Big\}
- \one\{|\Bus^{\xi\sig}_{x_{k-1},x_k}|\geq L_3\}
- \one\{|\log \Yw_{x_k}|\geq L_3\}
}
So subtracting \eqref{gbc4} and \eqref{gbc8} from \eqref{bnw4} results in
\eq{
 \varliminf_{n\to-\infty}\frac{1}{|n|}\sum_{k=n+1}^0 
 \one\Big\{
 |\Bus^{\xi\sig}_{x_{k-1},x_k}| < L_3, \
 |\log\Yw_{x_k}| < L_3, \
 \Bus^{\xi\sig}_{x_{k-1},x_k}-\log\Yw_{x_k}\geq\frac{1}{L_1} \Big\}
  \geq \frac{\delta}{18L_2}.
}
Since the left-hand side is nondecreasing in $L_1$ and $L_3$ while the right-hand side is decreasing in $L_2$, we may set $L = \max\{L_1,L_3,18L_2/\delta\}$ and obtain \eqref{gnb8}.
\end{proof}

\begin{proof}[Proof of Proposition~\ref{lm:51-32}]  
Consider $\xi\in\Udiff$ and $x\in\bD^\xi$.
By Lemma \hyperref[lm:51-31a]{\ref*{lm:51-31}\ref*{lm:51-31a}}, we must have $x-\evec_r\in\bD^\xi$ for some $r\in\{1,2\}$.
Assume $r=1$ without loss of generality, since the case $r=2$ is analogous.
Now suppose toward a contradiction that there is no $z<x$ such that $z\in\bD^\xi$.
In particular, $x-\evec_1-\evec_2$ does not belong to $\bD^\xi$, so part~\ref{lm:51-31a} of Lemma~\ref{lm:51-31} forces $x-2\evec_1\in\bD^\xi$, while part~\ref{lm:51-31b} says $S^{\xi,\evec_2}_{x-2\evec_1} = S^\xi_{x-\evec_1}$.
Repeating this logic results in
\eq{
0 < S^\xi_{x-\evec_1} 
= S^{\xi,\evec_2}_{x-2\evec_1}
= S^{\xi}_{x-2\evec_1}
= S^{\xi,\evec_2}_{x-3\evec_1}
= S^{\xi}_{x-3\evec_1}
=\cdots
}
Set $\delta = S^\xi_{x-\evec_1} > 0$.

Henceforth we use the notation $x_k = x-k\evec_1$.
Let $L = L^\xi$ be the constant from Lemma~\ref{g85n}, which we assume to be greater than $1$.
Consider the indicator variable
\eq{
\Isf_k = \one\Big\{
 |\Bus^{\xi\sig}_{x_{k-1},x_k}|\leq L, \
 |\log\Yw_{x_k}|\leq L, \
 \Bus^{\xi\sig}_{x_{k-1},x_k}-\log\Yw_{x_k}\geq\frac{1}{L} \Big\}.
}
The inequality \eqref{gnb8} says
\eeq{ \label{gnb9}
\varliminf_{n\to-\infty}\frac{1}{|n|}\sum_{k=n+1}^0 \Isf_k \geq \frac{1}{L}.
}
When $\Isf_k=1$, Lemma~\ref{rjco} guarantees $S_{x_k}^{\xi,\evec_1} \geq e^{-(2L+\log L)}S_{x_k}^{\xi,\evec_2}= \delta e^{-(2L+\log L)}$.
When $\Isf_k=0$, we still have the trivial bound $S_{x_k}^{\xi,\evec_1}\geq0$.
Therefore, it follows from \eqref{gnb9} that
\eeq{ \label{xnd6}
\varliminf_{n\to-\infty}\frac{1}{|n|}\sum_{k=n+1}^0 S^{\xi,\evec_1}_{x_k} \geq \frac{\delta e^{-(2L+\log L)}}{L} > 0.
}
On the other hand, by the cocycle property \eqref{coc1},
\eq{
\frac{1}{|n|}\sum_{k=n+1}^0 S^{\xi,\evec_1}_{x_k} = \frac{\Bus^{\xi-}_{x_n,x_0}-\Bus^{\xi+}_{x_n,x_0}}{|n|}.
}
By \eqref{bfanfx}, the right-hand side converges as $n\to-\infty$ to $\nabla\gpp(\xi-)\cdot\evec_1 - \nabla\gpp(\xi+)\cdot\evec_1$, but this difference is zero since $\xi$ was assumed to be a direction of differentiability for $\gpp$.
This contradicts \eqref{xnd6}.
\end{proof}




\section{Polymer dynamics and geometric RSK} \label{sec:grsk}  

This section reformulates  the sequential process to make explicit  the appearance of the gRSK correspondence.  We start with a brief introduction to gRSK, without aiming for a complete description. We follow the conventions of \cite{corw-ocon-sepp-zygo}.   This section can be skipped without loss of continuity.

%

%

\subsection{Polymers and gRSK} 

For $m,n\in\Z_{>0}$,   
gRSK  is a  bijection between $m\times n$ matrices $d=(d_{ij}:\,  1\le i\le m, 1\le j\le n)$  with positive 
entries and pairs of triangular arrays $(z,w)$ of positive reals, indexed as $z=(z_{k\ell}:\,  1\le k\le n,  1\le\ell\le k\wedge m)$ and  $w=(w_{k\ell}:\,  1\le k\le m,  1\le\ell\le k\wedge n)$, whose bottom rows 
agree:  $(z_{n1},\dotsc,z_{n,m\wedge n})=(w_{m1},\dotsc,w_{m,m\wedge n})$. Pictorially,  $z$ consists of rows $z_{k\tsp\bbullet}$  indexed by $k$ from top to bottom and southeast-pointing
diagonals $z_{\rcbullet\tsp\ell}$ indexed by $\ell$ from right to left.  See Figures~\ref{fig:z5} and  \ref{fig:z8} for examples. 

   The connection with polymers is that $z_{k1}$ equals the partition function $Z_{(1,1),(m,k)}$  of polymer paths from $(1,1)$ to $(m,k)$ with weights $d_{ij}$. 
Furthermore,  for $\ell=2,\dotsc,k\wedge m$,  
$z_{k\ell}=\tau_{k\ell}/\tau_{k,\ell-1}$ is a ratio  where $\tau_{k\ell}$ is the partition function of $\ell$-tuples $(\pi_1,\dotsc,\pi_\ell)$ of pairwise disjoint paths such that $\pi_r$ goes from $(1,r)$ to $(m,n-\ell+r)$.  This fact makes the restriction $\ell\le k\wedge m$ natural.  


The utility of the array representation is that  $z$ can be constructed in an alternative way by a procedure called geometric row insertion.  Starting with an empty array $\varnothing$, the rows $d_{i\tsp\bbullet}$ of the matrix $d$ are row-inserted into the growing array one by one. This procedure is denoted by 
\be\label{zd7} 
z = \varnothing \leftarrow  d_{1\tsp\bbullet}  \leftarrow d_{2\tsp\bbullet}\leftarrow \dotsm \leftarrow d_{m\tsp\bbullet}. 
\ee
The array $w$ is constructed by applying the same process to the transpose $d^{\tspb T}$.   
This alternative construction is a key part of the integrability of the inverse-gamma polymer; see \cite{corw-ocon-sepp-zygo}.
We explain some details of the construction  next.  

\begin{figure}
	\centering
	\begin{minipage}{235pt}
		\begin{equation*}
			{
			\begin{array}{ccccccccc}
				&& z_{11} &&\\
				& z_{22} && z_{21} &\\
				z_{33} && z_{32} && z_{31}\\
				&z_{43} && z_{42} && z_{41}\\
				&&z_{53} && z_{52} && z_{51}
			\end{array}
			}
		\end{equation*}
		\captionsetup{width=.9\linewidth}
		\caption{\small The form of the $z$ array in the case $m=3$ and $n=5$. The first diagonal is 
			$z_{\bullet 1}=(z_{11}, z_{21}, z_{31}, z_{41}, z_{51})$ 
			and the second one $z_{\bullet 2}=(z_{22}, z_{32}, z_{42}, z_{52})$.}
		\label{fig:z5}
	\end{minipage}
	\hfill
	\begin{minipage}{235pt}
		\begin{equation*}
			{
			\begin{array}{ccccccccc}
				&&&& z_{11} &&\\
				&&& z_{22} && z_{21} &\\
				&&  z_{33} && z_{32} && z_{31}\\
				&  z_{44} && z_{43} && z_{42} && z_{41}\\
				z_{55} &&z_{54}&&z_{53} && z_{52} && z_{51}
			\end{array}
			}
		\end{equation*}
		\captionsetup{width=.9\linewidth}
		\caption{\small The form of a fully triangular array $z$ in the case   $m=n=N=5$. From right to left there are five diagonals $z_{\bullet\ell}=(z_{\ell\ell},\dotsc,z_{5\ell})$ for $\ell=1,2,\dotsc,5$.}
		\label{fig:z8}
	\end{minipage}
\end{figure}

%

The basic building block of this process is the row insertion of a single word (a vector of positive reals) into another, defined as follows. 

\begin{definition}\label{NYdef}
Let $1\le \ell\le N$. Consider two {\it words} $\xi=(\xi_{\ell},\ldots,\xi_N)$ and $b=(b_\ell,\ldots, b_N)$ with strictly positive real entries. {\it Geometric row insertion} of the word $b$ into the word $\xi$    transforms  $(\xi,b)$ into a new pair $(\xi',b')$ where $\xi'=(\xi'_{\ell},\ldots, \xi'_N)$ and $b'=(b'_{\ell+1},\ldots, b'_N)$. The notation and definition are  as follows:
\begin{equation}
\begin{array}{ccc}
& b & \\
\xi & \cross & \xi' \\[3pt] 
& b'
\end{array}
\qquad
\textrm{where} \qquad
\begin{cases}
\xi'_\ell=b_\ell\xi_\ell, &\\[4pt]
\xi'_k =b_k(\xi_{k-1}' + \xi_k), & \ell+1\le k\le N\\[5pt]
b'_{k} = b_k \ddd\frac{ \xi_{k}\xi'_{k-1}}{\xi_{k-1}\xi'_k}, & \ell+1\leq k\leq N.
\end{cases}
\label{g-row-ins}\end{equation}
Transforming $b\mapsto b'$ produces a word shorter by one position.    If $\ell=N$, then $b'$ is empty and we write $b'=\es$.  
\qedex
\end{definition}


Next, a sequence of row insertions are combined to update an array, diagonal by diagonal.   

\begin{definition}\label{NYdef-z}
 Let  $\zarr=(\zarr_{k\ell}:\,  1\le \ell\le k\le N)$   be an array with $N$ rows and $N$ diagonals.  (That is,   $m=n=N$ and $z$ is the full triangle in Figure~\ref{fig:z8}.)    Let  $b\in\R_{>0}^N$ be an $N$-word.  Geometric row insertion of   $b$ into   $\zarr$  produces a new triangular array $\zarr'= \zarr\leftarrow b$ with $N$ rows and $N$ diagonals. This procedure  consists of $N$ successive  basic row insertions. 
Set $a_1=b$. For $\ell=1,\ldots , N$  iteratively apply the row insertion map \eqref{g-row-ins} to the diagonal words $\zarr_{\rcbullet\tsp\ell}  = (\zarr_{\ell\ell},\ldots, \zarr_{N\ell})$  of $\zarr$:
\[
\begin{array}{ccc}
& a_\ell & \\
\zarr_{\rcbullet\tsp\ell} & \cross & \zarr'_{\rcbullet\tsp\ell} \\
& a_{\ell+1}
\end{array}
\]
where $a_{\ell+1}=a'_\ell$ is one position shorter than $a_\ell$.  The last output $a_{N+1}$ is empty.
The new array  $\zarr'=(\zarr'_{k\ell}:\,  1\le \ell\le k\le N)$ is formed from the diagonals 
$\zarr'_{\rcbullet\tsp\ell} = (\zarr'_{\ell\ell},\ldots, \zarr'_{N\ell})$. 
\qedex
\end{definition}

Shown below is an example when $N=3$.
At each step $a_\ell$ is inserted into the diagonal  $\zarr_{\rcbullet\tsp\ell}$ with outputs $z'_{\rcbullet\tsp\ell}$ and $a_{\ell+1}$:
\eeq{ \label{NYdef_eg}
\begin{array}{ccc}
& a_1=b & \\
\zarr_{\rcbullet\tsp1} & \cross & z'_{\rcbullet\tsp1} \\
& a_2 &\\
\zarr_{\rcbullet\tsp2} & \cross & z'_{\rcbullet\tsp2} \\
& a_3 &\\
\zarr_{\rcbullet\tsp3} & \cross & z'_{\rcbullet\tsp3} \\ &a_4=\varnothing&
\end{array}
}


This description does not cover the construction   \eqref{zd7} of the array $z$  from an empty one.  Separate rules are needed for insertion into an empty array and into an array that is not fully triangular as in Figure~\ref{fig:z5}.  However, these details are not needed for our subsequent discussion and we refer the reader to \cite{corw-ocon-sepp-zygo} for the rest.

Once the array from  \eqref{zd7}  is full (that is, has $N=m=n$ rows and diagonals, as in Figure~\ref{fig:z8}),  we keep $n=N$ fixed and let $m$ grow to define a temporal evolution $z(m)$  of the array.  At each time step 
$m=n+1, n+2, n+3,\dotsc$, the input is the next row $d_{m\tsp\bbullet}$ from the now semi-infinite weight matrix 
$d=(d_{ij}:\,  i\ge 1, 1\le j\le n)$ and the next array $z(m)= z(m-1) \leftarrow d_{m\tsp\bbullet}$ is computed as  in Definition~\ref{NYdef-z}.      The size of $z(m)$ remains fixed at $n=N$ rows and diagonals,  and the polymer interpretations of $z_{k\ell}$ for 
$1\le \ell\le k\le n$ explained above are valid for each $m\ge N$.   Figure~\ref{NYfig3} illustrates diagrammatically  the temporal evolution $z(\aabullet)$ of a full array. 

{\small
 \begin{figure}
\begin{equation*}
\begin{array}{cccccccc}
           &a_1(1) &           & a_1(2)     &        & a_1(3) & &      \\[2pt]
\zarr_{\rcbullet\tsp1}(0)        &\cross         &\zarr_{\rcbullet\tsp1}(1)     &\cross              & \zarr_{\rcbullet\tsp1}(2) &\cross          & \zarr_{\rcbullet\tsp1}(3)   & \cdots\\[3pt]
           &a_2(1)           &           & a_2(2)             &        & a_2(3)         & &      \\[2pt]
\zarr_{\rcbullet\tsp2}(0)         &\cross         &\zarr_{\rcbullet\tsp2}(1)        &\cross              & \zarr_{\rcbullet\tsp2}(2) &\cross          & \zarr_{\rcbullet\tsp2}(3)   & \cdots\\[3pt]
           &a_3(1)            &           & a_3(2)              &        & a_3(3)         & &      \\[2pt]
\zarr_{\rcbullet\tsp3}(0)       &\cross         &\zarr_{\rcbullet\tsp3}(1)         &\cross              & \zarr_{\rcbullet\tsp3}(2)     &\cross          & \zarr_{\rcbullet\tsp3}(3)   & \cdots\\[3pt]
     &a_4(1)            &           & a_4(2)              &        & a_4(3)         & &      \\
  \vdots         &\vdots           & \vdots          & \vdots               & \vdots       & \vdots           &\vdots & \\[3pt]
               &a_N(1)            &           & a_N(2)              &        & a_N(3)         & &      \\[2pt]
\zarr_{\rcbullet\tsp N}(0)       &\cross         &\zarr_{\rcbullet\tsp N}(1)         &\cross              & \zarr_{\rcbullet\tsp N}(2)     &\cross          & \zarr_{\rcbullet\tsp N}(3)   & \cdots\\[3pt]
               &\es           &           &\es              &        & \es       & &      \\
\end{array}
\end{equation*}
\caption{\small Evolution of a triangular array $\zarr(m)$  with $N$ rows and diagonals  over time $m=0,1,2,\dotsc$.  
   The initial state
$\zarr(0)$ is on the left edge and time progresses from left to right.   At time $m$, the driving weights   
come from  row $m$ of the  $d$-matrix: $a_1(m)=d_{m\bullet}=(d_{m,1},\dotsc, d_{m,N})$. The update of $z(m-1)$ to $z(m)$  diagonal by diagonal is represented by the downward  vertical progression of row insertions. Each cross reduces the length of $a_\ell(m)$ by one and  after $N$ steps the last output $a_{N+1}(m)$ is empty. }
 \label{NYfig3}
\end{figure}
}

\subsection{Geometric row insertion in the sequential transformation}  \label{sec:Sgrsk}
Structurally, the triangular  form of the output $z$ with shrinking diagonals towards the left is tied to the shortening in the $b$ to $b'$ mapping in \eqref{g-row-ins}.  We utilize the same row insertion  \eqref{g-row-ins} but in the sequence of row insertions, such as in the example in Figure~\ref{NYfiga}, the shortening of the outputs $a_\ell$ is countered by the addition of a weight from a boundary condition.   Thus the end result is not triangular but rectangular.  Additionally, we formulate the process for a matrix that extends bi-infinitely left and right.  Our procedure is represented by the diagram in Figure~\ref{fig:Z200}. Each cross $\ \scaleobj{0.7}\cross \ $ is an instance of the transformation in  \eqref{g-row-ins} that reduces length along its vertical arrow.  But before the next cross below, the outputted $\Yw$-vector is augmented with an $I$-weight from the boundary condition, thus restoring the original length of the input.  

\begin{figure}   
{\footnotesize
\[  \begin{array}{cccccc}
&&  ( \boldsymbol{I^1_{(k,0)}}, \Yw^1_{(k,\tsp\parng{1}{M})} )  & &  ( \boldsymbol{I^1_{(k+1,0)}}, \Yw^1_{(k+1,\tsp\parng{1}{M})} )  \\[8pt]
\dotsm
&Z^1_{(k-1,\tsp\parng{0}{M})}  & \cross &  Z^1_{(k,\tsp\parng{0}{M})}  & \cross &  Z^1_{(k+1,\tsp\parng{0}{M})} \  
\dotsm  \\[8pt] 
&& \Yw^{2}_{(k,\tsp\parng{1}{M})}  & & \Yw^{2}_{(k+1,\tsp\parng{1}{M})} \\[7pt] 
&&\downarrow  &&\downarrow \\[7pt] 
&&  ( \boldsymbol{I^2_{(k,0)}}, \Yw^2_{(k,\tsp\parng{1}{M})} )  & &  ( \boldsymbol{I^2_{(k+1,0)}}, \Yw^2_{(k+1,\tsp\parng{1}{M})} )  \\[8pt]
\dotsm &Z^2_{(k-1,\tsp\parng{0}{M})}  & \cross &  Z^2_{(k,\tsp\parng{0}{M})}  & \cross &  Z^2_{(k+1,\tsp\parng{0}{M})} \  \dotsm \\[8pt] 
&& \Yw^{3}_{(k,\tsp\parng{1}{M})} & & \Yw^{3}_{(k+1,\tsp\parng{1}{M})}  \\[7pt]  
&&\downarrow   &&\downarrow\\
&&\vdots   &&\vdots \\
&&\downarrow  &&\downarrow\\[7pt] 
&&  ( \boldsymbol{I^N_{(k,0)}}, \Yw^i_{(N,\tsp\parng{1}{M})} )  &  &  ( \boldsymbol{I^N_{(k+1,0)}}, \Yw^N_{(k+1,\tsp\parng{1}{M})} ) \\[8pt]
\dotsm &Z^N_{(k-1,\tsp\parng{0}{M})}  & \cross &  Z^N_{(k,\tsp\parng{0}{M})} & \cross &  Z^N_{(k+1,\tsp\parng{0}{M})} \  \dotsm \\[8pt] 
&& \Yw^{N+1}_{(k,\tsp\parng{1}{M})}   & & \Yw^{N+1}_{(k+1,\tsp\parng{1}{M})}
\end{array} 
 \]
 }
\caption{\small    The bi-infinite geometric row insertion procedure with boundary.   Index $i=1,\dotsc,N$ runs vertically down and index $k\in\Z$ horizontally from left to right.  The ratio variables  $\{\boldsymbol{I^{i}_{(k, 0)}}\}$ are boldfaced to highlight   that they are  initially given boundary conditions.  The weights $\Yw^1$ are the initial dynamical input.   On row $i\in\lzb1,N\rzb$,   instance $k$ of the geometric row insertion marked by crossed arrows updates the vector  $Z^i_{(k-1,\parng{0}{M})}$  to $Z^i_{(k,\parng{0}{M})}$ and outputs the dual weight vector $\Yw^{i+1}_{(k,\parng{1}{M})}$. If $i<N$, the latter is  then combined with the initially given ratio weight $\boldsymbol{I^{i+1}_{(k, 0)}}$ and fed into  instance $k$ of the geometric row insertion on row $i+1$.   The evolution began in the infinite past of the $k$-index on the left and progresses into the infinite future on the right.  The final dual weights $\Yw^{N+1}_{(k,\tsp\parng{1}{M})}$ are left unused in this picture, but index $i$ can also be extended indefinitely beyond $N$. 
 }
\label{fig:Z200}
\end{figure}

We now reformulate the update map so that we can express the sequential transformation in terms of geometric row insertion. 

For $x\in\Z\times\Z_{\ge0}$, define a vector   $Z_x=(Z^1_x,\dotsc, Z^N_x)$  of partition functions with a boundary condition  as follows. 
  On the bottom level $\Z\times\{0\}$ we have $N$  given  boundary functions  $\{Z^i_{(k,0)}\}_{k\in\Z}$ for $i\in\lzb1,N\rzb$.   
  In the bulk $\Z\times\Z_{>0}$ the weights $\Yw^1=(\Yw^1_x)_{x \tsp\in\tsp\Z\times\Z_{>0}}$ are given.  
For $i=1,\dotsc,N$ iterate the following  two-step construction.    

\smallskip 

\textit{Step 1.} 
  For $(k,t)\in\Z\times\Z_{>0}$ define  
	\begin{equation}\label{Def:Z^i}
		Z^i_{(k,t)}=\sum_{j:\,j\leq k}Z^i_{(j,0)} \, Z^i_{(j,1),(k,t)}\,,
	\end{equation}
	where $\{Z^i_{x,y}: x\le y\}$ is the partition function   with weights $\Yw^i=(\Yw^i_x)_{x \in\Z\times\Z_{>0}}$:  
\eq{ 
Z^i_{x,y}= \sum_{x_\brbullet\tspa\in\tspa\pathsp_{x,\tspb y}} \prod_{j=m}^{n} \Yw^i_{x_j}   
\quad \text{for $x\in\level_m$,  $y\in\level_n$, $m\leq n$}.
}
   (The difference with the partition function in  \eqref{part56} is that now the initial weight at $x$ is included.)  Assume that the series in \eqref{Def:Z^i} always converges.   
   
   \smallskip 
   
 \textit{Step 2.}    For $k\in\Z$, $s\in\Z_{\ge0}$ and $t\in\Z_{>0}$ define the weights 
	\be\label{G-eta} 
	I^i_{(k,s)}=\frac{Z^i_{(k,s)} }{Z^i_{(k-1,s)}} \, , \qquad 
	J^i_{(k,t)}=\frac{Z^i_{(k,t)} }{Z^i_{(k,t-1)}} \, , 
	\qquad\text{and}\qquad  
	\Yw^{i+1}_{(k,t)} =     \frac{1}{ \frac1{I^i_{(k, t-1)}}+\frac1{J^i_{(k-1,t)}}} .	\ee  
If $i<N$, return to Step 1 with $i+1$ and use the weights $\Yw^{i+1}$ just constructed.  	

\smallskip 
   
  The reader can check that we have replicated the construction in Section~\ref{map_sec}. Namely, on each level $t\in\Z_{>0}$, 
  \eq{
		Z^i_{(k,t)}=\sum_{m:\,m\leq k}Z^i_{(m,t-1)}  \prod_{j=m}^k \Yw^i_{(j,t)}\,, \quad k\in\Z,  
	}
  and   the sequences in \eqref{G-eta}  obey the transformations \eqref{drs_def}: 
 \be\label{Z285} 
I^i_{(\bbullet\tsp,\tspa t)}=\Dop( \Yw^i_{(\bbullet\tsp,\tspa t)}, I^i_{(\bbullet\tsp,\tspa t-1)}), \quad
J^i_{(\bbullet\tsp,\tspa t)}=\Sop( \Yw^i_{(\bbullet\tsp,\tspa t)}, I^i_{(\bbullet\tsp,\tspa t-1)}) \quad \text{and}\quad 
\Yw^{i+1}_{(\bbullet\tsp,\tspa t)}=\Rop( \Yw^i_{(\bbullet\tsp,\tspa t)}, I^i_{(\bbullet\tsp,\tspa t-1)}). 
\ee	
Moreover, for each $t\in\Z_{>0}$, the $N$-tuple $I^{\parng{1}{N}}_{(\bbullet\tsp,\tspa t)} \in(\R_{>0}^\Z)^N$ is an output of the sequential transformation from \eqref{Saop_def}:
$I^{\parng{1}{N}}_{(\bbullet\tsp,\tspa t)}  = \Saop_{\Yw^1_{(\bbullet\tsp,\tspa t)}}( I^{\parng{1}{N}}_{(\bbullet\tsp,\tspa t-1)}  )$.  
In particular,  $(I^{\parng{1}{N}}_{(\bbullet\tsp,\tspa t)}:\, t\in\Z_{\ge0})$ is an instance of the sequential process defined in \eqref{g:multil}.  

Fix $M>0$  and for a given $i\in\lzb1,N\rzb$   consider the   partition functions  $(Z^i_{(\bbullet, t)} :\, t\in\lzb0,M\rzb)$  restricted to  $M+1$ lattice levels. The evolution of the $(M+1)$-vector $Z^i_{(k,\tsp\parng{0}{M})}=(Z^i_{(k,t)}:\,  t\in\lzb0,M\rzb)$ and the $M$-vector 
$\Yw^i_{(k,\parng{1}{M})}=(\Yw^i_{(k,t)}:\,  t\in\lzb1,M\rzb)$    from left to right, as $k$ ranges over $\Z$,   obeys these equations: 
\be\label{Z300} \begin{aligned} 
Z^i_{(k,0)}&= Z^i_{(k-1,0)}  I^i_{(k,0)}, \\[4pt]
Z^i_{(k,t)}&= (Z^i_{(k,t-1)}+Z^i_{(k-1,t)})  \Yw^i_{(k,t)}\,, \quad t\in\lzb1,M\rzb, \\[4pt] 
\Yw^{i+1}_{(k,t)}&=    \Yw^i_{(k,t)} \tspb\frac{Z^i_{(k,t-1)}Z^i_{(k-1,t)}}{Z^i_{(k-1,t-1)}Z^i_{(k,t)}} \, , \quad t\in\lzb1,M\rzb. 
\end{aligned}\ee
The first equation above is the definition of $I^i_{(k,0)}$ from \eqref{G-eta}. 
The middle equation is deduced from \eqref{Def:Z^i}.  
The last equation above is a rewriting of the last equation of \eqref{Z285}. 
Now note that equation \eqref{Z300} is exactly the geometric  row insertion 
\be\label{Z308} 
{\small
\begin{array}{ccc}
&  ( I^i_{(k,0)}, \Yw^i_{(k,\tsp\parng{1}{M})} )  & \\[4pt]
Z^i_{(k-1,\tsp\parng{0}{M})}  & \cross &  Z^i_{(k,\tsp\parng{0}{M})}  \\[6pt] 
& \Yw^{i+1}_{(k,\tsp\parng{1}{M})}
\end{array}
}
\ee

Lastly, we combine these geometric row insertions from \eqref{Z308} over all $i\in\lzb1,N\rzb$ and $k\in\Z$ into a bi-infinite network that represents the two-step construction of the partition functions $Z^i_x$ for $x\in\Z\times\lzb0,M\rzb$. The network is  depicted in Figure~\ref{fig:Z200}. The   boundary ratio weight  $I^i_{(k,0)}$  is inserted into the network before the cross $\ \scaleobj{0.7}\cross \ $ that marks  the $(k,i)$  row insertion step. 




\section{Proofs in the inverse-gamma environment} \label{sec:Vig}  

\subsection{Intertwining under inverse-gamma weights} \label{sec:twin_ig}
This section applies the results of Section~\ref{mar_sec}  to  
i.i.d.\ inverse-gamma weights $\Yw_x\sim{\rm Ga}^{-1}(\alpha)$, as assumed in \eqref{m:exp}.   The logarithmic mean of the weights is now $\kappa=-\psi_0(\alpha)$.
The lemma recalled below captures a central feature of inverse-gamma distributions. 
A partial version of it appeared as \cite[Lem.~3.13]{corw-ocon-sepp-zygo}  in the context  of invariant distributions of gRSK. 

\begin{lemma} {\textup{\cite[Lem.~B.2]{busa-sepp-22-ejp}}} \label{lm:invg2}  
Let $\lambda_1>\lambda_2>0$.
Let $\Yw=(\Yw_j)_{j\in\Z}$ and $I=(I_k)_{k\in\Z}$  be mutually independent random variables such that $\Yw_j\sim{\rm Ga}^{-1}(\lambda_1)$ and $I_k\sim{\rm Ga}^{-1}(\lambda_2)$.  
Let 
	\[  \wt I=\Dop(\Yw,I)\,\quad \wt\Yw=R(\Yw,I)\quad\text{and}\quad J=S(\Yw,I).\]      Let $\gpp_k= (\{\wt I_j\}_{j\leq k}, \,J_k, \,\{\wt \Yw_j\}_{j\leq k})$.  
	Then the following statements hold.
	\begin{enumerate}[label={\rm(\alph*)}, ref={\rm(\alph*)}]   \itemsep=3pt
		\item\label{lm:invg2.a}   $\{\gpp_k\}_{k\in\Z}$ is a stationary, ergodic process.  
		For each $k\in\Z$, the random variables $\{\wt I_j\}_{j\leq k}$, $\,J_k$, and $\{\wt \Yw_j\}_{j\leq k} $ are mutually independent with marginal distributions  
		\[  \text{ $\wt\Yw_j\sim{\rm Ga}^{-1}(\lambda_1)$, \ \ $\wt I_j\sim{\rm Ga}^{-1}(\lambda_2)$ \ and \   $J_k\sim{\rm Ga}^{-1}(\lambda_1-\lambda_2)$.   } \] 
		
		\item\label{lm:invg2.b}  $\wt\Yw$ and $\wt I$  are  mutually  independent sequences of i.i.d.\ variables.  
	\end{enumerate}
\end{lemma}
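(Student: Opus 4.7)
The key step is a single-site Burke identity. Assume $J_{k-1}$, $\Yw_k$, $I_k$ are independent with marginals $\mathrm{Ga}^{-1}(\lambda_1-\lambda_2)$, $\mathrm{Ga}^{-1}(\lambda_1)$, $\mathrm{Ga}^{-1}(\lambda_2)$; then the map $(J_{k-1}, \Yw_k, I_k) \mapsto (J_k, \wt I_k, \wt \Yw_k)$ from \eqref{mja}--\eqref{I_W_def} pushes the product measure to the analogous product measure with the roles of $J_{k-1}$, $I_k$, $\Yw_k$ replaced by $J_k$, $\wt I_k$, $\wt \Yw_k$. I would verify this by explicit change of variables: the map is a smooth bijection of $\R_{>0}^3$ onto $\{(s,t,v):v<s\wedge t\}$, with inverse $J_{k-1}=\wt \Yw_k(J_k+\wt I_k)/\wt I_k$, $\Yw_k=J_k\wt I_k/(J_k+\wt I_k)$, $I_k=\wt \Yw_k(J_k+\wt I_k)/J_k$. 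The two ``recovery'' identities $1/\wt \Yw_k=1/I_k+1/J_{k-1}$ and $1/\Yw_k=1/J_k+1/\wt I_k$ (the latter a consequence of the former together with the definition of $J_k$) align the exponential factors in the three inverse-gamma densities. A direct computation shows that the Jacobian determinant equals $\wt \Yw_k(J_k+\wt I_k)/(J_k\wt I_k)$, and substituting gives precisely the product of the three target marginal densities.

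The one-site identity implies that $\mathrm{Ga}^{-1}(\lambda_1-\lambda_2)$ is invariant for the Markov chain $J_k = \Yw_k(1 + J_{k-1}/I_k)$ driven by the i.i.d.\ pairs $(\Yw_k, I_k)$. Since $\lambda_1 > \lambda_2$ ensures $\E[\log\Yw_0]<\E[\log I_0]$, the series representation \eqref{J_def} converges almost surely by Lemma~\ref{ces_lem} and produces a solution of the recursion satisfying the sublinear growth condition \eqref{b6x3dd} of Lemma~\ref{lm:D13}. Comparison of this solution with the stationary chain via a standard contraction argument in the spirit of Chang \cite{chan-94} shows that $J_k$ has the invariant distribution $\mathrm{Ga}^{-1}(\lambda_1-\lambda_2)$ for every $k\in\Z$.

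With stationarity in hand, part (a) follows by induction on $k$, starting from an auxiliary time $k_0$ that is eventually sent to $-\infty$. Suppose that, conditional on $J_{k_0-1}$, the variables $\{\wt I_j\}_{k_0\le j\le k-1}$, $J_{k-1}$, $\{\wt \Yw_j\}_{k_0\le j\le k-1}$ are mutually independent with the stated marginals. Because $(\Yw_k, I_k)$ is independent of $(\Yw_j, I_j)_{j<k}$, it is independent both of the conditioning $\sigma$-algebra and of all previously constructed output variables. Applying the one-site Burke identity to $(J_{k-1}, \Yw_k, I_k)$ conditional on the past then adjoins $(\wt I_k, J_k, \wt \Yw_k)$ to the independent family with the correct marginals, while replacing $J_{k-1}$ by $J_k$. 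Since the resulting conditional joint distribution does not depend on $J_{k_0-1}$, the conditioning can be dropped, and sending $k_0\to-\infty$ (only finitely many indices $\le k$ appear in any finite-dimensional marginal) yields part (a). Stationarity of $\{\gpp_k\}$ under translation is immediate from the analogues for $\Dop$, $\Rop$, $\Sop$ in Lemma~\ref{sta_pres}, and ergodicity follows because $\{\gpp_k\}$ is a measurable factor of the i.i.d.\ process $\{(\Yw_j, I_j)\}_{j\in\Z}$.

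Part (b) then follows by exhausting the index set: for any finite $S\subset\Z$, choosing $k\ge\max S$ and restricting the collection in part (a) to indices in $S$ shows that $\{\wt \Yw_j\}_{j\in S}$ and $\{\wt I_j\}_{j\in S}$ are i.i.d.\ with the stated marginals and independent of each other. Since finite-dimensional distributions determine the law, the bi-infinite sequences $\wt \Yw$ and $\wt I$ are mutually independent i.i.d.\ sequences. The main technical obstacle is the one-step density calculation, which is the log-gamma analogue of Burke's theorem and of the Matsumoto--Yor intertwining; once it is verified, the propagation to the full bi-infinite joint law is bookkeeping via independence of the driving weights.
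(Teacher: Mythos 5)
The paper does not actually prove this lemma—it is imported verbatim from \cite{busa-sepp-22-ejp}—so I am comparing your argument against the standard proof of that result, which follows exactly the route you chose: a one-site Burke-type identity, propagated site by site, then extended to the bi-infinite stationary setting by pushing the starting time to $-\infty$. Your single-site computation is correct: the inverse formulas, the two recovery identities, and the Jacobian $\wt\Yw_k(J_k+\wt I_k)/(J_k\wt I_k)$ all check out, and the change of variables does carry ${\rm Ga}^{-1}(\lambda_1-\lambda_2)\otimes{\rm Ga}^{-1}(\lambda_1)\otimes{\rm Ga}^{-1}(\lambda_2)$ to the analogous product for $(J_k,\wt I_k,\wt\Yw_k)$. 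One small slip: the map $(J_{k-1},\Yw_k,I_k)\mapsto(J_k,\wt I_k,\wt\Yw_k)$ is a bijection of $\R_{>0}^3$ onto all of $\R_{>0}^3$, not onto $\{(s,t,v):v<s\wedge t\}$; your own inverse formulas produce positive preimages for every $(s,t,v)\in\R_{>0}^3$, and the density computation is unaffected, but as stated the image claim would contradict your conclusion that the pushforward is a full product of inverse-gamma laws.

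The genuine gap is in the propagation step of part (a). Your induction hypothesis—``conditional on $J_{k_0-1}$, the variables $\{\wt I_j\}_{k_0\le j\le k-1}$, $J_{k-1}$, $\{\wt\Yw_j\}_{k_0\le j\le k-1}$ are mutually independent with the stated marginals''—is false for the true process: given $J_{k_0-1}=c$, the next state $J_{k_0}=\Yw_{k_0}(1+c/I_{k_0})$ is not ${\rm Ga}^{-1}(\lambda_1-\lambda_2)$ distributed, so the one-site identity cannot be applied ``conditionally on the past,'' and the resulting conditional joint law does depend on $J_{k_0-1}$ at every finite range (the dependence only washes out as $k-k_0\to\infty$); hence ``the conditioning can be dropped'' is not justified as written. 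The standard repair, which is in the spirit of your second paragraph but must be done jointly rather than only for the marginal of $J_k$: run the exact, unconditional induction for an auxiliary chain seeded at time $k_0-1$ with an independent ${\rm Ga}^{-1}(\lambda_1-\lambda_2)$ variable $J^{\rm init}$, for which the one-site identity applies verbatim and yields full mutual independence of $\{(\wt I^{(k_0)}_j,\wt\Yw^{(k_0)}_j)\}_{k_0\le j\le k}$ and $J^{(k_0)}_k$ with the stated marginals; then couple with the series-defined process via \eqref{mja}, which gives $J_k-J^{(k_0)}_k=(J_{k_0-1}-J^{\rm init})\prod_{j=k_0}^{k}(\Yw_j/I_j)\to0$ as $k_0\to-\infty$ because $\E[\log(\Yw_0/I_0)]=\psi_0(\lambda_2)-\psi_0(\lambda_1)<0$ and both seeds are tight, and by \eqref{m801a} and \eqref{I_W_def} the corresponding $\wt I_j$, $\wt\Yw_j$ converge as well. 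The true finite-dimensional laws are then weak limits of the auxiliary product laws, which gives part (a); with the full mutual independence so obtained, your deduction of part (b) and your stationarity/ergodicity argument (shift-equivariant factor of the i.i.d.\ process $\{(\Yw_j,I_j)\}$) are fine.
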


Induction leads to the following generalization.


	\begin{lemma}\label{Dop-lm4} 
	Let $\lambda_1>\dotsm > \lambda_N>0$.
	If $I^{\parng{1}{N}}\in(\R_{>0}^\Z)^N$ has the product inverse-gamma distribution $\nu^{\lambda_{\parng{1}{N}}}$ defined in \eqref{nu5}, then $\Dop^{(N)}(I^{\parng{1}{N}})$ has distribution $\nu^{\lambda_N}$.  In other words,  $\Dop^{(N)}(I^{\parng{1}{N}})\in\R_{>0}^\Z$ is a sequence of i.i.d.\  ${\rm Ga}^{-1}(\lambda_N)$ random variables.  
	\end{lemma}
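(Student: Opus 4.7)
The plan is to prove this by induction on $N$, using Lemma~\ref{lm:invg2} as the key ingredient at each step and the recursive definition \eqref{repeat_dog} of $\Dop^{(N)}$.

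\textbf{Base case.} For $N=1$, by definition $\Dop^{(1)}$ is the identity map on $\cI$, so the claim $\Dop^{(1)}(I^1) \sim \nu^{\lambda_1}$ is immediate from the hypothesis $I^1 \sim \nu^{\lambda_1}$.

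\textbf{Inductive step.} Suppose the lemma holds for $N-1$. Let $I^{\parng{1}{N}}$ have distribution $\nu^{\lambda_{\parng{1}{N}}}$ with $\lambda_1 > \cdots > \lambda_N > 0$. By the product structure of $\nu^{\lambda_{\parng{1}{N}}}$ in \eqref{nu5}, the sequence $I^1$ is independent of $I^{\parng{2}{N}}$, and $I^{\parng{2}{N}}$ has distribution $\nu^{\lambda_{\parng{2}{N}}}$. The inductive hypothesis (applied to the tuple $I^{\parng{2}{N}}$ with parameters $\lambda_2 > \cdots > \lambda_N > 0$) yields that $\Dop^{(N-1)}(I^{\parng{2}{N}})$ is a sequence of i.i.d.\ ${\rm Ga}^{-1}(\lambda_N)$ random variables. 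Since $\Dop^{(N-1)}(I^{\parng{2}{N}})$ is a (Borel) function of $I^{\parng{2}{N}}$ alone, it is independent of $I^1$.

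\textbf{Applying the key lemma.} By the recursion \eqref{repeat_dog},
\[
\Dop^{(N)}(I^{\parng{1}{N}}) = \Dop\bigl(I^1, \Dop^{(N-1)}(I^{\parng{2}{N}})\bigr).
\]
We now invoke Lemma~\ref{lm:invg2}\ref{lm:invg2.a} with the roles $\Yw \leftrightarrow I^1$ (having i.i.d.\ ${\rm Ga}^{-1}(\lambda_1)$ entries) and $I \leftrightarrow \Dop^{(N-1)}(I^{\parng{2}{N}})$ (having i.i.d.\ ${\rm Ga}^{-1}(\lambda_N)$ entries), and with parameters $\lambda_1 > \lambda_N$ as required. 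The conclusion of that lemma, applied to the $\wt I$-component, is exactly that $\Dop\bigl(I^1, \Dop^{(N-1)}(I^{\parng{2}{N}})\bigr)$ consists of i.i.d.\ ${\rm Ga}^{-1}(\lambda_N)$ variables, i.e.\ has distribution $\nu^{\lambda_N}$. This completes the induction.

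\textbf{Anticipated obstacles.} The proof is essentially a bookkeeping exercise once Lemma~\ref{lm:invg2} is in hand; the only point requiring care is verifying that the hypotheses of Lemma~\ref{lm:invg2} are met at the inductive step, namely the independence of $I^1$ and $\Dop^{(N-1)}(I^{\parng{2}{N}})$ and the strict inequality $\lambda_1 > \lambda_N$. Both follow directly from the product structure of $\nu^{\lambda_{\parng{1}{N}}}$ and the ordering hypothesis on the parameters, so no genuine difficulty arises.
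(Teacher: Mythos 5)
Your proof is correct and is essentially the paper's own argument: the paper simply states that the lemma follows from Lemma~\ref{lm:invg2} by induction, and your induction via the recursion \eqref{repeat_dog}, using the product structure of $\nu^{\lambda_{\parng{1}{N}}}$ for independence and $\lambda_1>\lambda_N$ for the hypotheses of Lemma~\ref{lm:invg2}, is exactly that argument. (Implicit in both is that $I^{\parng{1}{N}}\in\cI_N^\uparrow$ almost surely, since $\ces(I^i)=-\psi_0(\lambda_i)$ is strictly increasing in $i$, so $\Dop^{(N)}$ is well-defined.)
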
 
	
	


We now identify invariant distributions for the sequential process. 

	\begin{theorem} \label{thm-I} Assume   \eqref{m:exp} and   $\lambda_{\parng{1}{N}}=(\lambda_1,\dotsc,\lambda_N)\in (0,\alpha)^N$.  The product measure  $\nu^{\lambda_{\parng{1}{N}}}$  in \eqref{nu5}  is invariant for the sequential process $Y^{\parng{1}{N}}(\aabullet)$ defined in \eqref{g:multil}.  
	\end{theorem}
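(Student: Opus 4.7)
The plan is to argue by induction on $N$, with Lemma~\ref{lm:invg2}\ref{lm:invg2.b} serving as the engine of the induction. Since $\lambda_i\in(0,\alpha)$ and $\psi_0$ is strictly increasing, each component's Cesàro mean satisfies $\ces(Y^i)=-\psi_0(\lambda_i)>-\psi_0(\alpha)=\ces(\Yw)$, so $\Saop_{\Yw(t)}$ is well-defined on the support of $\nu^{\lambda_{\parng{1}{N}}}$. It suffices to show that if $I^{\parng{1}{N}}\sim\nu^{\lambda_{\parng{1}{N}}}$ is independent of the driving sequence $\Yw^1\sim$ i.i.d.\ Ga$^{-1}(\alpha)$, then the output $\Saop_{\Yw^1}(I^{\parng{1}{N}})$ is again distributed as $\nu^{\lambda_{\parng{1}{N}}}$.

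For the base case $N=1$, the output is $D(\Yw^1,I^1)$ with $\Yw^1$ i.i.d.\ Ga$^{-1}(\alpha)$ and $I^1$ i.i.d.\ Ga$^{-1}(\lambda_1)$ independent; Lemma~\ref{lm:invg2}\ref{lm:invg2.b} (with $\lambda_1<\alpha$) immediately yields $D(\Yw^1,I^1)\sim\nu^{\lambda_1}$. For the inductive step, apply the first layer of \eqref{Saop_def} to produce $\wt I^1=D(\Yw^1,I^1)$ and $\Yw^2=R(\Yw^1,I^1)$. By Lemma~\ref{lm:invg2}\ref{lm:invg2.b}, $\wt I^1\sim$ i.i.d.\ Ga$^{-1}(\lambda_1)$ and $\Yw^2\sim$ i.i.d.\ Ga$^{-1}(\alpha)$ are mutually independent.

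The crucial bookkeeping observation is that $(\wt I^1,\Yw^2)$ is a measurable function of $(\Yw^1,I^1)$, while $(I^2,\dots,I^N)$ is independent of $(\Yw^1,I^1)$ by assumption. Therefore the $(N{+}1)$-tuple $(\wt I^1,\Yw^2,I^2,\dots,I^N)$ is mutually independent with the correct marginals: $\wt I^1\sim\nu^{\lambda_1}$, $\Yw^2\sim$ i.i.d.\ Ga$^{-1}(\alpha)$, and $I^j\sim\nu^{\lambda_j}$ for $j\ge 2$. Now the recursive formulation \eqref{Saop_alt} gives
\[
\Saop_{\Yw^1}(I^{\parng{1}{N}})=\bigl(\wt I^1,\;\Saop_{\Yw^2}(I^{\parng{2}{N}})\bigr),
\]
and the second coordinate is precisely an instance of the sequential transformation in dimension $N-1$ driven by a fresh i.i.d.\ Ga$^{-1}(\alpha)$ sequence $\Yw^2$ independent of the input $(I^2,\dots,I^N)\sim\nu^{\lambda_{\parng{2}{N}}}$. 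The induction hypothesis delivers $\Saop_{\Yw^2}(I^{\parng{2}{N}})\sim\nu^{\lambda_{\parng{2}{N}}}$. Because $\wt I^1$ is independent of the inputs $(\Yw^2,I^2,\dots,I^N)$, it is independent of the measurable image $\Saop_{\Yw^2}(I^{\parng{2}{N}})$, and the joint distribution of the full tuple is the product measure $\nu^{\lambda_{\parng{1}{N}}}$, as required.

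The only potential obstacle is maintaining joint independence across the sequential updates, since the successive driving sequences $\Yw^2,\Yw^3,\dots$ are not supplied externally but manufactured from the previous inputs. This is resolved entirely by the two conclusions of Lemma~\ref{lm:invg2}\ref{lm:invg2.b}: it gives both the correct inverse-gamma marginal for the dual weights \emph{and} their independence from the updated sequence, which is exactly what the induction needs in order to feed a genuinely fresh pair into the next layer of \eqref{Saop_def}.
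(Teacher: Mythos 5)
Your proof is correct and follows essentially the same route as the paper: the paper also runs an induction (on the layer index $k$ rather than on $N$ via \eqref{Saop_alt}, which amounts to the same unfolding) and at each step invokes Lemma~\ref{lm:invg2} to convert the independent pair $(\Yw^k,I^k)$ into the independent pair $(\Dop(\Yw^k,I^k),\Yw^{k+1})$ with the correct inverse-gamma marginals, keeping it independent of the untouched components. Your bookkeeping of joint independence and the check that $\ces(I^i)>\ces(\Yw)$ match the paper's argument.
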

	
\begin{proof}    We assume
$(\Yw^{1}, I^{\parng{1}{N}})\sim\nu^{(\alpha, \lambda_{\parng{1}{N}})}$ and then apply the sequential map \eqref{Saop_def}.
	Utilizing Lemma~\ref{lm:invg2}\ref{lm:invg2.b},  induction on $k$ shows that $\Dop(\Yw^1,I^1),\dotsc, \Dop(\Yw^k,I^k)$, $\Yw^{k+1}$, $I^{k+1},\dotsc, I^N$ are independent with $\Dop(\Yw^i,I^i)\sim\nu^{\lambda_i}$, $\Yw^{k+1}\sim\nu^{\alpha}$, $I^j\sim\nu^{\lambda_j}$.   The case $k=N$ is the claim. 
\end{proof} 	

We have partial uniqueness for Theorem~\ref{thm-I}. Namely,  $\nu^{\lambda_{\parng{1}{N}}}$ is the unique invariant measure among shift-ergodic measures $\nu$ with means 
$\int_{\cI_{N\!,\tsp\kappa}} \log x^i_0\,\nu(\dd x^{\parng{1}{N}})  =  -\psi_0(\lambda_i)$ 
 under two different restricted settings:
\begin{enumerate} [label={\rm(\alph*)}, ref={\rm(\alph*)}] \itemsep=3pt 
 \item  if $\lambda_1,\dots,\lambda_N$ are all distinct, by Corollary~\ref{uniq_cor}\ref{uniq_cor.b};  and 
 \item  if we consider measures whose sequence-valued components are independent, for then each component must be i.i.d.\ inverse-gamma, by the uniqueness in the case $N=1$ applied to each component and by Lemma~\ref{lm:invg2}\ref{lm:invg2.b}. 
 \end{enumerate} 
 We leave further uniqueness as an open problem. 
	

Our next task is to apply Theorem~\ref{B_thm9} to the inverse-gamma case.   We wish to include the original weights in this description, as stated in the preliminary Theorem~\ref{B1_thm}.  This will be achieved by taking the  limit \eqref{B_lim} at the level of measures.  

 With  $\lambda_{\parng{1}{N}}=(\lambda_1,\dotsc,\lambda_N)\in \R_{>0}^N$ such that  $\lambda_1>\dotsm > \lambda_N>0$,  $\nu^{\lambda_{\parng{1}{N}}}$ as in \eqref{nu5}, and the transformation 
$\Daop^{(N)}\colon \cI_{N}^\uparrow\to\cI_N^\uparrow$  as in \eqref{Daop_def}, 
  define these probability  measures on $ \cI_{N}^\uparrow$:  
\be\label{mu_ig} 
\mu^{\lambda_{\parng{1}{N}}}=\nu^{\lambda_{\parng{1}{N}}}\circ(\Daop^{(N)})^{-1}.  \ee 
For the continuity claim below we endow the product space $(\R_{>0}^\Z)^N$ and its subspaces with the product topology. 
  
  	\begin{theorem}\label{murho-th1}    The probability measure $\mu^{\lambda_{\parng{1}{N}}}$ is shift-ergodic and  has the following  properties. 
		
		{\rm (Continuity.)}  The probability measure $\mu^{\lambda_{\parng{1}{N}}}$ is weakly continuous as a function of ${\lambda_{\parng{1}{N}}}$ on the set of vectors that satisfy $\lambda_1>\lambda_2>\dotsm >\lambda_N>0$. 
		
		{\rm (Consistency.)}   If $(\brvX^1, \dotsc, \brvX^N)\sim\mu^{(\lambda_1,\dotsc,\lambda_N)}$, then   for all $j\in\lzb1,N\rzb$, we have
		\eq{
		(\brvX^1, \dotsc,  \brvX^{j-1}, \brvX^{j+1},\dotsc, \brvX^N) \sim \mu^{(\lambda_1,\dotsc,  \lambda_{j-1}, \lambda_{j+1},\dotsc,  \lambda_N)}.
		}
		
	\end{theorem}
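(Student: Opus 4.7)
The plan is to treat the three assertions in the order shift-ergodicity, consistency, and continuity. The first is almost immediate: $\nu^{\lambda_{\parng{1}{N}}}$ is a product of i.i.d.\ sequences and therefore shift-ergodic under $\tau$; since $\Daop^{(N)}$ commutes with $\tau$ by Lemma~\ref{sta_pres}\ref{sta_prea}, its pushforward $\mu^{\lambda_{\parng{1}{N}}}$ inherits shift-ergodicity via Lemma~\ref{sta_pres}\ref{sta_presa}.

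For consistency, fix $j\in\lzb1,N\rzb$ and let $\mu^{(-j)}$ denote the projection of $\mu^{\lambda_{\parng{1}{N}}}$ onto the coordinates $i\ne j$. I plan to identify $\mu^{(-j)}$ with $\mu^{(\lambda_1,\ldots,\lambda_{j-1},\lambda_{j+1},\ldots,\lambda_N)}$ via the uniqueness in Corollary~\ref{uniq_cor}\ref{uniq_cor.a}. Theorem~\ref{thm-I} gives $\nu^{\lambda_{\parng{1}{N}}}\circ\Saop^{-1}=\nu^{\lambda_{\parng{1}{N}}}$, so Theorem~\ref{twm_thm} yields $\mu^{\lambda_{\parng{1}{N}}}\circ\Taop^{-1}=\mu^{\lambda_{\parng{1}{N}}}$. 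Because $\Taop_W$ acts coordinatewise on $N$-tuples, this invariance descends to $\mu^{(-j)}$, and shift-ergodicity passes through projection as well. Lemma~\ref{Dop-lm4} identifies each marginal of $\mu^{\lambda_{\parng{1}{N}}}$ as $\nu^{\lambda_i}$, so $\cesm{\mu^{(-j)}}{i}=-\psi_0(\lambda_i)$ for $i\ne j$, and these means exceed $\kappa=-\psi_0(\alpha)$ since $\lambda_i<\alpha$. Corollary~\ref{uniq_cor}\ref{uniq_cor.a} then applies to both $\mu^{(-j)}$ and $\mu^{(\lambda_1,\ldots,\lambda_{j-1},\lambda_{j+1},\ldots,\lambda_N)}$, forcing them to coincide.

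For continuity, given $\lambda^{(n)}\to\lambda^*$ inside $\{\lambda_1>\cdots>\lambda_N>0\}$, I would couple the laws $\nu^{\lambda^{(n)}}$ on a common probability space through the inverse-CDF construction $Y^{i,\lambda}_k = F^{-1}_{\lambda_i}(U^i_k)$ with $(U^i_k)$ an i.i.d.\ Uniform$(0,1)$ field, so that $Y^{i,\lambda^{(n)}}_k\to Y^{i,\lambda^*}_k$ almost surely for each $(i,k)$. Propagating this convergence through the triangular recursion \eqref{repeat_dog} reduces to iterated applications of the continuity statement in Lemma~\ref{lm:D319}. To supply its domination hypothesis, I would restrict $\lambda^{(n)}$ to a small compact box $K_1\times\cdots\times K_N$ around $\lambda^*$ and introduce the random envelopes $M^i_k = \sup_{\lambda\in K_i} F^{-1}_\lambda(U^i_k)$ and $m^i_k = \inf_{\lambda\in K_i} F^{-1}_\lambda(U^i_k)$. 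Choosing the boxes narrow enough preserves the strict ordering $\E[\log M^i_0]<\E[\log m^{i+1}_0]$ inherited from $-\psi_0(\lambda^*_i)<-\psi_0(\lambda^*_{i+1})$, so the required Ces\`aro inequalities hold a.s.\ by the strong law of large numbers. Lemma~\ref{mon_lem}\ref{mon_lema} guarantees that each intermediate output $\Dop^{(i-1)}(Y^{\parng{2}{i}})$ dominates $Y^2$ and hence $m^2$, allowing the lower bounds to propagate through successive layers of the recursion. Iterating Lemma~\ref{lm:D319} then yields coordinatewise a.s.\ convergence of $\Daop^{(N)}$, hence the weak convergence $\mu^{\lambda^{(n)}}\to\mu^{\lambda^*}$.

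The main obstacle is the continuity step: although each individual application of Lemma~\ref{lm:D319} is routine, verifying the required Ces\`aro inequalities \emph{simultaneously} at every level of the triangular recursion demands careful selection of the compact parameter box together with uniform integrability of $\log F^{-1}_\lambda(U)$ across that box. Once this uniform control is in place, the remainder is bookkeeping along the inductive definition \eqref{Daop_def}.
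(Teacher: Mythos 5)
Your proposal is correct and follows essentially the same route as the paper's proof: shift-ergodicity from the $\tau$-equivariance of $\Daop$ (Lemma~\ref{sta_pres}), consistency from the fact that coordinate projection commutes with the parallel map together with the uniqueness in Corollary~\ref{uniq_cor} (one of the two routes the paper itself indicates), and continuity from the inverse-CDF coupling with iterated applications of Lemma~\ref{lm:D319}, your box-endpoint envelopes $(M^i,m^{i+1})$ playing exactly the role of the paper's intermediate-parameter sequences $\wh I^{\tsp i},\wc I^{\tsp i}$ and Lemma~\ref{mon_lem} supplying the lower bounds at the deeper levels of the recursion just as in the paper. Two minor remarks: since the theorem allows arbitrary $\lambda_1>\dotsm>\lambda_N>0$, the uniqueness step should be run with driving weights ${\rm Ga}^{-1}(\alpha')$ for some $\alpha'>\lambda_1$ (legitimate because $\mu^{\lambda_{\parng{1}{N}}}$ does not involve the driving weights), and the uniform-integrability worry in your last paragraph is unnecessary, as the envelopes are i.i.d.\ inverse-gamma sequences at fixed parameters whose Ces\`aro means are pinned down by the strong law, and Lemma~\ref{ces_lem} then fixes the Ces\`aro means of every intermediate output.
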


We prove Theorem~\ref{murho-th1}  after completing the main result of this section and thereby proving Theorem~\ref{B1_thm}.  
Recall the notation  $\Yw(t)=(\Yw_{(k,t)})_{k\in\Z}$  and   $ {\brvI}^{\chdir\sig}(t) = 
(e^{\Bus^{\chdir\sig}_{(k-1,t),\tsp (k,t)}})_{k\in\Z}$.

	\begin{theorem}\label{B2_thm} Assume \eqref{m:exp} and let $N\in\Z_{>0}$.  Let  $\chdir_1\succ\dotsm\succ\chdir_N$ be directions  in $\,]\evec_2,\evec_1[\,$ and  $\sigg_1,\dotsc,\sigg_N$ signs in $\{-,+\}$. 
 	 Then at  each level  $t\in\Z$,  we have 
	 \eeq{ \label{jc9fa}
	 (\Yw(t),\,{\brvI}^{\chdir_1\sig_1}(t),\dotsc, {\brvI}^{\chdir_N\sig_N}(t)) \sim
	 \mu^{(\alpha, \tspa \alpha-\rho(\chdir_1),\dotsc, \tspa\alpha-\rho(\chdir_N))}.
	 } 
	\end{theorem}

\begin{proof}  Pick one more direction $\chdir_0\in\,]\chdir_1, \evec_1[$ and sign $\sigg_0 \in \{-,+\}$.  Think of $\lambda_{\parng{0}{N}}=(\alpha-\rho(\chdir_0), \tspa \alpha-\rho(\chdir_1),\dotsc, \tspa\alpha-\rho(\chdir_N))$ as a function of $\chdir_0$ while $\chdir_{\parng{1}{N}}$ are held fixed.    By Theorem~\ref{thm-I}, $\nu^{\lambda_{\parng{0}{N}}}$ is invariant for the sequential process with $N+1$ components.  By Corollaries~\ref{twm_cor} and \ref{uniq_cor}\ref{uniq_cor.a},   $\mu^{\lambda_{\parng{0}{N}}}$ of \eqref{mu_ig}  is the unique shift-ergodic invariant distribution of  the parallel process, with the given logarithmic means.   By Theorem~\ref{B_thm9},  $\mu^{\lambda_{\parng{0}{N}}}$   is the   distribution of ${\brvI}^{(\chdir\sig)_{\parng{0}{N}}}(t)$.   

As the final step, let $\chdir_0\nearrow\evec_1$.   Then $\lambda_{\parng{0}{N}}\to (\alpha, \tspa \alpha-\rho(\chdir_1),\dotsc, \tspa\alpha-\rho(\chdir_N))$ and by Theorem~\ref{murho-th1}, 
$\mu^{\lambda_{\parng{0}{N}}}\to\mu^{(\alpha, \tspa \alpha-\rho(\chdir_1),\dotsc, \tspa\alpha-\rho(\chdir_N))}$.  
By \eqref{B_lim},   ${\brvI}^{(\chdir\sig)_{\parng{0}{N}}}(t)\to (\Yw(t),\,{\brvI}^{(\chdir\sig)_{\parng{1}{N}}}(t))$ almost surely.   Thus in the limit we obtain \eqref{jc9fa}. 
\end{proof} 

\begin{proof}[Proof of Theorem~\ref{murho-th1}]  
Shift-ergodicity of $\mu^{\lambda_{\parng{1}{N}}}$ follows from shift-ergodicity of  $\nu^{\lambda_{\parng{1}{N}}}$, because of Lemma~\hyperref[sta_prea]{\ref*{sta_pres}\ref*{sta_prea}}.
Consistency follows from the uniqueness of $\mu^{\lambda_{\parng{1}{N}}}$ as the invariant distribution of the parallel transformation because the projection in question commutes with the transformation.  
 We prove the continuity claim by constructing coupled configurations that converge almost surely. 

 Fix $\lambda_{\parng{1}{N}}=(\lambda_1,\dotsc,\lambda_N)$ such that  $\lambda_1>\dotsm>\lambda_N>0$.   Let  $\{\lambda_{\parng{1}{N}}^h\}_{h\in\Z_{>0}}$  be a sequence of parameter vectors such that  $\lambda_{\parng{1}{N}}^h=(\lambda^h_1,\dotsc,\lambda^h_N)\to(\lambda_1,\dotsc,\lambda_N)$ as $h\to\infty$. 
 %
%
%
Let $\{U^i_k\}^{i\in\lzb1,N\rzb}_{k\in\mathbb{Z}}$ be i.i.d.\ uniform variables on $(0,1)$.  
For $\lambda\in (0,\infty)$,  let $F^{-1}_{\lambda}$ be the inverse of the cumulative distribution function of the Ga$^{-1}$($\lambda$) distribution. To obtain sequences $I^{\parng{1}{N}}=(I^1,\dotsc,I^N)\sim\nu^{\lambda_{\parng{1}{N}}}$ and $I^{h, \tspa\parng{1}{N}}=(I^{h,1},\dotsc,I^{h,N})\sim\nu^{\lambda_{\parng{1}{N}}^h}$, set $I^i_k=F^{-1}_{\lambda_i}(U^i_k)$ and $I^{h,i}_k=F^{-1}_{\lambda^h_i}(U^i_k)$.
Then we have the pointwise limits $I^{h,i}_k\to I^i_k$ for all $i\in\lzb1,N\rzb$ and $k\in\Z$ as $h\to\infty$.   

Define the outputs $X^{h, \tspa\parng{1}{N}}=\Daop^{(N)}(I^{h, \tspa\parng{1}{N}})\sim\mu^{\lambda_{\parng{1}{N}}^h}$ and  $X^{\parng{1}{N}}=\Daop^{(N)}(I^{\parng{1}{N}})\sim\mu^{\lambda_{\parng{1}{N}}}$. 
To show $\mu^{\lambda_{\parng{1}{N}}^h}\to\mu^{\lambda_{\parng{1}{N}}}$ weakly, we verify that $X^{h, \tspa\parng{1}{N}}\to X^{\parng{1}{N}}$ coordinatewise almost surely, as $h\to\infty$.     For the latter we turn to Lemma~\ref{lm:D319}.   To satisfy its hypothesis, for each $i\in\lzb1,N-1\rzb$ fix intermediate parameter values $\wh\lambda_i$ and $\wc\lambda_i$ so that 
$\lambda^h_i>\wh\lambda_i>\wc\lambda_i>\lambda^h_{i+1}$ holds for large enough $h$.    Define intermediate weight sequences by 
$\wh I^{i}_k=F^{-1}_{\wh\lambda_i}(U^i_k)$ and $\wc I^{i}_k=F^{-1}_{\wc\lambda_i}(U^i_k)$.   
Then 
\begin{subequations} \label{gcerf3}
\be\label{B4512}   \bigl(\wh I^{i} , \,  \wc I^{i} \bigr) \in \cI_2^\uparrow  \quad \text{for all $i\in\lzb1,N-1\rzb$}  
\ee
and for  large enough  $h$ we  have the inequalities 
\be\label{B4509} 
I^{h,i}_k < \wh I^{i}_k <  \wc I^{i}_k < I^{h,i+1}_k \quad \text{for all $i\in\lzb1,N-1\rzb$, $k\in\Z$}. 
\ee
\end{subequations}
These follow because $\lambda\mapsto F^{-1}_{\lambda}(u)$ is strictly decreasing.  

We verify the desired limits $X^{h, \tspa\parng{1}{N}}\to X^{\parng{1}{N}}$ inductively. 
  
		
		\smallskip 
		
		(1)  $X^{h,1}=I^{h,1}\to I^1=X^1$ needs no proof.   

		\smallskip 
				
		(2) For each $i\in\lzb1,N-1\rzb$  apply Lemma~\ref{lm:D319} to the pair $(\Yw,I)=(I^{h,i}, I^{h,i+1})$ with $(\Yw'', I')=(\wh I^{i} ,  \wc I^{i}) $.  The hypotheses of Lemma~\ref{lm:D319} are in \eqref{gcerf3}. 
  This gives the limit  $\Dop(I^{h,i}, I^{h,i+1}) \to  \Dop(I^{i}, I^{i+1})$ and in particular, $X^{h,2}=\Dop(I^{h,1}, I^{h,2}) \to  \Dop(I^{1}, I^{2})=X^2$.

		\smallskip

		(3) \textit{Induction step.}   Suppose we have the limits 
$\Dop^{(k)}(I^{h,\tspa \parng{i}{i+k-1}}) \to \Dop^{(k)}(I^{ \parng{i}{i+k-1}})$ for $i\in\lzb1,N-k+1\rzb$.   For each $i\in\lzb1,N-k\rzb$  apply Lemma~\ref{lm:D319} to the pair $(\Yw,I)=(I^{h,i}, \Dop^{(k)}(I^{h,\tspa\parng{i+1}{i+k}}))$ again with $(\Yw'', I')=(\wh I^{i} ,  \wc I^{i})$.     From \eqref{B4509} and an inductive application of Lemma~\ref{mon_lem} we have 
\[    I^{h,i} < \wh I^{i}  =  \Yw'' < I' = \wc I^{i} <  I^{h,i+1} <  \Dop^{(k)}(I^{h,\tspa\parng{i+1}{i+k}}).  \]
The   hypotheses of Lemma~\ref{lm:D319} are met, so we get the limits 
\eeq{ \label{4vx4c}  
\Dop(I^{h,i}, \Dop^{(k)}(I^{h,\tspa\parng{i+1}{i+k}})) 
\to    \Dop(I^{i}, \Dop^{(k)}(I^{\parng{i+1}{i+k}})) 
}
 for $i\in\lzb1,N-k+1\rzb$.   The case $i=1$ is $X^{h, k+1}\to X^{k+1}$. 
 Since the left-hand side of \eqref{4vx4c} is $\Dop^{(k+1)}(I^{h,\tspa\parng{i}{i+k}})$, while the right-hand side is $\Dop^{(k+1)}(I^{ \parng{i}{i+k}})$, the induction is complete.
\end{proof}

\subsection{Triangular array construction of the intertwining mapping} \label{sec:array}  
	
	To extract further properties of the law of the Busemann process, we  develop a triangular array  description of the mapping $\arrX=\Daop^{(N)}(I)$ of \eqref{Daop_def}.   Figure~\ref{fig-ar}   represents  the resulting arrays  graphically according to a matrix convention.   There is no probability in this section and the weights  are arbitrary strictly positive reals.  Still, we place this section here 
	because its application to  inverse-gamma weights comes immediately in the next section. 
The proofs of this section are structurally \newa{similar} to those in \cite{fan-sepp-20} for last-passage percolation, after de-tropicalization, that is, after replacement of the max-plus operations of \cite{fan-sepp-20} with standard $(+,\aabullet)$ algebra.

	
\begin{definition}[Array algorithm]  \label{def:arr} 	
	Assume given $I^{\parng{1}{N}}=(I^1,\dotsc,I^N)\in \cI_N^\uparrow$.  
	Define arrays  $\{\arrX^{i,j}:\,  1\le j\le i\le N\}$ and $\{\arrV^{i,j}:\,  1\le j\le i\le N\}$ of elements of $\R_{>0}^\Z$  as follows.  In the inductive definition below index $i$ increases from 1 to $N$, and for each fixed $i$ the second  index $j$ increases from $1$ to $i$.   The $\arrV$ variables are passed from one $i$ level to the next. 
	\begin{enumerate}  [label={\rm(\alph*)}, ref={\rm(\alph*)}]   \itemsep=4pt
		\item \label{def:arra} For $i=1$ set  $\arrX^{1,1}=I^1=\arrV^{1,1}$. 
		\item For $i=2,3,\dotsc,N$, 
		\be\label{m:ar5} \begin{aligned}
			&\ \arrX^{i,1}=I^i, \\
			&\begin{cases}  \arrX^{i,j} =\Dop( \arrV^{i-1,j-1}, \arrX^{i,j-1})\\[3pt]
				\arrV^{i,j-1}= \Rop( \arrV^{i-1,j-1}, \arrX^{i,j-1})  \end{cases} 
			\quad\text{for } j=2,3 \dotsc, i, \\
			&\; \,\arrV^{i,i}=\arrX^{i,i}.  
		\end{aligned}\ee
		Step $i$ takes inputs from two sources:  from the outside it takes $I^i$, and from step $i-1$ it takes the    configuration  $\arrV^{i-1, \tspa\parng{1}{i-1}}=(\arrV^{i-1, 1},  \arrV^{i-1, 2}, \dotsc,  \arrV^{i-1,i-2},  \arrV^{i-1,i-1}=\arrX^{i-1,i-1})$.   \qedex
	\end{enumerate}  
\end{definition} 	
	
 \begin{figure}
 \[
 \begin{matrix}
 \arrX^{1,1} \\[2pt] 
 \arrX^{2,1} &\arrX^{2,2} \\[2pt]
  \arrX^{3,1} &\arrX^{3,2} & \arrX^{3,3} \\
 \vdots& \vdots& \vdots&  \ddots \\[1pt]  
   \arrX^{N,1} &\arrX^{N,2} & \arrX^{N,3}  & \dotsm&  \arrX^{N,N}
  \end{matrix}  
  \qquad \qquad 
  \begin{matrix}
 \arrV^{1,1} \\[2pt]
 \arrV^{2,1} &\arrV^{2,2} \\[2pt]
  \arrV^{3,1} &\arrV^{3,2} & \arrV^{3,3} \\
 \vdots& \vdots& \vdots&  \ddots \\[1pt] 
   \arrV^{N,1} &\arrV^{N,2} & \arrV^{N,3}  & \dotsm&  \arrV^{N,N}
  \end{matrix}  
\] 
\caption{\small Arrays $\{\arrX^{i,j}:\,  1\le j\le i\le N\}$ and $\{\arrV^{i,j}:\,  1\le j\le i\le N\}$. The input  $I^{\parng{1}{N}}=(I^1,\dotsc,I^N)$ enters on the left edge of the $\arrX$-array as the first column $(\arrX^{1,1}, \arrX^{2,1},\dotsc,\arrX^{N,1})=(I^1, I^2,\dotsc,I^N)$. The output appears in the rightmost diagonal of both arrays as $(\arrX^{1,1}, \arrX^{2,2},\dotsc,\arrX^{N,N})=(\arrV^{1,1}, \arrV^{2,2},\dotsc,\arrV^{N,N})=\Daop^{(N)}(I^{\parng{1}{N}})$, as proved in Lemma~\ref{lm:eta5}.}
\label{fig-ar} 
\medskip 
\end{figure}


%
%
%
	
	Lemma~\ref{ces_lem} ensures that the arrays are well-defined for $I^{\parng{1}{N}}\in \cI_N^\uparrow$.  
	The inputs $I^1,\dotsc,I^N$ enter the algorithm one by one in order.  If the process is stopped after the step  $i=m$ is completed for some $m<N$, it produces  the arrays for $(I^1,\dotsc,I^m)\in \cI_m^\uparrow$.   
	
The description in \eqref{m:ar5} constructs the arrays row by row.  Observing the $\arrX$-array column by column from left to right, one sees the sequential transformation in action.  For $j\in\lzb2,N\rzb$,   the mapping from column $\arrX^{\parng{j-1}{N},j-1}$ to column  $\arrX^{\parng{j}{N},j}$ is the sequential transformation  
\be\label{arrX-S}     \arrX^{\parng{j}{N},j}=\Saop_{\arrX^{j-1,j-1}}(\arrX^{\parng{j}{N},j-1})  \ee
on $(N-j+1)$-tuples of sequences,  with the first input sequence $\arrX^{j-1,j-1}$ used as the driving weights. 	
	
	\begin{lemma}\label{lm:eta5}  Let   $I=(I^1,\dotsc,I^N)\in \cI_N^\uparrow$.  
		Let  
		$(\wt\arrX^1,\dotsc,\wt\arrX^N)=\Daop^{(N)}(I^1,\dotsc,I^N)$
		be given by the   mapping   \eqref{Daop_def}.  Let $\{\arrX^{i,j}\}$ and $\{\arrV^{i,j}\}$ be the arrays defined in \eqref{m:ar5}  above.   Then $\wt\arrX^i=\arrX^{i,i}=\arrV^{i,i}$ for $i=1,\dotsc,N$. 
	\end{lemma}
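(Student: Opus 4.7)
My plan is to prove by induction on $N$ the stronger statement that $\arrX^{i,i} = \Dop^{(i)}(I^{\parng{1}{i}})$ for each $i \in \lzb 1, N \rzb$; the equality $\arrV^{i,i} = \arrX^{i,i}$ is built directly into Definition~\ref{def:arr}. Since step $i$ of the algorithm consumes only $I^1,\dotsc,I^i$, the value of $\arrX^{i,i}$ is unchanged if one runs the algorithm with just the first $i$ inputs, so it suffices to treat the terminal diagonal entry $\arrX^{N,N}$. The base case $N = 1$ is immediate from part~\ref{def:arra} of the definition.

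For the inductive step, assume the statement for all input lengths at most $N-1$. The key observation is that the sub-array $\{\arrX^{i,j}, \arrV^{i,j} : 2 \le j \le i \le N\}$ obtained by deleting row $1$ and column $1$ coincides with the array produced by Definition~\ref{def:arr} on the shorter input tuple $(\arrX^{2,2}, \arrX^{3,2}, \dotsc, \arrX^{N,2})$ (namely, column $2$ of the original), a fact that follows from a direct reading of the recursions in \eqref{m:ar5} together with the identity $\arrV^{1,1} = \arrX^{1,1} = I^1$. The inductive hypothesis applied to this sub-array then gives
\[
\arrX^{N,N} = \Dop^{(N-1)}\bigl(\arrX^{2,2}, \arrX^{3,2}, \dotsc, \arrX^{N,2}\bigr).
\]

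To identify the right-hand side with $\Dop^{(N)}(I^{\parng{1}{N}})$, I will invoke the column identity \eqref{arrX-S} at $j=2$, namely $(\arrX^{2,2}, \dotsc, \arrX^{N,2}) = \Saop_{I^1}(I^2, \dotsc, I^N)$. Unpacking the definition \eqref{Saop_def} of $\Saop$ gives $\arrX^{k,2} = \Dop(W^{k-1}, I^k)$ for $k = 2,\dotsc,N$, where $W^1 = I^1$ and $W^j = \Rop(W^{j-1}, I^j)$ for $j \ge 2$. After relabeling $\widetilde W^1 = W^1 = I^1$ and $\widetilde I^j = I^{j+1}$ so that $\widetilde W^j = W^j$ for every $j$, identity \eqref{dogpart} of Lemma~\ref{m:D-lm6} applied to $(\widetilde W^1, \widetilde I^{\parng{1}{N-1}})$ reads
\[
\Dop^{(N)}(I^{\parng{1}{N}}) = \Dop^{(N-1)}\bigl(\Dop(W^1, I^2), \dotsc, \Dop(W^{N-1}, I^N)\bigr) = \Dop^{(N-1)}\bigl(\arrX^{2,2}, \dotsc, \arrX^{N,2}\bigr),
\]
which combined with the previous display closes the induction. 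The only delicate point is aligning the $\Rop$-recursion generated by $\Saop_{I^1}$ with the weight recursion in the hypothesis of Lemma~\ref{m:D-lm6}, but since both recursions have the same structural form after the relabeling above, this reduces to careful index bookkeeping rather than a genuine mathematical obstacle.
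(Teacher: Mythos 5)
Your proof is correct and is essentially the paper's argument in different packaging: both hinge on the identity \eqref{dogpart} of Lemma~\ref{m:D-lm6} to pass from one column of the array to the next, the paper iterating it explicitly over $\ell=1,\dots,N-2$ while you wrap the same iteration into an induction on $N$ via the (correct, easily checked) observation that deleting the first row and column reproduces the algorithm run on column $2$. The only point left tacit is that the inductive hypothesis needs $(\arrX^{2,2},\dots,\arrX^{N,2})\in\cI_{N-1}^\uparrow$, which follows at once from the Ces\`aro-mean preservation of Lemma~\ref{ces_lem} (equivalently \eqref{S_pres} applied to \eqref{arrX-S}), so there is no real gap.
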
  
	
	\begin{proof}  
		It suffices to prove $\wt\arrX^N=\arrX^{N,N}$  because the same proof applies to all $i$. 
		
		Let  $\ell\in\lzb 1,N-1\rzb$.  In the $\arrX$-array of Figure~\ref{fig-ar},  consider  the step from column $\ell$ to column $\ell+1$.   This is done by  transforming  the $(N-\ell+1)$-vector 
		$\bigl( \arrX^{\ell,\ell}, \arrX^{\ell+1,\ell} , \dotsc,   \arrX^{N, \ell} \bigr)$
		into  the $(N-\ell)$-vector 
		\be\label{n-ell}\begin{aligned}
			&\bigl(\arrX^{\ell+1,\ell+1},  \arrX^{\ell+2,\ell+1} , \dotsc,  \arrX^{N, \ell+1} \bigr)  \\
			& =   \bigl(  \Dop(\arrV^{\ell,\ell}, \arrX^{\ell+1, \ell}),  \Dop(\arrV^{\ell+1,\ell}, \arrX^{\ell+2, \ell}) , \dotsc ,  \Dop(\arrV^{N-1,\ell}, \arrX^{N, \ell})  \bigr).
		\end{aligned}\ee
		The $\arrV$-variables above satisfy 
		\begin{align*}
			&\arrV^{\ell,\ell}=\arrX^{\ell,\ell}, \quad 
			\arrV^{\ell+1,\ell}=\Rop(\arrV^{\ell,\ell}, \arrX^{\ell+1, \ell}), \quad \dotsc   
			\quad \arrV^{N-1,\ell}=  \Rop(\arrV^{N-2,\ell}, \arrX^{N-1, \ell}). 
		\end{align*}
		Invoking \eqref{dogpart} and then \eqref{n-ell} 
		gives
		\be\label{m:ar8} \begin{aligned}
			&\Dop^{(N-\ell+1)}\bigl( \arrX^{\ell,\ell}, \arrX^{\ell+1,\ell}  , \dotsc ,    \arrX^{N, \ell} \bigr) \\
			&= \Dop^{(N-\ell)}\bigl( \Dop(\arrV^{\ell,\ell}, \arrX^{\ell+1, \ell}),   \Dop(\arrV^{\ell+1,\ell}, \arrX^{\ell+2, \ell})  , \dotsc,  \Dop(\arrV^{N-1,\ell}, \arrX^{N, \ell})\bigr)\\   
			&=  \Dop^{(N-\ell)}\bigl(\arrX^{\ell+1,\ell+1},  \arrX^{\ell+2,\ell+1} , \dotsc , \arrX^{N, \ell+1} \bigr). 
		\end{aligned}\ee
		In the derivation below, use the first line of \eqref{m:ar5} to replace each $I^i$ with $\arrX^{i,1}$.  Then iterate \eqref{m:ar8}  from $\ell=1$  to $\ell=N-2$ to obtain  
		\begin{align*}
			&\wt\arrX^{N}=\Dop^{(N)}(I^1, I^2, \dotsc, I^N)  
			=  \Dop^{(N)}\bigl(\arrX^{1, 1},  \arrX^{2, 1}, \dotsc, \arrX^{N, 1} \bigr)\\
			&=  \Dop^{(N-1)}\bigl(\arrX^{2,2},  \arrX^{3,2} , \dotsc,  \arrX^{N, 2} \bigr)\\
			&=\dotsm= \Dop^{(3)}(\arrX^{N-2,N-2} , \arrX^{N-1,N-2}, \arrX^{N,N-2} ) = \Dop( \arrX^{N-1,N-1}, \arrX^{N,N-1} )=\arrX^{N,N}. 
			\qedhere \end{align*} 
	\end{proof}

Before turning to inverse-gamma weights, we make an observation about geometric RSK. 

\begin{remark}[Ingredients of geometric row insertion]   \label{rmk:a_grsk}  As in Section~\ref{sec:Sgrsk}, to observe  the geometric row insertion in algorithm \eqref{m:ar5}, we switch from ratio variables $\arrX^{i,j}_m$ to polymer partition functions $\arrZ^{i,j}_m$.
  Since step~\ref{def:arra} in Definition~\ref{def:arr}  is just a straightforward assignment  for $i=1$, let $i\ge 2$.  

For each $i\ge2$ repeat these steps.  Given the input $I^i$, pick an initial sequence  $\arrZ^{i,1}$ that satisfies  
$\arrZ^{i,1}_k/\arrZ^{i,1}_{k-1}=I^i_k$.   Then, with the additional input $\arrV^{i-1,\tspa\parng{1}{i-1}}$ from the previous round  $i-1$, for  $j=2,\dotsc,i $   and  $m\in\Z$  define partition functions 
	\eq{ 
	\arrZ^{i,j}_{m}=\sum_{\ell:\,\ell\le m}    \arrZ^{i,j-1}_\ell\,\prod_{k=\ell}^{m} \arrV^{i-1,j-1}_k \,.
	}
The outputs $\arrX^{i,j}$ are the ratio  variables $\arrX^{i,j}_m=\arrZ^{i,j}_m/\arrZ^{i,j}_{m-1}$.  Along the way, construct the auxiliary outputs $\arrV^{i, \tspa\parng{1}{i}}$ as in \eqref{m:ar5}.  
	
In the variables $(\arrZ, \arrV)$, equations \eqref{m:ar5} can be represented by the following iteration as the $m$-index runs from $-\infty$ to $\infty$:  
\be\label{Z359} \begin{aligned}     \arrZ^{i,1}_m &=  \arrZ^{i,1}_{m-1} \tspa I^i_m,  \\
 \arrZ^{i,j}_{m} &=  (\arrZ^{i,j}_{m-1}  + \arrZ^{i,j-1}_{m}) \arrV^{i-1,j-1}_m , \quad  j=2,\dotsc,i, \\
 \arrV^{i,j-1}_m&=  \arrV^{i-1,j-1}_m \tspa \frac{\arrZ^{i,j}_{m-1} \arrZ^{i,j-1}_m}{{\arrZ^{i,j-1}_{m-1}}    {\arrZ^{i,j}_m}   } 
  , \quad  j=2,\dotsc,i, \\
 \arrV^{i,i}_m&=    \frac{\arrZ^{i,i}_m}{{\arrZ^{i,i}_{m-1}}  } .  
  \end{aligned}\ee
Comparison with \eqref{g-row-ins} shows that the first three lines of \eqref{Z359} constitute  the geometric  row insertion 
\[
{\small \begin{array}{ccc}
&  ( I^i_m, \arrV^{i-1,\tspa\parng{1}{i-1}}_m  )  & \\[4pt]
Z^{i, \parng{1}{i}}_{m-1}   & \cross &  Z^{i, \parng{1}{i}}_m  \\[6pt] 
& \arrV^{i,\tspa\parng{1}{i-1}}_m
\end{array} } 
\]
In a network in the style of Figure~\ref{fig:Z200}, the next row insertion below would be 
\[
{\small \begin{array}{ccc}
&  ( I^{i+1}_m, \arrV^{i,\tspa\parng{1}{i}}_m  )  & \\[4pt]
Z^{i+1, \parng{1}{i+1}}_{m-1}   & \cross &  Z^{i+1, \parng{1}{i+1}}_m  \\[6pt] 
& \arrV^{i+1,\tspa\parng{1}{i}}_m
\end{array} } 
\]
As we go vertically down from line $i$ to line $i+1$,   the length of the $Z$-vectors increases from  $i$ to $i+1$. To match this length, the output $ \arrV^{i,\tspa\parng{1}{i-1}}_m$ of length $i-1$ from line $i$  is augmented by the inclusion of $I^{i+1}_m$ from the initial input and by $\arrV^{i,i}_m$ from the fourth line of  equation \eqref{Z359}, and then fed into the row insertion at line $i+1$. 
\qedex\end{remark}


\subsection{Array with inverse-gamma weights} \label{ss_inv_ga}
	
This section derives properties of the array  under inverse-gamma weights and culminates in the proof of Theorem~\ref{B-th5}.

	\begin{lemma}\label{lm:ar-dist}  
	   Fix  $N\in\Z_{>0}$ and  $\lambda_1>\dotsm>\lambda_N>0$.  
	Let   $I^{\parng{1}{N}}=(I^1, \dotsc, I^N)$ have law 
		$\nu^{(\lambda_1,\dotsc, \lambda_N)}$.  Then the following hold for the arrays  $\{\arrX^{i,j}\}$ and $\{\arrV^{i,j}\}$ from \eqref{m:ar5}.
		\begin{enumerate}  [label={\rm(\alph*)}, ref={\rm(\alph*)}]   \itemsep=3pt 
			\item\label{lm:ar-dist.i}  Both arrays have the law $\mu^{(\lambda_1,\dotsc,\lambda_N)}$  on the right diagonal. That is,
			$$(\arrX^{1,1},\dotsc,\arrX^{N,N})=(\arrV^{1,1},\dotsc,\arrV^{N,N})\sim \mu^{(\lambda_1,\dotsc,\lambda_N)}.$$
			\item\label{lm:ar-dist.ii} For each $i\in\lzb 1,N\rzb$,   the row $(\arrV^{i,1}, \arrV^{i,2},  \dotsc, \arrV^{i,i})$ has  law $\nu^{(\lambda_1, \,\lambda_{2},\dotsc, \,\lambda_i)}$. 
			\item\label{lm:ar-dist.iii} For each  $j\in\lzb 1,N\rzb$,   the column $(\arrX^{j,j}, \arrX^{j+1,j},  \dotsc, \arrX^{N,j})$ has  law  $\nu^{(\lambda_j,\dotsc,\lambda_N)}$.
		\end{enumerate}
	\end{lemma}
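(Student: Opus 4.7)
The plan is to handle the three parts separately, in the order~\ref{lm:ar-dist.i}, \ref{lm:ar-dist.iii}, \ref{lm:ar-dist.ii}. Part~\ref{lm:ar-dist.i} is immediate from Lemma~\ref{lm:eta5}, which identifies the right diagonal of both arrays with $\Daop^{(N)}(I^{\parng{1}{N}})$: the hypothesis $I^{\parng{1}{N}}\sim\nu^{(\lambda_1,\dotsc,\lambda_N)}$ combined with the definition $\mu^{(\lambda_1,\dotsc,\lambda_N)}=\nu^{(\lambda_1,\dotsc,\lambda_N)}\circ(\Daop^{(N)})^{-1}$ from~\eqref{mu_ig} yields the diagonal law at once.

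For part~\ref{lm:ar-dist.iii}, I would induct on $j$ using the structural identity~\eqref{arrX-S}, which expresses column $j$ as $\Saop_{\arrX^{j-1,j-1}}(\arrX^{\parng{j}{N},j-1})$. The base case $j=1$ is just the hypothesis on $(\arrX^{1,1},\dotsc,\arrX^{N,1})=(I^1,\dotsc,I^N)$. For the inductive step, the inductive hypothesis on column $j-1$ supplies $(\arrX^{j-1,j-1},\arrX^{j,j-1},\dotsc,\arrX^{N,j-1})\sim\nu^{(\lambda_{j-1},\lambda_j,\dotsc,\lambda_N)}$, and the product form makes the driving sequence $\arrX^{j-1,j-1}\sim\nu^{\lambda_{j-1}}$ independent of $(\arrX^{j,j-1},\dotsc,\arrX^{N,j-1})\sim\nu^{(\lambda_j,\dotsc,\lambda_N)}$. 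Theorem~\ref{thm-I}, whose proof via Lemma~\ref{lm:invg2} is valid with the driving parameter $\alpha$ replaced by any value strictly larger than the component parameters, then guarantees that $\Saop_{\arrX^{j-1,j-1}}$ preserves the product law $\nu^{(\lambda_j,\dotsc,\lambda_N)}$; this is precisely the distribution of column $j$.

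For part~\ref{lm:ar-dist.ii}, I would induct on $i$ and iterate Lemma~\ref{lm:invg2}\ref{lm:invg2.b} through the row-$i$ construction in~\eqref{m:ar5}. The base case $i=1$ is $\arrV^{1,1}=I^1\sim\nu^{\lambda_1}$. Assuming $(\arrV^{i-1,1},\dotsc,\arrV^{i-1,i-1})\sim\nu^{(\lambda_1,\dotsc,\lambda_{i-1})}$ and noting its independence from $I^i$ (since row $i-1$ is a function of $I^1,\dotsc,I^{i-1}$), I would initialize $\arrX^{i,1}=I^i\sim\nu^{\lambda_i}$ and step through $j=2,\dotsc,i$. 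At step $j$ the pair $(\arrV^{i-1,j-1},\arrX^{i,j-1})$ consists of two independent sequences with marginals $\nu^{\lambda_{j-1}}$ and $\nu^{\lambda_i}$; since $\lambda_{j-1}>\lambda_i$, Lemma~\ref{lm:invg2}\ref{lm:invg2.b} produces mutually independent outputs $\arrV^{i,j-1}=\Rop(\arrV^{i-1,j-1},\arrX^{i,j-1})\sim\nu^{\lambda_{j-1}}$ and $\arrX^{i,j}=\Dop(\arrV^{i-1,j-1},\arrX^{i,j-1})\sim\nu^{\lambda_i}$. As measurable functions of the consumed pair, these outputs are also independent of the previously extracted $\arrV^{i,1},\dotsc,\arrV^{i,j-2}$ and of the untouched tail $(\arrV^{i-1,j},\dotsc,\arrV^{i-1,i-1})$. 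Terminating at $j=i$ with $\arrV^{i,i}=\arrX^{i,i}$ yields the product law $\nu^{(\lambda_1,\dotsc,\lambda_i)}$ on row $i$.

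The main obstacle is the independence bookkeeping in part~\ref{lm:ar-dist.ii}: each iteration relies on the freshly produced $\arrV^{i,j-1}$ and $\arrX^{i,j}$ being jointly independent of all other quantities still in play. To make this propagation transparent, I would phrase the induction with a strengthened statement asserting, at every intermediate stage $j$, the joint product independence of the extracted row-$i$ variables, the current $\arrX^{i,j}$, and the remaining tail of row $i-1$; each subsequent application of Lemma~\ref{lm:invg2}\ref{lm:invg2.b} then consumes only pairs that the induction has already declared independent of everything else, so the scheme closes.
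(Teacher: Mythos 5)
Your proposal is correct and follows essentially the same route as the paper: part~\ref{lm:ar-dist.i} via Lemma~\ref{lm:eta5} and the pushforward definition \eqref{mu_ig}, part~\ref{lm:ar-dist.ii} by the same double induction on $i$ and $j$ with Lemma~\ref{lm:invg2}\ref{lm:invg2.b} and the same strengthened independence bookkeeping (the paper's \eqref{m:ar31}), and part~\ref{lm:ar-dist.iii} via the column identity \eqref{arrX-S} and the invariance of Theorem~\ref{thm-I}. Your only additions are cosmetic: you spell out the column-by-column induction and the observation that Theorem~\ref{thm-I} applies with driving parameter $\lambda_{j-1}$ in place of $\alpha$, details the paper compresses into one line.
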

	
	\begin{proof}  
		{Part~\ref{lm:ar-dist.i}. } This part follows from Lemma~\ref{lm:eta5} and the definition of $\mu^{(\lambda_1,\dotsc, \lambda_N)}$ as the push-forward of $\nu^{(\lambda_1,\dotsc, \lambda_N)}$ under the mapping $\Daop^{(N)}$.

		\smallskip 
		
		{Part~\ref{lm:ar-dist.ii}. } We shall show that $\arrX^{i,j} \sim \nu^{\lambda_i}$ and $(\arrV^{i,1}, \arrV^{i,2},  \dotsc, \arrV^{i,i}) \sim \nu^{(\lambda_1, \,\lambda_{2},\dotsc, \,\lambda_i)}$.
		
		The claims are immediate for $i=1$ because there is just one sequence $\arrX^{1,1}=I^1=\arrV^{1,1}$ that has distribution $\nu^{\lambda_1}$.  Let $i\in\lzb 2,N\rzb$ and assume inductively that 
		\be\label{m:ar30} \begin{aligned} 
			&\text{elements $\arrV^{i-1,1}, \arrV^{i-1,2},  \dotsc, \arrV^{i-1,i-1}$ of $\R_{>0}^\Z$ 
			are independent, and $\arrV^{i-1,j}\sim\nu^{\lambda_{j}}$. }
		\end{aligned}\ee 
		
		We extend \eqref{m:ar30}  from $i-1$ to $i$.  
		By construction,  $\arrX^{i,1}=I^i\sim\nu^{\lambda_i}$ is independent of $\arrV^{i-1,\rcbullet}$. 
		Run $j$-induction upward through $j=2 \dotsc, i$.  The first pair 
		\[  \begin{cases}  
		\arrX^{i,2}=\Dop(\arrV^{i-1,1}, \arrX^{i,1}) =\Dop(\arrV^{i-1,1}, I^{i})\\
		\arrV^{i,1} =\Rop(\arrV^{i-1,1}, \arrX^{i,1})=\Rop(\arrV^{i-1,1}, I^{i})
		\end{cases} \]
		is independent of $\arrV^{i-1,2},  \dotsc, \arrV^{i-1,i-1}$.   According to Lemma~\ref{lm:invg2}, $\arrX^{i,2}$ and $\arrV^{i,1}$ are independent,  $\arrV^{i,1}$ inherits the law $\nu^{\lambda_{1}}$  of $\arrV^{i-1,1}$, while   $\arrX^{i,2}$ inherits the law $\nu^{\lambda_i}$ of  $\arrX^{i,1}$.  
		
		Inside this $i$-step we do induction on  $j\in\lzb 1, i-1\rzb$.  Induction assumption:  after constructing the pair $(\arrV^{i,j}, \arrX^{i,j+1})$,  the  sequences  
		\be\label{m:ar31}  
		\arrV^{i,1}, \dotsc, \arrV^{i,j-1}, (\arrV^{i,j}, \arrX^{i,j+1}) ,\arrV^{i-1, j+1}, \arrV^{i-1, j+2}, \dotsc, \arrV^{i-1,i-1} 
		\ee
		are independent, and the marginal distributions are $\arrV^{i,\ell}\sim\nu^{\lambda_{\ell}}$ for $\ell\in\lzb 1, j\rzb$, $\arrX^{i,j}\sim\nu^{\lambda_i}$, and $\arrV^{i-1,r}\sim\nu^{\lambda_{r}}$ for $r\in\lzb j+1,i-1\rzb$ (the last one inherited from the induction assumption on $i-1$).   The induction assumption was just verified for $j=1$ in the previous paragraph.  
		
		The tail   $\arrV^{i-1, j+2},  \dotsc, \arrV^{i-1,i-1}$ of \eqref{m:ar31} consists of those row  $i-1$  elements   that have not yet been used to construct row $i$ elements.  
		Next construct the pair 
		\[  \begin{cases}  \arrX^{i,j+2} =\Dop(\arrV^{i-1,j+1}, \arrX^{i,j+1})\\[3pt]
		\arrV^{i,j+1}= \Rop(\arrV^{i-1,j+1}, \arrX^{i,j+1}).  \end{cases}  \]
		This transforms the independent pair $(\arrX^{i,j+1}, \arrV^{i-1,j+1})$ in the middle of \eqref{m:ar31} into the  independent pair 
		$ (\arrV^{i,j+1}, \arrX^{i,j+2})$.  Again by Lemma~\ref{lm:invg2}, $\arrV^{i,j+1}$ inherits the distribution $\nu^{\lambda_{j+1}}$  of $\arrV^{i-1,j+1}$ and $\arrX^{i,j+2}$ inherits the distribution $\nu^{\lambda_i}$ of  $\arrX^{i,j+1}$. 
		Thus the induction assumption \eqref{m:ar31} has been advanced from $j$ to $j+1$.  
		
		At the end of the $j$-induction at $j=i-1$ we have constructed the pair $ (\arrV^{i,i-1}, \arrX^{i,i})$
		and \eqref{m:ar31} has been transformed into 
		\[ 
		\arrV^{i,1},\arrV^{i,2}, \dotsc, \arrV^{i,i-1}, \arrX^{i,i}. 
		\]
		Finally recall that $\arrV^{i,i}= \arrX^{i,i}$.  Induction assumption \eqref{m:ar30} has been advanced from $i-1$ to $i$. 
		
		\smallskip 
		
		{Part~\ref{lm:ar-dist.iii}. }    Since the columns of the $\arrX$-array follow the sequential transformation \eqref{arrX-S}, this follows from the invariance of product inverse-gammas in Theorem~\ref{thm-I}. 
	\end{proof}


For the remainder of the section, we introduce alternative notation for the mappings \eqref{drs_def}: $\wt I^{\tspb\Yw,\,I}=\Dop(\Yw,I)$,  $J^{\tspb\Yw,\,I}=\Sop(\Yw,I)$ and $\wt\Yw^{\tspb\Yw,\,I}=\Rop(\Yw,I)$. 
	

	\begin{lemma}\label{lm:eta12}    Fix  $\lambda_1>\dotsm>\lambda_N>0$ 
		and let   $I^{\parng{1}{N}}=(I^1, \dotsc, I^N)$ have law 
		$\nu^{(\lambda_1,\dotsc, \lambda_N)}$.    Let $\arrX^{\parng{1}{N}}=(\arrX^1,\dotsc,\arrX^N)=\Daop^{(N)}(I^{\parng{1}{N}})$ and let $\{\arrX^{i,j}\}$ and $\{\arrV^{i,j}\}$ be the arrays from \eqref{m:ar5}.    Then for each $m\in\lzb 2,N\rzb$ and $k\in\Z$, the following random variables are independent: 
		\[ 
		\{\arrV^{m,1}_i\}_{i\le k}, \{\arrV^{m,2}_i\}_{i\le k}, \dotsc, \{\arrV^{m,m-1}_i\}_{i\le k}, \{\arrX^m_i\}_{i\le k-1}, \,
		\frac{\arrX^m_k}{\arrX^{m-1}_k}\,, \, \frac{\arrX^{m-1}_k}{\arrX^{m-2}_k}\,, \dotsc,   \frac{\arrX^2_k}{\arrX^1_k}\,,
		\, \arrX^1_k.
		\]
	\end{lemma}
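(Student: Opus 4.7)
The plan is induction on $m$, with the key leverage at each step being the mutual independence statement in Lemma~\ref{lm:invg2}\ref{lm:invg2.a} applied to a single $(\Dop, \Rop)$-update.

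\textbf{Base case $m=2$.} Since $\arrX^1 = I^1$, $\arrX^{2,1}=I^2$, $\arrX^2 = \Dop(I^1, I^2)$, and $\arrV^{2,1} = \Rop(I^1, I^2)$, Lemma~\ref{lm:invg2}\ref{lm:invg2.a} at index $k-1$ (with $\Yw = I^1$ and $I = I^2$) yields mutual independence of $\{\arrV^{2,1}_i\}_{i\le k-1}$, $J_{k-1}=\Sop(I^1, I^2)_{k-1}$, and $\{\arrX^2_i\}_{i\le k-1}$. Combined with the independence of $I^1_k$ and $I^2_k$ from the past, this produces five mutually independent inverse-gamma elements. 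The update formulas \eqref{I_W_def} and \eqref{m801a} give
\[
\arrV^{2,1}_k = \bigl(1/I^2_k + 1/J_{k-1}\bigr)^{-1}, \qquad \arrX^2_k/\arrX^1_k = 1 + I^2_k/J_{k-1},
\]
both functions of $(I^2_k, J_{k-1})$ alone. Writing $A = 1/I^2_k \sim {\rm Ga}(\lambda_2)$ and $B = 1/J_{k-1} \sim {\rm Ga}(\lambda_1-\lambda_2)$, these become $1/(A+B)$ and $(A+B)/B$ respectively; the classical Beta--Gamma independence ($A+B$ independent of $A/(A+B)$) then shows that $\arrV^{2,1}_k$ and $\arrX^2_k/\arrX^1_k$ are independent. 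Since $\arrX^1_k = I^1_k$ is independent of $(I^2_k, J_{k-1})$ and of the past variables, the base case follows.

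\textbf{Induction step $m \to m+1$.} Row $m+1$ is constructed via \eqref{m:ar5} through $m$ successive $(\Dop, \Rop)$-updates: step $j$ uses weights $\arrV^{m,j}$ and input $\arrX^{m+1, j}$ to produce $\arrX^{m+1, j+1}$ and $\arrV^{m+1, j}$. Two structural observations drive the argument. First (causality in $k$), by \eqref{J_def} and \eqref{I_W_def} every row-$(m{+}1)$ output at indices $\le k$ is built from row-$m$ data and $I^{m+1}$ at indices $\le k$. Second (cancellation of $\arrX^m_k$), unravelling the recursion shows that $\arrX^{m+1, m}_k$ is determined by $(\arrV^{m, j}_{\le k}: j \le m-1)$ and $I^{m+1}_{\le k}$ and does \emph{not} involve $\arrV^{m, m} = \arrX^m$; meanwhile $J^{(m)}_{k-1} = \Sop(\arrX^m, \arrX^{m+1, m})_{k-1}$ depends on $\arrX^m_{\le k-1}$ and $\arrX^{m+1, m}_{\le k-1}$. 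Consequently the ``new'' variables
\[
\arrX^{m+1}_k/\arrX^m_k = 1 + \arrX^{m+1, m}_k/J^{(m)}_{k-1}, \qquad \arrV^{m+1, m}_k = \bigl(1/\arrX^{m+1, m}_k + 1/J^{(m)}_{k-1}\bigr)^{-1},
\]
together with $\{\arrX^{m+1}_i\}_{i \le k-1}$ and $\{\arrV^{m+1, j}_i\}_{i \le k}$ for $j < m$, are all functions of the tuple $G$ consisting of $(\arrV^{m, j}_{\le k}: j \le m-1)$, $\arrX^m_{\le k-1}$, and $I^{m+1}_{\le k}$, with no dependence on $\arrX^m_k$. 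By the inductive hypothesis, $G$ is independent of the old ratios $R_{\rm old} := (\arrX^1_k, \arrX^2_k/\arrX^1_k, \ldots, \arrX^m_k/\arrX^{m-1}_k)$ (which encode $\arrX^m_k$), so the entire new-row output is independent of $R_{\rm old}$. Mutual independence among the new-row variables themselves comes from Lemma~\ref{lm:ar-dist}\ref{lm:ar-dist.ii} applied to row $m+1$ (which establishes mutual independence of $\arrV^{m+1, 1}, \ldots, \arrV^{m+1, m+1}=\arrX^{m+1}$ as full sequences), combined with one further Beta--Gamma computation as in the base case, applied to the final update step, to separate $\arrX^{m+1}_k/\arrX^m_k$ from $\arrV^{m+1, m}_k$.

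\textbf{Main obstacle.} The technical heart of the proof is verifying the cancellation observation: a short secondary induction on $j$ shows that $\arrX^{m+1, j+1}_k$ has no dependence on $\arrV^{m, m}_k = \arrX^m_k$. This decoupling---a consequence of the specific structure of the $\Dop/\Rop/\Sop$ formulas in \eqref{I_W_def}, \eqref{m801a}, and \eqref{J_def}---is exactly what enables the old ratios, which carry all of the information in $\arrX^m_k$, to remain independent of the newly constructed row.
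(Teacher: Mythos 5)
Your base case is correct and coincides with the paper's, and the structural part of your induction step is sound: the observation that every level-$(m{+}1)$ quantity at indices $\le k$ (including $\arrX^{m+1}_k/\arrX^m_k$ and $\arrV^{m+1,m}_k$, via $\arrX^{m+1,m}_k$ and $J^{(m)}_{k-1}$) is a function of the tuple $G=\bigl((\arrV^{m,j}_i)_{i\le k,\,j\le m-1},\,(\arrX^m_i)_{i\le k-1},\,(I^{m+1}_i)_{i\le k}\bigr)$, together with the inductive hypothesis, does give that the whole new block is independent of the old ratios $\arrX^m_k/\arrX^{m-1}_k,\dots,\arrX^2_k/\arrX^1_k,\arrX^1_k$. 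This decoupling is essentially the same mechanism the paper exploits.

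The gap is in the \emph{within-block} mutual independence. You need the new ratio $\arrX^{m+1}_k/\arrX^m_k$ to be jointly independent of \emph{all} of $\{\arrV^{m+1,1}_i\}_{i\le k},\dots,\{\arrV^{m+1,m}_i\}_{i\le k}$ and $\{\arrX^{m+1}_i\}_{i\le k-1}$, and you justify this by citing Lemma~\ref{lm:ar-dist}\ref{lm:ar-dist.ii} plus ``one further Beta--Gamma computation'' separating the ratio from $\arrV^{m+1,m}_k$. Neither ingredient reaches the claim: Lemma~\ref{lm:ar-dist}\ref{lm:ar-dist.ii} only gives mutual independence of the row sequences $\arrV^{m+1,1},\dots,\arrV^{m+1,m},\arrX^{m+1}$, and the ratio $\arrX^{m+1}_k/\arrX^m_k=1+\arrX^{m+1,m}_k/J^{(m)}_{k-1}$ is \emph{not} a function of those sequences (it involves the intermediate column $\arrX^{m+1,m}$ and the $\Sop$-output of the last update), so the lemma says nothing about its joint law with them; and a pairwise Beta--Gamma separation from $\arrV^{m+1,m}_k$ is far weaker than the required joint independence. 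The real difficulty is that $\{\arrV^{m+1,j}_i\}_{i\le k}$ for $j\le m-1$ and $\{\arrX^{m+1,m}_i\}_{i\le k}$ are built from the \emph{same} inputs $(\arrV^{m,1},\dots,\arrV^{m,m-1},I^{m+1})$, so one must prove that this collection (with the cut at index $k$) is mutually independent and then feed the last pair $(\arrX^m,\arrX^{m+1,m})$ into Lemma~\ref{lm:invg2}\ref{lm:invg2.a}, the splitting of $\{\arrX^{m+1,m}_i\}_{i\le k}$ at $k$, and only then the Beta--Gamma step. This propagation is exactly what the paper's inner $j$-induction does (repeated use of Lemma~\ref{lm:invg2}\ref{lm:invg2.b}, the chain \eqref{m:ar46}--\eqref{m:ar52}); the needed intermediate independence appears in the proof of Lemma~\ref{lm:ar-dist}\ref{lm:ar-dist.ii} as \eqref{m:ar31}, but not in its statement, so your citation does not supply it. Your ``secondary induction on $j$'' addresses only the measurability/cancellation of $\arrX^m_k$, which is the easy part; the independence bookkeeping along the $j$-steps is the missing content, and without it the induction step does not close.
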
 
	
	\begin{proof}  The index $k$ is fixed throughout.  Recall the connection $\arrX^i=\arrX^{i,i}=\arrV^{i,i}$ from Lemma~\ref{lm:eta5}.   We begin with the case $m=2$ and then undertake two nested loops of induction. 
		
		
		By the definitions and Lemma~\ref{lm:invg2},    $\arrX^1=I^1\sim\nu^{\lambda_1}$, 
		\[  \arrV^{2,1}=\Rop(\arrX^{2,1}, \arrV^{1,1})=\Rop(I^1\!, I^2)=\wt\Yw^{I^1\!,\, I^2} \ \text{ and }\ \arrX^2=\Dop(I^1\!, I^2)=\wt I^{I^1\!,\, I^2}\sim\nu^{\lambda_2}. \]   
		Lemma~\ref{lm:invg2}\ref{lm:invg2.a} gives the mutual independence of  
		\eq{
		\{\wt I^{I^1\!,\, I^2}_i\}_{i\le k-1}, \quad  J^{I^1\!,\, I^2}_{k-1}, \quad \text{and} \quad \{\wt\Yw^{I^1\!,\, I^2}_i\}_{i\le k-1}.
		}   
		These are functions of $\{I^1_i, I^2_i\}_{i\le k-1}$, and thereby  independent of $I^1_k, I^2_k$.  Thus we have the mutual independence of   $\{\arrV^{2,1}_i\}_{i\le k-1}$, $\{\arrX^2_i\}_{i\le k-1}$,   $\arrX^1_k$  and the pair $(J^{I^1\!,\, I^2}_{k-1}, I^2_k)$. 
The reciprocals $\bigl((J^{I^1\!,\, I^2}_{k-1})^{-1}, (I^2_k)^{-1}\bigr)$ of this last pair are an independent 
$({\rm Ga}(\lambda_1-\lambda_2), {\rm Ga}(\lambda_2))$ pair. 		
Then the beta-gamma algebra of random variables  \cite[Exercise 6.50, p.~244]{ande-sepp-valk}   implies  the independence of  
		\be\label{m:ar26} 
		\begin{aligned} 
(\arrV^{2,1}_k)^{-1}&\overset{\rm\eqref{I_W_def}}= {(I^2_k)^{-1} + (J^{I^1\!,\, I^2}_{k-1})^{-1}}  \sim {\rm Ga}(\lambda_1)  \\
 \text{and}\qquad \frac{\arrX^1_k}{\arrX^2_k}&\stackrefp{I_W_def}{=}\frac{I^1_k}{\wt I^{I^1\!,\, I^2}_k} 
\overset{\rm\eqref{m801a}}=\frac{(I^2_k)^{-1}}{(I^2_k)^{-1} + (J^{I^1\!,\, I^2}_{k-1})^{-1}}  \sim {\rm Beta}(\lambda_2, \lambda_1-\lambda_2).  			
		\end{aligned} 
		\ee
		We have the independence of   $\{\arrV^{2,1}_i\}_{i\le k}$, $\{\arrX^2_i\}_{i\le k-1}$,   $\arrX^2_k/\arrX^1_k$,\, $\arrX^1_k$.   This concludes the case $m=2$ of the lemma.

		Now let $m\ge 3$ and make an induction assumption: 
		\be\label{m:ar44} 
		\begin{aligned}
			&\text{$\{\arrV^{m-1,1}_i\}_{i\le k}, \dotsc,\{\arrV^{m-1,m-2}_i\}_{i\le k}$,  } \\
			&\text{ $\{\arrX^{m-1}_i\}_{i\le k-1}, \,\arrX^{m-1}_k/\arrX^{m-2}_k,\dotsc,\arrX^2_k/\arrX^1_k,\, \arrX^1_k$ are independent. }
		\end{aligned}\ee
		The previous paragraph verified   this assumption for $m=3$. (Note that the meaning of $m$ shifted by one.)  Our task is to verify this statement with $m-1$ replaced by $m$. 
		
		
		Since $\arrX^{m,1}=I^m$ is independent of all the variables in \eqref{m:ar44},  apply Lemma~\ref{lm:invg2}\ref{lm:invg2.b} to the pair  $\arrV^{m,1}=\Rop( \arrV^{m-1,1}, \arrX^{m,1})$,  $\arrX^{m,2}=\Dop( \arrV^{m-1,1}, \arrX^{m,1})$ and  \eqref{m:ar31}   to conclude the independence of 
		\be\label{m:ar46} 
		\begin{aligned}
			& \bigl(\{\arrV^{m,1}_i\}_{i\le k},   \{\arrX^{m,2}_i\}_{i\le k}\bigr),  \{\arrV^{m-1,2}_i\}_{i\le k}, \dotsc,\{\arrV^{m-1,m-2}_i\}_{i\le k},   \\
			&\{\arrX^{m-1}_i\}_{i\le k-1}, \arrX^{m-1}_k/\arrX^{m-2}_k,\dotsc,\arrX^2_k/\arrX^1_k,\, \arrX^1_k. 
		\end{aligned}\ee
		
		This starts an inner   induction loop on $j=1, 2, \dotsc, m-2$, whose induction assumption is the independence of 
		\be\label{m:ar48} 
		\begin{aligned}
			& \{\arrV^{m,1}_i\}_{i\le k}, \dotsc, \{\arrV^{m,j-1}_i\}_{i\le k}, \bigl(\{\arrV^{m,j}_i\}_{i\le k},   \{\arrX^{m,j+1}_i\}_{i\le k}\bigr),  \{\arrV^{m-1,j+1}_i\}_{i\le k}, \dotsc,  \\
			&\{\arrV^{m-1,m-2}_i\}_{i\le k}, \{\arrX^{m-1}_i\}_{i\le k-1}, \arrX^{m-1}_k/\arrX^{m-2}_k,\dotsc,\arrX^2_k/\arrX^1_k,\, \arrX^1_k. 
		\end{aligned}\ee
		The base case $j=1$ is \eqref{m:ar46} above.  
		The induction step is an application of Lemma~\ref{lm:invg2}\ref{lm:invg2.b} to the pair  
		\eq{
		\arrV^{m,j+1}=\Rop(\arrV^{m-1,j+1}, \, \arrX^{m,j+1}),  \quad \arrX^{m,j+2}=\Dop(\arrV^{m-1,j+1}, \, \arrX^{m,j+1})
		}
		to advance the  induction assumption \eqref{m:ar48} from $j$ to $j+1$. 
		At the end of the $j$-induction at $j=m-2$   all the  $\arrV^{m-1, \tspb\rcbullet}$ sequences  have been converted to  $\arrV^{m, \rcbullet}$ sequences, and we have  independence of 
		\be\label{m:ar52} 
		\begin{aligned}
			& \{\arrV^{m,1}_i\}_{i\le k}, \dotsc,  \{\arrV^{m,m-3}_i\}_{i\le k}, \{\arrV^{m,m-2}_i\}_{i\le k},\{\arrX^{m,m-1}_i\}_{i\le k} ,   \\
			&\{\arrX^{m-1}_i\}_{i\le k-1}, \arrX^{m-1}_k/\arrX^{m-2}_k,\dotsc,\arrX^2_k/\arrX^1_k,\, \arrX^1_k. 
		\end{aligned}\ee
		
		
		We return to advancing the induction assumption \eqref{m:ar44}   from $m-1$ to $m$. 
		Separate $ \{\arrX^{m,m-1}_i\}_{i\le k}$ into  $ \{\arrX^{m,m-1}_i\}_{i\le k-1}$ and $\arrX^{m,m-1}_k$, which are independent by Lemma~\ref{lm:ar-dist}\ref{lm:ar-dist.iii}. 
		Combine the former with $\{\arrX^{m-1}_i\}_{i\le k-1}$,  Lemma~\ref{lm:invg2}\ref{lm:invg2.a}, and the transformations 
		\[  \begin{cases} \arrV^{m,m-1}=\Rop(\arrX^{m-1}, \arrX^{m,m-1})\\[3pt]
		\arrX^{m}=\Dop(\arrX^{m-1}, \arrX^{m,m-1})  \end{cases}  \]
		to form the independent variables $\{\arrV^{m,m-1}_i\}_{i\le k-1}$, $\{\arrX^{m}_i\}_{i\le k-1}$,  $J^{\arrX^{m-1}, \arrX^{m,m-1}}_{k-1}$.    
		
		As above in \eqref{m:ar26}, transform the independent pair $(\arrX^{m,m-1}_k,  J^{\arrX^{m-1}, \arrX^{m,m-1}}_{k-1})$  into 
		the  independent pair  
		\[  \frac1{\arrV^{m,m-1}_k}= \frac1{\arrX^{m,m-1}_k}+ \frac1{J^{\arrX^{m-1}, \arrX^{m,m-1}}_{k-1}} 
		\quad\text{and}\quad 
		\frac{\arrX^m_k}{\arrX^{m-1}_k}=1+ \frac{\arrX^{m,m-1}_k}{ J^{\arrX^{m-1}, \arrX^{m,m-1}}_{k-1}}.  
		 \]
		  Attach $\arrV^{m,m-1}_k$ to the sequence $\{\arrV^{m,m-1}_i\}_{i\le k-1}$.  After these steps, the independent variables of  \eqref{m:ar52}  have been transformed into the independent variables 
		 \eq{
		 \{\arrV^{m,1}_i\}_{i\le k}, \dotsc,\{\arrV^{m,m-1}_i\}_{i\le k},   \{\arrX^{m}_i\}_{i\le k-1},
		 \frac{\arrX^{m}_k}{\arrX^{m-1}_k}, \,\frac{\arrX^{m-1}_k}{\arrX^{m-2}_k},\dotsc,\frac{\arrX^2_k}{\arrX^1_k},\, \arrX^1_k. 
}
		Thus the induction assumption \eqref{m:ar44} has been advanced from $m-1$ to $m$. 
	\end{proof}

\begin{proof}[Proof of Theorem~\ref{B-th5}]
It suffices to show the equality in distribution 
\be\label{X780} \begin{aligned} 
& \bigl( \log \Yw_x, \, \Bus^{\chdir(\rho_1)}_{x-\evec_1,x}- \log\Yw_x,\, \Bus^{\chdir(\rho_2)}_{x-\evec_1,x}-\Bus^{\chdir(\rho_1)}_{x-\evec_1,x}\,, \dotsc, \Bus^{\chdir(\rho_{N})}_{x-\evec_1,x}-\Bus^{\chdir(\rho_{N-1})}_{x-\evec_1,x} \bigr) \\
&\qquad\qquad
\,\deq \, 	 \bigl(\Zpr(0), \Zpr(\rho_1)-\Zpr(0), \Zpr(\rho_2)-\Zpr(\rho_1),\dotsc, \Zpr(\rho_N)-\Zpr(\rho_{N-1})\bigr) 	  	
\end{aligned} \ee
for arbitrary but henceforth fixed parameters $0< \rho_1 <\dotsm <\rho_N< \alpha$.  The initial values at $\rho=0$  satisfy $\Bus^{\chdir(0)}_{x-\evec_1,x}=\log\Yw_x\deq\Zpr(0)\sim\log {\rm Ga}^{-1}(\alpha)$ by the definition. 

 We represent the law of the Busemann process as the image of independent inverse-gamma weights.  Let  the $(\R_{>0}^\Z)^{N+1}$-valued  configuration $I^{\parng{0}{N}}$ 
 have law 
$\nu^{(\alpha, \alpha-\rho_1,\dotsc, \alpha-\rho_N)}$ and let 
$\arrX^{\parng{0}{N}}=(\arrX^0,\dotsc,\arrX^N)=\Daop^{(N+1)}(I^{\parng{0}{N}})$.
		By Theorem~\ref{B1_thm},  	
		\eq{
		(\Yw(t),{\brvI}^{\chdir(\rho_1)}(t), \dotsc, {\brvI}^{\chdir(\rho_N)}(t))\deq\arrX^{\parng{0}{N}}\sim\mu^{(\alpha, \alpha-\rho_1,\dotsc, \alpha-\rho_N)}.
		}  
Taking logarithms of the coordinates  gives
		\be\label{X784} \begin{aligned} 
			&\bigl(\log \Yw_x, \, \Bus^{\chdir(\rho_1)}_{x-\evec_1,x}- \log\Yw_x,\, \Bus^{\chdir(\rho_2)}_{x-\evec_1,x}-\Bus^{\chdir(\rho_1)}_{x-\evec_1,x}\,, \dotsc, \Bus^{\chdir(\rho_{N})}_{x-\evec_1,x}-\Bus^{\chdir(\rho_{N-1})}_{x-\evec_1,x} \bigr)  \\
			&\qquad\qquad
			 \deq \,  \big( \log\arrX^0_k, \, \log(\arrX^1_k/\arrX^0_k), \, \log(\arrX^2_k/\arrX^1_k), \dotsc, \log(\arrX^{N}_k/\arrX^{N-1}_k)\big). 
		\end{aligned}\ee 
The choices of the lattice locations  $x\in\Z^2$, $t\in\Z$ and  $k\in\Z$  above  are entirely  arbitrary  because all the distributions are invariant under lattice translations. 

 Lemma~\ref{lm:eta12} and \eqref{X784} give the independence of the coordinates on the left-hand side of \eqref{X780}. On the right of \eqref{X780} the independence of the $\Zpr$-increments follows from the definition \eqref{Def:X}.   Thus it remains to check the distributional equality of a single increment: 
\be\label{X788}		
	\log(\arrX^m_k/\arrX^{m-1}_k)\,\deq\,\Zpr(\rho_{m})-\Zpr(\rho_{m-1}).  
	\ee

The distribution  of  $\arrX^m_k/\arrX^{m-1}_k$  comes from the 2-component mapping 
\eq{
(\arrX^{m-1}, \arrX^m)=\Daop^{(2)}(I^{m-1}, I^m)=(I^{m-1}, \Dop(I^{m-1}, I^m)),
}
where  $(I^{m-1}, I^{m})\sim\nu^{\alpha-\rho_{m-1},\,\alpha-\rho_m}$.  This was stated in \eqref{m:ar26} for the reciprocal:   
\eeq{ \label{X792}	 
{\arrX^{m-1}_k}/{\arrX^m_k}  \,\sim\, {\rm Beta}(\alpha-\rho_m, \rho_m-\rho_{m-1}).
}  
We now examine the right-hand side of \eqref{X788}.
By definition \eqref{Def:X},
\[\Zpr(\rho_{m})-\Zpr(\rho_{m-1}) =\sum_{(s,y)\in\,\Prm}F(s,y)
\quad\text{where}\quad 
F(s,y)=y\ind_{(\rho_{m-1},\rho_m]}(s).
 \]    
Apply \eqref{LaplaceF}   to compute the Laplace transform of $\Zpr(\rho_{m})-\Zpr(\rho_{m-1})$ for $t\ge0$: 
\begin{align*}
&\E\big[e^{-t\tspa(\Zpr(\rho_{m})-\Zpr(\rho_{m-1}))}\big]
= \exp\Big\{-\int_0^\alpha \dd s \int_0^{\infty} \dd y\, (1-e^{-tF(s,y)})\,\sigma(s,y)\Big\}\\
&\quad = \exp\Big\{-\int_{\rho_{m-1}}^{\rho_m} \dd s \int_0^\infty\dd y\,(1-\mathrm{e}^{-t\tspa y})\,\frac{e^{-y(\alpha-s)}}{1-e^{-y}}\Big\} 
\\&\quad 
= \exp\Big\{\int_{\rho_{m-1}}^{{\rho_m}}\bigl[ \psi_0(\alpha-s)-\psi_0(\alpha-s+t)\bigr] \,\dd s\Big\}\\
&\quad = \exp\Big\{\log\frac{\Gamma(\alpha-\rho_{m-1})}{\Gamma(\alpha-{\rho_m})} 
- \log\frac{\Gamma(\alpha-\rho_{m-1}+t)}{\Gamma(\alpha-\rho_m+t)} \Big\} 
= \frac{\BBet(\alpha-\rho_m+t,\,  \rho_m-\rho_{m-1})}{\BBet(\alpha-{\rho_m},\,{\rho_m}-\rho_{m-1})}\\
&\quad = \frac1{\BBet(\alpha-{\rho_m},\,{\rho_m}-\rho_{m-1})}  \int_0^1  e^{-t \log u^{-1}}  \tspb u^{\alpha-\rho_m}\tspb (1-u)^{\rho_m-\rho_{m-1}}\,\dd u . 
\end{align*}
Above we used   $\frac{\dd }{\dd s}\log\Gamma(s)=\psi_0(s)=\int_0^{\infty}\bigl(\frac{e^{-r}}{r}-\frac{e^{-sr}}{1-e^{-r}}\bigr)\,\dd r$. 
The calculation establishes $\Zpr(\rho_{m})-\Zpr(\rho_{m-1})\sim \log{\rm Beta}^{-1}(\alpha-\rho_m, \rho_m-\rho_{m-1})$, which together with \eqref{X792} verifies \eqref{X788}. 
\end{proof}


\appendix

 \section{Busemann process} \label{app_bp} 
 
 We present  two complements to the general properties of the Busemann process.

\subsection{Shape theorem for Busemann functions}  \label{a:Bshape}

This section shows that the shape theorem  holds simultaneously for all Busemann functions on a single full-probability event.
We recall the statement of Theorem~\ref{all_lln}.

 \begin{theorem} \label{all_lln_app}  Assume \eqref{ranenv}. 
There exists a full-probability 
event on which the following limit  holds simultaneously 
 for each $\xi\in\,]\evec_2,\evec_1[\,$ and $\sigg\in\{-,+\}$:
\eeq{ \label{bfanfx_app}
\lim_{n\to\infty}\;\max_{|x|_1\le\tsp n}   n^{-1} {|\Bus^{\xi\sig}_{\zevec,x}-\nabla\gpp(\xi\sigg)\cdot x|} = 0 .  
}
\end{theorem}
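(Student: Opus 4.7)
The plan is to deduce the simultaneous shape theorem from its single-direction analog (Theorem~\ref{lln_input}) by combining a countable intersection argument with monotonicity and one-sided continuity of $\nabla\gpp$.

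First I would fix a countable dense subset $\cA \subset \,]\evec_2,\evec_1[\,$ containing every non-differentiability direction of $\gpp$; the complement of $\Udiff$ is countable by concavity. Intersecting the countable family of full-probability events furnished by Theorem~\ref{lln_input} applied to each $(\zeta,\sigg) \in \cA \times \{-,+\}$, together with the event from Remark~\ref{pm_rmk} ensuring $\Bus^{\zeta-}=\Bus^{\zeta+}$ for $\zeta \in \cA \cap \Udiff$, produces a full-probability event $\Omega_0$ on which \eqref{bfanfx_app} already holds for every $(\zeta,\sigg) \in \cA \times \{-,+\}$. In particular, this directly covers all non-differentiability directions.

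Next, on $\Omega_0$ I would consider arbitrary $\xi \in \Udiff \setminus \cA$ and $\sigg \in \{-,+\}$, and pick approximating sequences $\zeta_k \nearrow \xi$ and $\eta_k \searrow \xi$ in $\cA$. For $x=(a,b)\in\Z^2$ with $|x|_1\le n$, the cocycle identity \eqref{coc1} decomposes $\Bus^{\xi\sig}_{\zevec,x}$ along an L-shaped path (for instance $\zevec \to (a,0) \to (a,b)$ when $a,b\ge 0$, with analogous paths in the other three quadrants). Since horizontal nearest-neighbor increments are nonincreasing in $\xi$ while vertical ones are nondecreasing (by \eqref{B_Mono}), summation yields a uniform-in-$|x|_1\le n$ sandwich
\[
a\tspb\nabla\gpp(\eta_k)\cdot\evec_1 + b\tspb\nabla\gpp(\zeta_k)\cdot\evec_2 + o_k(n)
\le \Bus^{\xi\sig}_{\zevec,x}
\le a\tspb\nabla\gpp(\zeta_k)\cdot\evec_1 + b\tspb\nabla\gpp(\eta_k)\cdot\evec_2 + o_k(n),
\]
where the $o_k(n)$ errors (for each fixed $k$) come from applying Theorem~\ref{lln_input} on $\Omega_0$ to $\Bus^{\zeta_k}$ and $\Bus^{\eta_k}$, combined with the cocycle identity $\Bus^{\zeta_k}_{(a,0),(a,b)} = \Bus^{\zeta_k}_{\zevec,(a,b)} - \Bus^{\zeta_k}_{\zevec,(a,0)}$. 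Sending $n\to\infty$ first and then $k\to\infty$, one-sided continuity of the concave $\gpp$ gives $\nabla\gpp(\zeta_k)\to\nabla\gpp(\xi-)$ and $\nabla\gpp(\eta_k)\to\nabla\gpp(\xi+)$; since $\xi \in \Udiff$ these limits coincide with $\nabla\gpp(\xi)$, and both ends of the sandwich collapse onto $\nabla\gpp(\xi)\cdot x = \nabla\gpp(\xi\sig)\cdot x$, yielding \eqref{bfanfx_app}.

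The main obstacle is that the L-shape decomposition forces horizontal and vertical Busemann increments to be sandwiched from \emph{opposite} sides of $\xi$: the upper bound picks up $\nabla\gpp(\zeta_k)\cdot\evec_1$ horizontally but $\nabla\gpp(\eta_k)\cdot\evec_2$ vertically, and symmetrically for the lower bound. Such mixed sandwiches close only when the left and right one-sided derivatives of $\gpp$ at $\xi$ agree, which is precisely why the non-differentiability directions must be absorbed into $\cA$ and handled by direct application of Theorem~\ref{lln_input} rather than through approximation. The remaining work---tracking sign conventions across the four quadrants of $\Z^2$ and both values of $\sigg$, so that the appropriate $\pm$-monotonicity is invoked in each case---is routine bookkeeping once this two-regime split is in place.
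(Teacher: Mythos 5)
Your proposal is correct and follows essentially the same route as the paper's proof in Appendix~\ref{a:Bshape}: intersect the single-direction events of Theorem~\ref{lln_input} over a countable dense set that absorbs all non-differentiability directions, then for $\xi\in\Udiff$ decompose $\Bus^{\xi\sig}_{\zevec,x}$ into horizontal and vertical increments and sandwich them via the monotonicity \eqref{B_mono} between nearby rational directions, closing the sandwich using differentiability of $\gpp$ at $\xi$. The only cosmetic difference is that you pass to the limit along sequences $\zeta_k\nearrow\xi$, $\eta_k\searrow\xi$, whereas the paper fixes a single pair $\zeta\prec\xi\prec\eta$ with gradients $\eps$-close; the substance is identical.
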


This improves the following input.

\begin{theirthm} \textup{\cite[Thm.~4.4, Lem.~4.12]{janj-rass-20-aop}} \label{lln_input}
For each $\xi\in\,]\evec_2,\evec_1[\,$, there exists a full-probability 
event $\Omega_\xi$ on which \eqref{bfanfx_app} holds for both signs $\sigg\in\{-,+\}$.
\end{theirthm}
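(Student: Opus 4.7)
The plan is to pick a countable dense set of directions that already contains every nondifferentiable direction of $\gpp$, work on the intersection of the corresponding full-probability events supplied by Theorem~\ref{lln_input}, and extend to arbitrary $\xi$ by a monotonicity sandwich.

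Since $\gpp$ is concave on $[\evec_2,\evec_1]$, its nondifferentiability set $\mathrm{ND}=\,]\evec_2,\evec_1[\,\setminus\Udiff$ is at most countable. I would fix a countable dense subset $\cD_0\subset\Udiff$ and set $\cD=\mathrm{ND}\cup\cD_0$. On the full-probability event $\Omega_*=\bigcap_{\zeta\in\cD}\Omega_\zeta$, \eqref{bfanfx_app} holds simultaneously for every $\zeta\in\cD$ and both signs. Directions $\xi\in\cD$ are thereby handled, so I may restrict to $\xi\in\Udiff\setminus\cD_0$, for which $\nabla\gpp(\xi-)=\nabla\gpp(\xi+)=\nabla\gpp(\xi)$.

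For such an $\xi$ I reduce to one-dimensional partial sums via the cocycle property \eqref{coc1}. For $x=(a,b)$ with $a,b\ge 0$, write
\begin{align*}
\Bus^{\xi\sigg}_{\zevec,x} \;=\; \underbrace{\sum_{i=1}^{a}\Bus^{\xi\sigg}_{(i-1,0),(i,0)}}_{H^{\xi\sigg}_a} \;+\; \underbrace{\sum_{j=1}^{b}\Bus^{\xi\sigg}_{(a,j-1),(a,j)}}_{V^{\xi\sigg}_{a,b}}\,;
\end{align*}
the three other quadrants of $x$ are covered by analogous L-shaped paths through the axes that stay inside $\{|x|_1\le n\}$, and I suppress those cases. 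For any $\zeta,\eta\in\cD$ with $\zeta\prec\xi\prec\eta$, the pointwise monotonicity \eqref{B_mono} gives
\begin{align*}
H^{\zeta+}_a \,\ge\, H^{\xi\sigg}_a \,\ge\, H^{\eta-}_a \qquad\text{and}\qquad V^{\zeta+}_{a,b} \,\le\, V^{\xi\sigg}_{a,b} \,\le\, V^{\eta-}_{a,b}.
\end{align*}
On $\Omega_*$, Theorem~\ref{lln_input} applied to $(\chi,\sigg')\in\{(\zeta,+),(\eta,-)\}$ yields $H^{\chi\sigg'}_a = a\,\nabla\gpp(\chi\sigg')\cdot\evec_1+o(n)$ uniformly in $0\le a\le n$; rewriting $V^{\chi\sigg'}_{a,b}=\Bus^{\chi\sigg'}_{\zevec,(a,b)}-\Bus^{\chi\sigg'}_{\zevec,(a,0)}$ and applying Theorem~\ref{lln_input} twice more gives $V^{\chi\sigg'}_{a,b} = b\,\nabla\gpp(\chi\sigg')\cdot\evec_2+o(n)$ uniformly in $a+b\le n$.

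To close the sandwich I use concavity of $\gpp$: as $\zeta\nearrow\xi$ through $\cD$, $\nabla\gpp(\zeta+)\to\nabla\gpp(\xi-)=\nabla\gpp(\xi)$, and as $\eta\searrow\xi$ through $\cD$, $\nabla\gpp(\eta-)\to\nabla\gpp(\xi+)=\nabla\gpp(\xi)$. Given $\eps>0$, I pick $\zeta,\eta$ so that all four gradients $\nabla\gpp(\chi\sigg')\cdot\evec_r$ lie within $\eps$ of $\nabla\gpp(\xi)\cdot\evec_r$; the displayed bounds then give $\limsup_n \max_{|x|_1\le n} n^{-1}|\Bus^{\xi\sigg}_{\zevec,x}-\nabla\gpp(\xi)\cdot x|\le 2\eps$, and letting $\eps\to 0$ completes the proof once I note $\nabla\gpp(\xi\sigg)=\nabla\gpp(\xi)$ for $\xi\in\Udiff$. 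The one step that really requires care is the inclusion of $\mathrm{ND}$ in $\cD$: at a nondifferentiable direction the sandwich gradients $\nabla\gpp(\zeta+)$ and $\nabla\gpp(\eta-)$ cannot be driven to a common limit from the two sides, so such directions cannot be obtained by the sandwich and must be supplied directly by Theorem~\ref{lln_input}; the countability of $\mathrm{ND}$ under a concave $\gpp$ is what lets this be done while keeping $\cD$ countable.
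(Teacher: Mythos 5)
You have not proved the quoted statement; you have assumed it. Theorem~\ref{lln_input} is the single-direction shape theorem, and your very first move invokes it for every $\zeta$ in your countable set $\cD$ to produce the events $\Omega_\zeta$, and again in the middle of the argument to control $H^{\chi\sigg'}_a$ and $V^{\chi\sigg'}_{a,b}$. Taking the statement for a countable family of directions is still taking the statement as given, so as a proof of Theorem~\ref{lln_input} the argument is circular. What you have actually written is the \emph{extension} from countably many directions to all directions on a single event, i.e.\ a proof of Theorem~\ref{all_lln}. Indeed, your write-up coincides essentially step for step with the paper's proof of Theorem~\ref{all_lln_app} in Appendix~\ref{a:Bshape}: the same countable dense set augmented by the nondifferentiability directions, the same L-shaped decomposition through $a\evec_1$, the same monotonicity sandwich from \eqref{B_mono}, and the same one-sided-gradient continuity of the concave shape function. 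For that theorem your argument is correct; it just is not a proof of this one.

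The statement you were asked about is an imported result, \cite[Thm.~4.4, Lem.~4.12]{janj-rass-20-aop}, and this paper does not reprove it. Its proof is of a genuinely different character from anything in your proposal: one fixes $\xi$, uses that the nearest-neighbor increments $\Bus^{\xi\sig}_{x-\evec_r,x}$ form a stationary, integrable, covariant cocycle with mean $\nabla\gpp(\xi\sigg)\cdot\evec_r$ (cf.\ \eqref{coc2} and \eqref{EB}), applies an ergodic theorem for such cocycles to obtain the law of large numbers along rays, and then upgrades to uniformity over the box $\{|x|_1\le n\}$ via a separate maximal estimate --- this last step is where the moment hypothesis $\E(|\log\Yw_\zevec|^p)<\infty$ with $p>2$ from \eqref{ranenv} is actually used. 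None of these ingredients appears in your argument, and they cannot be recovered from the monotonicity sandwich alone, since the sandwich needs the single-direction result for at least two directions before it can say anything.
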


\begin{proof}[Proof of Theorem~\ref{all_lln_app}]
Let $\Udiff_0$ be a countable dense subset of $\Udiff$, the directions of differentiability for $\gpp$.
Since $\gpp$ is concave, the set $\Udiff^\cc = \,]\evec_2,\evec_1[\,\setminus\Udiff$ is countable, so we can consider the countable set $\cC \coloneqq \Udiff_0\cup\Udiff^\cc$.
For each $\zeta\in\cC$, let $\Omega_\zeta$ be the full-probability event from Theorem~\ref{lln_input}.
For convenience, when $\zeta\in\Udiff_0$, we will assume that $\Omega_\zeta\subset\{\Bus^{\zeta-}=\Bus^{\zeta+}\}$.
Let   $\Omega_0 \coloneqq \bigcap_{\zeta\in\cC}\Omega_\zeta$,  again a full-probability event. 
We show that on $\Omega_0$, the limit \eqref{bfanfx_app} holds for every direction $\xi\in\,]\evec_2,\evec_1[\,$ and both signs $\sigg\in\{-,+\}$.
We may assume $\xi\in\Udiff$ since $\Udiff^\cc\subset\cC$.

Given $\xi\in\Udiff$ and some $\eps>0$, choose directions $\zeta,\eta\in\Udiff_0$ such that $\zeta\prec\xi\prec\eta$ and
\eeq{ \label{cdirec}
|\nabla\gpp(\zeta) - \nabla\gpp(\xi)|_1 \leq \eps \quad \text{and} \quad
|\nabla\gpp(\xi) - \nabla\gpp(\eta)|_1 \leq \eps.
}
We show that the following quantity is $o(n)$ on the event $\Omega_0$: 
\eq{
M_\xi(n) \coloneqq \max_{|x|_1\le \tsp n\tsp,\tspc \sig\tsp\in\tsp\{-,+\}} |\Bus^{\xi\sig}_{\zevec,x}-\nabla\gpp(\xi)\cdot x|. 
}
Let $x = (a,b)\in\Z^2$ satisfy $|x|_1\le n$. 
For ease of exposition, assume that $x$ lies in the first quadrant so that $a$ and $b$ are nonnegative. (Along the way, we indicate what changes if this is not true.)

Decompose $\Bus^{\xi\sig}_{\zevec,x}$ into horizontal and vertical increments:
\eeq{ \label{vhvh}
\Bus^{\xi\sig}_{\zevec,x} = \Bus^{\xi\sig}_{\zevec,a\evec_1} + \Bus^{\xi\sig}_{a\evec_1, a\evec_1+b\evec_2}.
}
For the horizontal increments, apply monotonicity \eqref{B_mono1}:
\eeq{ \label{huwq}
\Bus^{\zeta}_{\zevec,a\evec_1} \geq \Bus^{\xi\sig}_{\zevec,a\evec_1} \geq \Bus^\eta_{\zevec,a\evec_1}.
}
The upper bound admits a further sequence of inequalities:
\begin{subequations} \label{fmpo}
\eeq{ \label{fmpo1}
\Bus^\zeta_{\zevec, a\evec_1} \leq \nabla\gpp(\zeta)\cdot(a\evec_1) + M_\zeta(n) \stackref{cdirec}{\leq} \nabla\gpp(\xi)\cdot(a\evec_1) + a\eps + M_\zeta(n).
}
Similarly, the lower bound in \eqref{huwq} satisfies
\eeq{ \label{fmpo2}
\Bus^\eta_{\zevec, a\evec_1} \geq \nabla\gpp(\eta)\cdot(a\evec_1) - M_\eta(n) \geq \nabla\gpp(\xi)\cdot(a\evec_1) - a\eps - M_\eta(n).
}
\end{subequations}
Together \eqref{huwq}--\eqref{fmpo} yield 
\eeq{ \label{hoxi}
|\Bus^{\xi\sig}_{\zevec, a\evec_1} - \nabla\gpp(\xi)\cdot(a\evec_1)| \leq M_\zeta(n) + M_\eta(n) + a\eps.
}
If $a<0$, exchange $\zeta$ and $\eta$: \eqref{huwq} is replaced by
\eq{
\Bus^{\zeta}_{\zevec,a\evec_1} \leq \Bus^{\xi\sig}_{\zevec,a\evec_1} \leq \Bus^\eta_{\zevec,a\evec_1} \quad \text{for $a<0$}, 
}
and then \eqref{fmpo1} converted to further lower bounds and \eqref{fmpo2} to further upper bounds.
The replacement to \eqref{hoxi} would then be
\eq{
|\Bus^{\xi\sig}_{\zevec, a\evec_1} - \nabla\gpp(\xi)\cdot(a\evec_1)| \leq M_\zeta(n) + M_\eta(n) + |a|\eps.
}

Next we address  the vertical increment in \eqref{vhvh}. 
By monotonicity \eqref{B_mono2},
\eeq{ \label{huwp}
\Bus^\zeta_{a\evec_1, a\evec_1+b\evec_2} \leq \Bus^{\xi\sig}_{a\evec_1, a\evec_1+b\evec_2} \leq \Bus^\eta_{a\evec_1, a\evec_1+b\evec_2},
}
where the lower bound satisfies
\eq{
\Bus^\zeta_{a\evec_1, a\evec_1+b\evec_2} &= \Bus^\zeta_{\zevec,a\evec_1+b\evec_2} - \Bus^\zeta_{\zevec, a\evec_1} \\
&\geq [\nabla\gpp(\zeta)\cdot(a\evec_1+b\evec_2) - M_\zeta(n)] - [\nabla\gpp(\zeta)\cdot(a\evec_1) + M_\zeta(n)] \\
&= \nabla\gpp(\zeta)\cdot(b\evec_2) - 2M_\zeta(n)  
\geq \nabla\gpp(\xi)\cdot(b\evec_2) - b\eps - 2M_\zeta(n).
}
By analogous reasoning, the upper bound in \eqref{huwp} satisfies
\eq{
\Bus^\eta_{a\evec_1, a\evec_1+b\evec_2} \leq \nabla\gpp(\xi)\cdot(b\evec_2) + b\eps + 2M_\eta(n).
}
Together, the three previous displays imply
\eeq{ \label{vexi}
|\Bus^{\xi\sig}_{a\evec_1, a\evec_1+b\evec_2} - \nabla\gpp(\xi)\cdot(b\evec_2)| \leq 2M_\zeta(n) + 2M_\eta(n) + b\eps.
}
Similar to before, if $b$ were negative, replace $b\eps$ with $|b|\eps$ on the right-hand side.

Combining \eqref{vhvh}, \eqref{hoxi}, and \eqref{vexi}, we have
\eq{ 
|\Bus^{\xi\sig}_{\zevec,x} - \nabla\gpp(\xi)\cdot x| \leq 3M_\zeta(n) + 3M_\eta(n) + n\eps.
}
By virtue of $\zeta, \eta\in\Udiff_0\subset\cC$,  we have $M_\zeta(n) + M_\eta(n)=o(n)$ on the event $\Omega_0$. As $\eps>0$ is arbitrary, \eqref{bfanfx_app} follows.
\end{proof}

\subsection{Busemann limit}\label{a:Blim}
 This section refines the asymptotic Busemann bounds \eqref{B_monob} by showing that even in jump directions, the (exponentiated)  Busemann function is a limit of partition function ratios. 

\begin{proposition} \label{ebiwlp}
Assume \eqref{ranenv} and  \eqref{shape_ass}.  
Then the following holds almost surely. 
For every $\xi\in\,]\evec_2,\evec_1[\,$, $\sigg\in\{-,+\}$, $x\in\Z^2$, and $r\in\{1,2\}$, 
there exists an $\cL_\xi$-directed sequence $(x_\ell)$ such that
\eeq{ \label{exidn}
e^{\Bus^{\xi\sig}_{x-\evec_r,\tspb x}} = \lim_{\ell\to-\infty} \frac{Z_{x_\ell,\tspb x}}{Z_{x_\ell,x-\evec_r}}.
}
\end{proposition}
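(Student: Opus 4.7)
The plan is to combine Theorem~\ref{buse_dir}, which supplies the exact Busemann limit at any direction $\zeta$ satisfying $\zeta, \underline{\zeta}, \overline{\zeta} \in \Udiff$, with a diagonalization against a countable dense family of such directions. First observe that under assumption \eqref{shape_ass}, Theorem~\ref{buse_dir} applies at every $\zeta \in \Udiff$: if $\cL_\zeta = \{\zeta\}$ the hypothesis is trivially satisfied, and otherwise the endpoints $\underline{\zeta}, \overline{\zeta}$ of the nondegenerate linear segment $\cL_\zeta$ cannot be points of strict concavity and therefore lie in $\Udiff$ by \eqref{shape_ass}. Since $\Udiff$ is dense in $\,]\evec_2, \evec_1[\,$, fix a countable dense subset $\cC \subset \Udiff$, and let $\Omega_0$ be the intersection of $\bigcap_{\zeta \in \cC} \Omega_\zeta$ (with $\Omega_\zeta$ from Theorem~\ref{buse_dir}) with the full-probability event of Theorem~\ref{buse_full}, arranged via Remark~\ref{pm_rmk} so that $\Bus^{\zeta-} = \Bus^{\zeta+}$ on $\Omega_0$ for every $\zeta \in \cC$. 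On $\Omega_0$, for each $\zeta \in \cC$, each $\cL_\zeta$-directed sequence $(y_\ell)$ with $y_\ell \in \level_\ell$, and each $x \in \Z^2$, $r \in \{1,2\}$,
\begin{equation*}
\lim_{\ell \to -\infty} \frac{Z_{y_\ell, x}}{Z_{y_\ell, x-\evec_r}} = e^{\Bus^{\zeta}_{x-\evec_r, x}}.
\end{equation*}
For $\xi \in \cC$ the proposition holds directly, so fix $\xi \in \,]\evec_2, \evec_1[\,\setminus\cC$, $\sigg \in \{-,+\}$, $x \in \Z^2$, and $r \in \{1,2\}$.

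Select $\cC \ni \zeta_n \nearrow \xi$ when $\sigg = -$, respectively $\cC \ni \zeta_n \searrow \xi$ when $\sigg = +$; the cadlag limits \eqref{B_lim1}, valid on $\Omega_0$, yield $\Bus^{\zeta_n}_{x-\evec_r, x} \to \Bus^{\xi\sig}_{x-\evec_r, x}$. For each $n$, pick a $\zeta_n$-directed sequence $(y^n_\ell)_{\ell < 0}$ with $y^n_\ell \in \level_\ell$; by the displayed limit above, choose a strictly decreasing sequence of negative levels $\ell_n \to -\infty$ satisfying simultaneously
\begin{equation*}
\left|\frac{y^n_{\ell_n}}{\ell_n} + \zeta_n\right|_1 < \frac{1}{n} \qquad\text{and}\qquad \left|\frac{Z_{y^n_{\ell_n}, x}}{Z_{y^n_{\ell_n}, x-\evec_r}} - e^{\Bus^{\zeta_n}_{x-\evec_r, x}}\right| < \frac{1}{n}.
\end{equation*}
Setting $x_{\ell_n} := y^n_{\ell_n}$ defines a sequence with $x_{\ell_n}/\ell_n \to -\xi \in -\cL_\xi$, hence $\cL_\xi$-directed, along which the partition-function ratio tends to $e^{\Bus^{\xi\sig}_{x-\evec_r, x}}$ by the triangle inequality combined with $\Bus^{\zeta_n}_{x-\evec_r, x} \to \Bus^{\xi\sig}_{x-\evec_r, x}$.

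The main technical point is the verification, carried out in the first paragraph, that assumption \eqref{shape_ass} allows Theorem~\ref{buse_dir} to be invoked at every $\zeta \in \Udiff$; this provides the crucial dense family of approximating directions at which exact Busemann limits along $\cL_\zeta$-directed sequences are already available. After that, no delicate analysis is required: the upper and lower bounds in \eqref{B_monob} are not separately invoked, the $\cL_\xi$-directedness of the diagonal sequence is automatic from $\zeta_n \to \xi$, and the countably many directions $\xi \notin \Udiff$ need no separate treatment since there $\cL_\xi = \{\xi\}$ and $\xi$ is still approximated from both sides within $\cC$ by the same construction.
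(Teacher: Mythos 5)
Your proof is correct and follows essentially the same route as the paper's: restrict to a countable dense set of differentiability directions where Theorem~\ref{buse_dir} applies (your first-paragraph verification that \eqref{shape_ass} makes its hypotheses hold at every $\zeta\in\Udiff$ is exactly the content of Remark~\ref{diffrmk}), approach $\xi$ monotonically from the side dictated by the sign, invoke \eqref{B_lim1}, and diagonalize. The one genuine deviation is that you take approximating sequences directed at the single point $\zeta_n$ rather than arbitrary $\cL_{\zeta_n}$-directed ones; this makes the diagonal sequence exactly $\{\xi\}$-directed and lets you bypass the paper's Lemma~\ref{leftdc} (left-continuity of $\xi\mapsto\underline\xi$, right-continuity of $\xi\mapsto\overline\xi$), which the paper needs only because it glues arbitrary $\cL_{\zeta_k}$-directed sequences. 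Since the proposition asserts existence of just one sequence, this simplification is legitimate and is a small genuine saving.

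Two minor repairs are needed. First, with the paper's indexing ($x_\ell\in\level_\ell$, $\ell\to-\infty$), $A$-directedness requires the limit points of $x_\ell/\ell$ (not of $-x_\ell/\ell$) to lie in $A$; since $\ell<0$, your condition should read $|y^n_{\ell_n}/\ell_n-\zeta_n|_1<1/n$, and the diagonal then satisfies $x_{\ell_n}/\ell_n\to\xi$, not $-\xi$. Second, as written your sequence is defined only at the levels $\ell_n$, whereas a directed sequence in the paper's sense assigns a vertex to every level and \eqref{exidn} is a limit over all $\ell\to-\infty$. This is fixed at no cost: both of your approximation bounds come from limits as $\ell\to-\infty$, so you may choose $\ell_n$ such that the directedness bound and the ratio bound hold for all $\ell\le\ell_n$, and then use the $n$-th sequence on the whole block of levels $\ell_{n+1}<\ell\le\ell_n$ (this is exactly the paper's gluing). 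The resulting full sequence is still exactly $\xi$-directed, since on the $n$-th block $x_\ell/\ell$ stays within $1/n$ of $\zeta_n$ and $\zeta_n\to\xi$, so even after the repair no appeal to Lemma~\ref{leftdc} is required.
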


The following lemma is a consequence of the concavity of $\gpp$.  
Recall the definitions of $\underline\xi$ and $\overline\xi$ from \eqref{seg_end}. 

\begin{lemma} \label{leftdc}
The map $\xi\mapsto\underline\xi$ is left-continuous on $\,]\evec_2,\evec_1[\,$, while $\xi\mapsto\overline\xi$ is  right-continuous.
\end{lemma}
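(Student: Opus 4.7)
My plan is to reduce the claim to a one-variable statement about the restriction of $\gpp$ to the segment $[\evec_2,\evec_1]$, and then invoke standard one-sided regularity properties of concave functions. Parameterize points of $[\evec_2,\evec_1]$ by $t = \xi\cdot\evec_1 \in [0,1]$ and set $h(t) \coloneqq \gpp((1-t)\evec_2 + t\evec_1)$. Then $h$ is continuous and concave, the southeast order on $[\evec_2,\evec_1]$ corresponds to the usual order on $[0,1]$, and in these coordinates the definitions \eqref{seg_end} translate to
\eeq{ \label{uotot}
\underline{t}(t) = \inf\{s\in[0,1]:\, \text{$h$ is affine on $[s,t]$ with slope $h'(t-)$}\}, \quad
\overline{t}(t) = \sup\{s\in[0,1]:\, \text{$h$ is affine on $[t,s]$ with slope $h'(t+)$}\}.
}
The homeomorphism $t\leftrightarrow\xi$ between $[0,1]$ and $[\evec_2,\evec_1]$ reverses orientation, so left-continuity of $\xi\mapsto\underline\xi$ in the southeast order is equivalent to left-continuity of $t\mapsto\underline{t}(t)$, and similarly for $\overline t$.

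I would first handle $\overline t$ (right-continuity) and then argue that $\underline t$ is handled symmetrically. Take $t_n\searrow t$ in $]0,1[$ and consider two cases. If $\overline{t}(t)>t$, then $h$ is affine on $[t,\overline{t}(t)]$ with slope $h'(t+)$. For $n$ large, $t_n\in(t,\overline{t}(t))$, and then $h'(t_n+) = h'(t+)$ and $h$ is affine on $[t_n, \overline{t}(t)]$, so $\overline{t}(t_n)\geq \overline{t}(t)$. Conversely, any larger right endpoint for some $t_n$ would extend the affine region rightward of $\overline{t}(t)$, which contradicts the maximality in \eqref{uotot}; hence $\overline t(t_n) = \overline t(t)$ eventually. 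If instead $\overline{t}(t)=t$, I would suppose for contradiction that $\overline{t}(t_{n_k})\geq t+\delta$ along a subsequence for some $\delta>0$. Then $h$ is affine on $[t_{n_k},t+\delta]$ with slope $h'(t_{n_k}+)$; using the right-continuity $h'(t_{n_k}+)\to h'(t+)$ for concave $h$, passing to the limit together with continuity of $h$ yields that $h$ is affine on $[t,t+\delta]$ with slope $h'(t+)$. This contradicts $\overline{t}(t)=t$.

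Left-continuity of $\underline t$ follows by the mirror argument using left-continuity of the left derivative $h'(\cdot-)$ of a concave function. The only real issue is the limit-taking step in the second case above, where one must ensure that the common slope of the affine pieces indeed converges to $h'(t+)$; this is exactly the right-continuity of $s\mapsto h'(s+)$, which is standard for concave $h$. The rest is bookkeeping.
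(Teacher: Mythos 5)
Your proof is correct and takes essentially the same route as the paper's: both arguments rest on the one-sided continuity of the one-sided derivatives of the concave shape function together with the dichotomy between a nondegenerate linear segment (where $\underline\xi$, resp.\ $\overline\xi$, is locally constant) and the strictly concave case, which is the paper's Case~2 and your Case~B. Two small repairs: the parameterization $t=\xi\cdot\evec_1$ is order-\emph{preserving} for the southeast order (so your stated equivalence of left-continuity with left-continuity is right, but not because the map "reverses orientation"), and the identification $h'(t\pm)=\nabla\gpp(\xi\pm)\cdot(\evec_1-\evec_2)$ implicit in your one-dimensional rewriting of \eqref{seg_end} should be checked explicitly (a short computation using the homogeneity \eqref{homog}), since the sets $\cL_{\xi\pm}$ are defined through $\nabla\gpp(\xi\pm)$ rather than through derivatives of the restriction $h$.
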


\begin{proof}   We prove the left-continuity of $\xi\mapsto\underline\xi$, as right-continuity of $\xi\mapsto\overline\xi$ is analogous.
Fix  $\xi\in\,]\evec_2,\evec_1[\,$.  We have two cases to consider.  

\smallskip 

\textit{Case 1.}  $\underline\xi<\xi$.    Then $\xi$ belongs to a linear segment of $\gpp$, and $\underline{\zeta}=\underline\xi$ for all $\zeta\in\,]\underline\xi,\xi]$.
In particular, $\zeta\mapsto\underline\zeta$ is left-continuous at $\xi$.

\smallskip 

\textit{Case 2.} 
$\underline\xi=\xi$.  Now,  according to definition \eqref{seg_end} 
and  concavity,  
\eq{
\gpp(\xi-)\cdot(\xi-\zeta) < \gpp(\xi)-\gpp(\zeta) \quad \text{for all $\zeta\in\,]\evec_2,\xi[\,$}.
}
Let $\zeta_0\in\,]\evec_2,\xi[\,$.  Since both sides of the above inequality are left-continuous in $\xi$, there is some $\zeta_1 \in\,]\zeta_0,\xi[\,$ such that
\eq{
\gpp(\zeta_1-)\cdot(\zeta_1-\zeta_0) < \gpp(\zeta_1)-\gpp(\zeta_0).
}
Hence $\zeta_0\prec\underline{\zeta_1}$ (again by definition \eqref{seg_end}), which forces the following for every $\zeta\in[\zeta_1,\xi]$:
\eq{
\zeta_0\prec\underline{\zeta_1}\preccurlyeq \underline{\zeta} \preccurlyeq \underline{\xi}\preccurlyeq\xi.
}
Since $\zeta_0$ can be chosen arbitrarily close to $\xi$, we have verified that $\zeta\mapsto\underline\zeta$ is left-continuous at $\xi$.
\end{proof}

\begin{proof}[Proof of Proposition~\ref{ebiwlp}]   We prove the case $(\sigg,r)=(-,1)$, as the three other cases are analogous. 
Let $\Udiff_0$ be a countable dense subset of $\Udiff$, the directions of differentiability for $\gpp$.
Since we have assumed \eqref{shape_ass}, the hypotheses of Theorem~\ref{buse_dir} are satisfied for every $\zeta\in\Udiff_0$.
So take $\Omega_\zeta$ to be the full-probability event from Theorem~\ref{buse_dir}, on which
\eeq{ \label{zetgo}
e^{\Bus^{\zeta-}_{x,\tspb y}}=e^{\Bus^{\zeta+}_{x,\tspb y}}=\lim_{\ell\to-\infty}\frac{Z_{y_\ell,\tspb y}}{Z_{y_\ell,\tspb x}} \quad \text{for all $x,y\in\Z^2$ and any $\cL_\zeta$-directed sequence $(y_\ell)$}.
}
In addition, let $\Omega_0$ be the full-probability from Theorem~\ref{buse_full}.
We will prove the claim of the proposition on the event $\Omega_1 \coloneqq \Omega_0\cap\big(\bigcap_{\zeta\in\Udiff_0}\Omega_\zeta\big)$. 


Let $\xi\in\,]\evec_2,\evec_1[\,$ and $x\in\Z^2$ be given. 
Take a sequence $(\zeta_k)_k$ in $\Udiff_0$ such that $\zeta_k\nearrow\xi$.
By \eqref{B_lim1},  
\eeq{ \label{budcon}
 \lim_{k\to\infty}e^{\Bus_{x-\evec_1,x}^{\zeta_k}} = e^{\Bus_{x-\evec_1,x}^{\xi-}}.
}
For each $k$, choose any $\cL_{\zeta_k}$-directed sequence $(y^{(k)}_\ell)_\ell$, meaning that
\eq{
\underline{\zeta_k}\cdot\evec_1 
\leq \varliminf_{\ell\to-\infty} \frac{y^{(k)}_\ell}{\ell}\cdot\evec_1
\leq \varlimsup_{\ell\to-\infty} \frac{y^{(k)}_\ell}{\ell}\cdot\evec_1
\leq \overline{\zeta_k}\cdot\evec_1
\leq \overline\xi\cdot\evec_1.
}
No matter our choice of sequence, \eqref{zetgo} ensures that
\eq{ 
\lim_{\ell\to-\infty}\frac{Z_{y^{(k)}_\ell,\tspb x}}{Z_{y^{(k)}_\ell,\tspb x-\evec_1}} = e^{\Bus^{\zeta_k}_{x-\evec_1,\tspb x}}.
}
We now inductively construct a decreasing sequence of integers $(\ell_k)_{k\geq1}$ as follows.
The initial value $\ell_1$ can be chosen arbitrarily.
For each $k\geq2$,  invoke the two previous displays to choose some $\ell_{k}< \ell_{k-1}$ such that
\eeq{ \label{dircon}
\underline{\zeta_k}\cdot\evec_1 - \frac{1}{k} \leq \frac{y_{\ell}^{(k)}}{\ell}\cdot \evec_1
\leq \overline\xi\cdot\evec_1 + \frac{1}{k} \quad \text{for all $\ell\leq\ell_k$} 
}
and 
\eeq{ \label{bnew}
\bigg|\frac{Z_{y^{(k)}_\ell,\tspb x}}{Z_{y^{(k)}_\ell,\tspb x-\evec_1}} - e^{\Bus^{\zeta_k}_{x-\evec_1,\tspb x}} \bigg| \leq \frac{1}{k}
 \qquad \text{for all $\ell\leq\ell_k$}.
}
Now consider the sequence $(x_\ell)_\ell$ defined by
\eq{
x_\ell = y_\ell^{(k)} \quad \text{when $\ell_{k+1}<\ell\leq\ell_k$}.
}
Since $\underline{\zeta_k}\nearrow\underline\xi$ as $k\to\infty$ by Lemma~\ref{leftdc}, it follows from \eqref{dircon} that
\eq{
\underline\xi \cdot\evec_1 
\leq \varliminf_{\ell\to-\infty}\frac{x_\ell}{\ell}
\leq \varlimsup_{\ell\to-\infty}\frac{x_\ell}{\ell}
\leq \overline\xi\cdot\evec_1.
}
That is, $(x_\ell)_\ell$ is $\cL_\xi$-directed.
The combination of \eqref{budcon} and \eqref{bnew} produces \eqref{exidn}.
\end{proof}



\section{Discrete stochastic heat equation}  \label{bmcon}

This appendix records  implications of our results for a  lattice version  of the stochastic heat equation (SHE).
There are four subsections.  
Section~\ref{suppB1} is purely for context; it briefly discusses the standard stochastic heat equation (SHE) and the related Kardar--Parisi--Zhang (KPZ) and stochastic Burgers equations (SBE).
Section~\ref{suppB2} introduces the discrete SHE on the lattice that is solved by polymer partition functions.
The associated eternal solutions are seen to be in correspondence with recovering cocycles (Lemma~\ref{lm:sheB7}).
Since the Busemann functions are in fact recovering cocycles, this leads to the existence of eternal solutions (Theorem~\ref{thm:V1}) and---more novelly---to the failure of 1F1S (Theorem~\ref{thm:she_ig}).
Section~\ref{suppB3} proves the results from Section~\ref{suppB2}.
The final Section~\ref{suppB4} offers a different representation of eternal solutions, identifying them with semi-infinite polymer measures (Theorem~\ref{thm:V-mu}).


\subsection{Polymers, SHE, KPZ and SBE} \label{suppB1}
In continuous time and space, the SHE with multiplicative space-time white noise $\dot W$ is the stochastic partial differential equation 
\eeq{ \label{sheq}
\partial_t\tsp\cZ = \tfrac{1}{2}\partial_{xx}\tsp\cZ + \cZ\tsp\dot W. 
}
With point mass initial condition $\cZ(0,x) = \delta_0(x)$, \eqref{sheq} is formally solved by the rescaled partition function of the continuum directed random polymer (CDRP) \cite{alberts_khanin_quastel14b}:  
\eq{ 
 \cZ(t,x)= \rho(t,x)  \tsp E\Bigl[:\!\exp\!: \!\Bigl(\; \int_0^t  \dot W(s, b(s))\,\dd s\Bigr)\Bigr],
}
where the expectation $E$ is over Brownian bridges $b(\cdot)$ from $b(0) = 0$ to $b(t) = x$,   $:\!\exp\!:$  is  the Wick exponential, and 
 $
\rho(t,x) = \frac{1}{\sqrt{2\pi t}} e^{- \frac{x^2}{2t}} \one\{t\in(0,\infty)\}
$
is the heat kernel.


Switching to the free energy $\cH = \log\cZ$ ($\cZ=e^\cH$ is also called the Hopf--Cole transform) takes us formally from SHE to 
  to the Kardar--Parisi--Zhang (KPZ) equation  
\eeq{ \label{kpzeq}
\partial_t \cH = \tfrac{1}{2}\partial_{xx}\cH + \tfrac{1}{2}(\partial_x \cH)^2 + \dot W.
}
Originally proposed in \cite{kardar_parisi_zhang86} as a model for the height profile of a growing interface, \eqref{kpzeq} is the universal scaling limit of various 1+1 dimensional stochastic  models under the so-called intermediate disorder scaling and is itself a member of the   KPZ universality class; see \cite{corwin12} for a survey.
 
Upon formally  taking a  spatial derivative $\cU = \partial_x\log\cZ$ 
we arrive at  the (viscous) stochastic Burgers equation (SBE)  
\eeq{ \label{stbur}
\partial_t\, \cU = \tfrac{1}{2}\partial_{xx}\, \cU + \cU\tsp\partial_x\,\cU + \partial_x \dot W.
}
  The  \textit{one force--one solution principle} (1F1S) is concerned with existence and uniqueness of eternal solutions  to \eqref{stbur} and its inviscid counterpart. This program was initiated  by Sinai \cite{sina-91}.

\medskip 
 
\subsection{Discrete SHE} \label{suppB2}
The directed polymer model of our paper is associated with a particular discretization of \eqref{sheq} on the planar integer lattice  $\Z^2$.
  Given an assignment $\Yw=(\Yw_v)_{v\tsp\in\tsp\Z^2}$ of  strictly positive weights, consider solutions  $\gs$  
of the equation
\eeq{ \label{origin_eq}
\gs(x) = \Yw_x\big[\gs(x-\evec_1)+\gs(x-\evec_2)\big]. 
}

\begin{remark}[Relation to usual SHE] 
 \label{rmk:dshe3} 
Equation \eqref{origin_eq} is a natural discrete counterpart of \eqref{sheq} because both are equations for  polymer partition functions.  We can also render \eqref{origin_eq} formally similar to \eqref{sheq} by choosing suitable  variables.    Let the forward diagonal $\etime \coloneqq \evec_1+\evec_2$ represent the   time direction and  $\espace \coloneqq \evec_1-\evec_2$ the positive spatial direction.  
Suppose first that $\Yw_x = 1/2$ for every $x$.  Then several applications of   \eqref{origin_eq}  yield
 \eeq{ \label{finite_difference_heat}
\gs(x+\etime )-\gs(x) = \tfrac{1}{4}\big[\gs(x+\espace)+\gs(x-\espace)-2\gs(x)\big].
}
This is a finite difference version of the heat equation  $\gs_t = \frac{1}{2}\gs_{xx}$.  Next, let 
  $\Yw_x = 1/2+ \wb\Yw_x$ for i.i.d.\  mean zero random variables $\wb\Yw_x$.  Then the right-hand side of \eqref{finite_difference_heat} acquires an additional  term which is a linear combination of the $\gs$-terms on the right with mean-zero random coefficients.   This is   a discrete, though somewhat complicated,  version of the multiplicative noise term in \eqref{sheq}.
 \qedex\end{remark} 

With partition functions defined as in \eqref{part56},  equation \eqref{origin_eq} extends    across multiple levels:   
\be\label{g77} 
\gs(x) =\sum_{u\tsp\in\tsp\level_m}\gs(u)\tsp Z_{u,x}
\quad \text{ for all $m<n$ and $x\in\level_n$}. 
\ee
 Equation \eqref{g77} prescribes how to calculate, from an initial condition $\gs|_{\level_m}$, the unique solution on all later levels $\level_n$, $n>m$.  Instead of an initial value problem, we consider  eternal solutions.  An   \textit{eternal solution}  is a function $\gs\colon\Z^2\to\R$ such that \eqref{origin_eq} (equivalently, \eqref{g77})  holds at \textit{every} $x\in\Z^2$.
 The first lemma below gives a deterministic relationship between strictly positive eternal solutions and recovering cocycles.
 Recall that a recovering cocycle is a function $B\colon \Z^2\times\Z^2\to\R$ that satisfies properties \eqref{rec_coc}, with  the given weights $\Yw$ appearing in \eqref{rec_def}. 

\begin{lemma}  \label{lm:sheB7}
Let $(\Yw_x)_{x\in\Z^2}$ be strictly positive weights, and fix $u\in\Z^2$.  
Then eternal solutions $\gvv>0$ of \eqref{g77} such that $\gvv(u)=1$ are in bijective correspondence with recovering cocycles  $B$ via $\gvv(x)=e^{B(u,x)}$. 
\end{lemma}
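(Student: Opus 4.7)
The plan is to exhibit the bijection explicitly in both directions and verify that each candidate map lands in the intended space and that the two maps are mutually inverse. The forward map sends a recovering cocycle $B$ to $\gvv(x) \coloneqq e^{B(u,x)}$, and the inverse map sends an eternal solution $\gvv>0$ with $\gvv(u)=1$ to $B(x,y) \coloneqq \log \gvv(y) - \log \gvv(x)$.

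First I would check that $B \mapsto \gvv$ produces a valid eternal solution. From the cocycle relation \eqref{coc_def} applied at $(x,x,x)$, one gets $B(x,x)=0$, so $\gvv(u)=1$. Then using the cocycle identity $B(u,x-\evec_r) = B(u,x) + B(x,x-\evec_r) = B(u,x) - B(x-\evec_r,x)$ and the recovery identity \eqref{rec_def},
\eq{
\Yw_x\bigl[\gvv(x-\evec_1)+\gvv(x-\evec_2)\bigr]
= \Yw_x\, e^{B(u,x)}\bigl[e^{-B(x-\evec_1,x)}+e^{-B(x-\evec_2,x)}\bigr]
= e^{B(u,x)}=\gvv(x),
}
which is \eqref{origin_eq}. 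The multilevel form \eqref{g77} follows from \eqref{origin_eq} by induction on $n-m$, so $\gvv$ is an eternal solution.

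Next I would check that $\gvv \mapsto B$ produces a recovering cocycle. The cocycle property \eqref{coc_def} is automatic from the telescoping structure of $\log\gvv(y)-\log\gvv(x)$. For recovery, divide \eqref{origin_eq} by $\gvv(x)>0$:
\eq{
1 = \Yw_x\Bigl[\frac{\gvv(x-\evec_1)}{\gvv(x)}+\frac{\gvv(x-\evec_2)}{\gvv(x)}\Bigr]
= \Yw_x\bigl[e^{-B(x-\evec_1,x)}+e^{-B(x-\evec_2,x)}\bigr],
}
which is \eqref{rec_def}. Finally, $e^{B(u,x)}=\gvv(x)/\gvv(u)=\gvv(x)$, so this map lands in the correct normalization.

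It remains to verify that the two maps are mutually inverse. Starting from $B$, forming $\gvv(x)=e^{B(u,x)}$ and then $B'(x,y)=\log\gvv(y)-\log\gvv(x)=B(u,y)-B(u,x)$, the cocycle property \eqref{coc_def} gives $B'(x,y)=B(x,y)$. Starting from $\gvv$, forming $B(x,y)=\log\gvv(y)-\log\gvv(x)$ and then $\gvv'(x)=e^{B(u,x)}=\gvv(x)/\gvv(u)=\gvv(x)$ since $\gvv(u)=1$. There is no real obstacle here; the argument is a bookkeeping exercise combining the cocycle, recovery, and normalization conditions. The only mild care needed is in noting that positivity of $\gvv$ is exactly what allows the logarithms to be taken and the divisions in the recovery check to be performed, so the positivity hypothesis on $\gvv$ and the reality of the recovering cocycle $B$ correspond to one another under the bijection.
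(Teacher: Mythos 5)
Your proposal is correct and follows essentially the same route as the paper: verify the one-step equation \eqref{origin_eq} from the cocycle and recovery properties (extending to \eqref{g77} by induction), obtain recovery by dividing \eqref{origin_eq} by $\gvv(x)>0$, and check the two explicit maps are mutual inverses using $B(u,u)=0$ and the normalization $\gvv(u)=1$. No gaps.
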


  Existence and uniqueness questions of eternal solutions are typically posed under  given weights $\Yw$ and for a given value of a conserved quantity. 
Equation \eqref{origin_eq} has a natural conserved quantity in the asymptotic logarithmic slope.  

\begin{lemma} \label{nx79}
Let $(\Yw_x)_{x\in\Z^2}$ be strictly positive weights satifying 
\eeq{ \label{she129}  
\lim_{\abs k\to\infty}  \abs k^{-1}\log \Yw_{(k, t-k)} = 0   \quad \text{for all $t\in\Z$}.
}
Then the quantity 
\be\label{she139}  \lambda = \lim_{\abs k\to\infty}  k^{-1}\log \gs(k, t-k)  \;\in\; [-\infty, \infty]      \ee
is preserved by the evolution \eqref{origin_eq}. That is, if   the limit \eqref{she139} holds at level $t$, it continues to hold at all subsequent levels. 
\end{lemma}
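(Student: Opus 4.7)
The plan is to induct on levels, showing that if the limit \eqref{she139} exists at level $t$, then it exists at level $t+1$ with the same value. For convenience, parameterize the anti-diagonal $\level_s$ by its first coordinate, setting $h_s(k) \coloneqq \log \gs(k, s-k)$. The hypothesis \eqref{she129} then reads $k^{-1}\log W_{(k, s-k)}\to 0$ as $|k|\to\infty$ on every level, and the hypothesis \eqref{she139} says $h_t(k)/k \to \lambda$ as $|k|\to\infty$.

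First I would apply \eqref{origin_eq} at $x = (k, t+1-k)$. Both predecessors $x - \evec_1 = (k-1,t+1-k)$ and $x-\evec_2 = (k, t-k)$ lie on level $t$, so taking logarithms gives
\begin{equation*}
h_{t+1}(k) = \log W_{(k, t+1-k)} + \log\bigl(e^{h_t(k-1)} + e^{h_t(k)}\bigr).
\end{equation*}
The elementary bound $\max(a,b) \le \log(e^a + e^b) \le \max(a,b) + \log 2$ then sandwiches $h_{t+1}(k)$ between
\begin{equation*}
\log W_{(k, t+1-k)} + \max\bigl(h_t(k-1), h_t(k)\bigr) \quad\text{and}\quad \log W_{(k, t+1-k)} + \max\bigl(h_t(k-1), h_t(k)\bigr) + \log 2.
\end{equation*}

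Next I would divide through by $k$ and send $|k|\to\infty$. The $\log W$ term vanishes by \eqref{she129}, and $(\log 2)/k \to 0$. It remains to show that $k^{-1}\max\bigl(h_t(k-1), h_t(k)\bigr) \to \lambda$. Since $(k-1)/k\to 1$, both $h_t(k-1)/k$ and $h_t(k)/k$ share the limit $\lambda \in [-\infty, +\infty]$, so this reduces to the elementary claim: if $a_k/k\to \lambda$ and $b_k/k\to \lambda$ as $|k|\to\infty$, then $\max(a_k,b_k)/k\to\lambda$. For finite $\lambda$ this is immediate from $\max(a_k,b_k) = \lambda k + o(k)$. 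For $\lambda = +\infty$ (the $\lambda = -\infty$ case is symmetric), for any $M>0$ one has eventually $a_k/k > M$ and $b_k/k > M$; when $k>0$ this gives $\max(a_k,b_k)/k \ge a_k/k > M$, while when $k<0$ both inequalities read $a_k, b_k < Mk$, hence $\max(a_k,b_k) < Mk$, i.e.\ $\max(a_k,b_k)/k > M$. A final sandwich argument then yields $h_{t+1}(k)/k \to \lambda$, and iterating the one-step induction finishes the proof.

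The only subtle point is the case analysis for $\lambda = \pm\infty$, because dividing by $k$ reverses inequalities when $k<0$, so one must verify the max claim in each combination of sign of $k$ and sign of $\lambda$. This is routine once done carefully, so there is no real obstacle beyond bookkeeping.
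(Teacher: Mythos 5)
Your proof is correct and follows essentially the same route as the paper's: apply \eqref{origin_eq} on the anti-diagonal, take logarithms, use the bound $\max(a,b)\le\log(e^a+e^b)\le\max(a,b)+\log 2$, kill the weight term via \eqref{she129}, and induct level by level. The only difference is that you spell out the sign/$\pm\infty$ case analysis that the paper leaves implicit, which is fine.
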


As discussed in Theorem~\ref{buse_full}, the Busemann process is a family of recovering cocycles $(\Bus^{\xi\sig}:\, \xi\in\,]\evec_2,\evec_1[\,,   \sigg\in\{-,+\})$.
 So in light of Lemma~\ref{lm:sheB7}, we obtain the following theorem on  the almost sure  existence of eternal solutions under i.i.d.\ random weights.



\begin{theorem}\label{thm:V1}  Assume  \eqref{ranenv}.
 There exists a full-probability event $\Omega_0$ such that 
  for each  $\w\in\Omega_0$,  $\xi\in\,]\evec_2,\evec_1[\,$,   $\sigg\in\{-,+\}$, and  $u\in\Z^2$, the function 
 $\gs^{\w,\tsp\xi\sig}_u\colon \Z^2\to\R$ defined by  
 \eq{ 
 \gs^{\w,\tsp\xi\sig}_u(x) =   \exp\{ \Bus^{\xi\sig}_{u,x}(\w)\} , \quad x\in\Z^2, 
   }
 satisfies the following properties. 
\begin{enumerate}  [label={\rm(\roman*)}, ref={\rm(\roman*)}]   \itemsep=3pt 
\item\label{thm:V1.i}    $\gs^{\w,\tsp\xi\sig}_u$ is an eternal solution of \eqref{g77}  normalized by $\gs^{\w,\tsp\xi\sig}_u(u)=1$.   \item\label{thm:V1.ii}  
The following limit holds for all choices of the parameters: 
\eq{ 
\lim_{\abs x_1\to\infty} \frac{\log \gs^{\w,\tsp\xi\sig}_u( x)-\nabla\gpp(\xi\sigg)\cdot x}{\abs x_1}=0. 
}
 \item\label{thm:V1.iii}   Under the additional assumption \eqref{shape_ass}, for each $t\in\Z$, the  ratios $\bigl\{  \frac{\gs^{\w,\tsp\xi\sig}_u(k, t-k)}{\gs^{\w,\tsp\xi\sig}_u(\ell, t-\ell)} :\, k,\ell\in\Z\bigr\}$ on lattice level $\level_t$ are measurable functions of the weights $\{\Yw_x:\,  x\cdot   \etime\le t\}$ in the past. 
 
\end{enumerate}
\end{theorem}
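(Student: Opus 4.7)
\medskip

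\noindent\emph{Plan of proof for Theorem~\ref{thm:V1}.} Let $\Omega_0$ be the full-probability event of Theorem~\ref{buse_full}; by shrinking we can also assume that the shape theorem \eqref{bfanfx} from Theorem~\ref{all_lln} holds on $\Omega_0$ simultaneously for every direction and sign. For part~\ref{thm:V1.i}, observe that on $\Omega_0$ each Busemann function $\Bus^{\xi\sig}(\w,\aabullet,\aabullet)$ is a recovering cocycle: the cocycle identity \eqref{coc1} is \eqref{rec_coc}\eqref{coc_def}, and the recovery identity \eqref{rec_coc}\eqref{rec_def} is \eqref{B_reco}. From the cocycle property applied to $(u,u,u)$ we also have $\Bus^{\xi\sig}_{u,u}=0$, so $\gs^{\w,\tsp\xi\sig}_u(u)=1$. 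Lemma~\ref{lm:sheB7} then produces an eternal solution of \eqref{origin_eq} and hence of \eqref{g77}, and this eternal solution is precisely $\gs^{\w,\tsp\xi\sig}_u(x)=\exp\{\Bus^{\xi\sig}_{u,x}\}$.

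For part~\ref{thm:V1.ii} I would translate to the origin via the cocycle property: writing $\Bus^{\xi\sig}_{u,x}=\Bus^{\xi\sig}_{\zevec,x}-\Bus^{\xi\sig}_{\zevec,u}$, the triangle inequality yields
\[
\frac{|\Bus^{\xi\sig}_{u,x}-\nabla\gpp(\xi\sigg)\cdot x|}{|x|_1}\leq \frac{|\Bus^{\xi\sig}_{\zevec,x}-\nabla\gpp(\xi\sigg)\cdot x|}{|x|_1}+\frac{|\Bus^{\xi\sig}_{\zevec,u}|+|\nabla\gpp(\xi\sigg)\cdot u|}{|x|_1}.
\]
The first summand on the right tends to zero as $|x|_1\to\infty$ by the uniform shape limit \eqref{bfanfx}, while the second is finite over an unbounded denominator, so vanishes as well. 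This proves the claim on the same event $\Omega_0$.

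The substantive part is \ref{thm:V1.iii}, where assumption \eqref{shape_ass} is crucial. Fix $t\in\Z$ and a countable dense subset $\Udiff_0\subset\Udiff$. Under \eqref{shape_ass}, Theorem~\ref{buse_dir} applies to every $\zeta\in\Udiff_0$: on a $\zeta$-dependent full-probability event, for any $\cL_\zeta$-directed sequence $x_m\in\level_m$ with $m\to-\infty$ one has $\exp\{\Bus^{\zeta}_{v,y}\}=\lim_{m\to-\infty}Z_{x_m,y}/Z_{x_m,v}$. Choose a fixed deterministic $\cL_\zeta$-directed sequence $(x^{(\zeta)}_m)$; for any $v,y\in\level_t$ and all $m<t$, the ratio $Z_{x^{(\zeta)}_m,y}/Z_{x^{(\zeta)}_m,v}$ is a Borel function of the weights $\{\Yw_w:m<w\cdot(\evec_1+\evec_2)\le t\}$, hence of the past weights $\{\Yw_w:w\cdot(\evec_1+\evec_2)\le t\}$ alone. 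Taking the almost-sure limit in $m$, the variable $\exp\{\Bus^{\zeta}_{(\ell,t-\ell),(k,t-k)}\}$ is therefore past-measurable for every $\zeta\in\Udiff_0$. Intersecting the countably many $\zeta$-dependent full-probability events with $\Omega_0$ produces a single event $\Omega_1\subset\Omega_0$ of full measure on which this holds simultaneously for all $\zeta\in\Udiff_0$.

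For a general $\xi\in\,]\evec_2,\evec_1[\,$ and $\sig\in\{-,+\}$, the left- and right-continuous construction from Theorem~\ref{buse_full} recovers $\Bus^{\xi\sig}_{(\ell,t-\ell),(k,t-k)}$ on $\Omega_1$ as a monotone limit along sequences in $\Udiff_0$ via \eqref{B_lim1}, so $\Bus^{\xi\sig}_{(\ell,t-\ell),(k,t-k)}$ is also past-measurable. Since $\gs^{\w,\tsp\xi\sig}_u(k,t-k)/\gs^{\w,\tsp\xi\sig}_u(\ell,t-\ell)=\exp\{\Bus^{\xi\sig}_{(\ell,t-\ell),(k,t-k)}\}$ by the cocycle property, this gives the stated measurability. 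The main obstacle in this argument is bookkeeping of the $\xi$-dependent null events so that a single full-probability event suffices: this is precisely what the countable dense $\Udiff_0$ together with the monotone left/right-continuous extension \eqref{B_lim1} resolves, allowing one to avoid invoking Theorem~\ref{buse_dir} uncountably often.
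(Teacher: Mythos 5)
Your proposal is correct and follows essentially the same route as the paper: part~(i) via Lemma~\ref{lm:sheB7} together with Theorem~\ref{buse_full}, part~(ii) as a restatement of the simultaneous shape theorem \eqref{bfanfx} combined with the cocycle property, and part~(iii) via the construction below Theorem~\ref{buse_dir}, which under \eqref{shape_ass} realizes the Busemann process from countably many limits of past-measurable partition-function ratios and extends to all directions by the monotone limits \eqref{B_lim1}. The only difference is that you spell out the null-set bookkeeping for part~(iii) in more detail than the paper's brief remarks, which is harmless.
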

  
  Further properties of the eternal solutions  $\gs^{\w,\tsp\xi\sig}_u$ can of course be inferred from the properties of the Busemann functions. 
 Some comments on the theorem follow.   Part~\ref{thm:V1.i}  follows from Lemma~\ref{lm:sheB7} together with Theorem~\ref{buse_full}.   
 Part~\ref{thm:V1.ii} is a restatement of Theorem~\ref{all_lln} by identifying the conserved quantity in \eqref{she139}  for the solution $\gs^{\w,\tsp\xi\sig}_u$  as  $\lambda=\nabla\gpp(\xi\sigg)\cdot(\evec_1-\evec_2)$.

The  eternal solutions of the conservation law  required by  1F1S 
must  depend only on the past of the weights.  In our setting this is the past measurability of the ratios  in part~\ref{thm:V1.iii}.    This is the natural statement, for  if we imitate the connection from  SHE to SBE, then the differences $\cU^{\w,\tsp\xi\sig}_u(k, t-k) = \log \gs^{\w,\tsp\xi\sig}_u(k, t-k) - \log \gs^{\w,\tsp\xi\sig}_u(k-1, t-k+1)$ are the discrete counterpart of the solution to SBE \eqref{stbur}. 
The solution $\gs^{\w,\tsp\xi\sig}_u$ itself is  determined by the past weights only up to a multiplicative constant.  Part~\ref{thm:V1.iii} is a consequence of the construction of the Busemann process described below Theorem~\ref{buse_dir}.
This construction realizes the Busemann function $\xi\mapsto\Bus^{\xi\sig}$ from countably many limits of the form \eqref{orig_buse}, and each of these limits is determined only by weights in the past.  
But this strategy requires assumption \eqref{shape_ass} (see Remark~\ref{diffrmk}), hence this assumption's appearance in part~\ref{thm:V1.iii}.  

Theorem~\ref{thm:V1} opens the possibility of failure of 1F1S.
In the inverse-gamma case we have a theorem.

\begin{theorem}\label{thm:she_ig} 
 Assume \eqref{m:exp}.  Then there exists a full-probability event $\Omega_0$ with the following property. 
 For each $\w\in\Omega_0$ there exists a countably infinite dense set $\aUset\subset\,]\evec_2, \evec_1[\,$ such that for each $\xi\in\aUset$ and each base point $u\in\Z^2$,  $\gs^{\w,\tsp\xi-}_u$ and $\gs^{\w,\tsp\xi+}_u$ are two distinct eternal solutions with the same conserved quantity $\lambda=\nabla\gpp(\xi)\cdot(\evec_1-\evec_2)$, and $\gs^{\w,\tsp\xi-}_u(x)\neq \gs^{\w,\tsp\xi+}_u(x)$ for all $x\neq u$.
\end{theorem}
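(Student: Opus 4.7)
The plan is to deduce the theorem by assembling three results already established in the paper: Theorem~\ref{thm:V1} on eternal solutions from Busemann functions, Corollary~\ref{cor:ig_aUset}\ref{cor:6.b} on denseness of the discontinuity set $\aUset$ under inverse-gamma weights, and Theorem~\ref{thm:allxy} on the invariance of $\aUset$ across all vertex pairs when weights have a continuous distribution. Since inverse-gamma is a continuous distribution, all three apply. The full-probability event $\Omega_0$ will be defined as the intersection of the full-probability events furnished by these three results, and the set $\aUset$ will be the $\w$-dependent discontinuity set defined in \eqref{aUsetdef}.

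First, I would note that the inverse-gamma shape function is differentiable on the whole open interval $]\evec_2,\evec_1[\,$ (this is recorded just after \eqref{rho_x}, and implies assumption \eqref{shape_ass}). Consequently $\nabla\gpp(\xi-)=\nabla\gpp(\xi+)=\nabla\gpp(\xi)$ for every $\xi$, so the conserved quantities appearing in Theorem~\ref{thm:V1}\ref{thm:V1.ii} for $\gs^{\w,\xi-}_u$ and $\gs^{\w,\xi+}_u$ coincide, both equal to $\lambda=\nabla\gpp(\xi)\cdot(\evec_1-\evec_2)$. Theorem~\ref{thm:V1}\ref{thm:V1.i} simultaneously tells us that both functions are eternal solutions normalized at $u$.

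Second, I would cite Corollary~\ref{cor:ig_aUset}\ref{cor:6.b} to conclude that on a full-probability event, $\aUset$ is dense in $]\evec_2,\evec_1[\,$; from part~\ref{cor:6.a} of the same corollary (the Poissonian description with infinitely many small jumps on every interval containing $\evec_2$ as a limit) it follows that $\aUset$ is countably infinite. To upgrade distinctness of $\gs_u^{\w,\xi-}$ and $\gs_u^{\w,\xi+}$ from nearest-neighbor edges to arbitrary pairs $(u,x)$, I would appeal to Theorem~\ref{thm:allxy}: continuous weights guarantee that on a full-probability event the set of $\xi$ for which $\Bus^{\xi-}_{u,x}\neq\Bus^{\xi+}_{u,x}$ equals $\aUset$ for every pair $u\neq x$. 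Hence for each $\xi\in\aUset$ and each $x\neq u$,
\[
\gs^{\w,\xi-}_u(x)=\exp\{\Bus^{\xi-}_{u,x}(\w)\}\neq \exp\{\Bus^{\xi+}_{u,x}(\w)\}=\gs^{\w,\xi+}_u(x),
\]
which is the final distinctness claim.

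The only point requiring care, rather than a direct citation, is the verification that all three full-probability events can be taken simultaneously and that the resulting $\aUset$ is genuinely a single (common) random subset of directions that works uniformly over the base point $u$. This is immediate from the formulation of Theorem~\ref{thm:allxy}, which produces one full-probability event on which $\aUset$ coincides with the discontinuity set of $\xi\mapsto\Bus^{\xi\pm}_{x,y}$ for \emph{all} pairs $x\neq y$ at once; combining this with the density/countability from Corollary~\ref{cor:ig_aUset} and the eternal-solution statement from Theorem~\ref{thm:V1} completes the proof. There is no serious obstacle remaining: the main technical work (propagation of discontinuities and the Poisson description in the solvable case) has already been carried out earlier in the paper, and the present theorem is essentially a packaging of those results in the language of the discrete stochastic heat equation.
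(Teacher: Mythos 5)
Your proposal is correct and follows essentially the same route as the paper, which explicitly assembles Theorem~\ref{thm:V1}, Corollary~\ref{cor:ig_aUset} (for denseness and countability of $\aUset$, using differentiability of the inverse-gamma shape function for the conserved quantity), and Theorem~\ref{thm:allxy} (continuous weights giving $\gs^{\w,\xi-}_u(x)\neq\gs^{\w,\xi+}_u(x)$ for all $x\neq u$), intersecting the corresponding full-probability events. No gaps.
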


We cannot state Theorem~\ref{thm:she_ig} for general weights because we do not presently know whether in general $\aUset$ is nonempty.
In the inverse-gamma case, the denseness of $\aUset$ follows from Corollary~\ref{cor:ig_aUset}, and we use differentiability of the inverse-gamma polymer shape function \eqref{gpp} in expressing $\lambda$.
 The final claim $\gs^{\w,\tsp\xi-}_u(x)\neq \gs^{\w,\tsp\xi+}_u(x)$ for all $\xi\in\aUset$ follows from Theorem~\ref{thm:allxy} and thus requires only the fact that the weights are continuous.


\subsection{Proofs of lemmas} \label{suppB3}

As stated above, Theorems~\ref{thm:V1} and \ref{thm:she_ig} are immediate from earlier results in the main text.
So we just prove the preceding lemmas.

\begin{proof}[Proof of Lemma~\ref{lm:sheB7}]   Let $B$ be a recovering cocycle and define $\gvv(x) = e^{B(u,x)}$.
For this function $\gvv$, first verify \eqref{g77} for $m=n-1$: 
\eeq{
\label{VB70} 
\gvv(x-\evec_1)Z_{x-\evec_1,x}+\gvv(x-\evec_2)Z_{x-\evec_2,x} 
&\stackrefpp{part56}{coc_def}{=} (e^{B(u,\tspb x-\evec_1)}+ e^{B(u,x-\evec_2)})\Yw_x   \\
&\stackref{coc_def}{=}  e^{B(u,\tspb x)}    (e^{-B(x-\evec_1,x)}+ e^{-B(x-\evec_2, x)})\Yw_x   \\
&\stackref{rec_def}{=}  e^{B(u,\tspb x)}  . 
} 
To verify \eqref{g77} for $m\le n-2$, split the partition function $Z_{y,x}$ into two parts and then apply induction:
\eq{
\sum_{y\tsp\in\tsp\level_m} e^{B(u,\tspb y)}  Z_{y,x} 
&= 
\biggl\{\, \sum_{y\tsp\in\tsp\level_m} e^{B(u,\tspb y)}  Z_{y,x-\evec_1}   + \sum_{y\tsp\in\tsp\level_m} e^{B(u,\tspb y)}  Z_{y,x-\evec_2} \biggr\}  \Yw_x \\
&= 
(e^{B(u,\tspb x-\evec_1)}+ e^{B(u,x-\evec_2)})\Yw_x  
\stackref{VB70}{=} e^{B(u,\tspb x)}  . 
}
Thus $\gvv(x)=e^{B(u,x)}$ is an eternal solution. 
Furthermore, any cocycle must have $B(u,u) = 0$, and so $\gvv(u) = 1$.

Now suppose $\gvv>0$ is an eternal solution  
and define $B$ via 
$e^{B(x,y)}= \gvv(y)/\gvv(x)$. 
The cocycle property \eqref{coc_def} is  immediate.   
The recovery property \eqref{rec_def} follows from \eqref{g77} with $m=n-1$:
\begin{align*}
e^{-\Bus(x-\evec_1,x)}+ e^{-\Bus(x-\evec_2,x)} = \frac{\gvv(x-\evec_1)+\gvv(x-\evec_2)}{\gvv(x)}  =  \Yw_x^{-1}  . 
\end{align*}
Thus $B$ is a recovering cocycle.

Finally, check that these mappings are inverses of each other. 
In one direction, map $B$ to $\gvv(x) = e^{B(u,x)}$, and then map $\gvv$ to $\wt B$ defined by $e^{\wt B(x,y)} = \gvv(y)/\gvv(x)$.
This results in
\[
e^{\wt B(x,y)} = \frac{\gvv(y)}{\gvv(x)} = \frac{e^{B(u,y)}}{e^{B(u,x)}} = e^{B(x,u)+B(u,y)} = e^{B(x,y)}.
 \]
In the other direction, let $\gvv>0$ be an eternal solution such that $\gvv(u)=1$. Map $\gvv$ to $B$ defined by  $e^{B(x,y)}= \gvv(y)/\gvv(x)$, and then map $B$ to  $\wt \gvv(x)=e^{B(u,x)}$.
This results in 
\[
\wt \gvv(x)=e^{B(u,x)} =   \frac{\gvv(x)}{\gvv(u)} = \gvv(x).  
\qedhere \]  
\end{proof}

\begin{proof}[Proof of Lemma~\ref{nx79}]
Assuming \eqref{she129} and \eqref{she139}, we will show that
\eeq{ \label{she140}
\lim_{|k|\to\infty}k^{-1}\log\gs(k,t+1-k) = \lambda\in[-\infty,\infty].
}
Note that by replacing $k$ with $k-1$, we can also write \eqref{she139} as
\eeq{ \label{she140.3}
\lim_{|k|\to\infty} k^{-1}\log\gs(k-1,t+1-k) = \lambda.
}
Since for any $a,b>0$ we have
\eq{
\log a \vee \log b \leq \log(a+b) \leq \log(2(a\vee b)) = \log 2 + (\log a \vee \log b)
}
it follows from \eqref{she139} and \eqref{she140.3} that
\eeq{ \label{she141}
\lim_{|k|\to\infty}k^{-1}\log[\gs(k,t-k) + \gs(k-1,t+1-k)] = \lambda.
}
When $x = (k,t+1-k)$, equation \eqref{origin_eq} becomes
\eq{
\gs(k,t+1-k) = \Yw_{(k,t+1-k)}\big[\gs(k-1,t+1-k)+\gs(k,t-k)\big].
}
Now \eqref{she140} follows by taking logarithms, dividing by $k$, and applying \eqref{she129} and \eqref{she141}.
\end{proof}

\subsection{Correspondence with Gibbs measures} \label{suppB4}

The theorem below relates eternal solutions to consistent families of rooted semi-infinite polymer measures.
 Recall that such a family $(Q_v)_{v\in\Z^2}$ satisfies \eqref{gibbs12}.

 \begin{theorem}\label{thm:V-mu}
		There is a bijective correspondence between strictly positive eternal solutions  of \eqref{g77}  up to a constant multiplicative factor  and consistent families of rooted semi-infinite polymers measures.  This correspondence is formulated  as follows. 
		
\smallskip 		
		
		{\rm (a)}  Given a strictly positive eternal solution $\gs$ of \eqref{g77}, the consistent family $\{Q_v\}_{v\tsp\in\tsp\Z^2}$  of Gibbs measures associated  to $\gs$ is defined through their finite-dimensional marginals as follows: 
\be\label{g83}    Q_v(X_{\parng{m}{n}}=x_{\parng{m}{n}}) =  \one \{x_n=v\} \, \frac{\gs(x_{m})}{\gs(v)}  \prod_{i=m+1}^{n} \Yw_{x_i} 
\ee 
for $m\le n=v\cdot(\evec_1+\evec_2)$ and paths $x_{\parng{m}{n}}\in\pathsp_{x_m, v}$.  

\smallskip 

{\rm (b)}  Given a consistent family $(Q_v)_{v\tsp\in\tsp\Z^2}$  of Gibbs measures and any vertex $u\in\Z^2$, the strictly positive eternal solution $\gs$ that satisfies  $\gs(u)=1$ and is  associated to the   family $(Q_v)_{v\tsp\in\tsp\Z^2}$    is given by
\be\label{V55.5} \begin{aligned}   \gs(x) 
= \frac{Q_v(X_m=x)}{Z_{x,v}} \cdot \frac{Z_{u,v}}{Q_v(X_{m'}=u)} 
\quad\text{whenever $x\in\level_m$, $u\in\level_{m'}$, $v\ge x\vee u$}.
\end{aligned} \ee
		
\end{theorem}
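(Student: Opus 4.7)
The plan is to derive this theorem by composing two bijections already established in the excerpt: Lemma~\ref{lm:sheB7} identifies strictly positive eternal solutions $\gvv$ normalized by $\gvv(u)=1$ with recovering cocycles $B$, via $\gvv(x)=e^{B(u,x)}$; Theorem~\ref{pococthm} identifies recovering cocycles with consistent families of rooted semi-infinite polymer measures. The freedom to rescale $\gvv$ by a positive constant on the solution side exactly matches the freedom of choosing the base vertex $u$ on the cocycle side, so the composition descends to the quotient and yields the desired bijection. The formulas \eqref{g83} and \eqref{V55.5} will then emerge from tracking $\gvv$-ratios through the composition.

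For part (a), pick any base vertex $u$, rescale so $\gvv(u)=1$, and set $B(x,y)=\log\gvv(y)-\log\gvv(x)$, which is a recovering cocycle by Lemma~\ref{lm:sheB7}. Theorem~\ref{pococthm} then produces a consistent family $(Q_v)_{v\tsp\in\tsp\Z^2}$ whose backward transitions are
\[
Q_v(X_{m-1}=x-\evec_r\mid X_m=x) = e^{-B(x-\evec_r,x)}\tspa\Yw_x = \frac{\gvv(x-\evec_r)}{\gvv(x)}\tspa\Yw_x.
\]
Multiplying these transitions along a path $x_{\parng{m}{n}}\in\pathsp_{x_m,v}$ telescopes to
\[
Q_v(X_{\parng{m}{n}}=x_{\parng{m}{n}}) = \prod_{i=m+1}^n\frac{\gvv(x_{i-1})}{\gvv(x_i)}\tspa\Yw_{x_i} = \frac{\gvv(x_m)}{\gvv(v)}\prod_{i=m+1}^n\Yw_{x_i},
\]
which is exactly \eqref{g83}. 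Because the right-hand side depends on $\gvv$ only through the ratio $\gvv(x_m)/\gvv(v)$, rescaling $\gvv$ by a positive constant leaves $(Q_v)$ unchanged, so the construction is well-defined on the quotient and in particular is independent of the auxiliary choice of $u$.

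For part (b), I would derive \eqref{V55.5} directly from \eqref{g83}: summing \eqref{g83} over $\pathsp_{x,v}$ gives $Q_v(X_m=x)=\gvv(x)Z_{x,v}/\gvv(v)$, and the same identity evaluated at $u$ gives $Q_v(X_{m'}=u)=Z_{u,v}/\gvv(v)$ once $\gvv(u)=1$. Dividing and rearranging yields
\[
\gvv(x) = \frac{Q_v(X_m=x)}{Z_{x,v}}\cdot\frac{Z_{u,v}}{Q_v(X_{m'}=u)},
\]
which is \eqref{V55.5}. Since the left-hand side does not involve $v$, the right-hand side is automatically independent of $v\ge x\vee u$, and the prescription $(Q_v)\mapsto\gvv$ defined by \eqref{V55.5} inverts the map from part (a). The main delicate point is this $v$-independence of \eqref{V55.5}, which is immediate once one routes through the intermediate cocycle but would otherwise require a direct appeal to the consistency relation \eqref{gibbs_two} together with \eqref{qmpf} to migrate factors of $Q_{v'}(X_n=v)$ and $Z_{v,v'}$ between numerator and denominator; going via Lemma~\ref{lm:sheB7} and Theorem~\ref{pococthm} makes it automatic.
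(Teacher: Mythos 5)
Your argument is correct, but it takes a different route from the paper's own proof. You obtain the theorem by composing two existing bijections—Lemma~\ref{lm:sheB7} (eternal solutions normalized at $u$ $\leftrightarrow$ recovering cocycles) with the cited Theorem~\ref{pococthm} (recovering cocycles $\leftrightarrow$ consistent families)—and then read off \eqref{g83} by telescoping the backward transition probabilities \eqref{back_prob} and \eqref{V55.5} by summing \eqref{g83} over $\pathsp_{x,v}$; surjectivity of Theorem~\ref{pococthm} is what lets you treat an arbitrary consistent family in part (b) and makes the $v$-independence of \eqref{V55.5} automatic. This is precisely the shortcut the paper acknowledges in the remark following the theorem ("In this way, Theorem~\ref{thm:V-mu} could be inferred from Lemma~\ref{lm:sheB7}") but deliberately does not take: instead it gives a self-contained four-step verification, checking directly that \eqref{g83} yields consistent marginals of total mass one, the Gibbs property \eqref{gibbs_one}, and the consistency \eqref{gibbs_two}; building $\gs_v(x)=Q_v(X_m=x)/Z_{x,v}$ from a single rooted measure; proving $v$-independence of \eqref{V55.5} by a direct computation with \eqref{gibbs_one}, \eqref{gibbs_two}, and the factorization $Q_{u,v}(x)=Z_{u,x}Z_{x,v}/Z_{u,v}$; and verifying the two maps are mutual inverses. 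Your approach is shorter and cleanly isolates where each formula comes from, at the cost of leaning on the external bijection of \cite{janj-rass-20-aop} as a black box; the paper's direct proof buys independence from that input and exhibits explicitly how the consistency relations alone force the $v$-independence. One point worth making explicit if you write this up: bijectivity of your composed map (in particular that \eqref{V55.5} is a two-sided inverse of \eqref{g83} on the quotient by constants) should be stated as following from the bijectivity of the two cited correspondences, since your text only spells out the left-inverse computation.
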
 		

\begin{remark}[Random walk in a random environment]
Another way to state \eqref{g83} is that $Q_v$ is the  Markov chain evolving backward in time   with initial state $v\in\level_n$ and transition probability 
\eeq{ \label{qpx8}
Q_v\givenp{X_{m-1}=x-\evec_r}{X_m= x}  
=  \frac{\gs(x-\evec_r)}{\gs(x)}  \Yw_{x} \quad \text{for $x\in\level_m$, $r\in\{1,2\}$, $m\le n$}.
}
 If we denote the particular function defined in \eqref{V55.5}  by $\gs_u(x)$, then it follows that 
$\gs_{a}(x)= \gs_{a}(u)\gs_u(x)$ for all $a,u,x\in\Z^2$.
That is,  $\gs_a$ and $\gs_{u}$ are constant multiples of each other, and so the transition probabilities do not depend on the choice of $u$.

The representation \eqref{qpx8} is also found in Theorem~\ref{pococthm} as \eqref{back_prob}, but with the ratio $\gs(x-\evec_r)/\gs(x)$ replaced by the increment of a recovering cocycle.
In this way, Theorem~\ref{thm:V-mu} could be inferred from Lemma~\ref{lm:sheB7}.
Nevertheless, we provide a direct proof.
\qedex\end{remark}

\begin{proof}[Proof of Theorem~\ref{thm:V-mu}]



  \textit{Step 1.} Given a strictly positive eternal solution $\gs$ of \eqref{g77}, we show that 
\eqref{g83}  defines a consistent family of polymer Gibbs measures. 
First we check that \eqref{g83} gives a well-defined probability measure on $\pathsp_v$. 
Namely, we need to verify that (i) the finite-dimensional marginals are consistent; and (ii) the total mass is $1$.
This is done by induction on the distance from the root $v$.
First, we have the base case
\eeq{ \label{mass_1}
Q_v(X_n = v) = \one \{v=v\} \frac{\gs(v)}{\gs(v)} = 1.
}
Second, observe that for any nearest neighbor path $x_{\parng{m}{n}}$, we have
\eq{
&Q_v(X_{m-1} = x_m-\evec_1, X_{\parng{m}{n}} = x_{\parng{m}{n}})
+ Q_v(X_{m-1} = x_m-\evec_2, X_{\parng{m}{n}} = x_{\parng{m}{n}}) \\
&\overset{\eqref{g83}}{=} \one \{x_n=v\} \frac{\gs(x_m-\evec_1)+\gs(x_m-\evec_2)}{\gs(v)}\prod_{i=m}^n \Yw_{x_i} \\
&\stackrefpp{g77}{g83}{=} \one \{x_n=v\} \frac{\gs(x_m)}{\gs(v)} \prod_{i=m+1}^n \Yw_{x_i} 
\overset{\eqref{g83}}{=} Q_v(X_{\parng{m}{n}}=x_{\parng{m}{n}}).
}
That is, the marginal on paths from level $m-1$ is consistent with that from level $m$.
By induction and \eqref{mass_1}, $Q_v$ is indeed a well-defined probability measure on $\pathsp_v$.

Next we check that $Q_v$ is a semi-infinite polymer measure; that is, $Q_v$ satisfies \eqref{gibbs_one}. 
As an intermediate step, we calculate the finite-dimensional marginals:
\eeq{ \label{mult_mar}
Q_v(x_{\parng{\ell}{m}}) 
&\overset{\hphantom{\eqref{g83},\eqref{part56}}}{=} \sum_{x_\brbullet\in\pathsp_{x_m,v}}Q_v(x_{\parng{\ell}{n}})
\stackref{g83}{=} \sum_{x_\brbullet\in\pathsp_{x_m,v}}\frac{\gs(x_\ell)}{\gs(v)}\prod_{i=\ell+1}^n\Yw_{x_i} \\
&\overset{\hphantom{\eqref{g83},\eqref{part56}}}{=} \frac{\gs(x_\ell)}{\gs(x_m)}\prod_{i=\ell+1}^m\Yw_{x_i}\sum_{x_\brbullet\in\pathsp_{x_m,v}}\frac{\gs(x_m)}{\gs(v)}\prod_{i=m+1}^n\Yw_{x_i}  \\
&\overset{\eqref{g83},\eqref{part56}}{=} Q_{x_m}(x_{\parng{\ell}{m}})\frac{\gs(x_m)}{\gs(v)}Z_{x_m,v}.
}
With this (using the case $\ell=m$) we can check the Gibbs property \eqref{gibbs_one}:  with $x_n=v$, we have
\eeq{ \label{use_mult}
Q_v\givenp{x_{\parng{m}{n}}}{x_m} 
&= \frac{Q_v(x_{\parng{m}{n}})}{Q_v(x_m)} 
\overset{\eqref{g83},\eqref{mult_mar}}{=} \frac{\gs(v)^{-1}\gs(x_m)\prod_{i=m+1}^n \Yw_{x_i}}{\gs(v)^{-1}\gs(x_m)Z_{x_m,u}} \\
&=
\frac{\prod_{i=m+1}^n \Yw_{x_i}}{Z_{x_m,v}}.
}
Finally, we verify that $(Q_v)_{v\in\Z^2}$ is a consistent family; that is, \eqref{gibbs_two} holds.
Indeed, given any $\ell\le m\le n$ and $x_m$ such that $\pathsp_{x_m,v}$ is nonempty, we can verify the desired equality:
\eq{
Q_v\givenp{x_{\parng{\ell}{m}}}{x_m}
= \frac{Q_v(x_{\parng{\ell}{m}})}{Q_v(x_m)} 
\stackref{mult_mar} =Q_{x_m}(x_{\parng{\ell}{m}}). 
}
We have verified that \eqref{g83}  defines a consistent family of polymer Gibbs measures. 

\smallskip

  \textit{Step 2.}  Fix $v\in\Z^2$.  Given a semi-infinite Gibbs measure $Q_v$ rooted at $v$, we check that 
\be\label{V45}   \gs_v(x)=\frac{Q_v(x)}{Z_{x,v}}  \quad \text{for $x\le v$}  \ee
 defines a solution $\gs_v$ of \eqref{g77} on the southwest quadrant $\{x\in\Z^2:\, x\leq v\}$. 
 The key observation is that whenever $u\le x\le v$, we have
 \eeq{ \label{mid_part}
Q_{u,v}(x) = \frac{Z_{u,x}Z_{x,v}}{Z_{u,v}}.
}
Now start from the right-hand side of \eqref{g77}: for $m<n=x\cdot \etime $, we have
 \eq{ 
 \sum_{u\tsp\in\tsp\level_m} \gs_v(u)\tsp Z_{u,x} 
&\stackrefp{gibbs_one}{=} \sum_{u\tsp\in\tsp\level_m} \frac{Q_v(u)}{Z_{u,v}}\tsp Z_{u,x}  
\overset{\eqref{mid_part}}{=} \sum_{u\tsp\in\tsp\level_m} \frac{Q_v(u)}{Z_{x,v}}\tsp Q_{u,v}(x)   \\
&\stackref{gibbs_one}{=} \sum_{u\tsp\in\tsp\level_m} \frac{Q_v(u)}{Z_{x,v}}\tsp Q_v\givenp{x}{u}
= \sum_{u\tsp\in\tsp\level_m} \frac{Q_v(u,x)}{Z_{x,v}}   = \frac{Q_v(x)}{Z_{x,v}} =\gs_v(x).
 }

\smallskip

  \textit{Step 3.} Suppose we have a consistent family $(Q_v)_{v\in\Z^2}$ of semi-infinite rooted Gibbs measures, and fixed $u\in\Z^2$.
We show that the formula given in \eqref{V55.5}, namely
\be\label{V54.67}    \gs(x) 
= \frac{Q_v(x)}{Z_{x,v}} \cdot \frac{Z_{u,v}}{Q_v(u)}  
\qquad\text{for any $v\ge x\vee u$},
\ee 
is independent of $v$ and  defines an eternal solution. 
Indeed, in terms of definition \eqref{V45},  the formula \eqref{V54.67}  is 
\be\label{V55}   \gs(x)= \frac{\gs_v(x)}{\gs_v(u)}. \ee
Therefore, we wish to show that
\eeq{ \label{root_irr}
\frac{\gs_v(x)}{\gs_v(u)} = \frac{\gs_{v'}(x)}{\gs_{v'}(u)} \quad \text{whenever $v\wedge v' \geq x\vee u$}.
}
Given such $v,v'$, take any $w\in\Z^2$ such that  $w\ge v \vee v'$.
Since $w \ge v \ge x$, we can write
\eeq{ \label{preroot1}
\gs_v(x)
\stackref{V45}{=} \frac{Q_v(x)}{Z_{x,v}}  
&\stackref{gibbs_two}{=}\frac{Q_w\givenp{x}{v}}{Z_{x,v}}
=\frac{Q_w(x)Q_w\givenp{v}{x}}{Q_w(v)\tsp Z_{x,v}} \\
&\stackref{gibbs_one}{=}\frac{Q_w(x)Q_{x,w}(v)}{Q_w(v)\tsp Z_{x,v}}
\stackref{mid_part}{=} \frac{Q_w(x)\tsp Z_{v,w}}{Q_w(v)\tsp Z_{x,w}}
\stackref{V45}{=} \frac{\gs_w(x)}{\gs_w(v)}. 
}
But then the same sequence of equations holds with $u$ replacing $x$ and/or $v'$ replacing $v$, and so 
\eeq{ \label{preroot2}
\gs_v(u) = \frac{\gs_w(u)}{\gs_w(v)}, \qquad \gs_{v'}(x) = \frac{\gs_w(x)}{\gs_w(v')}, \qquad \gs_{v'}(u) = \frac{\gs_w(u)}{\gs_w(v')}.
}
The desired equality \eqref{root_irr} is   immediate from \eqref{preroot1} and \eqref{preroot2}, with both sides equal to $\gs_w(x)/\gs_w(u)$.
Furthermore, since $\gs$ is a constant multiple of $\gs_v$, $\gs$ is a solution on 
the quadrant $\{x\in\Z^2:\, x\le v\}$ by Step 2.
Since $v$ is now   arbitrary,  $\gs$ is a solution on the entire lattice $\Z^2$.

\smallskip

  \textit{Step 4.}   We show that the mappings constructed above are inverses of each other when solutions are restricted to those satisfying $\gs(u)=1$ for a fixed base vertex $u\in\Z^2$.  
In one direction, let $\gs$ be a eternal solution such that $\gs(u)=1$. Then 
let  $(Q_v)_{v\in\Z^2}$  be the image of $\gs$ from \eqref{g83}, and let $\wt\gs$ be the image of $(Q_v)_{v\in\Z^2}$ under \eqref{V54.67}.   For $v\ge x\vee u$, we have
\begin{align*}
   \wt \gs(x)   \overset{\eqref{V54.67}}{=} \frac{Q_v(x)}{Z_{x,v}} \cdot \frac{Z_{u,v}}{Q_v(u)} 
    \overset{\eqref{mult_mar}}{=} \frac{\gs(x)}{\gs(v)} \cdot \biggl( \frac{\gs(u)}{\gs(v)}\biggr)^{-1} =  \frac{\gs(x)}{\gs(u)}  = \gs(x).  
\end{align*} 

In the other direction, let $\gs$ be the image of $(Q_v)_{v\in\Z^2}$ under \eqref{V54.67}, and then let  $(\wt Q_v)_{v\in\Z^2}$  be the image of $\gs$ from \eqref{g83}.  Let  $v\in\level_n$, $m\le n$, and   $x_{\parng{m}{n}}\in\pathsp_{x_m,v}$.
Choose some $w\ge v\vee u$.    Then
\eq{
 \wt Q_v(x_{\parng{m}{n}}) 
& \stackrefpp{g83}{use_mult}{=}   \frac{\gs(x_{m})}{\gs(v)}  \prod_{i=m+1}^{n} \Yw_{x_i}
 \overset{\eqref{V55}}{=}  \frac{\gs_w(x_m)}{\gs_w(u)} \cdot \biggl( \frac{\gs_w(v)}{\gs_w(u)}\biggr)^{-1}  \prod_{i=m+1}^n \Yw_{x_i} 
 \\ &\stackrefp{g83}{=}  \frac{\gs_w(x_m)}{\gs_w(v)} \prod_{i=m+1}^n \Yw_{x_i} 
 \overset{\eqref{root_irr}}{=}  \frac{\gs_v(x_m)}{\gs_v(v)} \prod_{i=m+1}^n \Yw_{x_i} 
 \overset{\eqref{V45}}{=}\frac{Q_v(x_m)}{Z_{x_m,v}} \prod_{i=m+1}^n \Yw_{x_i}  \\
 & \overset{\eqref{use_mult}}{=}  Q_v\givenp{x_{\parng{m}{n}}}{x_m}Q_v(x_m)
 = Q_v(x_{\parng{m}{n}}) . 
}
 This completes the proof. 
 \end{proof}  

\bibliographystyle{plain}
\bibliography{polymerLGv2,firasbib2010,erikbib,timobib,MultiCGM}

\end{document}